\numberwithin{equation}{section}
\patchcmd{\ttlh@hang}{\parindent\z@}{\parindent\z@\leavevmode}{}{}
\patchcmd{\ttlh@hang}{\noindent}{}{}{}
\newcommand\numberthis{\addtocounter{equation}{1}\tag{\theequation}}
\theoremstyle{plain}
\newtheorem{theorem}{Theorem}[section]
\newtheorem{lemma}[theorem]{Lemma}
\newtheorem{proposition}[theorem]{Proposition}
\newtheorem{corollary}[theorem]{Corollary}
\theoremstyle{definition}
\newtheorem{definition}[theorem]{Definition}
\newenvironment{example}
  {\pushQED{\qed}\examplex}
  {\popQED\endexamplex}
\theoremstyle{remark}
\newtheorem{remark}[theorem]{Remark}
\newcommand{\supp}{\operatorname{supp}}
\newcommand{\Dom}{\operatorname{Dom}}
\newcommand{\esssup}{\mathop{\mathrm{ess\,sup}}}
\DeclareMathOperator{\Co}{Co}
\newcommand{\NN}{\mathbb{N}} 
\newcommand{\Z}{\mathbb{Z}} 
\newcommand{\R}{\mathbb{R}} 
\newcommand{\dimN}{d} 
\newcommand{\hdim}{Q} 
\newcommand{\cqn}{\gamma}
\newcommand{\hdeg}{\nu} 
\newcommand{\ord}{\sigma} 
\newcommand{\NA}{G} 
\newcommand{\N}{N} 
\newcommand{\n}{\mathfrak{n}} 
\renewcommand{\H}{\mathbb{H}_n} 
\newcommand{\BMO}{\mathop{\mathrm{BMO}}}
\newcommand{\PT}{P^{p,q}_{\PTpar, \ord}} 
\newcommand{\PTi}{P^{\infty,q}_{\PTpar, \ord}} 
\newcommand{\PTii}{P^{\infty,\infty}_{\PTpar, \ord}} 
\newcommand{\PTv}[3]{P^{#1,#2}_{\PTpar,#3}} 
\newcommand{\MT}{L^{p,q}_{\PTpar, \ord}}
\newcommand{\ST}{Y^{p,q}_{\PTpar, \ord}}
\newcommand{\SC}{\mathcal{S}} 
\newcommand{\TD}{\mathcal{S}'} 
\newcommand{\TDP}{\mathcal{S}'/\mathcal{P}} 
\newcommand{\SV}{\mathcal{S}_0} 
\newcommand{\sL}{\mathrm{L}} 
\renewcommand{\P}{\mathrm{P}} 
\newcommand{\crk}{\phi} 
\newcommand{\drk}{\psi} 
\newcommand{\crkk}{\eta} 
\newcommand{\drkk}{\zeta} 
\newcommand{\F}{\dot{\mathbf{F}}} 
\newcommand{\B}{\dot{\mathbf{B}}} 
\newcommand{\SIP}{\mathcal{S}'(N) / \mathcal{P}}
\newcommand{\PTpar}{a}
\newcommand{\loc}{\operatorname{loc}}
\newcommand{\RLO}{\mathrm{R}} 
\newcommand{\TLptwo}{\F^{0}_{p,2}}
\newcommand{\TLS}{\F^{\ord}_{p,q}}
\newcommand{\BS}{\B^{\ord}_{p,q}}
\newcommand{\op}{\operatorname{op}}
\def\Xint#1{\mathchoice
{\XXint\displaystyle\textstyle{#1}}%
{\XXint\textstyle\scriptstyle{#1}}%
{\XXint\scriptstyle\scriptscriptstyle{#1}}%
{\XXint\scriptscriptstyle\scriptscriptstyle{#1}}%
\!\int}
\def\XXint#1#2#3{{\setbox0=\hbox{$#1{#2#3}{\int}$ }
\vcenter{\hbox{$#2#3$ }}\kern-.6\wd0}}
\def\dashint{\Xint-}
\title[]{Besov and Triebel-Lizorkin spaces\\  on homogeneous groups}
\author[G. Hu]{Guorong Hu}
\address{School of Mathematics and Statistics, Jiangxi Normal University, Nanchang, Jiangxi 330022, China}
\email{hugr@mail.ustc.edu.cn}
\author[D. Rottensteiner]{David Rottensteiner}
\address{Department of Mathematics: Analysis, Logic and Discrete Mathematics, Ghent University, Ghent, Belgium}
\email{david.rottensteiner@ugent.be}
\author[M. Ruzhansky]{Michael Ruzhansky}
\address{Department of Mathematics: Analysis, Logic and Discrete Mathematics, Ghent University, Ghent, Belgium
and
School of Mathematical Sciences, Queen Marry University of London, United Kingdom}
\email{michael.ruzhansky@ugent.be}
\author[J.T. van Velthoven]{Jordy Timo van Velthoven}
\address{Faculty of Mathematics, University of Vienna, Oskar-Morgenstern-Platz 1, 1090 Vienna, Austria}
\email{jordy-timo.van-velthoven@univie.ac.at}
\keywords{Besov spaces, homogeneous groups, maximal functions, Triebel-Lizorkin spaces, wavelets}
\subjclass[2020]{22E25, 22E30, 43A80, 46E35}
\begin{document}

\begin{abstract}
This paper develops a theory of Besov spaces $\dot{\mathbf{B}}^{\sigma}_{p,q} (N)$ and Triebel-Lizorkin spaces $\dot{\mathbf{F}}^{\sigma}_{p,q} (N)$ on an arbitrary homogeneous group $N$ for the full range of parameters $p, q \in (0, \infty]$ and $\sigma \in \mathbb{R}$. Among others, it is shown that these spaces are independent of the choice of the Littlewood-Paley decomposition and that they admit characterizations in terms of continuous maximal functions and molecular frame decompositions. The defined spaces include as special cases various
classical function spaces, such as Hardy spaces on homogeneous groups and homogeneous Sobolev spaces
and Lipschitz spaces associated to sub-Laplacians on stratified groups. 
\end{abstract}

\maketitle

\tableofcontents

\allowdisplaybreaks

\section{Introduction}
Besov and Triebel-Lizorkin spaces on Euclidean spaces provide a unified framework for various classical function spaces in harmonic analysis  encompassing, among others, Hardy spaces, Lebesgue spaces, Lipschitz spaces and Sobolev spaces. A common feature of these function spaces is that they admit a characterization in terms of a Littlewood-Paley decomposition, which is precisely the property that allows them to be treated within the scale of Besov and Triebel-Lizorkin spaces \cite{frazier1991littlewood, triebel1983theory}. 
Over the years, function spaces defined by Littlewood-Paley-type decompositions have been studied in great generalities beyond Euclidean spaces, such as on Lie groups and metric measure spaces; see, e.g., \cite{feneuil2018algebra, bruno2021potential, bruno2020besov, furioli2006littlewood, coulhon2001sobolev} and \cite{bruno2022homogeneous, KP, bui2012calderon, han2008theory, BBD}. A more specific setting that allows for a particularly rich theory of function spaces is that of homogeneous groups, which are nilpotent Lie groups equipped with a one-parameter family of dilations. The theory of Hardy spaces on such groups has first been initiated in the influential monograph \cite{FS} and has been further developed in, e.g., \cite{christ1984singular, Sato, glowacki1987inversion}. Furthermore, on the subclass of stratified Lie groups, various classes of smoothness spaces associated to sub-Laplacians, such as Lipschitz and Sobolev spaces, have been studied in, e.g.,  \cite{Folland1979Lipschitz, krantz1982lipschitz, Folland1975subelliptic, hu2019littlewood, FR2}. More generally, Besov and Triebel-Lizorkin spaces defined by Littlewood-Paley-type decompositions associated to sub-Laplacians on stratified Lie groups have been studied in, e.g., \cite{fuehr2012homogeneous, hu2013stratified, giulini1986approximation, saka1979besov, bruno2023pointwise}.

The objective of the present paper is to develop a general theory of homogeneous Besov and Triebel-Lizorkin spaces on arbitrary homogeneous groups. Motivated by the approach in \cite{FS}, we develop such a framework 
under quite general assumptions on the Littlewood-Paley-type decompositions. In particular, we do not assume such Littlewood-Paley decompositions to be associated with a specific self-adjoint operator. 
In doing so, we are able to provide a unified framework for the aforementioned Hardy spaces on homogeneous groups \cite{FS, christ1984singular, glowacki1987inversion},  the various smoothness spaces associated to sub-Laplacians on stratified groups \cite{Folland1979Lipschitz, FR2, krantz1982lipschitz, fuehr2012homogeneous, hu2013stratified}, and the Sobolev spaces associated to general Rockland operators on graded groups \cite{FR2}.  

Although, as mentioned above, Besov and Triebel-Lizorkin spaces have been defined on general doubling metric measure spaces (see, e.g., \cite{bruno2022homogeneous, bui2012calderon, BBD, KP}), the general theory developed in the present paper does not appear to be a special case of such existing theories. On the one hand, in contrast to, e.g., the papers  \cite{bruno2022homogeneous, georgiadis2017homogeneous, liu2016besov, BBD, KP}, the Besov and Triebel-Lizorkin spaces defined here are not assumed to be associated to a fixed self-adjoint operator.
On the other hand, even for Littlewood-Paley decompositions associated to self-adjoint operators, we do not require such operators to generate semigroups satisfying Gaussian estimates as is done in, e.g., \cite{bruno2022homogeneous, BBD, liu2016besov, KP}. As a matter of fact, the self-adjoint operators that we will use for constructing Littlewood-Paley-type decompositions on arbitrary homogeneous groups generate semigroups with only mild decay at infinity, cf. \cite{glowacki1986stable, dziubanski1992schwartz}. Nevertheless, in the case of a stratified group, the spaces defined in the present paper naturally coincide with those studied in \cite{fuehr2012homogeneous, hu2013stratified}, which in turn can be treated as special cases of the general theories \cite{bruno2022homogeneous, BBD, KP}. Indeed, in the case of stratified groups, the sub-Laplacian is a homogeneous operator with respect to the canonical automorphic dilations, and hence the spectral multipliers used in \cite{bruno2022homogeneous, BBD, KP} to define Littlewood-Paley decompositions form dilations of a fixed function. Similarly, in the case of a graded group with canonical dilations, a Rockland operator is a homogeneous operator satisfying Gaussian estimates (possibly of arbitrary high order) \cite{auscher1994on}, and hence the associated Besov and Triebel-Lizorkin spaces can be treated a special cases of, e.g., \cite{BBD, KP}. For arbitrary homogeneous groups, however, the existence of homogeneous operators satisfying Gaussian estimates appears unknown, which yields that the Besov and Triebel-Lizorkin spaces defined in this paper cannot simply be studied as special cases of the general theory \cite{BBD, KP}.

We next describe the setting of the paper and give an outline of its content. For a more detailed discussion on the basic notions of analysis on homogeneous groups, see Section \ref{sec:prelim}.

\subsection{Setup and content} Let $N$ be a homogeneous group equipped with a family of dilations $(\delta_t)_{t > 0}$ and let $\hdim$ be the homogeneous dimension of $N$. As mentioned above, our approach towards defining Besov and Triebel-Lizorkin spaces is based on the use of Littlewood-Paley-type decompositions. For this purpose, we say that a Schwartz function $\crk \in \SV(N)$ with all moments vanishing satisfies the \emph{discrete Calder\'on formula} if, for all $f \in \SV'(N)$,
\begin{align} \label{eq:calderon_intro}
f = \sum_{j \in \mathbb{Z}} f \ast \crk_{2^{-j}} \ast \crk_{2^{-j}}, 
\end{align} 
with convergence in $\SV'(N)$, where we use the notation $\crk_{t} := t^{-Q} \crk(\delta_{t^{-1}} (\cdot))$ for a general $t > 0$. Functions satisfying such a Calder\'on condition on arbitrary homogeneous groups will be constructed in Section \ref{sec:auxiliary} as spectral multipliers of some nondifferential convolution operator introduced in \cite{glowacki1986stable}. For stratified (resp. graded) groups, such functions are already known to exist as a consequence of Hulanicki's theorem \cite{Hulanicki} for sub-Laplacians (resp. Rockland operators), see, e.g., \cite{fuehr2012homogeneous, geller2006continuous}. 

Given a function $\crk \in \SV(N)$ satisfying the discrete Calder\'on condition \eqref{eq:calderon_intro}, a smoothness parameter $\ord \in \mathbb{R}$ and integrability exponents $p, q \in (0, \infty]$, we define the homogeneous Besov space $\BS(N)$ and Triebel-Lizorkin space $\TLS(N)$ as the space consisting of all $f \in \SV'(N)$ satisfying $\| f \|_{\BS} < \infty$ and $\| f \|_{\TLS} < \infty$, respectively, where
\[
\| f \|_{\BS} := \bigg( \sum_{j \in \Z} 2^{j \ord q} \| f \ast \crk_{2^{-j}} \|^q_{L^p} \bigg)^{1/q} \]
whenever $q < \infty$ (with the usual modification for $q = \infty$) and
\[
\| f \|_{\TLS} := \bigg\| \bigg( \sum_{j \in \mathbb{Z}} 2^{j \ord q} |f \ast \phi_{2^{-j}} |^q \bigg)^{1/q} \bigg\|_{L^p},
\]
 whenever $p, q < \infty$ (with the usual modification for $q = \infty$); see Section  \ref{sec:BTL} for the definition of $\TLS(N)$ in the case $p = \infty$. We will prove various basic properties of these Besov and Triebel-Lizorkin spaces in Section \ref{sec:BTL}. Among others, we will show that they are independent of the choice of the defining function satisfying the discrete Calder\'on condition \eqref{eq:calderon_intro}. Besides that such an independence result is a desired basic property, it also provides a simple general
 explanation for the independence of the Besov and Triebel-Lizorkin spaces \cite{fuehr2012homogeneous, hu2013stratified} and the Sobolev spaces \cite{Folland1975subelliptic, FR2} on stratified/graded groups of the choice of sub-Laplacian/Rockland operator defining such spaces; see Corollary \ref{cor:spectral_multiplier} and Section \ref{sec:identification}. We mention that our general study of Triebel-Lizorkin spaces $\TLS (N)$ for $p = \infty$ appears to be new even for the special case of stratified groups.
 
 In Section \ref{sec:continuous}, we provide various continuous and maximal characterizations of the Besov and Triebel-Lizorkin spaces in the spirit of the works \cite{Bui1996maximal, BT, Rychkov, stromberg1989weighted} in the classical setting of Euclidean spaces.
 Here, we assume that $\drk \in \SV(N)$ satisfies a \emph{continuous Calder\'on condition} of the form 
 \[
 f = \int_0^{\infty} f \ast \drk_t \ast \drk_t \; \frac{dt}{t}, \quad f \in \SV'(N);
 \]
 see Section \ref{sec:auxiliary}. For such a function $\psi$ and $a, t > 0$, the associated \emph{Peetre-type maximal function} of a distribution $f \in \SV'(N)$ is defined as
 \begin{align*}
	(\drk_t^* f)_\PTpar(x) =
	\sup_{y \in \N} \frac{|f \ast \drk_t(y)| }{(1+t^{-1}|y^{-1}x|)^\PTpar},\quad x \in \N,
\end{align*}
and satisfies $| f \ast \drk_t|(x) \leq (\drk_t^* f)_\PTpar(x)$ for all $x \in \N$ and $a,t>0$.
Our continuous and maximal characterizations (cf. Section \ref{sec:continuous}) then yield that, for all $f \in \SV'(N)$,
\begin{align} \label{eq:continuous_besov_intro}
 \bigg( \int_0^{\infty} t^{- \ord q} \| (\drk_t^* f)_a \|_{L^p}^q \; \frac{dt}{t} \bigg)^{1/q} \lesssim \| f \|_{\BS} \lesssim \bigg( \int_0^{\infty}  t^{- \ord q} \| f \ast \drk_t \|_{L^p}^q \; \frac{dt}{t} \bigg)^{1/q}
\end{align}
whenever $p, q \in (0, \infty]$, $\ord \in \mathbb{R}$ and $a > \max \{ \frac{Q}{p}, |\sigma|\}$, and 
\begin{align} \label{eq:continuous_tl_intro}
\bigg\| \bigg( \int_0^{\infty} t^{- \ord q} [(\drk_t^* f)_a]^q \; \frac{dt}{t} \bigg)^{1/q} \bigg\|_{L^p} 
\lesssim \| f \|_{\TLS} 
\lesssim 
\bigg\| \bigg( \int_0^{\infty} t^{- \ord q} |f \ast \drk_t|^q \; \frac{dt}{t} \bigg)^{1/q} \bigg\|_{L^p} 
\end{align}
whenever $p \in (0, \infty)$, $q \in (0, \infty]$, $\sigma \in \R$ and $a > \max \{ \frac{Q}{\min\{p, q\}}, |\sigma| \}$.  In addition, we also provide such maximal characterizations for $\TLS(N)$ with $p = \infty$ and show that $\dot{\mathbf{B}}_{\infty, \infty}^{\alpha} (N)= \dot{\mathbf{F}}_{\infty, \infty}^{\alpha} (N)$; see Section \ref{sec:continuous} for the precise details.
Our proofs of the continuous maximal characterizations \eqref{eq:continuous_besov_intro} and \eqref{eq:continuous_tl_intro} are inspired by the approaches in \cite{Bui1996maximal, BT, Rychkov, stromberg1989weighted} for Euclidean spaces and hinge on a sub-mean-value property of the convolution products $f \ast \drk_t$, which we establish as a result of independent interest in Section \ref{sec:auxiliary}. For doubling metric measure spaces, continuous characterizations in terms of Peetre-type maximal functions can be found in, e.g., \cite{BBD,georgiadis2017homogeneous, liu2016besov}.  
The novelty of our characterizations  \eqref{eq:continuous_besov_intro} and \eqref{eq:continuous_tl_intro} beyond Euclidean spaces is that they are valid for general functions satisfying a continuous Calder\'on condition and do not need to be spectral multipliers of a fixed self-adjoint operator, which requires various  additional arguments in the proofs.

The continuous maximal characterizations \eqref{eq:continuous_besov_intro} and \eqref{eq:continuous_tl_intro} will play an essential role in our approach towards obtaining molecular and wavelet-type frame decompositions of the Besov and Triebel-Lizorkin spaces. For obtaining such decompositions, we interpret the characterizations \eqref{eq:continuous_besov_intro} and \eqref{eq:continuous_tl_intro} of the Besov and Triebel-Lizorkin spaces as imposing mixed-norms on a continuous wavelet transform or matrix coefficients on the semi-direct product group $\NA = \N \times (0, \infty)$ defined by the action of the dilations ($\delta_t)_{t > 0}$ (see Section \ref{sec:wavelet}). This interpretation will be used in Section \ref{sec:coorbit} to treat the Besov and Triebel-Lizorkin spaces as (quasi-)Banach spaces \cite{feichtinger1989banach, velthoven2022quasi} associated to the unitary representation $\pi$ of $\NA = \N \times (0, \infty)$ acting on $f \in L^2 (N)$ by
\[
\pi(x,t) f = t^{-\hdim/2} f(\delta_{t^{-1}} (x^{-1} \cdot)), \quad (x, t) \in \NA;
\]
see \cite{rauhut2011generalized, ullrich2012continuous, koppensteiner2023anisotropic1, koppensteiner2023anisotropic2}
for similar interpretations in the classical setting of Euclidean spaces. For such (quasi-)Banach spaces associated to the unitary representation $\pi$, the general theory \cite{romero2021dual, feichtinger1989banach, grochenig1991describing, velthoven2022quasi} provides series expansions of $f \in L^2 (N)$ of the form
\begin{align} \label{eq:atomic_intro}
f = \sum_{\lambda \in \Lambda} \langle f, \varphi_{\lambda} \rangle \pi (\lambda) \psi = \sum_{\lambda \in \Lambda} \langle f, \pi(\lambda) \drk \rangle \varphi_{\lambda}
\end{align}
for a discrete set $\Lambda \subseteq G$ and a system $(\varphi_{\lambda})_{\lambda \in \Lambda}$ of vectors $\varphi_{\lambda} \in L^2 (\N)$. In addition, the recent improvements \cite{romero2021dual, velthoven2022quasi} on the classical decompositions of such spaces \cite{feichtinger1989banach, grochenig1991describing} guarantee that the system $(\varphi_{\lambda})_{\lambda \in \Lambda}$ appearing in \eqref{eq:atomic_intro} forms a molecular system, in the sense that it satisfies similar localization estimates as the discrete wavelet system $\{ \pi(\lambda) \drk : \lambda \in \Lambda \}$; see Section \ref{sec:coorbit} for precise definitions. In particular, this yields that the molecular decompositions \eqref{eq:atomic_intro} can be extended to all $f \in \BS(N)$ and $f \in \TLS(N)$, cf. Section \ref{sec:coorbit}. These molecular decompositions of Besov and Triebel-Lizorkin spaces are similar to the state-of-the-art in the classical setting of Euclidean spaces obtained in \cite{gilbert2002smooth}. Beyond Euclidean spaces, general frame decompositions of Besov spaces and Hardy spaces on stratified groups have been obtained earlier in \cite{fuehr2012homogeneous, geller2006continuous}. However, the decompositions on stratified groups \cite{fuehr2012homogeneous, geller2006continuous} do not provide a similar localization of the dual system, and as such our results are new even in these specific settings. We mention that explicit frame constructions on metric measure spaces have been studied in, e.g., \cite{coulhon2012heat, KP, georgiadis2017homogeneous}.

Lastly, in Section \ref{sec:identification}, we provide identifications of the Besov and Triebel-Lizorkin spaces with various function spaces studied earlier in the literature. Among others, we identify the Hardy spaces \cite{FS}, homogeneous Sobolev spaces \cite{FR2, Folland1975subelliptic} and Lipschitz spaces \cite{krantz1982lipschitz, Folland1979Lipschitz, hu2019littlewood} as special cases of Besov or Triebel-Lizorkin spaces. 

\subsection{General notation}
We denote by $\mathbb{N}_0$ the set of all nonnegative integers, and by $\mathbb{N}$ the set of all strictly positive integers. The open and closed half-lines in the real line $\mathbb{R}$ are denoted by $\mathbb{R}^+ = (0, \infty)$ and $\mathbb{R}^+_0 = [0, \infty)$, respectively. 

For $a,b \in \mathbb{R}$, we write $a \land b := \min\{a,b\}$ and $a \lor b := \max\{a,b\}$. The conjugate exponent $r'$ of $r \in (0,\infty]$ is defined to satisfy $1/r + 1/r' = 1$ if $1 < r \leq \infty$ and $r' = \infty$ if $0< r \leq 1$, with the convention that $\frac{r}{\infty} = 0$ if $r \in (0, \infty)$ and $\frac{r}{\infty} = 1$ if $r = \infty$. 

Given two functions $f, g : X \to [0,\infty)$ on a set $X$, we write $f \lesssim g$ whenever there exists $C \geq 1$ such that $f(x) \leq C g(x)$ for all $x \in X$, and $f \asymp g$ if $f \lesssim g$ and $g \lesssim f$. In order to indicate a specific dependence of the implicit constants in $\asymp$ or $\lesssim$, we will use a subscript, e.g., $\lesssim_\delta$ if it depends on some parameter $\delta$ whose meaning is clear from the context.

Given a Lie group $G$ with left Haar measure $\mu_\NA$, the Lebesgue spaces are denoted by $L^r (\NA) = L^r(\NA, \mu_\NA)$ for $r \in (0,\infty)$. The space of locally integrable functions on $\NA$ is denoted by $L^1_{\loc} (G)$. For a function $f \in L^1_{\loc} (G)$ and a set $\Omega$ of positive, but finite measure, we write $\dashint_{\Omega} |f(x)| \; d\mu_G (x) = \frac{1}{\mu_G(\Omega)} \int_{\Omega} |f(x)| \; d\mu_G(x)$. The indicator function of a measurable set $\Omega \subseteq G$ is denoted by $\chi_{\Omega}$.
For a function $f : \NA \to \mathbb{C}$, we write $f^{\vee}(x) = f(x^{-1})$ for $x \in \NA$. The space of smooth functions on $G$ is denoted by $C^{\infty}(\NA)$, and $C_c^{\infty}(\NA)$ denotes the subspace of $C^{\infty}(\NA)$ consisting of functions with compact support.

\section{Analysis on homogeneous groups} \label{sec:prelim}
This section contains preliminaries on homogeneous groups and functions and operators on such groups that will be used freely throughout the article. References for the theory provided here are the books \cite{FS, FR}. We refer the reader to these books for more details and proofs.

\subsection{Dilations}
Let $\n$ be a real Lie algebra of dimension $d$ with Lie bracket $[\cdot, \cdot]$. A one-parameter family $\{D_t \}_{t > 0}$ of automorphisms $D_t : \n \to \n$ is said to consist of \emph{dilations} if each element possesses the form $D_t = \exp(A \ln t)$ for a diagonalizable linear map with positive eigenvalues $v_1, ..., v_d$, which are often called the \emph{dilations' weights}. A Lie algebra admitting dilations is automatically nilpotent, but not every nilpotent Lie algebra admits dilations. Note that if $D_t = \exp(A \ln t)$ are dilations, then so are $D'_t = \exp( \ln t A')$, where $A' = cA$ for some $c > 0$. 

The existence of a family of dilations on $\n$ is intimately related to the existence of a grading on $\mathfrak{n}$. An \emph{$\mathbb{N}$-grading} of a Lie algebra $\mathfrak{n}$ is a family ($\mathfrak{n}_i)_{i \in \mathbb{N}}$ of subspaces $\mathfrak{n}_i \subseteq \mathfrak{n}$ (where all but finitely many $\mathfrak{n}_i$ are zero) such that $\mathfrak{n} = \bigoplus_{i \in \mathbb{N}} \mathfrak{n}_i$ and $[\mathfrak{n}_i, \mathfrak{n}_j ] \subseteq \mathfrak{n}_{i + j}$ for all $i, j \in \mathbb{N}$. A Lie algebra admitting an $\mathbb{N}$-grading is often referred to as a \emph{graded Lie algebra}. If $\mathfrak{n}$ admits an $\mathbb{N}$-grading $(\mathfrak{n}_i)_{i \in \mathbb{N}}$, then a family of dilations $(D_t)_{t > 0}$ can be defined by $D_t = \exp(A \ln t)$, where $A : \mathfrak{n} \to \mathfrak{n}$ is the linear operator defined by $A X = i X$ for $X \in \mathfrak{n}_i$. We will refer to such dilations as the \emph{canonical dilations} associated to an $\mathbb{N}$-grading. Conversely, if $\mathfrak{n}$ admits dilations, then it also admits an $\mathbb{N}$-grading \cite{miller1980parametrices}, but the given dilations need not to be the canonical dilations for such an $\mathbb{N}$-grading.
An $\mathbb{N}$-grading $(\mathfrak{g}_i)_{i \in \mathbb{N}}$ such that $\mathfrak{n}_1$ generates $\n$ as an algebra is said to be a \emph{stratification} of $\n$. A Lie algebra admitting a stratification will be called \emph{stratified}.

A \emph{homogeneous group} is a connected, simply connected nilpotent Lie group whose Lie algebra can be endowed with a family of dilations. 
For a homogeneous group $N$ with Lie algebra $\n$, the exponential map $\exp_N : \n \to N$ is a global diffeomorphism. 
If $(D_t)_{t>0}$ is a family of dilations on $\n$, then the associated automorphisms on $\N = \exp_\N (\n)$ are defined by $\delta_t := \exp_\N \circ D_t \circ \exp_N^{-1}$, and will also be referred to as \emph{dilations}. Given $t > 0$ and $x \in N$, we will often simply write $t x = \delta_t (x)$.

Throughout, unless stated otherwise, $N$ will always denote a homogeneous group with a fixed family of dilations $D_t = \exp(A \ln t)$ on $\n$. It will be assumed that the eigenvalues $v_1, \cdots , v_d$ of $A$ are listed in increasing order and that $v_1 \geq 1$. The number $\hdim := v_1 + ... + v_d \geq \dim(\N)$ is the \emph{homogeneous dimension} of $\N$. 
We fix once and for all a basis $\{X_1, \cdots, X_\dimN\}$ for $\n$ such that $A X_j = v_j X_j$ for $j =1,\cdots, \dimN$. Via the exponential map $\exp_N : \n \to N$, we identify the points $x = (x_1,\cdots, x_\dimN) \in \mathbb{R}^{\dimN} \cong \n$  with points $x =\exp_\N (x_1 X_1 + \cdots + x_\dimN X_d)$ in $\N$. Under this identification, the  abstract Lie group $N$ may be considered as a Lie group with underlying manifold $\mathbb{R}^n$. For each $j \in \{1,\cdots, \dimN\}$, the coordinate function $x = (x_1, \cdots, x_\dimN) \mapsto x_j$ is denoted by $x_j$.

\subsection{Homogeneous functions} \label{subsection:homogeneity}
A function $f : \N \to \mathbb{C}$ is called \emph{homogeneous} of degree $\hdeg \in \mathbb{R}$ if it satisfies $f \circ \delta_t = t^{\hdeg} f$ for all $t > 0$. 
For example, for a multi-index $\alpha =(\alpha_1, \cdots, \alpha_{\dimN}) \in \mathbb{N}_0^\dimN$, the function $x^\alpha = x_1^{\alpha_1}\cdots x_\dimN^{\alpha_{\dimN}}$ is a homogeneous function of degree 
\begin{align} \label{homogeneous degree of alpha}
[\alpha]:=v_1 \alpha_1 +\cdots v_{\dimN} \alpha_{\dimN}.  
\end{align}
Formula \eqref{homogeneous degree of alpha} defines what is often referred to as the \emph{homogeneous degree} of the multi-index $\alpha$. It is usually
different from the length of $\alpha$ given by
$
|\alpha|:= \alpha_1 +\cdots + \alpha_{\dimN}$.  

A \emph{homogeneous quasi-norm} on $N$ is a symmetric continuous function $|\cdot | : \N \to [0,\infty)$ that is homogeneous of degree $1$ and vanishes precisely at the group identity $e_N$. Homogeneous quasi-norms exist on any homogeneous group and any two homogeneous quasi-norms $|\cdot|_1$ and $|\cdot |_2$ are equivalent, in the sense that $| \cdot |_1 \asymp | \cdot |_2$. Throughout, we fix a homogeneous quasi-norm $|\cdot|$.  There exists 
$\cqn \geq 1$ such that
\begin{align} \label{quas}
 |xy| \leq \cqn (|x| + |y|) \quad \mbox{for all } x, y \in \N.
 \end{align}
 In particular, for any $a > 0$, the Peetre-type inequality
 \begin{align} \label{eq:peetretype}
 (1+|x|)^a (1+|y|)^{-a} \leq \cqn^a (1 + |xy^{-1}|)^a
 \end{align}
 holds for all $x, y \in \N$. 
A homogeneous quasi-norm may be chosen such that \eqref{quas} holds with $\gamma = 1$. 
 
 The open ball $B_r (x)$ of center $x \in \N$ and radius $r > 0$ is defined by
\begin{align*}
B_r (x) := \{ y \in \N : |y^{-1} x| < r \}.
\end{align*}
We also sometimes write $B(x, r) = B_r (x)$.  
We have that $B_r(x) = \delta_r \bigl ( B_1(r^{-1}x) \bigr )$ and $B_r (x) = x B_r (e_N)$, and its Haar measure satisfies $\mu_\N(B_t(x)) = t^\hdim \mu_\N(B_1(e_N))$, where $\hdim$ denotes the homogeneous dimension of $\N$. 
Given a homogeneous ball $B \subseteq \N$, we will often use $x_B$ and $r_B$ to denote the center and radius of $B$, respectively.
We will frequently use that the triple $(\N, |\cdot|, \mu_\N)$ forms a space of homogeneous type in the sense of Coifman and Weiss.

There is an analogue of polar coordinates on homogeneous groups with the homogeneous dimension $\hdim$ replacing the topological dimension $d$: for all 
$f \in L^1(\N)$,
\begin{align*}
\int_{\N} f(x)\; d\mu_\N (x) = \int_0^\infty \int_{\mathfrak{G}}
f(\delta_r (y))r^{\hdim -1 }d\nu(y)dr,
\end{align*}
where $d\nu$ is a (unique) Radon measure on the 
unit sphere $\mathfrak{G}:=\{x \in \N: |x| =1\}$. 
This implies that for $0 < r < R < \infty$ and $s \in \mathbb{R}$, 
\begin{align} \label{eq:polar}
\int_{r < |x| <R} |x|^{s-\hdim} \; d\mu_\N (x) 
=\begin{cases}
Cs^{-1}(R^{-s}-r^{-s}) &\text{if } s \neq 0, \\
C \log (R /r) &\text{if } s =0.
\end{cases}
\end{align}
Consequently, if $s > 0$ then $|\cdot|^{s-\hdim}$ is integrable near the group identity $e_\N$, and if $s < 0$ then $|\cdot|^{s-\hdim}$ is integrable near $\infty$.

\subsection{Homogeneous operators}
A linear operator $T : C_c^{\infty}(\N) \to (C_c^{\infty}(\N))'$ is \emph{homogeneous} of degree $\hdeg \in \mathbb{R}$ if $T(f \circ \delta_t) = t^{\hdeg} (Tf) \circ \delta_t$ for all $f \in C_c^{\infty}(\N)$ and $t>0$. If we regard $X_j \in \n$ as a left-invariant vector field on $\N$ in the usual sense, i.e.,
\begin{align} \label{eq:definition of the vector field Xj}
X_j f(x): = \frac{d}{dt} f \big( x \exp(tX_j) \big) |_{t=0},
\end{align} 
then it is homogeneous of degree $v_j$. Similarly, for a multi-index $\alpha = (\alpha_1, ..., \alpha_d) \in \mathbb{N}_0^d$, the differential operator 
\[
X^{\alpha} := X_1^{\alpha_1} \dots X_d^{\alpha_d}\]
is homogeneous of degree $[\alpha]$. 

A left-invariant differential operator that is homogeneous of positive degree and hypoelliptic is called a \emph{Rockland operator}. If $\N$ admits a Rockland operator, then the dilations' weights have a common rational multiple, so that by rescaling the dilations if necessary, it may be assumed that the dilations are canonical dilations associated to an $\mathbb{N}$-grading. Conversely, if $(D_t)_{t >0}$ is a family of canonical dilations associated to an $\mathbb{N}$-grading of $\n$ and $\{X_1, ..., X_d\}$ is a basis for $\n$ satisfying $D_t X_j = t^{v_j} X_j$ for $j = 1, ..., d$ and $t>0$, then the operator
\[
\sum_{j = 1}^d (-1)^{\frac{v_0}{v_j}} c_j X_j^{2 \frac{v_0}{v_j}},
\]
where $c_j >0$ and $v_0$ is any common multiple of $v_1, ..., v_d$, is a Rockland operator of homogeneous degree $2 v_0$. Moreover, if $\n$ admits a stratification $(\n_i)_{i \in \mathbb{N}}$ and $Z_1, ..., Z_{d_1}$ is a basis for $\n_1$, then
the associated \emph{sub-Laplacian}
\[
\sL := \sum_{j = 1}^{d_1} Z^2_j
\]
is a Rockland operator on $\N$ of homogeneous degree $2$.

Lastly, we denote by $\{\widetilde{X}_1,\cdots,\widetilde{X}_d\}$ the basis for right-invariant vector fields corresponding to $\{X_1,\cdots,X_d\}$, i.e., $\widetilde{X}_j$ is defined by 
\begin{align*}
\widetilde{X}_j f(x) :=\frac{d}{dt} f \bigl ( \exp(tX_j) x\bigr ) \big|_{t =0}, \quad j =1,\cdots, d.
\end{align*}
For $\alpha =(\alpha_1,\cdots, \alpha_d) \in \mathbb{N}_0^d$, we set \begin{align}
\widetilde{X}^\alpha := \widetilde{X}_1^{\alpha_1} \cdots \widetilde{X}_d^{\alpha_d}.
\end{align}
Then 
$\widetilde{X}^\alpha$ is a right-invariant differential operator homogeneous of degree $[\alpha]$.

\subsection{Polynomials}

A function $P: \N \rightarrow \mathbb{C}$ is called a \emph{polynomial} if it possesses the form
\begin{align*}
P(x) =\sum_{\alpha \in \mathbb{N}_0^{\dimN}} c_\alpha x^\alpha,
\end{align*}
where all but finitely many of the complex coefficients $c_\alpha$ vanish. The
homogeneous degree
of the polynomial $P$ is defined as 
\begin{align*}
d(P):=\max \{[\alpha]: c_\alpha \neq 0 \} .
\end{align*}
The set of all polynomials is denoted by $\mathcal{P}$ and for any $M \in \mathbb{N}_0$, 
we set
\begin{align*}
\mathcal{P}_{M} := \{P \in \mathcal{P}: d(P)\leq M\},
\end{align*}
the set of polynomials of homogeneous degree at most $M$. 

Following \cite{FS}, we define the notion of Taylor polynomials on homogeneous groups.
Let $x \in \N$, $M \in \mathbb{N}_0$, and 
$f$ be a function whose (distributional) derivatives $X^\alpha f$ are continuous functions in a neighborhood of $x$ for $[\alpha] \leq M$. Then there exists a unique polynomial $P_{x,M}^{(f)} \in \mathcal{P}_{M}$ such that, for all $\alpha \in \mathbb{N}_0^d$ with $[\alpha] \leq M$, 
\begin{align} \label{eq:defintion of taylor polynomial}
 {X}^\alpha \big(P_{x,M}^{(f)}\big)(e_\N) = {X}^\alpha f(x). 
\end{align}
We will refer to this polynomial $P_{x,M}^{(f)}(\cdot)$
the  {\it left Taylor polynomial of $f$ at $x$ of homogeneous degree $\leq M$}. 

\begin{remark}
The classical Taylor polynomial of an adequate function $f: \mathbb{R}^n \rightarrow \mathbb{R}$ at $x_0 \in \mathbb{R}^n$ of order $M \in \mathbb{N}_0$ is defined to be the unique polynomial $P$ such that for all $|\alpha| \leq M$,
\begin{align} \label{eq:classical_taylor}
\partial^\alpha f (x_0) = \partial^\alpha P (x_0).
\end{align}
Note that, in contrast, the left-hand side of \eqref{eq:defintion of taylor polynomial} is  ${X}^\alpha \big(P_{x,M}^{(f)}\big)(e_\N)$ instead of ${X}^\alpha \big(P_{x,M}^{(f)}\big)(x)$ in \eqref{eq:classical_taylor}. The Taylor polynomial defined by \eqref{eq:defintion of taylor polynomial} may be thought of as the composition of the classical Taylor polynomial with a left-translation on $\N$.
\end{remark}

If $\N$ is identified with a Lie group on $\mathbb{R}^n$, 
then we have $X_j f(e_\N) =\partial_{x_j}f(e_\N)$. This means that $\{X_1,\cdots, X_d\}$ forms a Jacobian basis of the Lie algebra of the Lie group with underlying manifold $\mathbb{R}^n$, see, e.g., the proof of \cite[Theorem 2.2.18]{BLU}. Using this observation, we deduce from \cite[Theorem 2]{Bon} that the Taylor polynomial $P_{x,M}^{(f)}$ has the following explicit form.

\begin{lemma} \label{lem:explicit expression of taylor polynomial}
The explicit expression of the left Taylor polynomial of $f$ at $x$ of homogeneous degree $\leq M$ is
\begin{align} \label{eq:explicit taylor polynomial}
P_{x,M}^{(f)}(y) = f(x) + \sum_{k =1}^{\lceil M \rfloor} \sum_{\substack{i_1,\cdots, i_k \in \{1,\cdots, d\}\\ i_1 v_{i_1} +\cdots + i_k v_{i_k} \leq M}} \frac{X_{i_1}\cdots X_{i_k}f(x)}{k!} y_{i_1} \cdots  y_{i_k},
\end{align}
where 
\begin{align}
\label{eq:defintion of M}
\lceil M \rfloor:= \max\{|\alpha|: \alpha \in \mathbb{N}_0^\dimN \text{ with } [\alpha] \leq M\}.
\end{align}
\end{lemma}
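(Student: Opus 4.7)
The plan is to verify that the polynomial appearing on the right-hand side of \eqref{eq:explicit taylor polynomial} satisfies the defining property \eqref{eq:defintion of taylor polynomial}, and then conclude by the uniqueness of $P_{x,M}^{(f)}$. The main ingredient is the Taylor-type expansion theorem of Bonfiglioli \cite[Theorem 2]{Bon}, which is formulated for Lie groups on $\mathbb{R}^n$ whose Lie algebra is endowed with a \emph{Jacobian basis}, that is, a basis $\{X_1,\ldots,X_d\}$ satisfying $X_j f(e_N) = \partial_{x_j} f(e_N)$ for every smooth $f$.

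First, I would verify the Jacobian basis condition for the fixed basis $\{X_1,\ldots,X_d\}$. Under the identification of $\N$ with $\mathbb{R}^n$ via the exponential map and the choice of basis $\{X_j\}$ satisfying $A X_j = v_j X_j$, the curve $t \mapsto \exp_N(tX_j)$ corresponds to $t \mapsto t e_j$, where $e_j$ is the $j$-th standard coordinate vector. Consequently, from the definition \eqref{eq:definition of the vector field Xj} evaluated at $x = e_N$, we obtain $X_j f(e_N) = \partial_{x_j} f(e_N)$. This is precisely the observation referenced via \cite[Theorem 2.2.18]{BLU}.

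Having established that $\{X_1,\ldots,X_d\}$ is a Jacobian basis, I would apply \cite[Theorem 2]{Bon}, which produces an explicit polynomial $Q \in \mathcal{P}_M$ satisfying $X^\alpha Q(e_N) = X^\alpha f(x)$ for every multi-index $\alpha$ with $[\alpha] \leq M$. Its form is a sum of terms of the shape $\tfrac{1}{k!} X_{i_1} \cdots X_{i_k} f(x) \, y_{i_1} \cdots y_{i_k}$ indexed by tuples $(i_1,\ldots,i_k) \in \{1,\ldots,d\}^k$ for which the monomial $y_{i_1} \cdots y_{i_k}$ has homogeneous degree bounded by $M$; since $v_1 \geq 1$, the maximal admissible $k$ is exactly $\lceil M \rfloor$ as defined in \eqref{eq:defintion of M}. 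The uniqueness of the polynomial satisfying \eqref{eq:defintion of taylor polynomial} then forces $Q = P_{x,M}^{(f)}$, which yields \eqref{eq:explicit taylor polynomial}.

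The principal (albeit minor) technical step is thus the compatibility of the dilations and the basis in the statement of Bonfiglioli's theorem with those used throughout the present paper. Since the Jacobian basis property depends only on the coordinate structure at $e_N$, and since the weights $v_j$ are encoded directly through the monomials $y_{i_1}\cdots y_{i_k}$ via the homogeneity constraint, no further normalization of the dilations is required and the quoted formula carries over verbatim.
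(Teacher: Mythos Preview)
Your proposal is correct and follows essentially the same approach as the paper: observe that $X_j f(e_N) = \partial_{x_j} f(e_N)$ so that $\{X_1,\ldots,X_d\}$ is a Jacobian basis (via \cite[Theorem 2.2.18]{BLU}), and then invoke \cite[Theorem 2]{Bon} together with uniqueness of the Taylor polynomial. The paper gives exactly this argument in the paragraph preceding the lemma rather than as a formal proof, so your write-up simply makes the steps more explicit.
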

 
 A combination of the explicit expression \eqref{eq:explicit taylor polynomial} and the 
Poincar\'{e}-Birkhoff-Witt theorem  implies that
\begin{align} \label{eq:rough taylor polynomial}
P_{x,M}^{(f)}(y)=\sum_{\alpha \in \mathbb{N}_0^d : \ [\alpha] \leq M}\left( \sum_{\beta \in \mathbb{N}_0^d: \ [\beta] = [\alpha]} c_\beta(X^\beta f)(x)  \right)  y^\alpha,
\end{align}
where each $c_\beta$ is a constant depending only on $\beta$.
This rougher expression will be sufficient for our purpose in the sequel.

We will often use the following Taylor inequality.

\begin{lemma}[Left Taylor inequality] \label{lem:taylor inequality}
For any  $M \in \mathbb{N}_0$, there is a constant $C_M >0$ such that for all functions $f \in C^{\lceil M \rfloor +1}(\N)$ and all $x,y\in \N$,
\begin{align} \label{eq:taylor inequality}
|f(xy) - P_{x,M}^{(f)}(y)| \leq C_M \sum_{\substack{|\alpha| \le \lceil M \rfloor +1 \\ [\alpha] > M}} |y|^{[\alpha]}
\sup_{|z| \leq \eta^{\lceil M \rfloor +1}|y|}  |(X^\alpha f)(xz)|,
\end{align}
where $ P_{x,M}^{(f)}(\cdot)$ is the left Taylor polynomial of $f$ at $x$ of homogeneous degree $\leq M$,  $\eta >1$ is a constant, and 
$\lceil M \rfloor$ is defined by \eqref{eq:defintion of M}.
\end{lemma}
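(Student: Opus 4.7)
The plan is to reduce to the case $x = e_N$ via the left-invariance of the $X_j$, then to derive the bound by applying the classical Euclidean Taylor theorem in exponential coordinates and converting partial derivatives into left-invariant vector-field derivatives using the nilpotent group structure.

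\textbf{Reduction.} Since the $X_j$ are left-invariant, $X^\alpha(f \circ L_x)(y) = (X^\alpha f)(xy)$ for all $y \in \N$. Setting $\tilde f := f \circ L_x$, the uniqueness property in \eqref{eq:defintion of taylor polynomial} identifies $P^{(\tilde f)}_{e_N, M}$ with $P^{(f)}_{x, M}$, so it suffices to prove the estimate at $x = e_N$ with $f$ replaced by $\tilde f$. Identifying $\N$ with $\R^\dimN$ through the chosen Jacobian basis $\{X_1, \dots, X_\dimN\}$ and applying the classical Euclidean Taylor theorem of order $L := \lceil M \rfloor$ to $\tilde f$ at $0$ yields
\[
\tilde f(y) = \sum_{|\alpha| \leq L} \frac{(\partial^\alpha \tilde f)(0)}{\alpha!}\, y^\alpha + \sum_{|\alpha| = L+1} \frac{(L+1)\, y^\alpha}{\alpha!} \int_0^1 (1-s)^L (\partial^\alpha \tilde f)(sy)\, ds.
\]

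\textbf{Conversion and identification.} The Baker--Campbell--Hausdorff formula together with the Jacobian property yields $X_j = \partial_j + \sum_{k:\, v_k > v_j} P_{j,k}(y)\, \partial_k$, where each $P_{j,k}$ is a polynomial homogeneous of degree $v_k - v_j$. Ordering the indices so that the weights are nonincreasing and inverting this triangular system, then iterating, gives for every multi-index $\alpha$ an identity
\[
\partial^\alpha = \sum_{\beta:\; [\beta] \geq [\alpha],\; |\beta| \leq |\alpha|} Q_{\alpha, \beta}(y)\, X^\beta,
\]
in which each $Q_{\alpha,\beta}$ is a polynomial homogeneous of degree $[\beta] - [\alpha]$. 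In particular $Q_{\alpha,\beta}(e_N) = 0$ whenever $[\beta] > [\alpha]$, and $|Q_{\alpha,\beta}(y)| \lesssim |y|^{[\beta] - [\alpha]}$ in any homogeneous quasi-norm. Collecting the terms of the classical Taylor sum with $[\alpha] \leq M$ into $T_{\leq M}(y) := \sum_{|\alpha| \leq L,\; [\alpha] \leq M} \frac{(\partial^\alpha \tilde f)(0)}{\alpha!}\, y^\alpha \in \mathcal{P}_M$ and rewriting its coefficients via the identity above evaluated at $e_N$, one verifies $X^\beta T_{\leq M}(e_N) = X^\beta \tilde f(e_N)$ for all $[\beta] \leq M$; by the uniqueness in \eqref{eq:defintion of taylor polynomial}, $T_{\leq M} = P^{(\tilde f)}_{e_N, M}$.

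\textbf{Remainder estimate.} Subtracting $P^{(\tilde f)}_{e_N, M}$ from $\tilde f$ leaves only the residual monomial terms with $|\alpha| \leq L$ and $[\alpha] > M$, together with the integral remainder with $|\alpha| = L+1$ (for which $[\alpha] \geq L+1 > M$ automatically). For each such term, the conversion formula expresses $\partial^\alpha$ as a combination of $X^\beta$ with $|\beta| \leq L+1$ and $[\beta] \geq [\alpha] > M$. The bounds $|y^\alpha| \lesssim |y|^{[\alpha]}$ and $|y^\alpha\, Q_{\alpha,\beta}(y)| \lesssim |y|^{[\beta]}$ yield the correct power of the quasi-norm, while the evaluation points $sy$, $s \in [0,1]$, lie in a ball of radius $\lesssim |y|$, the constant being absorbed into the factor $\eta^{L+1}$ that arises from iterated application of the quasi-triangle inequality \eqref{quas}.

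\textbf{Main obstacle.} The delicate step is the algebraic inversion in the conversion formula: one must verify that only multi-indices $\beta$ satisfying \emph{both} $|\beta| \leq |\alpha|$ and $[\beta] \geq [\alpha]$ appear, with polynomial coefficients of exactly the compensating homogeneous degree $[\beta] - [\alpha]$. This bookkeeping is precisely what forces the final sum to range over $|\beta| \leq \lceil M \rfloor + 1$ with $[\beta] > M$, matching the exact index set in the statement.
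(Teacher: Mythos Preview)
The paper does not prove this lemma; it is stated as a known result, with the surrounding section referring the reader to Folland--Stein \cite{FS} for proofs (the statement is essentially Corollary~1.44 there). So there is no paper proof to compare against directly.

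Your argument is correct and is, in essence, a reorganized version of the Folland--Stein proof. They proceed by induction on $\lceil M \rfloor$, applying the one-step mean value identity $f(y)-f(0)=\sum_j y_j\int_0^1(\partial_j f)(sy)\,ds$, converting $\partial_j$ to a combination of $X_k$'s with $v_k\ge v_j$, and iterating; the factor $\eta^{\lceil M\rfloor+1}$ records the accumulated dilation of the ball at each inductive step. You instead expand once to full Euclidean order $L=\lceil M\rfloor$ and handle the conversion globally via the triangular system $\partial^\alpha=\sum_{[\beta]\ge[\alpha],\,|\beta|\le|\alpha|}Q_{\alpha,\beta}X^\beta$. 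Both routes rest on the same two structural facts: the triangular relation between $\{\partial_j\}$ and $\{X_j\}$ with homogeneous polynomial coefficients, and the estimate $|sy|\le C|y|$ for the Euclidean segment in exponential coordinates. Your identification $T_{\le M}=P^{(\tilde f)}_{e_N,M}$ is the one step that deserves a line more of justification (it uses that $Q_{\alpha,\beta}(e_N)=0$ for $[\beta]>[\alpha]$, so at the origin only terms with $[\gamma]=[\beta]\le M$ survive, and these automatically satisfy $|\gamma|\le\lceil M\rfloor$), but the reasoning you sketch is sound.
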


\subsection{Schwartz functions and distributions}
A function $f : \N \to \mathbb{C}$ is a \emph{Schwartz function} on $N$ if $f \circ \exp_\N$ is a Schwartz function on $\n$. The space of Schwartz functions is denoted by $\mathcal{S}(\N)$. 
For our purposes, it will be most convenient to use the following family of semi-norms to define the usual topology on $\mathcal{S}(\N)$,
\begin{align} \label{eq:Schwartz_seminorm}
\|f\|_{(k)}:= \sup_{[\alpha] \leq k, \ x \in \N} (1 + |x|)^{k} |X^\alpha f(x)|, \quad k \in \mathbb{N}.
\end{align} 
Consequently, if $f \in \mathcal{S}(\N)$ and $\ell, L \in \mathbb{N}$, then
\begin{align} \label{eq:derivativies of ft}
|X^\alpha f (x)| \leq  (1+|x|)^{-(L+[\alpha])} \|f\|_{(L+\ell)}
\end{align}
holds for all $x \in \N$ and all $\alpha \in \mathbb{N}_0^d$ with $[\alpha] \leq \ell$.

For fixed $k \in \NN$, the semi-norm \eqref{eq:Schwartz_seminorm} can be used to define the Banach space
\begin{align} \label{eq:Sk}
    \mathcal{S}_{k}(\N) := \bigl \{ f \in C^k (\N) : \|f\|_{(k)} < \infty \bigr \}.
\end{align}
Then $\SC(\N) = \bigcap_{k \in \NN} \mathcal{S}^k(\N)$, and $\SC(\N)$ is continuously embedded into $\mathcal{S}_{k}(\N)$ for any $k \in \NN$. For $k > \hdim/2$, we also have the continuous embedding $\mathcal{S}^k (N) \hookrightarrow L^2(\N)$.

The topological dual of $\mathcal{S}(\N)$, denoted by $\mathcal{S}'(\N)$, consists of tempered distributions on $\N$. A distribution $f \in \SC'(\N)$ is \emph{homogeneous} of degree $\nu \in \mathbb{R}$ if $f \circ \delta_t = t^{\nu} f$ for all $t > 0$.

We denote by $\SV(\N)$ the space of all functions $f \in \SC (\N)$ with all moments vanishing, i.e., 
\begin{align*}
\int_{\N} f(x)P(x) \; d\mu_N(x) =0 \quad \text{ for all } P \in \mathcal{P}. 
\end{align*}
Endowed with the topology inherited from $\SC(\N)$,  the space
$\SV(\N)$ is closed in $\SC(\N)$. Its topological dual,
$\SV'(\N)$, can be canonically identified with $\TDP$, see, e.g., \cite[Lemma 3.3]{fuehr2012homogeneous}.
In the following, we will not always explicitly distinguish between $f \in \mathcal{S}'(N)$ and its equivalence class $[f] := f + \mathcal{P}$, and sometimes just write $f \in \mathcal{S}' /\mathcal{P}$.

The duality pairing between $f \in \mathcal{S}'(N)$ and $g \in \mathcal{S}(N)$ will often be denoted by $( f, g ) = f(g)$. The same notation will be used for the duality pairing between $\SV'(\N)$ and $\SV(\N)$.

\subsection{Convolutions} \label{sec:convolution}
The convolution of two measurable functions $f, g : N \to \mathbb{C}$ is defined by
\[
f \ast g (x) = \int_N f(y) g(y^{-1} x) \; d\mu_\N (y),
\]
provided the integral exists. If $f, g \in \mathcal{S}(N)$, then also $f \ast g \in \mathcal{S}(N)$. Moreover, if $f \in \mathcal{S}(\N)$ and $g \in \SV(\N)$, then $f \ast g \in \SV(\N)$.
For any multi-index $\alpha \in \mathbb{N}_0^n$ and functions $f, g \in \mathcal{S}(N)$, 
we have
\begin{align*}
X^{\alpha} (f \ast g) = f \ast (X^\alpha g).
\end{align*}

For a function $f \in L^1(N)$ and $t>0$, we will denote by $f_t$ the $L^1$-normalized dilation of $f$, i.e., 
\begin{align*}
f_t (x) := t^{-\hdim} f (t^{-1}x), \quad x \in N.
\end{align*}
Then $(f \ast g)_t = f_t \ast g_t$ for all $t > 0$.

The convolution $f * g$ of $f \in \TD(\N)$ and $g \in \SC(\N)$ is defined by $f \ast g (x) = ( f, g^{\vee}(x^{-1} \cdot) )$ for $x \in N$, and $f \ast g \in C^{\infty} (N)$. Moreover, for every $f \in \TD(\N)$, there exist $k =k_f\in \mathbb{N}$ and a constant $C =C_f >0$ such that
\begin{align} \label{bounds_of_convolution}
    |f \ast g(x)| \leq C (1 + |x|)^k \|g\|_{(k)},
\end{align}
for all $g \in \mathcal{S}(N)$ and $x \in \N$. In particular, $f \ast g \in \mathcal{S}'(\N)$, and $( f \ast g, h ) = ( f , g \ast h^{\vee} )$ for any $h \in \mathcal{S}(\N)$. For $g \in \SC(\N)$, the map $f \mapsto f \ast g$ is continuous on $\SC'(\N)$.

Lastly, for $g \in \mathcal{S}(\N)$, the map $f \mapsto f \ast g$ is  well-defined and continuous from $\SV'(\N)$ into $\SV'(N)$. If, in addition, $g \in \SV(\N)$, then $f \mapsto f \ast g$ is continuous from $\SV'(\N)$ into $\mathcal{S}'(N)$, cf. \cite[Lemma 3.4]{fuehr2012homogeneous}.
\medskip

\section{Auxiliary results and estimates} \label{sec:auxiliary}
We provide in this section various results on Calder\'on-type reproducing formulae, convolution kernels and vector-valued inequalities. These notions and results will play an essential role in defining Besov and Triebel-Lizorkin spaces and establishing their basic properties in the following sections.

\subsection{Almost orthogonality}

The following lemma provides an almost orthogonality estimate for convolution products of Schwartz functions and will be a central ingredient in various proofs.

\begin{lemma} \label{aoe}
If $f, g \in \SV(\N)$, then for any $M, L >0$, there exist  $C=C_{M,L}>0$ and $k = k_{M,L} \in \mathbb{N}$ such that  
\begin{align} \label{eq:almost orthogonality estimate}
 |f_t \ast g_{s}(x)| \leq C_{M,L} \left(\frac{t}{s} \wedge \frac{s}{t}\right)^{M} \frac{(s^{-1}\wedge t^{-1})^\hdim}{\big(1 + (s^{-1}\wedge t^{-1})|x|\big)^{L}} \|f\|_{(k)}\|g\|_{(k)}
\end{align}
for all $t,s >0$ and $x \in \N$.
\end{lemma}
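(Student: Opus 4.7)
The plan is to exploit the vanishing moments of $g$ at the smaller scale by subtracting a suitable left Taylor polynomial of $f_t$ at $x$, then bound the remainder using Lemma~\ref{lem:taylor inequality}, and extract the required decay via the Peetre-type inequality~\eqref{eq:peetretype}.

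\textbf{Step 1 (Symmetry reduction).} Using the identity $(f_t \ast g_s)^\vee = g_s^\vee \ast f_t^\vee$ on the unimodular group $N$, the invariance $\|f^\vee\|_{(k)} = \|f\|_{(k)}$, and the fact that $\SV(N)$ is preserved under the reflection $f \mapsto f^\vee$ (inversion is polynomial on a nilpotent group), it suffices to treat the case $s \leq t$. Then $s^{-1} \wedge t^{-1} = t^{-1}$ and $(s/t) \wedge (t/s) = s/t \leq 1$, so the target inequality reduces to
\[
|f_t \ast g_s(x)| \lesssim_{M,L} (s/t)^M\, \frac{t^{-Q}}{(1+t^{-1}|x|)^L}\, \|f\|_{(k)} \|g\|_{(k)}.
\]

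\textbf{Step 2 (Vanishing moments and Taylor cancellation).} Fix an integer $M^* \geq M$ and let $P := P^{(f_t)}_{x,M^*}$ denote the left Taylor polynomial of $f_t$ at $x$ of homogeneous degree $\leq M^*$. Since the group law is polynomial, $y \mapsto P(x^{-1}y)$ is a polynomial in $y$. Substituting $u = y^{-1}x$ (Jacobian $1$), the function $P(x^{-1}y) = P(u^{-1})$ becomes a polynomial in $u$ (inversion being polynomial), so the vanishing moments of $g_s$ give
\[
f_t \ast g_s(x) = \int_N \bigl[f_t(y) - P(x^{-1}y)\bigr]\, g_s(y^{-1}x)\, d\mu_N(y).
\]

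\textbf{Step 3 (Pointwise remainder bound).} Applying Lemma~\ref{lem:taylor inequality} to $f_t$ with displacement $x^{-1}y$ yields
\[
|f_t(y) - P(x^{-1}y)| \lesssim_{M^*} \sum_{\substack{|\alpha| \leq \lceil M^* \rfloor + 1 \\ [\alpha] > M^*}} |x^{-1}y|^{[\alpha]} \sup_{|z| \leq \eta_0 |x^{-1}y|} |(X^\alpha f_t)(xz)|,
\]
where $\eta_0 := \eta^{\lceil M^* \rfloor +1}$. Using the scaling $(X^\alpha f_t)(w) = t^{-Q-[\alpha]}(X^\alpha f)(t^{-1}w)$ and the Schwartz bound \eqref{eq:derivativies of ft} with the Peetre-type inequality \eqref{eq:peetretype} in the form
\[
(1+t^{-1}|xz|)^{-K} \leq \gamma^K (1+t^{-1}|x|)^{-K} (1+t^{-1}|z|)^K
\]
with $K = L+[\alpha]$, and bounding $(1+t^{-1}|z|)^K \leq (1+\eta_0 t^{-1}|x^{-1}y|)^K$, we obtain
\[
|f_t(y) - P(x^{-1}y)| \lesssim t^{-Q} \|f\|_{(k_1)} \sum_{\alpha} \frac{(t^{-1}|x^{-1}y|)^{[\alpha]} (1+t^{-1}|x^{-1}y|)^{L+[\alpha]}}{(1+t^{-1}|x|)^{L+[\alpha]}},
\]
for some $k_1 = k_1(L, M^*)$.

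\textbf{Step 4 (Integration against $g_s$).} Substitute $u = y^{-1}x$, so $|x^{-1}y| = |u|$, and drop the harmless factor $(1+t^{-1}|x|)^{-[\alpha]} \leq 1$ while retaining $(1+t^{-1}|x|)^{-L}$. Using $|g_s(u)| \leq s^{-Q}(1+s^{-1}|u|)^{-K'} \|g\|_{(K')}$ and changing variables $v = s^{-1}u$, each term becomes, after using $s/t \leq 1$ to pass $(1+(s/t)|v|)^{L+[\alpha]}$ to $(1+|v|)^{L+[\alpha]}$,
\[
\int (t^{-1}|u|)^{[\alpha]} (1+t^{-1}|u|)^{L+[\alpha]} |g_s(u)|\, d\mu_N(u) \lesssim (s/t)^{[\alpha]} \int |v|^{[\alpha]} (1+|v|)^{L+[\alpha]-K'}\, d\mu_N(v),
\]
which is finite once $K' > Q + L + 2[\alpha]$, hence it suffices to take $K'$ depending only on $L, M, Q$. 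Since $[\alpha] > M^* \geq M$ and $s/t \leq 1$, we have $(s/t)^{[\alpha]} \leq (s/t)^M$, yielding the desired bound with a single $k = k(M,L)$ controlling both $\|f\|_{(k_1)}$ and $\|g\|_{(K')}$.

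\textbf{Main obstacle.} The delicate point is extracting the spatial decay $(1+t^{-1}|x|)^{-L}$, because the Taylor remainder involves derivatives of $f_t$ at the intermediate point $xz$, which is neither $x$ nor $y$. The Peetre-type inequality is the crucial tool: it converts decay in $|xz|$ into decay in $|x|$ at the price of a polynomial factor in $|z|$, and that loss is then absorbed by the Schwartz decay of $g_s$ under integration. The remaining work, essentially bookkeeping to determine a single Schwartz order $k$ (depending on $M$, $L$, and $Q$), is routine.
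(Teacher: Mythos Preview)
Your main argument (Steps 2--4) is correct and is actually a cleaner route than the paper's. The paper writes $f_t\ast g_s(x)=\int[f_t(xy^{-1})-P^{(f_t)}_{x,M}(y^{-1})]g_s(y)\,d\mu_N(y)$ and then splits the domain of integration into three pieces $I_1,I_2,I_3$ according to whether $|y|\le (t+|x|)/(2\gamma\eta^{M'})$ or not, treating the Taylor remainder, the full $f_t$ term, and the polynomial term separately. Your use of the Peetre-type inequality \eqref{eq:peetretype} to move the decay from $xz$ to $x$ globally, at the cost of a polynomial loss $(1+t^{-1}|z|)^{L+[\alpha]}\le(1+\eta_0 t^{-1}|u|)^{L+[\alpha]}$ which is then absorbed by the Schwartz decay of $g_s$ upon integration, avoids this region splitting entirely. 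Both approaches yield the same factor $(s/t)^{[\alpha]}\le(s/t)^M$ from the homogeneity of $X^\alpha f_t$; yours just reaches it with less bookkeeping.

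There is, however, one genuine slip in Step~1: the claim $\|f^\vee\|_{(k)}=\|f\|_{(k)}$ is false on a noncommutative homogeneous group. The seminorm $\|\cdot\|_{(k)}$ is built from \emph{left}-invariant derivatives $X^\alpha$, while $X^\alpha(f^\vee)=(-1)^{|\alpha|}(\widetilde{X}^\alpha f)^\vee$ involves the \emph{right}-invariant fields $\widetilde{X}^\alpha$. Expanding $\widetilde{X}^\alpha=\sum_{[\beta]\ge[\alpha],\,|\beta|\le|\alpha|}P_{\alpha,\beta}X^\beta$ with $P_{\alpha,\beta}$ a homogeneous polynomial of degree $[\beta]-[\alpha]$ only gives $\|f^\vee\|_{(\ell)}\lesssim_\ell\|f\|_{(\ell+\ell')}$ for some $\ell'=\ell'(\ell)$, which is exactly what the paper proves and uses. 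This does not affect the conclusion, since the statement only asks for \emph{some} $k=k_{M,L}$; you just need to replace the claimed equality by this inequality and adjust $k$ accordingly.
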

\begin{proof}

Since both of the functions
\begin{align*}
 M \mapsto \left(\frac{t}{s} \wedge \frac{s}{t}\right)^{M} \quad \text{and}\quad
 L \mapsto  \big(1 + (s^{-1}\wedge t^{-1})|x|\big)^{-L}
\end{align*}
are monotonically decreasing on $(0, \infty)$, it suffices to prove the assertion of the lemma for $M, L \in \mathbb{N}$. Moreover, we may assume, without loss of generality, that  $L > M + \hdim$.

First assume that 
$t \geq s$. Let $P^{(f_t)}_{x,M}$ be the left Taylor polynomial of
$f_{t}$ at $x$ of homogeneous
degree $\leq M$. Denote by $\eta > 0$ the constant in Taylor's inequality \eqref{eq:taylor inequality}, $\cqn$ the constant in the quasi-triangle
inequality \eqref{quas}, and set $M':= \lceil M \rfloor +1$, where $\lceil M \rfloor$ is defined by \eqref{eq:defintion of M}.
Using that $g \in \SV(\N)$, it follows that
\begin{align*}
\left|(f_{t}  \ast g_{s})(x) \right|
&= \left|\int_{\N}  \left[f_{t}(xy^{-1}) -
P^{(f_t)}_{x,M} (y^{-1})\right] g_{s}(y) \; d\mu_\N(y)\right|\\
& \leq\int_{|y| \leq (t + |x|)/(2\cqn \eta^{M'})}\left|f_{t}(xy^{-1}) -
P^{(f_t)}_{x,M} (y^{-1})\right||g_{s}(y)| \; d\mu_\N(y) \\
&\quad +\int_{|y| > (t + |x|)/(2\cqn \eta^{M'})}|f_{t}(xy^{-1})| |g_{s}(y)| \; d\mu_\N(y) \\
&\quad +  \int_{|y| > (t + |x|)/(2\cqn \eta^{M'})}
 \left|{P}^{(f_t)}_{x,M}(y^{-1})\right||g_{s}(y)| \; d\mu_\N(y) \\
 & =: I_{1} + I_{2} + I_{3}.
\end{align*}
We will prove the claim by estimating the integrals $I_1$, $I_2$ and $I_3$ separately.
\\~\\
\emph{Estimate of $I_1$}. For estimating $I_1$, first note that by Taylor's inequality \eqref{eq:taylor inequality} and \eqref{eq:derivativies of ft},
\begin{align*}
     \left|f_{t}(xy^{-1}) - P^{(f_t)}_{x,M} (y^{-1})\right| 
&\lesssim_M \sum_{\substack{|\alpha| \le M' \\ [\alpha] > M}} |y|^{[\alpha]}
\sup_{|z| \leq \eta^{M'}|y|}
|(X^{\alpha} f_t) (xz)| \\
&\leq  \|f\|_{(L+M^\ast)}
\sum_{\substack{|\alpha| \le M' \\ [\alpha] > M}}t^{-[\alpha]} |y|^{[\alpha]}
\sup_{|z| \leq \eta^{M'}|y|}
\frac{t^{-Q}}{(1+t^{-1} |xz|)^{L+[\alpha]}},
\end{align*} 
where $M^\ast := \max\{[\alpha] : |\alpha| \leq M'\}$. Therefore, 
\begin{align} \label{eq:estimate of I_1}
I_{1} 
&\lesssim_M \|f\|_{(L+M^\ast)} \|g\|_{(M + \lfloor \hdim \rfloor +1)}   \int_{|y| \leq (t + |x|)/(2\cqn \eta^{M'})}\frac{s^{- \hdim}}
{(1 + s^{-1} |y|)^{M +\hdim}} \nonumber\\
&\hspace{140pt} \times\sum_{\substack{|\alpha| \le M' \\ [\alpha] > M} }t^{-[\alpha]} |y|^{[\alpha]}
\sup_{|z| \le \eta^{M'}|y|} \frac{ t^{-\hdim} }{(1
+ t^{-1} |xz|)^{L+[\alpha]}} \; d\mu_\N(y) \nonumber\\
&= \|f\|_{(L+M^\ast)} \|g\|_{(M + \lfloor \hdim \rfloor +1)} \int_{|y| \leq (t + |x|)/(2\cqn \eta^{M'})}\frac{s^{M}}
{(s + |y|)^{M+\hdim} } \\
&\hspace{140pt}\times \sum_{\substack{|\alpha| \le M'\\ [\alpha] > M }}t^{-[\alpha]} |y|^{[\alpha]}
\sup_{|z| \le \eta^{M'}|y|} \frac{ t^{L+[\alpha]-\hdim} }{(t
+ |xz|)^{L+[\alpha]}} \; d\mu_\N(y)  \nonumber\\
& \lesssim_{M, L} \|f\|_{(L+M^\ast)} \|g\|_{(M + \lfloor \hdim \rfloor +1)}
\sum_{\substack{|\alpha| \le M' \\ [\alpha] > M}}  \frac{ s^{M}t^{L-\hdim}}{(t + |x|)^{L+[\alpha]}} 
\int_{|y| \leq (t+|x|)/(2\cqn \eta^{M'})} \frac{|y|^{[\alpha]}}{(s + |y|)^{M+\hdim}} \; d\mu_\N(y),  \nonumber
\end{align}
where the last inequality used that 
$
 |y| \leq (t+|x|)/(2\cqn \eta^{M'})$
 and $|z|\leq \eta^{M'}|y| $ imply
\begin{align*}
 t + |xz| \geq  t+ \left(\frac{|x|}{\gamma} -|z|\right) \geq t + \frac{|x|}{\gamma} -\frac{t + |x|}{2\gamma} \geq \frac{t + |x|}{2\gamma},
\end{align*}
cf. Equation \eqref{quas}. For each multi-index $\alpha$ with $|\alpha| \leq M'$ and $[\alpha] > M$, the identity \eqref{eq:polar} yields
\begin{align*}
\int_{|y| \leq (t+|x|)/(2\cqn \eta^{M'})} \frac{|y|^{[\alpha]}}{(s + |y|)^{M+\hdim}} \; d\mu_\N(y) & \leq 
\int_{|y| \leq (t+|x|)/(2\cqn \eta^{M'})} \frac{1}{|y|^{M+\hdim -[\alpha]}} \; d\mu_\N(y) \\
& \lesssim_{ M} 
(t+|x|)^{[\alpha]-M}.
\end{align*}
Putting this into \eqref{eq:estimate of I_1} and using that
\begin{align*}
\frac{ s^{M}t^{L-\hdim}}{(t + |x|)^{L+M}} = \left( \frac{s}{t}\right)^M \left( \frac{t}{t+|x|}\right)^M \frac{t^{L-Q}}{(t+|x|)^{L}} \leq
\left( \frac{s}{t}\right)^M \frac{t^{-Q}}{(1+t^{-1}|x|)^{L}},
\end{align*}
we obtain
\begin{align*}
I_1 \lesssim_{ M,L} \|f\|_{(L+M^\ast)} \|g\|_{(M + \lfloor \hdim \rfloor +1)} \left( \frac{s}{t}\right)^M \frac{t^{-Q}}{(1+t^{-1}|x|)^{L}}.
\end{align*}
\\~\\
\emph{Estimate of $I_2$.}
 We use similar arguments for estimating $I_2$. First, note that if $|y| > (t+|x|)/(2\cqn \eta^{M'})$, then
$(s + |y|)^{-1} < |y|^{-1} \lesssim_{M} (t + |x|)^{-1}$. Hence
\begin{align*}
I_{2}
& \leq {\|f\|_{(L)}\|g\|_{(L)}}  \int_{|y| > (t+|x|)/(2\cqn \eta^{M'})} \frac{t^{-\hdim}}{(1+
t^{-1} |xy^{-1}|)^{L}}\frac{s^{L-\hdim}}{(s+ |y|)^{L}}\; d\mu_\N(y) \\
& \lesssim_{M,L}    {\|f\|_{(L)}\|g\|_{(L)}}   \frac{s^{ L-\hdim}}{(t + |x|)^{L}} \int_{|y| > (t+|x|)/(2\cqn \eta^{M'})} \frac{t^{-\hdim}}{(1+
t^{-1} |xy^{-1}|)^{L}}\; d\mu_\N(y) \\
& \leq   {\|f\|_{(L)}\|g\|_{(L)}}  \frac{s^{ L-\hdim}}{(t + |x|)^{L}} \int_{\N} \frac{1}{(1+
  |y|)^{L}} \;d\mu_\N(y)  \\
&\lesssim_{L}    {\|f\|_{(L)}\|g\|_{(L)}}  \left(\frac{s}{t}\right)^{M} \frac{t^{-\hdim}}{(1 + t^{-1}|x|)^{L}},
\end{align*}
where in the last step we used that $L > M + \hdim$.
\\~\\
\emph{Estimate of $I_3$}. For estimating $I_3$, we use the explicit expression of the  
the Taylor polynomial ${P}_{x,M}^{(f_t)}$ given in \eqref{eq:rough taylor polynomial}, 
\begin{align*}
{P}_{x,M}^{(f_t)}(y) = \sum_{\alpha \in \mathbb{N}_0^d : \ [\alpha] \leq M}\left( \sum_{\beta \in \mathbb{N}_0^d: \ [\beta] = [\alpha]} c_\beta(X^\beta f_{t} )(x)  \right)  y^\alpha,
\end{align*}
for constants $c_\beta \in \mathbb{C}$. Since $f\in \SV(N)$, for every multi-index $\beta$ with $[\beta] \leq M$,  we have
\[ |X^\beta f_t (x)|  
\leq {\|f\|_{(M + \lfloor \hdim \rfloor +1)}} 
\frac{t^{-[\beta]}
t^{-\hdim}}{ (1+ t^{-1}|x|)^{\hdim +[\beta]}}.
\]
Hence, using that $|y^{\alpha} | \lesssim_{M} |y|^{[\alpha]}$ {for all $[\alpha] \leq M$}, we see that
\begin{align*}
\left| {P}_{x,M}^{(f_t)}(y^{-1})\right|\lesssim_{M} {\|f\|_{(M + \lfloor \hdim \rfloor +1)}} 
\sum_{[\alpha] \leq M}  \frac{t^{-[\alpha]}t^{-\hdim}|y|^{[\alpha]}}{ (1 + t^{-1}|x|)^{\hdim + [\alpha] }} =   {\|f\|_{(M + \lfloor \hdim \rfloor +1)}} \sum_{[\alpha] \leq M}  \frac{|y|^{[\alpha]}}{ (t +  |x|)^{\hdim +[\alpha] }}.
\end{align*}
This, combined with the fact that also $g \in \SV(N)$, yields
\begin{align*}
I_{3} &\lesssim_{M}
 {\|f\|_{(M + \lfloor \hdim \rfloor +1)} \|g\|_{(L)} }
\int_{|y| > 
(t+|x|)/(2\cqn \eta^{M'})}
\frac{s^{ L-\hdim}}{(s+ |y|)^{L}}\sum_{[\alpha] \leq M}  \frac{|y|^{[\alpha]}}{ (t +  |x|)^{\hdim +[\alpha]}} \;d\mu_\N(y) \\
&\leq  {\|f\|_{(M + \lfloor \hdim \rfloor +1)} \|g\|_{(L)} }  \sum_{[\alpha] \leq M}\frac{s^{ L-\hdim} }{(t+ |x|)^{\hdim+[\alpha]}}
 \int_{|y| > (t+|x|)/(2\cqn \eta^{M'})}
\frac{1}{|y|^{L- [\alpha]}} \;d\mu_\N(y) .
\end{align*}
Since $L -[\alpha] \geq L - M > \hdim$, 
we have that {for every $\alpha$ with $[\alpha] \leq M$}, \[ \int_{|y| > (t+|x|)/(2\cqn \eta^{M'})}
\frac{1}{|y|^{L- [\alpha]}} \;d\mu_\N(y) \lesssim_{M, L} \frac{1}{(t +|x|)^{L - Q -[\alpha]}}, \] and hence 
\begin{align*}
I_3 &\lesssim_{M,L}  {\|f\|_{(M + \lfloor \hdim \rfloor +1)} \|g\|_{(L)} }\sum_{[\alpha] \leq M}\frac{s^{L-\hdim} }{(t+ |x|)^{\hdim+[\alpha]}} \frac{1}{(t+|x|)^{L-\hdim-[\alpha]}}\\
&\asymp_{M} {\|f\|_{(M + \lfloor \hdim \rfloor +1)} \|g\|_{(L)} }\frac{s^{L -\hdim} }{(t+ |x|)^{L}} \\
&\leq {\|f\|_{(M + \lfloor \hdim \rfloor +1)} \|g\|_{(L)} }\left( \frac{s}{t}\right)  ^{M}  \frac{t^{-\hdim} }{(1+t^{-1} |x|)^{L}} .
\end{align*}

Combining the estimates for $I_1,I_2,I_3$, we obtain that for $t \geq s$,
\begin{align} \label{eq:ftgsconvo}
\left|(f_{t}  \ast g_{s})(x) \right| \lesssim_{M,L} \|f\|_{(k_1)} \|g\|_{(k_2)} \left( \frac{s}{t}\right)^M \frac{t^{-Q}}{(1+t^{-1}|x|)^{L}},
\end{align}
where {$k_1 :=\max\{L+M^\ast, M + \lfloor \hdim \rfloor +1\}$ and $k_2 := \max\{L,M +\lfloor \hdim\rfloor +1\}$}.

\medskip

We now consider the case $s \geq t$. Using that $f_t \ast g_s(x)= (g^{\vee})_s \ast (f^{\vee})_{t} (x^{-1})$, together with the case $s \leq t$ proven above, we get
\begin{align} \label{eq:ftgsconvolution}
|f_t \ast g_s(x)| \lesssim_{M,L} {\|f^{\vee}\|_{(k_2)}}\|g^{\vee}\|_{(k_1)} \left(\frac{t}{s}\right)^{M} \frac{s^{-\hdim}}{(1 + s^{-1}|x|)^{L}}.
\end{align}
Note that for each $\alpha \in \mathbb{N}_0^d$, we have 
$X^\alpha (f^\vee) =(\widetilde{X}^\alpha f)^\vee$, and 
\[
\widetilde{X}^\alpha =\sum_{\substack{|\beta| \leq |\alpha| \\ [\beta] \geq [\alpha]}} P_{\alpha, \beta} X^\beta
\]
with each $P_{\alpha,\beta}$ being a polynomial of homogeneous degree $[\beta] -[\alpha]$. Using this fact we may deduce that for each $\ell \in \mathbb{N}$ and $f \in \mathcal{S} (\N)$, there holds $\|f^\vee\|_{(\ell)} \lesssim_{\ell} \|f\|_{(\ell + \ell^{'})}$, where 
\[
{\ell' := \max\{[\beta]:  \beta \in \mathbb{N}_0^d \text{ with } |\beta| \leq \lceil \ell \rfloor\}}
\]
with $\lceil \ell \rfloor$ being defined by \eqref{eq:defintion of M}. From this 
and
\eqref{eq:ftgsconvolution} it follows that for $s \geq t$,
\begin{align} \label{eq:ftgsconvolution_2}
|f_t \ast g_s(x)| \lesssim_{M,L} {\|f\|_{(k_2 + k_2')}\|g\|_{(k_1 + k_1')} } \left(\frac{t}{s}\right)^{M} \frac{s^{-\hdim}}{(1 + s^{-1}|x|)^{L}}.
\end{align}

Finally, putting \eqref{eq:ftgsconvo} in the case $t \geq s$ and \eqref{eq:ftgsconvolution_2} in the case $s \geq t$ together, we see that the desired estimate \eqref{eq:almost orthogonality estimate} holds for $k:= \max\{k_1 + k_1', k_2 + k_2'\}$. 
\end{proof}

\subsection{Calder\'on reproducing formulae}
This subsection is devoted to the construction of Calder\'on reproducing formulae, which will play a central role in our theory. 
The precise forms of these formulae are as follows.

\begin{definition} 
Let $\crk \in \SV(\N)$.
\begin{enumerate}
[(i)]\item The function $\crk$ is said to satisfy the \emph{discrete Calder\'on condition} if there exists $\drk\in\SV(\N)$ such that for all $f \in \SV'(N)$,
 \begin{align} \label{eq:discrete_calderon}
  f = \sum_{j \in \mathbb{Z}} f \ast \crk_{2^{-j}} \ast \drk_{2^{-j}} 
 \end{align} 
with convergence in $\SV'(N)$.

\item 
The function $\crk$ is said to satisfy the \emph{continuous Calder\'on condition} if there exists $\psi \in \SV(\N)$ such that for all $f \in \SV'(N)$,
\begin{align} \label{eq:continuous_calderon}
 f = \int_0^{\infty} f \ast \phi_t \ast \psi_t \; \frac{dt}{t} 
\end{align}
in $\SV'(N)$.
\end{enumerate}
 \end{definition}
The formula \eqref{eq:continuous_calderon} has to be interpreted in the sense that
\begin{align} \label{eq:weaksense1}
(f, \varphi) = \int_0^{\infty} (f \ast \phi_t \ast \psi_t, \varphi) \; \frac{dt}{t}
\end{align}
for all $\varphi \in \SV(\N)$.

For the construction of functions satisfying the Calder\'on condition \eqref{eq:discrete_calderon} (or \eqref{eq:continuous_calderon}) on a general homogeneous group, we will make use of functional calculus of some adequate homogeneous convolution operators \cite{glowacki1986stable, dziubanski1992schwartz, dziubanski1989remark}. Following \cite{glowacki1986stable}, we
define the distribution $P \in \mathcal{S}'(N)$ by
\begin{align} \label{eq:def_con_ker_P}
( P, f )  =  \lim_{\varepsilon \to 0} \int_{\N \setminus B_\varepsilon(e_\N)} \big( f( e_\N) - f(x) \big) \; \frac{d\mu_\N(x)}{\rho(x)^{\hdim+1}}, \quad f \in \mathcal{S}(N),
\end{align} 
where $\rho$ denotes a homogeneous quasi-norm that is smooth away from the origin.
Then $P$ is a homogeneous distribution of degree $-(Q+1)$, and hence its associated convolution operator 
\begin{align} \label{eq:def_P}
\P f = f \ast P, \quad f \in \mathcal{S}(N),
\end{align}
is homogeneous of degree $1$, which is  positive and essentially self-adjoint; see, e.g., \cite{dziubanski1989remark, dziubanski1992schwartz}. A direct calculation shows that $\P$ admits the integral representation for $f \in \SC(\N)$ 
\begin{align*}
    \P f(x) &=  \big(P, f(x \; \cdot^{-1}) \big)  
=\lim_{\varepsilon \to 0} \int_{\rho(z) \geq \varepsilon} \big( f(x) - f(xz^{-1}) \big) \; \frac{d\mu_\N(z)}{\rho(z)^{\hdim+1}} \\
&= \lim_{\varepsilon \to 0} \int_{\N \setminus B_\varepsilon(x)} \frac{f(x) - f(y)}{\rho( y^{-1}x )^{\hdim + 1}} \, d\mu_\N(y), \numberthis \label{eq:int_rep_P}
\end{align*}
for $x \in N$.
\begin{remark} ~
    \begin{enumerate}[(1)]
\item Although any choice of quasi-norm smooth away from the origin in \eqref{eq:def_con_ker_P} yields a  left-invariant, positive, essentially self-adjoint  operator $\P$ of homogeneity $1$, the resulting operator $\P$ can be quite different. For example, this has been shown in the special case of stratified groups, in particular the Heisenberg group, in a series of papers \cite{FeFr15, RoTh16, BrOlOr96, BrFoMo13}.

\item A closely related notion to the operators $\P$ defined above are the so-called \emph{fractional $p$-Laplacians} on general homogeneous groups $\N$, defined for $f \in \SC(N)$ by
\begin{align} \label{eq:int_rep_frac_p_L}
    (-\Delta_p)^sf(x) := 2 \lim_{\varepsilon \to 0} \int_{\N \setminus B_\varepsilon(x)} \frac{ |f(x) - f(y)|^{p-2} (f(x) - f(y))}{| y^{-1}x |^{\hdim + sp}} \, d\mu_\N(y), \quad x \in \N, 
\end{align}
 where $| \cdot |$ is an arbitrary homogeneous quasi-norm on $\N$, $s \in (0, 1)$ and $p  \in (1, \infty)$; see~\cite{RuSu19}. For $s = \frac{1}{2}$, $p = 2$ these generally non-linear, non-local operators $(-\Delta_2)^{1/2}$ are immediately seen to coincide with the operator $\P$ if the quasi-norms employed in \eqref{eq:int_rep_P} and \eqref{eq:int_rep_frac_p_L} are chosen to be the same. 
    \end{enumerate}
\end{remark}

Using the convolution operator $\P$ defined in Equation \eqref{eq:def_P}, we next construct functions satisfying the Calder\'on conditions.
The proof of the following proposition is based on a combination of ideas that can be found in, e.g., \cite{glowacki2013lp, fuehr2012homogeneous, geller2006continuous}.
 
\begin{proposition} \label{prop:construction_crk}
The following assertions hold:
\begin{enumerate}
    [\rm (i)]\item There exists $\crk \in \SV(N)$ satisfying the discrete Calder\'{o}n condition \eqref{eq:discrete_calderon} with $\crk = \drk$.
    \item There exist $\crk \in \SV(N)$ satisfying the continuous Calder\'{o}n condition \eqref{eq:continuous_calderon} with $\crk = \drk$.
\end{enumerate}
\end{proposition}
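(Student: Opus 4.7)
The plan is to construct $\crk$ as the convolution kernel of a spectral multiplier $F(\P)$ of the positive, essentially self-adjoint, homogeneous operator $\P$ of degree $1$ from \eqref{eq:def_P}. Functional calculus of $\P$, combined with Hulanicki-type regularity results of \cite{glowacki1986stable, dziubanski1992schwartz, dziubanski1989remark}, will yield a single kernel $\crk \in \SV(N)$ whose dyadic (resp.\ continuous) dilates realize the desired Calder\'on decomposition with $\crk = \drk$.

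\emph{Step 1: Choice of multiplier and kernel.} Pick any nonzero real-valued $F_0 \in C_c^\infty((1/2,2))$ with $F_0 \geq 0$. Setting
\[
S_d(\lambda) := \sum_{j \in \Z} F_0(2^{-j}\lambda)^2, \qquad S_c(\lambda) := \int_0^\infty F_0(t\lambda)^2 \, \frac{dt}{t}, \qquad \lambda > 0,
\]
one obtains a smooth, strictly positive $S_d$ on $(0,\infty)$ (finitely many nonzero terms at each $\lambda$) and a nonzero constant $S_c = \int_0^\infty F_0(s)^2 \, ds/s$. Then $F_d := F_0/\sqrt{S_d}$ and $F_c := F_0/\sqrt{S_c}$ lie in $C_c^\infty((1/2,2))$ and satisfy $\sum_{j \in \Z} F_d(2^{-j}\lambda)^2 = 1$ and $\int_0^\infty F_c(t\lambda)^2 \, dt/t = 1$ for all $\lambda > 0$. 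Since $\P$ is left-invariant and essentially self-adjoint, $F(\P)$ is a left-invariant bounded operator on $L^2(N)$, hence realized as right convolution $F(\P)g = g \ast \crk^F$ with a distributional kernel. For $F \in C_c^\infty((0,\infty))$, the Hulanicki-type regularity results for $\P$ in \cite{dziubanski1992schwartz, glowacki1986stable} yield $\crk^F \in \SC(N)$; the infinite-order vanishing of $F$ at $0$ then forces all polynomial moments of $\crk^F$ to vanish, so $\crk^F \in \SV(N)$. Set $\crk := \crk^{F_d}$ for (i) and $\crk := \crk^{F_c}$ for (ii).

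\emph{Step 2: Dilation identity and $L^2$ Calder\'on formula.} Let $U_t g(x) := t^{Q/2} g(\delta_t x)$ denote the unitary $L^2$-dilation. Homogeneity of degree $1$ of $\P$ yields $U_t^{-1} \P\, U_t = t \P$, so by spectral calculus $U_t^{-1} F(\P) U_t = F(t\P)$, and unfolding this intertwining shows $F(t\P)g = g \ast \crk^F_t$, where $\crk^F_t = t^{-Q} \crk^F(\delta_{t^{-1}} \cdot)$ is the $L^1$-normalized dilation used throughout the paper. Consequently $F_d(2^{-j}\P)^2$ acts as convolution with $\crk_{2^{-j}} \ast \crk_{2^{-j}}$, and $F_c(t\P)^2$ as convolution with $\crk_t \ast \crk_t$. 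The normalization of Step 1 together with the spectral theorem for $\P$ then gives
\[
\sum_{j \in \Z} F_d(2^{-j}\P)^2 = I, \qquad \int_0^\infty F_c(t\P)^2 \, \frac{dt}{t} = I,
\]
with strong-operator convergence on $L^2(N)$; this is precisely the Calder\'on identity at the $L^2$ level for every $f \in L^2(N)$.

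\emph{Step 3: Upgrade to $\SV'(N)$ and the main obstacle.} To pass from $L^2$- to $\SV'(N)$-convergence, test against $\varphi \in \SV(N) \subset L^2(N)$ and invoke the duality between $\SV(N)$ and $\SV'(N)$, which transfers the problem to Schwartz-convergence of appropriate partial sums (resp.\ truncated integrals) of the convolution products of $\varphi$ with $\crk_{2^{-j}} \ast \crk_{2^{-j}}$ (resp.\ $\crk_t \ast \crk_t$). The key analytic input is the almost-orthogonality Lemma~\ref{aoe} applied to $\crk_s \ast \crk_t$, which provides pointwise bounds with arbitrarily high decay of the form $(s/t \wedge t/s)^M (1 + (s^{-1} \wedge t^{-1})|\cdot|)^{-L}$. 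Inserted into each Schwartz seminorm in \eqref{eq:Schwartz_seminorm}, these bounds render the tails summable (resp.\ integrable) in the scale parameter and vanishing in the truncation limit; together with the $L^2$-convergence of Step 2 this identifies the Schwartz limit and yields convergence of the test-function side in $\SV(N)$. By duality, the Calder\'on formulas hold in $\SV'(N)$. The principal technical difficulty lies precisely here: the spectral theorem alone affords only $L^2$-control, while the Schwartz-topology estimates on the truncated sums and integrals depend essentially on the double decay (in the scale ratio and in the spatial variable) supplied by Lemma~\ref{aoe}.
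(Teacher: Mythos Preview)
Your proposal is correct and follows essentially the same route as the paper: construct $\crk$ as the convolution kernel of a compactly supported spectral multiplier of $\P$, obtain the Calder\'on identity in $L^2(N)$ via the spectral theorem, upgrade to Schwartz convergence for test functions using the almost-orthogonality estimates of Lemma~\ref{aoe}, and then dualize to $\SV'(N)$. The only place where you are a bit informal is the moment-vanishing step, which on a general homogeneous group is not as immediate as ``infinite-order vanishing of $F$ at $0$'' suggests; the paper handles this by citing \cite[Lemma~7.1]{glowacki2013lp}.
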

\begin{proof}
 First, let $m \in C_c^{\infty}(\mathbb{R}^+)$ 
 be arbitrary and write $\crk$ 
 for the convolution kernel of the operator $m(\P)$ defined by functional calculus, i.e., $m(\P) f = f \ast \crk$ for $f \in \SV(N)$. Then $\crk \in \mathcal{S}(N)$ by \cite[Theorem 4.1]{dziubanski1992schwartz}. In addition, an application of \cite[Lemma 7.1]{glowacki2013lp} yields that $\crk$ has all moments vanishing, so that $\crk \in \SV (N)$. Since the operator $\P$ is homogeneous of degree $1$, one has the identity
 \begin{align} \label{eq:spectral_dilate}
 m(t\P)f = f \ast \phi_t
 \end{align}
 for all $t > 0$. Denote by $E_{\P}$ the spectral measure of $\P$, so that $\P = \int_0^{\infty} \lambda \; dE_{\P} (\lambda)$.
\\~\\
(i) Choose a real-valued function $m \in C_c^{\infty}(\mathbb{R}^+)$ satisfying 
\begin{align*}
 \sum_{j\in\mathbb{Z}  } |m (2^{-j}\lambda)|^2  = 1 \quad \text{for all } \lambda \in \mathbb{R}^+.
\end{align*}
By the first part of the proof,  its convolution kernel $\crk \in \SV (N)$. 
Using the identity \eqref{eq:spectral_dilate}, it follows that for arbitrary $f \in \SV (N)$,
\begin{align*}
    f &= \int_{0}^{\infty}  \; dE_{\P} (\lambda) f
    = \int_{0}^{\infty} \sum_{j \in \mathbb{Z}}
     m (2^{-j}\lambda) m(2^{-j}\lambda) \;
     dE_{\P} (\lambda) f \\
     &= \sum_{j \in \mathbb{Z}} m(2^{-j}\P) m(2^{-j}\P) f 
     = \sum_{j \in \mathbb{Z}} f \ast \crk_{2^{-j}} \ast \crk_{2^{-j}}, \numberthis \label{eq:discrete_calderon_multiplier}
\end{align*}
with convergence of the series in the $L^2$-norm. 

We first show that the series on the right-hand of \eqref{eq:discrete_calderon_multiplier} converges in $\mathcal{S}_0(N)$.  Indeed, since $f \in \SV(\N)$ and $ \ \crk \ast X^\alpha \crk \in \SV(\N)$, it follows from Lemma \ref{aoe} (with $t=1=2^0$ and $s = 2^{-j}$) that, for any $M,L >0$,
\begin{align}
    \left|X^\alpha \big[f \ast \crk_{2^{-j}} \ast \crk_{2^{-j}}(x)\big]\right| 
 &= 2^{j[\alpha]}\left|f \ast \big( \crk \ast X^\alpha \crk \big)_{2^{-j}}(x)\right| \nonumber\\ &\lesssim 2^{j[\alpha]} 2^{-|j|M} \frac{2^{(j \wedge 0)\hdim}}{(1 + 2^{j\wedge 0}|x|)^L} 
 \label{eq:est_f_kmj}
\end{align}
holds for all $j \in \mathbb{Z}$,
with implicit constant independent of $j$. 
Observe that the right-hand side of the inequality above can be estimated by
\begin{align*}
2^{j[\alpha]} 2^{-|j|M} \frac{2^{(j \wedge 0)\hdim}}{(1 + 2^{j\wedge 0}|x|)^L}  \leq 
\begin{cases}
2^{-|j|(M -[\alpha])}(1 +|x|)^{-L} \ \ &\text{if } j \geq 0, \\
 2^{-|j|(M +[\alpha] + \hdim -L)}(1 + |x|)^{-L} &\text{if } j <0.
\end{cases}
\end{align*}
Hence, for all $j \in \mathbb{Z}$,
\begin{align} \label{eq:X_f_kmj}
\left|X^\alpha \big[f \ast (\crk \ast \crk)_{2^{-j}}(x)\big]\right| \lesssim  2^{-|j|(M - [\alpha] -L)}(1 + |x|)^{-L}.
\end{align}
For any $\ell \in \mathbb{N}$, if we choose $L =\ell$ and  $M > 2\ell$ in \eqref{eq:X_f_kmj}, it follows that
\begin{align*}
\sum_{j \in \mathbb{Z}} \; \sup_{[\alpha] \leq \ell, x \in \N} (1 +|x|)^\ell \left|X^\alpha \big[f \ast (\crk \ast \crk)_{2^{-j}}(x)\big]\right| 
 \lesssim \sum_{j\in \mathbb{Z}} 2^{-|j|(M-2\ell)} <\infty.
\end{align*}
This shows that $ \sum_{j \in \mathbb{Z}} f \ast \crk_{2^{-j}} \ast \crk_{2^{-j}}$ converges in $\mathcal{S}(\N)$
to some element $f' \in \mathcal{S}(\N)$. In fact, being a limit of functions in $\mathcal{S}_0 (N)$, it follows that $f' \in \SV(\N)$.
 Equation \eqref{eq:discrete_calderon_multiplier} next implies that $f = f'$, and thus
\begin{align} \label{eq:calderon_representation_Schwartz}
f = \sum_{j \in \mathbb{Z}} f \ast \crk_{2^{-j}} \ast \crk_{2^{-j}} 
 \end{align} 
with convergence in $\SV(N)$.

To complete the proof of (i), we note that for any $f_1 \in \SV'(\N)$, $f_2 \in \SV(\N)$,
\begin{align*}
    \Bigl (f_1 - \sum_{j = -M}^{M} f_1 \ast \crk_{2^{-j}} \ast \crk_{2^{-j}} , f_2 \Bigr )
    = (f_1 ,f_2) - \Bigl (f_1  , \sum_{j = -M}^{M} f_2 \ast \crk_{2^{-j}} \ast \crk_{2^{-j}} \Bigr ) \to 0
\end{align*}
as $M \to \infty$.
Since $f_2$ is arbitrary, this proves \eqref{eq:discrete_calderon} with convergence in $\SV'(N)$.
\\~\\
(ii)  Let $m \in C_c^{\infty} (\mathbb{R}^+)$ be a real-valued function such that
 \[
\int_{0}^{\infty} |m(t) |^2 \; \frac{dt}{t} = 1.
\]
By a change of variables, this yields that 
\[
\int_{0}^{\infty} |m(t \lambda) |^2 \; \frac{dt}{t} = 1 \quad \text{for all } \lambda \in \mathbb{R}^+.
\]
Hence, for any $f \in \mathcal{S}_0 (N)$, 
\begin{align*}
f &= \int_0^{\infty} \; dE_{\P} (\lambda) f =  \int_0^{\infty} \int_0^{\infty} m(t \lambda)^2 \; \frac{dt}{t} \; dE_{\P} (\lambda) f  \\
&= \int_0^{\infty} m(t \P)^2 f \; \frac{dt}{t} = \int_0^{\infty} f \ast \crk_t \ast \crk_t \; \frac{dt}{t}, \numberthis \label{eq:continuous_calderon_proof}
\end{align*}
where the last integral converges as a Bochner integral in the $L^2(\N)$.

In order to extend \eqref{eq:continuous_calderon_proof} to elements of $\SV'(\N)$, we first show  the convergence of the integral in the Banach spaces $\mathcal{S}^k(\N)$ for arbitrary $k \in \NN$, cf. \eqref{eq:Sk}. Note that the map $t \mapsto f \ast \crk_t \ast \crk_t$ is continuous from $(0, \infty)$ into $\mathcal{S} (N) \hookrightarrow \mathcal{S}^k(\N)$ as the composition of continuous maps; in particular, it is strongly measurable.
Moreover, an application of
Lemma~\ref{aoe} gives, for any $L, M \in \mathbb{N}$,
\begin{align*}
\left|X^\alpha \big[f \ast \crk_t \ast \crk_{t}(x)\big]\right| 
 &= t^{-[\alpha]}\left|f \ast \big( \crk \ast X^\alpha \crk \big)_{t}(x)\right| \nonumber\\ &\lesssim t^{-[\alpha]}(t\wedge t^{-1})^{M}\frac{(t^{-1}\wedge1)^\hdim}{(1 + (t^{-1} \wedge 1)|x|)^L}  \nonumber\\
 &\lesssim (t \wedge t^{-1})^{(M -[\alpha] -L)}
(1 + |x|)^{-L}. \numberthis \label{eq:con_est_f_kmt}
\end{align*}
In combination, choosing $L = k$ and  $M > 2 k$, it follows that
\begin{align}
\begin{split} \label{eq:Bochner_integrability}
 \int_0^{\infty} \| f \ast \crk_t \ast \crk_t \|_{(k)} \; \frac{dt}{t} &\lesssim \int_0^\infty \sup_{[\alpha] \leq k, x \in \N}(1+|x|)^k \left|X^\alpha \big[f \ast \crk_t \ast \crk_{t}(x)\big]\right|\frac{dt}{t} \\ &\lesssim \int_0^\infty (t \wedge t^{-1})^{M -2k }\frac{dt}{t} <\infty.
\end{split}
\end{align}
This shows that the Bochner integral $\int_0^{\infty} f \ast \crk_t \ast \crk_t \; \frac{dt}{t}$ converges in $\mathcal{S}^k(\N)$
to some element $f' \in \mathcal{S}^k(\N)$. By Equation \eqref{eq:continuous_calderon_proof}, it follows that $f = f'$, and thus
\begin{align} \label{eq:cont_rep_formula_SV}
 f = \int_0^{\infty} f \ast \phi_t \ast \phi_t \; \frac{dt}{t} 
\end{align}
in $\mathcal{S}^k(\N)$.

Let now $f_1 \in \SV'(\N)$ and $f_2 \in \SV(\N)$. Then an application of the Hahn-Banach theorem yields a continuous extension $f_1'$ of $f_1$ to $\mathcal{S}(N)$. By \cite[Equation (3.45)]{FR}, there exists some $k_{f_1} \in \NN$ such that the restriction $f_1'|_{\SV}$ is continuous with respect to $\| \cdot \|_{(k_{f_1})}$. 
Hence, another application of the Hahn-Banach theorem yields that $f_1$ can be extended to a continuous linear functional $f_1''$ on $\mathcal{S}^k(\N)$. Therefore,
\begin{align*} 
     (f_1, f_2)  &= \big ( f_1'' ,  \int_0^{\infty} f_2 \ast \crk_t \ast \crk_t \; \frac{dt}{t} \big ) 
    = \int_0^{\infty} (f''_1 , f_2 \ast \crk_t \ast \crk_t) \; \frac{dt}{t} = \int_0^{\infty} (f_1 , f_2 \ast \crk_t \ast \crk_t) \; \frac{dt}{t} \\
    &= \int_0^{\infty} (f_1 \ast \crk_t \ast \crk_t, f_2) \; \frac{dt}{t}, \numberthis \label{eq:aux_step_Bochner_integral}
\end{align*}
where interchanging the order of functional evaluation and integration is justified by, e.g, \cite[Section 1.2]{hytonen2016analysis}. Since $f_1 \in \SV'(\N)$ and $f_2 \in \SV(\N)$ are arbitrary, this is precisely the integral representation that gives \eqref{eq:continuous_calderon}.
\end{proof}

For the construction of functions satisfying Calder\'{o}n conditions  on graded  groups, one can alternatively invoke spectral multipliers of any positive Rockland operators $\RLO$ (of some homogeneous degree $\hdeg \in \NN$) as was shown in \cite{geller2006continuous, velthoven2022comptes}, where a proof strategy similar to the one of Proposition~\ref{prop:construction_crk} is applied to $\RLO^{\frac{1}{\hdeg}}$ instead of $\P$. This particularly works out for any sub-Laplacian on a stratified group since $-\sL$ is a positive Rockland operator of homogeneous degree $\hdeg = 2$; see \cite{fuehr2012homogeneous, geller2006continuous} for similar constructions in this setting.

\subsection{Sub-mean-value property} 
The following sub-mean-value property for convolution products will be used in several crucial proofs in the sequel. Similar estimates for Euclidean spaces can be found in, e.g., \cite[Chapter V]{stromberg1989weighted} and \cite[Section 2]{Bui1996maximal}.

\begin{lemma} \label{lem:central_estimate}
Let $M, r \in (0,\infty)$ and $f \in \SV'(\N)$. 
\begin{enumerate}
[\rm (i)]\item If $\crk$ satisfies the discrete Calder\'{o}n condition~\eqref{eq:discrete_calderon}, then there exists a constant $C =C (M, r) > 0$ such that 
\begin{align} \label{eq:central_estimate_i}
 |f\ast \crk_{2^{-j}}(y) |
    \leq C \biggl ( \sum_{k \in \mathbb{Z}}
    2^{-|j-k|Mr} \int_{\N} \frac{2^{k\hdim}|f \ast \crk_{2^{-k}}(z)|^r}{(1+2^{k}|z^{-1}y|)^{Mr}} \; d\mu_\N(z) \biggr )^{1/r}
\end{align}
for all $j \in \mathbb{Z}$ and $y \in \N$.
\item If $\crk$ satisfies the continuous Calder\'{o}n condition~\eqref{eq:continuous_calderon}, then there exists a constant $C = C(M,r) > 0$ such that 
\begin{align} \label{eq:central_estimate_ii}
     |f\ast \phi_t(y)|
    \leq C \biggl ( \int_0^\infty
    \Bigl ( \frac{s}{t} \wedge \frac{t}{s} \Bigr )^{Mr} \int_{\N} \frac{s^{-\hdim}|f \ast \crk_s(z)|^r}{(1+s^{-1}|z^{-1}y|)^{Mr}} \; d\mu_\N (z)\frac{ds}{s} \biggr )^{1/r}
\end{align}
for all $t > 0$ and $y \in \N$.
\end{enumerate}
\end{lemma}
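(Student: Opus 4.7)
Parts (i) and (ii) admit parallel proofs, so I describe (i); part (ii) follows the same outline with sums over $k \in \mathbb{Z}$ replaced by integrals $\int_0^\infty \cdot\, \frac{ds}{s}$ and the factor $2^{-|j-k|M'}$ replaced by $(s/t \wedge t/s)^{M'}$. The strategy is to combine the Calder\'on reproducing formula with the almost orthogonality estimate of Lemma~\ref{aoe}, and then to move the $r$-th power inside the integral.

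Concretely, the discrete Calder\'on formula~\eqref{eq:discrete_calderon} gives
\[
f \ast \crk_{2^{-j}}(y) = \sum_{k \in \mathbb{Z}} \int_{\N} f \ast \crk_{2^{-k}}(z)\, (\crk_{2^{-k}} \ast \crk_{2^{-j}})(z^{-1}y)\, d\mu_\N(z),
\]
and Lemma~\ref{aoe}, applied with auxiliary decay exponents $M' \gg M$ and $L' \gg Mr + \hdim$ to be fixed later in terms of $M, r, \hdim$, yields
\[
|(\crk_{2^{-k}} \ast \crk_{2^{-j}})(x)| \lesssim 2^{-|j-k|M'} \frac{2^{(j \wedge k)\hdim}}{(1 + 2^{j \wedge k}|x|)^{L'}}.
\]
Substituting this into the reproducing formula produces a pointwise majorant $|f \ast \crk_{2^{-j}}(y)| \lesssim \sum_k 2^{-|j-k|M'} I_k(y)$, in which each $I_k(y)$ is an integral of $|f \ast \crk_{2^{-k}}(z)|$ against a weight at scale $2^{-(j \wedge k)}$.

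The core step is to bring the $r$-th power inside each $I_k(y)$ and to rewrite the weight at scale $2^{-k}$, as demanded by the right-hand side of~\eqref{eq:central_estimate_i}. When $r \ge 1$ this is a direct application of H\"older's inequality to the $z$-integral with respect to the normalized weighted measure, whose total mass is finite as soon as $L' > \hdim$. When $0 < r < 1$ H\"older fails in the needed direction, and one instead invokes the Peetre-type maximal function from the introduction: splitting $|f\ast\crk_{2^{-k}}(z)| = |f\ast\crk_{2^{-k}}(z)|^{r} \cdot |f\ast\crk_{2^{-k}}(z)|^{1-r}$ and bounding the second factor by $(\crk_{2^{-k}}^{\ast} f)_a(y)^{1-r}(1+2^k|z^{-1}y|)^{a(1-r)}$ transfers a $(1-r)$-th power onto the Peetre sup, which is then absorbed by a self-improvement (fixed-point-type) argument exploiting the freedom to enlarge $M'$ and $L'$. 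Finally, converting the kernel from scale $2^{-(j\wedge k)}$ to scale $2^{-k}$ is handled by partitioning the $z$-integration into dyadic annuli $\{|z^{-1}y| \asymp 2^{-k+\ell}\}$; the resulting covering loss $2^{|j-k|\hdim}$ is absorbed by the decay $2^{-|j-k|M'}$ once $M' > Mr + \hdim$.

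The main obstacle is the case $r < 1$: there one cannot use H\"older directly and must introduce the Peetre-type maximal function and close a bootstrap inequality in order to eliminate the residual $(\crk_{2^{-k}}^{\ast} f)_a(y)^{1-r}$ factor. This closure is possible precisely because Lemma~\ref{aoe} provides arbitrarily strong decay in $|j-k|$ (and, for part (ii), in the continuous scale ratio $s/t \wedge t/s$), which leaves enough room to absorb the losses incurred at each stage of the estimate.
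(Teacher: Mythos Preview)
Your outline matches the paper's proof: Calder\'on reproducing formula plus Lemma~\ref{aoe}, then H\"older for $r>1$ and a Peetre-type bootstrap for $r\le 1$. Two points deserve more care in the $r<1$ case. First, the self-improvement does not close with the fixed-scale Peetre function $(\crk_{2^{-k}}^{\ast} f)_a(y)$, because after summing over $k$ the residual $(1-r)$-power still depends on $k$; the paper instead works with the grand maximal quantity
\[
\Lambda_{u,M}f(x)=\sup_{t>0}\sup_{y\in\N}\Bigl(\frac{u}{t}\wedge\frac{t}{u}\Bigr)^{M}\frac{|f\ast\crk_t(y)|}{(1+t^{-1}|y^{-1}x|)^{M}},
\]
which absorbs the scale variable as well and makes the fixed-point inequality $\Lambda\lesssim\Lambda^{1-r}\cdot[\text{RHS}]$ genuinely self-referential. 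Second, dividing through by $\Lambda^{1-r}$ requires $\Lambda<\infty$, which the paper verifies for $M$ large enough \emph{depending on $f$} using \eqref{bounds_of_convolution}; a short second pass then upgrades the resulting $f$-dependent constant to one depending only on $M,r$. With these two refinements your argument is the paper's.
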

\begin{proof}
We only prove assertion (ii);  the proof of (i) being similar. 
Since the right-hand side of \eqref{eq:central_estimate_ii} decreases for increasing $M$, it suffices to show \eqref{eq:central_estimate_ii} for all $M \in \mathbb{N}$. Throughout, we let $f \in \SV'(\N) \setminus \{0\}$ and $M \in \mathbb{N}$ be fixed.

By assumption, there exists $\drk \in \SV(\N)$ such that, for all $f \in \SV'(N)$,
\begin{align}
 f = \int_0^{\infty} f \ast \crk_s \ast \drk_s \; \frac{ds}{s}
\end{align}
with convergence in $\SV'(N)$. 
It follows that, for all $t >0$ and $y \in \N$,
\begin{equation} 
\label{pointwise rep}
\begin{split}
 f\ast\crk_t (y) 
 &=\int_0^{\infty} f \ast \crk_s \ast \drk_s \ast \crk_t(y)\; \frac{ds}{s}\\
 &=\int_0^{\infty} \int_{\N} \big( f \ast \crk_s \big)(z) \big( \drk_s \ast \crk_t \big) (z^{-1}y)\; d\mu_\N(z)  \frac{ds}{s}.
\end{split}
\end{equation}
Since $\crk,\drk \in \SV(N)$, an application of Lemma \ref{aoe} yields that
\begin{align*}
| \drk_s \ast \crk_t (z^{-1}y)| \lesssim_M \Bigl ( \frac{s}{t} \wedge \frac{t}{s} \Bigr )^{2M}\frac{(s^{-1}\wedge t^{-1})^\hdim }{\big(1 + (s^{-1}\wedge t^{-1})|z^{-1}y|\big)^{M}}.
\end{align*}
Inserting this into \eqref{pointwise rep} gives
\begin{align*}
|f\ast\crk_t (y) | &\lesssim_M \int_0^{\infty} \int_{\N} \Bigl ( \frac{s}{t} \wedge \frac{t}{s} \Bigr )^{2M}\frac{(s^{-1}\wedge t^{-1})^\hdim| f \ast \crk_s (z) |}{\big(1 + (s^{-1}\wedge t^{-1})|z^{-1}y|\big)^{M}}\;  d\mu_\N(z) \frac{ds}{s} \\
&\leq \int_0^{\infty} \int_{\N} \Bigl ( \frac{s}{t} \wedge \frac{t}{s} \Bigr )^{2M}\frac{s^{-\hdim} | f \ast \crk_s (z) |}{(1 + s^{-1}|z^{-1}y|)^{M}}    \; d\mu_\N (z)\frac{ds}{s}. \numberthis \label{first pointwise}
\end{align*}
We next split the proof into the cases $r \in (1,\infty)$ and $r \in (0,1]$. 
\\~\\
\textbf{Case 1.} Using the inequality  
\begin{align*}
 \bigl (1 +(s^{-1}\wedge t^{-1})|z^{-1}y| \bigr )^M 
 \geq  \Bigl ( \frac{s}{t} \wedge 1 \Bigr )^{M} (1+s^{-1}|z^{-1}y|)^M
 \geq  \Bigl ( \frac{s}{t} \wedge \frac{t}{s} \Bigr )^{M} (1+s^{-1}|z^{-1}y|)^M, 
\end{align*}
it follows from \eqref{first pointwise} that 
\begin{align} \label{second pointwise}
|f\ast\crk_t (y) | &\lesssim_M \int_0^{\infty} \int_{\N} \Bigl ( \frac{s}{t} \wedge \frac{t}{s} \Bigr )^{M}\frac{s^{-\hdim} | f \ast \crk_s (z) |}{(1 + s^{-1}|z^{-1}y|)^{M}}    \; d\mu_\N (z)\frac{ds}{s}. 
\end{align}
If $r \in (1, \infty)$,  we denote by $r' \in (1,\infty)$ its dual exponent and let $\varepsilon > 0$.
 We write
\begin{align*}
\frac{s^{-\hdim} | f \ast \crk_s (z) |}{(1 + s^{-1}|z^{-1}y|)^{M}} =  \frac{ s^{-\hdim /r} | f \ast \crk_s (z) |}{(1 + s^{-1}|z^{-1}y|)^{M-(Q+\varepsilon)/r'}} \frac{ s^{-\hdim/r'} }{(1 + s^{-1}|z^{-1}y|)^{(Q+\varepsilon)/r'}}, 
\end{align*}
and use H\"{o}lder's inequality for the integral over $N$ on the right-hand side of \eqref{second pointwise} to obtain
\begin{align} \label{eq:power r}
 |f\ast \crk_t(y)| \lesssim_{r, M} \int_0^{\infty}  \Bigl ( \frac{s}{t} \wedge \frac{t}{s} \Bigr )^{M} \left(\int_{\N} \frac{ s^{-\hdim } | f \ast \crk_s (z) |^r}{(1 + s^{-1}|z^{-1}y|)^{Mr-(Q+\varepsilon)r/r'}}\; d\mu_\N (z)\right)^{1/r} \frac{ds}{s}  
\end{align}
where it is used that $(1+|\cdot|)^{-(\hdim + \varepsilon)} \in L^1 (N)$.
Similarly, we write
\begin{align*}
\Bigl ( \frac{s}{t} \wedge \frac{t}{s} \Bigr )^{M}= \Bigl ( \frac{s}{t} \wedge \frac{t}{s} \Bigr )^{M-\varepsilon/r'}  \Bigl ( \frac{s}{t} \wedge \frac{t}{s} \Bigr )^{\varepsilon/r'} ,
\end{align*}
and use H\"{o}lder's inequality for the integral over $(0,\infty)$ on the right-hand side of \eqref{eq:power r} to obtain
\begin{align*}
 |f \ast \crk_t (y)| \lesssim_{r, M}
\left(\int_0^{\infty}  \Bigl ( \frac{s}{t} \wedge \frac{t}{s} \Bigr )^{(M-\varepsilon/r')r}  \int_{\N} \frac{ s^{-\hdim } | f \ast \crk_s (z) |^r}{(1 + s^{-1}|z^{-1}y|)^{[M-(Q+\varepsilon)/r']r}}  \; d\mu_\N (z) \frac{ds}{s}\right)^{1/r},
\end{align*}
where we used that $\int_0^\infty \big( \frac{s}{t} \wedge \frac{t}{s} \big )^{\varepsilon} \; \frac{ds}{s}<\infty$

Lastly, to obtain \eqref{eq:central_estimate_ii}, we choose $M' \in \mathbb{N}$ such that $M'-(Q+\varepsilon)/r' > M$. 
Then replacing $M$ by $M'$ in the previous estimate, we deduce that
\begin{align*}
 |f \ast \crk_t (y)| \lesssim_{r, M}
\left(\int_0^{\infty}  \Bigl ( \frac{s}{t} \wedge \frac{t}{s} \Bigr )^{Mr}  \int_{\N} \frac{ s^{-\hdim } | f \ast \crk_s (z) |^r}{(1 + s^{-1}|z^{-1}y|)^{Mr}}  \; d\mu_\N (z) \frac{ds}{s}\right)^{1/r}
\end{align*}
as the right-hand side decreases for increasing $M$.
\\~\\
\textbf{Case 2.} We next consider the case $r \in (0,1]$. 
Dividing both sides of the inequality \eqref{first pointwise} by $(1+t^{-1}|y^{-1}x|)^M$, and using the inequality
\begin{align*}
\big(1 + (s^{-1} \wedge t^{-1})|z^{-1}y|\big)^{M}\big(1 +t^{-1}|y^{-1}x|\big)^{M} 
\gtrsim \Bigl ( \frac{s}{t} \wedge 1\Bigr )^M(1+s^{-1}|z^{-1}x|)^M,
\end{align*}
we get
\begin{align*}
\frac{|f\ast\crk_t (y) |}{(1 +t^{-1}|y^{-1}x|)^{M}} 
\lesssim \int_0^{\infty} \int_{\N}\Bigl ( \frac{s}{t} \wedge \frac{t}{s} \Bigr )^{M } \frac{s^{-Q}| f \ast \crk_s (z) |}{(1 +s^{-1}|z^{-1}x|)^{M}}  \; d\mu_\N (z)\frac{ds}{s}.
\end{align*}
Hence, for $u > 0$,
\begin{align*}
 \left( \frac{u}{t} \wedge \frac{t}{u}\right)^M \frac{|f\ast\crk_t (y)|}{(1+ t^{-1}|y^{-1}x|)^M} \lesssim \int_0^{\infty}  \int_{\N}\left(\frac{u}{s} \wedge \frac{s}{u} \right)^{M} \frac{s^{-\hdim}| f \ast \crk_s (z) |}{(1 + s^{-1}|z^{-1}x|)^{M}}    \; d\mu_\N (z) \frac{ds}{s},
\end{align*}
where we have used the elementary inequality
$
\left( \frac{u}{t} \wedge \frac{t}{u}\right)^M \Bigl ( \frac{s}{t} \wedge \frac{t}{s} \Bigr )^{M} \leq  \left( \frac{u}{s} \wedge \frac{s}{u}\right)^M.
$

We next introduce the auxiliary function
\begin{align*}
\Lambda_{u,M}f(x) = \sup_{t>0} \sup_{y \in \N} \left( \frac{u}{t} \wedge \frac{t}{u}\right)^{M} \frac{|f\ast\crk_t (y)|}{(1+ t^{-1}|y^{-1}x|)^M}, \quad x \in N,
\end{align*}
for fixed $u >0$, $M \in \mathbb{N}$. Observe that
\begin{align} \label{r inequality}
\Lambda_{u,M}f(x) \lesssim_{M} \big(\Lambda_{u,M}f(x) \big)^{1-r}\int_0^{\infty} \int_{\N} \left(\frac{u}{s} \wedge \frac{s}{u} \right)^{Mr} \frac{s^{-\hdim}| f \ast \crk_s (z) |^r}{(1 + s^{-1}|z^{-1}x|)^{Mr}}    \; d\mu_\N (z)\frac{ds}{s}.
\end{align}
In order to derive the desired conclusion from \eqref{r inequality}, 
we first show the following claim.
\\~\\
\textbf{Claim:} There exists $M_f >0$ (depending on $f$) such that if $M \geq M_f$, then
\begin{align} \label{lambda finite}
   0< \Lambda_{u,M}f(x) < \infty, \quad u > 0, \; x \in N.
\end{align}

For proving the claim \eqref{lambda finite}, note that, by \eqref{bounds_of_convolution}, there exist  $k_f \in \mathbb{N}$ such that
\begin{align*} 
|f \ast \crk_t(y)|  &\lesssim_f (1+|y|)^{k_f} \| \crk_t \|_{(k_f)}\\
&\leq  (1+|y|)^{k_f} \sup_{[\alpha] \leq k_f}\sup_{\ w \in \N} (1+|w|)^{k_f} t^{-\hdim-[\alpha]}\left|(X^\alpha \crk )(t^{-1}w)\right|\\
& \lesssim_f (1+|y|)^{k_f} \sup_{[\alpha] \leq k_f}\sup_{w\in \N} (1+|w|)^{k_f} t^{-\hdim-[\alpha]} (1 + t^{-1}|w|)^{-k_f} \|\phi\|_{(k_f)}
\numberthis \label{eq:fphit}
\end{align*}
for $x, y \in \N$.
Since
\begin{align*}
(1+|y|)^{k_f}  \lesssim (t\vee 1)^{k_f} (1 +|x|)^{k_f} (1 +t^{-1}|y^{-1}x|)^{k_f}
\end{align*}
and
\begin{align*}
(1+|w|)^{k_f} (1 +t^{-1}|w|)^{-k_f} \leq (t\vee 1)^{k_f},
\end{align*}
we can estimate \eqref{eq:fphit} further by 
\begin{align*}
|f \ast \crk_t (y)| &\lesssim (t\vee 1)^{2k_f} (1 +|x|)^{k_f} (1 +t^{-1}|y^{-1}x|)^{k_f} \|\phi\|_{(k_f)}\sup_{[\alpha] \leq k_f}t^{-Q - [\alpha]} \\
& \leq (t \wedge t^{-1})^{-\max(2k_f, Q +k_f)} (1 +|x|)^{k_f} (1 +t^{-1}|y^{-1}x|)^{k_f} \|\phi\|_{(k_f)}.
\end{align*}
Hence, if $M > \max(2k_f, Q +k_f)$, then
\begin{align*}
\Lambda_{u,M}f(x)& \leq \sup_{t >0} \Bigl ( \frac{u}{t} \wedge \frac{t}{u} \Bigr )^{M_f} (t \wedge t^{-1})^{- M_f} (1 + |x|)^{k_f} \leq (u \vee u^{-1})^{M_f} (1 +|x|)^{k_f} 
< \infty.
\end{align*}
for all $u > 0$ and $x \in \N$. This proves the claim \eqref{lambda finite}.

Using the claim \eqref{lambda finite}, note that \eqref{r inequality} yields
\begin{align*} 
|f \ast \crk_u(x) |^{r} &\leq \Lambda_{u,M}f(x)^{r} \\
&\lesssim_{f,M}  \int_0^{\infty} \int_{\N}\left(\frac{u}{s} \wedge \frac{s}{u} \right)^{Mr}  \frac{s^{-\hdim}| f \ast \crk_s (z) |^r}{(1 + s^{-1}|z^{-1}x|)^{Mr}} \; d\mu_\N (z) \frac{ds}{s} \numberthis \label{r inequality 2}
\end{align*}
provided $M\geq M_f$ and $u >0$.
Since the right-hand side of \eqref{r inequality 2} decreases as $M$ increases, this proves the desired estimate \eqref{eq:central_estimate_ii} for all $M \in \mathbb{N}$ with an implicit depending on $f, r$ and $M$.

Lastly, we use \eqref{r inequality 2} to show that the estimate \eqref{eq:central_estimate_ii} also holds for a constant independent of $f$. For this, we may assume that the integral on the right-hand side of \eqref{eq:central_estimate_ii}  is finite.
By \eqref{r inequality 2}, we have
\begin{align*} 
|f \ast \phi_v (y) |^{r} \lesssim_{f, M} \int_0^{\infty}\int_{\N}  \left(\frac{v}{s} \wedge \frac{s}{v} \right)^{2Mr} \frac{s^{-\hdim}| f \ast \crk_s (z) |^r}{(1 + s^{-1}|z^{-1}y|)^{2Mr}}    \; d\mu_\N (z) \frac{ds}{s},
\end{align*}
and hence
\begin{align*} 
\left( \frac{u}{v}\wedge \frac{v}{u}\right)^{Mr}\frac{|f \ast \crk_v (y) |^{r} }{(1+v^{-1}|y^{-1}x|)^{Mr}}
 \lesssim_{f, M} \int_0^{\infty} \int_{\N} \left( \frac{u}{s}\wedge \frac{s}{u}\right)^{Mr}\frac{s^{-Q}| f \ast \crk_s (z) |^r}{(1+s^{-1}|z^{-1}x|)^{Mr}}    \; d\mu_\N (z)\frac{ds}{s},
\end{align*}
where we used the inequality
\begin{align*}
 \frac{1}{(1 + s^{-1}|z^{-1}y|)^{2Mr}(1+v^{-1}|y^{-1}x|)^{Mr}} \leq  \left(\frac{v}{s} \wedge \frac{s}{v} \right)^{-Mr} \frac{1}{(1 + s^{-1}|z^{-1}y|)^{Mr}(1+s^{-1}|y^{-1}x|)^{Mr}} .
\end{align*}
Taking the supremum over $v >0$ and $y \in \N$ gives
\begin{align*} 
\Lambda_{u,M}f(x)^r\lesssim_{f, M} \int_0^{\infty} \int_{\N} \left( \frac{u}{s}\wedge \frac{s}{u}\right)^{Mr}\frac{s^{-\hdim}| f \ast \crk_s (z) |^r}{(1+s^{-1}|z^{-1}x|)^{Mr}}   \; d\mu_\N (z) \frac{ds}{s}.
\end{align*}
Since the right-hand side of the estimate above is assumed to be finite, it follows also that $\Lambda_{u,M}f(x)^r < \infty$.
Using this in \eqref{r inequality}, we conclude that 
\begin{align*} 
|f\ast \crk_u (x)| &\leq \Lambda_{u,M}f(x)^{r} \\
&\lesssim_{M,r} \int_0^{\infty} \left(\frac{u}{s} \wedge \frac{s}{u} \right)^{Mr} \int_{\N} \frac{s^{-\hdim}| f \ast \crk_s (z) |^r}{(1 + s^{-1}|z^{-1}x|)^{Mr}}    \; d\mu_\N (z)\frac{ds}{s},
\end{align*}
with implicit constant independent of $f$. This shows the desired estimate \eqref{eq:central_estimate_ii} in the case $r \in (0,1]$ and completes the proof.
\end{proof}

\subsection{Maximal functions} \label{sec:maximal}
For $r \in (0, \infty)$, the associated Hardy-Littlewood maximal function $\mathcal{M}_r f$ of a function $f \in L^1_{\loc} (N)$ is defined by
\begin{align}
\mathcal{M}_r (f)(x) = \sup_{t > 0} \bigg ( \dashint_{B_t(x)} |f(y)|^r \, d\mu_\N (y) \bigg )^{1/r}, \quad x \in N. \label{HLmax}
\end{align}
For $r = 1$, we also simply write $\mathcal{M} = \mathcal{M}_1$. The simple identity
\[
[\mathcal{M}_r (f)]^r = \mathcal{M}(|f|^r)
\]
will often be used to employ maximal inequalities. In particular, it implies that
$
\| \mathcal{M}_r f \|_{L^p} \lesssim \| f \|_{L^p} 
$
for all $r < p \leq \infty$. We will also use the following \emph{majorant property}.

\begin{lemma} \label{lem:majorant}
Let $f \in L^1_{\loc} (\N)$. If $a > \hdim$, then there exists $C>0$ such that
\begin{align}
\int_\N \frac{t^{-\hdim}|f(y)|}{(1+t^{-1}|y^{-1}x|)^a}d\mu_\N(y)
\leq C \mathcal{M}(f)(x) 
\end{align}
for all $x \in \N$ and $t>0$.
\end{lemma}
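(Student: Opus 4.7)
The plan is to dyadically decompose the domain of integration into shells centered at $x$ with respect to the quasi-metric induced by $|\cdot|$, and on each shell bound the weight uniformly and the integral of $|f|$ by a ball integral controlled by $\mathcal{M}(f)(x)$.

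Specifically, I would write
\[
\int_\N \frac{t^{-\hdim}|f(y)|}{(1+t^{-1}|y^{-1}x|)^a}\,d\mu_\N(y)
= \int_{B_t(x)} \cdots \; d\mu_\N(y) + \sum_{k=1}^{\infty} \int_{B_{2^k t}(x) \setminus B_{2^{k-1}t}(x)} \cdots \; d\mu_\N(y).
\]
For the first term, the weight is bounded by $1$, while $t^{-\hdim} \int_{B_t(x)} |f(y)|\,d\mu_\N(y) \lesssim \mathcal{M}(f)(x)$ because $\mu_\N(B_t(x)) = t^\hdim \mu_\N(B_1(e_\N))$. For the $k$-th annular piece, I use that $|y^{-1}x| \geq 2^{k-1} t$ there to bound $(1 + t^{-1}|y^{-1}x|)^{-a} \lesssim 2^{-ka}$, enlarge the annulus to the ball $B_{2^k t}(x)$, and invoke the definition of $\mathcal{M}$ to obtain
\[
\int_{B_{2^k t}(x) \setminus B_{2^{k-1}t}(x)} \frac{t^{-\hdim}|f(y)|}{(1+t^{-1}|y^{-1}x|)^a}\,d\mu_\N(y)
\lesssim 2^{-ka} t^{-\hdim}\,\mu_\N(B_{2^k t}(x))\,\mathcal{M}(f)(x)
\asymp 2^{k(\hdim-a)}\,\mathcal{M}(f)(x).
\]

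Summing the geometric series $\sum_{k=1}^\infty 2^{k(\hdim-a)}$ converges precisely because $a > \hdim$, yielding the desired bound with $C$ depending only on $a$, $\hdim$ and $\mu_\N(B_1(e_\N))$. There is no real obstacle here; the only minor point of care is that the quasi-triangle inequality \eqref{quas} does not affect the argument since one only needs $\mu_\N(B_r(x)) = r^\hdim \mu_\N(B_1(e_\N))$ (from the dilation invariance noted in the preliminaries) together with the elementary annular lower bound on $|y^{-1}x|$, both of which hold verbatim for a general homogeneous quasi-norm.
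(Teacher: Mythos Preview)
Your proposal is correct and follows essentially the same dyadic-shell decomposition as the paper's own proof: split into $B_t(x)$ and the annuli $B_{2^k t}(x)\setminus B_{2^{k-1}t}(x)$, bound the weight by $2^{-ka}$ on the $k$-th shell, control the resulting ball average by $\mathcal{M}(f)(x)$, and sum the geometric series using $a>\hdim$. There is nothing to add.
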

\begin{proof}
For fixed $x \in \N$ and $t>0$, we write 
\begin{align*}
\int_\N \frac{t^{-\hdim}|f(y)|}{(1+t^{-1}|y^{-1}x|)^a}d\mu_\N(y)& = \int_{B_t(x)}\frac{t^{-\hdim}|f(y)|}{(1+t^{-1}|y^{-1}x|)^a} \; d\mu_\N(y) \nonumber\\
&\quad \quad + \sum_{j =1}^\infty\int_{B_{2^{j}t}(x)\backslash B_{2^{j-1} t}(x)} \frac{t^{-\hdim}|f(y)|}{(1+t^{-1}|y^{-1}x|)^a} \; d\mu_\N(y).
\end{align*}
The first integral can be estimated as 
\begin{align*}
\int_{B_t(x)}\frac{t^{-\hdim}|f(y)|}{(1+t^{-1}|y^{-1}x|)^a} \; d\mu_\N(y) \lesssim \dashint_{B_t(x)} |f(y)| \; d\mu_\N(y) \leq \mathcal{M}(f)(x),
\end{align*}
while, for each $j \in \mathbb{N}$,
\begin{align*} 
\int_{B_{2^{j}t}(x)\backslash B_{2^{j-1} t}(x)} \frac{t^{-\hdim}|f(y)|}{(1+t^{-1}|y^{-1}x|)^a} \; d\mu_\N(y)
&\lesssim 2^{-j(a-\hdim)} \dashint_{B_{2^{j}t}(x)}|f(y)| \;d\mu_\N (y) \\
&\leq 2^{-j
(a-\hdim)}\mathcal{M}(f)(x),
\end{align*}
where it used that 
$(1+t^{-1}|y^{-1}x|)^a \asymp 2^{ja}$ for $y \in B_{2^{j}t}(x)\backslash B_{2^{j-1} t}(x)$.
Since $a -\hdim >0$, a combination of both estimates easily yields the result.
\end{proof}

In addition to the Hardy-Littlewood maximal function, the Peetre-type maximal function of a distribution will play an essential role in the theory developed in this paper. Given $f \in \mathcal{S}_0'(N)$ and $a,t > 0$, the \emph{Peetre-type maximal function} of $f$ associated to $\phi \in \mathcal{S}_0(\N)$ is defined as
\begin{align*}
	(\crk_t^* f)_\PTpar(x) =
	\sup_{y \in \N} \frac{|f \ast \crk_t(y)| }{(1+t^{-1}|y^{-1}x|)^\PTpar},\quad x \in \N.
\end{align*}
Note that $f \ast \phi_t (x) \leq (\crk_t^* f)_\PTpar(x)$ for all $x \in N$ and $a,t>0$.
We record the following simple consequence of Lemma \ref{lem:central_estimate}.

\begin{corollary}
\label{coro:central_estimate}
Let $M, a, r \in (0,\infty)$ with $M >a$ and $f \in \SV'(\N)$. 
\begin{enumerate}
[\rm (i)]\item If $\crk$ satisfies the discrete Calder\'{o}n condition ~\eqref{eq:discrete_calderon}, then there exists a constant $C =C (M, r) > 0$ such that 
\begin{align} \label{eq:central_r_peetre_i}
  (\crk_{2^{-j}}^* f)_\PTpar(x) 
    \leq C \biggl ( \sum_{k \in \mathbb{Z}}
    2^{-|j-k|(M-a)r} \int_{\N} \frac{2^{k\hdim}|f \ast \crk_{2^{-k}}(z)|^r}{(1+2^{k}|z^{-1}x|)^{ar}} \; d\mu_\N(z) \biggr )^{1/r}
\end{align}
for all $j \in \mathbb{Z}$ and $x \in \N$.
\item If $\crk$ satisfies the continuous Calder\'{o}n condition~\eqref{eq:continuous_calderon}, then there exists a constant $C = C(M,r) > 0$ such that 
\begin{align} \label{eq:central_r_peetre_ii}
      (\crk_t^* f)_\PTpar(x) 
    \leq C \biggl ( \int_0^\infty
    \Bigl ( \frac{s}{t} \wedge \frac{t}{s} \Bigr )^{(M-a)r} \int_{\N} \frac{s^{-\hdim}|f \ast \crk_s(z)|^r}{(1+s^{-1}|z^{-1}x|)^{ar}} \; d\mu_\N (z)\frac{ds}{s} \biggr )^{1/r}
\end{align}
for all $t > 0$ and $x \in \N$.
\end{enumerate}
\end{corollary}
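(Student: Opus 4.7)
The plan is to deduce both parts of the corollary from Lemma~\ref{lem:central_estimate} by applying the latter at an arbitrary point $y \in \N$ in place of the point appearing on the left-hand side, then redistributing the ``cost'' factor $(1+t^{-1}|y^{-1}x|)^{-ar}$ into the weight inside the integral/sum, and finally taking the supremum over $y$. Since part~(i) is entirely analogous to (ii) with integrals replaced by sums and $t=2^{-j}$, $s=2^{-k}$, I will only sketch (ii).

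First I would fix $t>0$ and $x \in \N$ and apply Lemma~\ref{lem:central_estimate}(ii) at the point $y \in \N$ with the given $M,r$, and multiply both sides by $(1+t^{-1}|y^{-1}x|)^{-ar}$. The task then reduces to absorbing the factor $(1+t^{-1}|y^{-1}x|)^{-ar}$ into the inner weight $(1+s^{-1}|z^{-1}y|)^{-Mr}$ in a way that trades the point $y$ for $x$. Using the quasi-triangle inequality \eqref{quas} gives
\[
1+s^{-1}|z^{-1}x| \leq \cqn(1+s^{-1}|z^{-1}y|)(1+s^{-1}|y^{-1}x|),
\]
and rescaling the last factor by $1+s^{-1}|y^{-1}x| \leq (t/s \vee 1)(1+t^{-1}|y^{-1}x|)$ yields
\[
(1+s^{-1}|z^{-1}y|)^{-Mr}(1+t^{-1}|y^{-1}x|)^{-ar} \lesssim (t/s \vee 1)^{ar}\,(1+s^{-1}|z^{-1}y|)^{-(M-a)r}(1+s^{-1}|z^{-1}x|)^{-ar}.
\]

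Next I would discard the harmless factor $(1+s^{-1}|z^{-1}y|)^{-(M-a)r} \leq 1$ (which is the point at which the hypothesis $M>a$ enters), and observe the elementary identity
\[
\Bigl(\frac{s}{t}\wedge\frac{t}{s}\Bigr)^{Mr}\,\Bigl(\frac{t}{s}\vee 1\Bigr)^{ar}
\;\leq\;
\Bigl(\frac{s}{t}\wedge\frac{t}{s}\Bigr)^{(M-a)r},
\]
which one verifies by splitting into the cases $s\leq t$ and $s>t$. Combining these estimates, the right-hand side becomes independent of $y$, and taking the supremum over $y \in \N$ on the left-hand side recovers $(\crk_t^* f)_a(x)$ after taking $r$-th roots, giving \eqref{eq:central_r_peetre_ii}.

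I do not foresee a serious obstacle: the argument is purely book-keeping with the quasi-norm, and the hypothesis $M > a$ is exactly what is needed to make the factor $(1+s^{-1}|z^{-1}y|)^{-(M-a)r}$ harmless after moving a portion $a$ of the exponent to the weight centered at $x$. The only mild care required is in the last elementary inequality relating the min-max powers, where the two cases $s\leq t$ and $s\geq t$ must be checked separately.
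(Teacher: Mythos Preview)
Your argument is correct and is essentially the same as the paper's: apply Lemma~\ref{lem:central_estimate} at a generic point $y$, divide by the Peetre weight $(1+t^{-1}|y^{-1}x|)^{ar}$, use a Peetre-type inequality to transfer the weight from $y$ to $x$ at the cost of a scale-mismatch factor, combine that factor with the existing $\bigl(\frac{s}{t}\wedge\frac{t}{s}\bigr)^{Mr}$, and take the supremum over $y$. The paper phrases the key step as the single ``fundamental inequality'' $(1+2^k|z^{-1}y|)^{-Mr}(1+2^j|y^{-1}x|)^{-ar} \lesssim 2^{|j-k|ar}(1+2^k|z^{-1}x|)^{-ar}$, but your more explicit decomposition via $(t/s\vee 1)^{ar}$ and the case analysis amounts to the same computation.
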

\begin{proof}
Dividing both sides of \eqref{eq:central_estimate_i} by $(1 +2^{j}|y^{-1}x|)^{ar}$, taking the supremum over $y \in \N$, and using the fundamental inequality
\begin{align*}
\frac{1}{(1 +2^k|z^{-1}y|)^{Mr}(1+2^j|y^{-1}x|)^{ar}}
\lesssim \frac{2^{|j -k|ar}}{(1 +2^k |z^{-1}x|)^{ar}} ,
\end{align*}
directly gives \eqref{eq:central_r_peetre_i}. The proof of \eqref{eq:central_r_peetre_ii} is similar.
\end{proof}

\subsection{Vector-valued inequalities}
For $p,q\in (0,\infty]$, the spaces $L^p(\ell^q)$ and $\ell^q(L^p)$ are defined to consist of all sequences $(f_j)_{j=-\infty}^\infty$ of measurable functions  $f_j : \N \to \mathbb{C}$ with finite
quasi-norms
\begin{align*}
\big\|(f_j)_{j=-\infty}^\infty\big\|_{L^p(\ell^q)} :=  \Biggl \| \Biggl( \, \sum_{j=-\infty}^\infty |f_j (\, \cdot \,)|^q \Biggr)^{\frac{1}{q}} \Biggr \|_{L^p}
\end{align*}
and
\begin{align*}
\big\|(f_j)_{j=-\infty}^\infty\big\|_{\ell^q(L^p)} :=\Biggl ( \, \sum_{j=-\infty}^\infty
\|f_j\|^q_{L^p} \Biggr)^{\frac{1}{q}},
\end{align*}
respectively (with the usual modifications for $q = \infty$).

We start by recalling the following boundedness property of the Hardy-Littlewood maximal operator.

\begin{lemma} \label{vector-valued HL}
Let $r >0$ and $p, q \in (0, \infty]$. If $r < p <\infty$ and $r < q \leq \infty$ (resp. $r < p \leq \infty$ and $0<q\leq \infty$), then the Hardy-Littlewood maximal operator $\mathcal{M}_r$ is bounded
on $L^p(\ell^q)$ (resp. $\ell^q(L^p)$).
\end{lemma}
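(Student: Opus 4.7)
The plan is to reduce both assertions to the scalar case $r=1$ via the pointwise identity $\mathcal{M}_r f = [\mathcal{M}(|f|^r)]^{1/r}$ recorded in Section \ref{sec:maximal}, after which one invokes either the scalar $L^{p/r}$-boundedness of $\mathcal{M}$ or the classical Fefferman--Stein vector-valued maximal inequality on the space of homogeneous type $(\N, |\cdot|, \mu_\N)$.

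For the $\ell^q(L^p)$-bound, I would argue componentwise. Since $p/r \in (1, \infty]$, the scalar boundedness of $\mathcal{M}$ on $L^{p/r}$ (trivially for $p = \infty$, since then $\mathcal{M}_r f \leq \|f\|_{L^\infty}$ pointwise) yields
\begin{align*}
\|\mathcal{M}_r f_j\|_{L^p} = \|\mathcal{M}(|f_j|^r)\|_{L^{p/r}}^{1/r} \lesssim \||f_j|^r\|_{L^{p/r}}^{1/r} = \|f_j\|_{L^p}
\end{align*}
with an implicit constant independent of $j$. Taking $\ell^q$-quasi-norms on both sides (with the usual modification when $q = \infty$) then yields the claim.

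For the $L^p(\ell^q)$-bound, I would use the elementary identity
\begin{align*}
\left\|(h_j^{1/r})\right\|_{L^p(\ell^q)} = \left\|(h_j)\right\|_{L^{p/r}(\ell^{q/r})}^{1/r},
\end{align*}
valid for any sequence $(h_j)$ of nonnegative measurable functions, applied with $h_j := |f_j|^r$ (to both the left- and right-hand side of the target inequality). This reduces matters to showing that $\mathcal{M}$ is bounded on $L^{\tilde p}(\ell^{\tilde q})$ for $\tilde p := p/r \in (1, \infty)$ and $\tilde q := q/r \in (1, \infty]$. In this regime the inequality is precisely the classical Fefferman--Stein vector-valued maximal inequality, whose validity on arbitrary spaces of homogeneous type in the sense of Coifman--Weiss is by now standard. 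Since $(\N, |\cdot|, \mu_\N)$ is such a space, the result applies directly.

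The only real obstacle is pinpointing an adequate reference for the Fefferman--Stein inequality in the generality of spaces of homogeneous type; alternatively one can reproduce the classical proof via a Calder\'on--Zygmund decomposition (available on $(\N,|\cdot|,\mu_\N)$ because of the doubling property) combined with interpolation between the weak-type $(1,1)$ bound and the trivial $L^\infty$-bound. Once that ingredient is secured, the reduction outlined above is purely formal.
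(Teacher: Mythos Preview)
Your proposal is correct and matches the paper's approach exactly: the paper does not give a detailed proof either, but simply remarks that the $L^p(\ell^q)$ case is a special case of the Fefferman--Stein vector-valued maximal inequality on spaces of homogeneous type (citing \cite{grafakos2009vector, S}), and that the $\ell^q(L^p)$ case follows easily from the scalar-valued case. Your reduction via $\mathcal{M}_r f = [\mathcal{M}(|f|^r)]^{1/r}$ is precisely the mechanism the paper has in mind, just spelled out in more detail.
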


The assertions for $L^p(\ell^q)$ are a special case of the \emph{Fefferman-Stein maximal inequalities} for general spaces of homogeneous type (see, e.g., \cite{grafakos2009vector, S} for proofs), while the assertions for $\ell^q(L^p)$ easily follow from the scalar-valued case.

Given a sequence $(g_j)_{j = - \infty}^{\infty}$ of measurable functions $g_j : N \to [0, \infty)$ and $\delta > 0$, we define the functions
\begin{align} \label{eq:hdl}
h^{(\delta)}_\ell   := \sum_{j =-\infty}^\infty 2^{-|j-\ell|\delta} g_j, \quad  \ell \in \mathbb{Z}.
\end{align}
Some basic properties of these functions are collected in the following lemma.
Its proof is an adaption of \cite[Lemma 2]{Rychkov} and included here for the sake of being self-contained. 

\begin{lemma}[\cite{Rychkov}] \label{Ul} 
Let $p,q \in (0, \infty]$ and $\delta >0$. For a sequence $(g_j)_{j=-\infty}^\infty$ of measurable functions $g_j : \N \to [0,\infty)$, define $(h^{(\delta)}_{\ell})_{\ell = - \infty}^{\infty}$ as in Equation \eqref{eq:hdl}. 

Then there is a constant $C=C(p,q,\delta) > 0$ such that
\begin{align}
\big\|(h^{(\delta)}_{\ell})_{\ell = - \infty}^{\infty} \big\|_{L^p(\ell^q)} &\leq C \big\|(g_j)_{j=-\infty}^\infty\big\|_{L^p(\ell^q)}, \label{eq:Ry_Lp_lq} \\
\big\|(h^{(\delta)}_{\ell})_{\ell = - \infty}^{\infty} \big\|_{\ell^q(L^p)}& \leq C \big\|(g_j)_{j=-\infty}^\infty\big\|_{\ell^q(L^p)}. \label{eq:Ry_lq_Lp}
\end{align}
In addition, there exists $C' = C'(q, \delta) > 0$ such that the pointwise estimate 
\begin{align}\label{Rylq}
\big\| (h^{(\delta)}_{\ell} (x))_{\ell = - \infty}^{\infty} \big\|_{\ell^q}& \leq C' \big\| ( g_j(x) )_{j = -\infty}^\infty \big\|_{\ell^q}
\end{align}
holds for all $x \in \N$.
\end{lemma}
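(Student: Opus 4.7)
The plan is to first establish the pointwise $\ell^q$ estimate \eqref{Rylq}, from which the $L^p(\ell^q)$ bound \eqref{eq:Ry_Lp_lq} is immediate upon taking the $L^p$-norm in $x$. The $\ell^q(L^p)$ bound \eqref{eq:Ry_lq_Lp} is then obtained by first ``moving'' the $L^p$-norm through the sum defining $h^{(\delta)}_\ell$ (using Minkowski's inequality if $p \ge 1$, or the $p$-subadditivity of $\|\cdot\|_{L^p}^p$ if $p<1$) and then applying a numerical version of the same $\ell^q$ argument to the sequence $(\|g_j\|_{L^p})_{j\in\Z}$.

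For the pointwise estimate, fix $x \in \N$ and write $b_j := g_j(x)$, $a_\ell := h^{(\delta)}_\ell(x) = \sum_j 2^{-|j-\ell|\delta} b_j$. If $q \ge 1$, Young's inequality for discrete convolution gives $\|a\|_{\ell^q} \le \bigl\|(2^{-|k|\delta})_k\bigr\|_{\ell^1} \|b\|_{\ell^q}$, and $\sum_{k \in \Z} 2^{-|k|\delta} = \frac{1+2^{-\delta}}{1-2^{-\delta}} < \infty$. If $0 < q < 1$, use the elementary inequality $\bigl(\sum_j c_j\bigr)^q \le \sum_j c_j^q$ (valid for $c_j \ge 0$) to obtain $a_\ell^q \le \sum_j 2^{-|j-\ell|\delta q} b_j^q$, and then sum over $\ell$ and interchange the order of summation, noting that $\sum_{\ell \in \Z} 2^{-|j-\ell|\delta q}$ is a finite constant independent of $j$. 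In either case we obtain \eqref{Rylq} with $C' = C'(q,\delta)$, and \eqref{eq:Ry_Lp_lq} follows by taking the $L^p$-norm.

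For the $\ell^q(L^p)$ bound, set $r := \min\{1,p\} \in (0,1]$. Since $\|\cdot\|_{L^p}^r$ is subadditive on nonnegative functions (Minkowski if $r = 1$, and $\|\sum f_j\|_{L^p}^p \le \sum \|f_j\|_{L^p}^p$ if $r = p < 1$), we get
\[
\bigl\|h^{(\delta)}_\ell\bigr\|_{L^p}^r \le \sum_{j \in \Z} 2^{-|j-\ell|\delta r} \|g_j\|_{L^p}^r.
\]
Writing $\widetilde{b}_j := \|g_j\|_{L^p}^r$ and $\widetilde{a}_\ell := \|h^{(\delta)}_\ell\|_{L^p}^r$, this is exactly a convolution-type bound with the geometrically decaying kernel $(2^{-|k|\delta r})_k$. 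Applying the same dichotomy as in the pointwise step (now with exponent $q/r$ in place of $q$ and $\delta r$ in place of $\delta$) yields
\[
\Bigl(\sum_{\ell} \widetilde{a}_\ell^{\,q/r}\Bigr)^{r/q} \le C(p,q,\delta) \Bigl(\sum_{j} \widetilde{b}_j^{\,q/r}\Bigr)^{r/q},
\]
and raising both sides to the power $1/r$ gives \eqref{eq:Ry_lq_Lp}, since $\widetilde{a}_\ell^{q/r} = \|h^{(\delta)}_\ell\|_{L^p}^q$ and $\widetilde{b}_j^{q/r} = \|g_j\|_{L^p}^q$.

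No serious obstacle arises; the only delicate point is careful bookkeeping in the quasi-Banach cases $p < 1$ or $q < 1$, where the ordinary Minkowski/Young inequalities must be replaced by their $r$-subadditive analogues. Everything else reduces to the summability of the geometric kernel $(2^{-|k|\delta})_k$.
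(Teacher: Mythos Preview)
Your proposal is correct and follows essentially the same approach as the paper: both establish the pointwise $\ell^q$ estimate first (via Minkowski/Young for $q\ge 1$ and $q$-subadditivity for $q<1$), deduce \eqref{eq:Ry_Lp_lq} immediately, and then handle \eqref{eq:Ry_lq_Lp} by first pushing the $L^p$-norm through the sum and reducing to the same $\ell^q$ argument. Your packaging with $r=\min\{1,p\}$ is a slight unification of what the paper writes out as two separate cases, but the substance is identical.
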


\begin{proof}
For simplicity, we write $h_{\ell} := h^{(\delta)}_{\ell}$ throughout the proof. 

We start by showing \eqref{Rylq}. If $q \geq  1$, then a change of variable and Minkowski's inequality yields
\begin{align*}
\big\| ( h_\ell(x) )_{\ell = -\infty}^\infty \big\|_{\ell^q} 
&=\biggl (\sum_{\ell =-\infty}^\infty \biggl |\sum_{j =-\infty}^\infty 2^{-|j|\delta} g_{j+\ell}(x) \biggr |^q \biggr )^{1/q} \leq
\sum_{j=-\infty}^\infty \biggl ( \sum_{\ell =-\infty}^\infty \Bigl |2^{-|j|\delta} g_{j+\ell}(x)   \Bigr |^q \biggr )^{1/q} \\
&\leq \sum_{j=-\infty}^\infty 2^{-|j|\delta} \biggl (  \sum_{\ell =-\infty}^\infty \bigl |  g_{j+\ell}(x)   \bigr |^q \biggr )^{1/q} 
= \frac{2^\delta +1}{2^\delta -1} \big\| ( g_j(x) )_{j = -\infty}^\infty \big\|_{\ell^q}.
\end{align*}
If $0< q <1$, then 
\begin{align*}
|h_\ell (x)|^q \leq \sum_{j =-\infty}^\infty 2^{-|j-\ell|\delta q} |g_j(x)|^q.
\end{align*}
Therefore, summing over $\ell$ and changing the order of the summations gives
\begin{align*}
 \big\| ( h_\ell(x) )_{\ell = -\infty}^\infty \big\|_{\ell^q}^q   \leq \sum_{j =-\infty}^{\infty}  |g_j(x)|^q \sum_{\ell =-\infty}^{\infty} 2^{-|j-\ell|\delta q}   = \frac{2^{\delta q} +1}{2^{\delta q} -1}  \big\| ( g_j(x) )_{j = -\infty}^\infty \big\|_{\ell^q}^q, 
\end{align*}
which shows \eqref{Rylq}.

The inequality \eqref{eq:Ry_Lp_lq} follows immediately from \eqref{Rylq}.

It remains to prove \eqref{eq:Ry_lq_Lp}. If $p \geq 1$, then Minkowski's inequality yields
\begin{align*}
\|h_\ell\|_{L^p}  \leq \sum_{j =-\infty}^\infty 2^{-|j-\ell|\delta} \|g_j \|_{L^p},
\end{align*}
so that an argument similar to proving \eqref{Rylq} yields \eqref{eq:Ry_lq_Lp}.
If $0< p <1$, then 
\begin{align*}
\|h_\ell\|_{L^p}^p \leq \sum_{j =-\infty}^\infty 2^{-|j-\ell|\delta p}\|g_j\|_{L^p}^p.    
\end{align*}
Again, arguing similarly as in the proof of \eqref{Rylq} yields
\begin{align*}
\big\|\big(\|h_\ell\|_{L^p}^p\big)_{\ell =-\infty}^\infty \big\|_{\ell^{q/p}} \leq C(p,q,\delta) \big\|\big(\|g_j\|_{L^p}^p\big)_{j =-\infty}^\infty \big\|_{\ell^{q/p}} ,
\end{align*}
which  gives \eqref{eq:Ry_lq_Lp}.
\end{proof}

In addition to the spaces $L^p(\ell^q)$ and $\ell^q(L^p)$ defined above, we will also use certain localized versions of these spaces, where averages are taken over small scales. For defining these spaces, we will use a family of ``dyadic balls'' with properties as in the following lemma. For this, we fix, once and for all, a maximal family  $\big\{ B_{(2 \cqn)^{-1}}(x_{l}) \big\}_{l \in \mathbb{N}}$ of disjoint balls,
and set 
\begin{equation} \label{Bkl}
B^k_l := B_{2^{k}} \big(\delta_{2^{k}}(x_{l}) \big).
\end{equation}
for $k \in \mathbb{Z}$.
The following lemma corresponds to \cite[Lemma 7.14]{FS}.

\begin{lemma}[\cite{FS}] \label{dyba} 
There exist integers $m, m', m''$ such that, for each $k\in \mathbb{Z}$,
\begin{enumerate}
[\rm (i)] \item  $\bigcup\limits_{l \in \mathbb{N}}B_l^k =\N$; 
\item
$\sum\limits_{l \in \mathbb{N}}\chi_{B^k_l}(x) \leq m$ for all $x \in \N$;
\item if $l \in \mathbb{N}$ and $k' \geq k$, there are at most $m'$ values of $l'$ for which $B^{k}_l \cap B^{k'}_{l'} \neq \emptyset$;
\item if $l \in \mathbb{N}$ and $k' < k$, there are at most  $m''2^{(k-k')\hdim}$ values of 
$l'$ for which 
$B^{k}_l \cap B^{k'}_{l'} \neq \emptyset$.
\end{enumerate}
\end{lemma}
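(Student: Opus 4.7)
The plan is to derive all four properties from the combination of the disjointness of the underlying family $\{B_{(2\cqn)^{-1}}(x_l)\}_{l \in \NN}$ and the fact that dilations act as scalings on homogeneous balls. The key observation is that
\[
B_l^k = \delta_{2^k}\bigl(B_1(x_l)\bigr),
\]
since $y \in B_{2^k}(\delta_{2^k}(x_l))$ if and only if $|x_l^{-1}\delta_{2^{-k}}(y)| < 1$. Thus all four claims reduce, via the dilation $\delta_{2^k}$, to the corresponding statements about the family $\{B_1(x_l)\}_{l \in \NN}$ and, more generally, $\{B_{2^{k'-k}}(x_l)\}_{l \in \NN}$, together with the disjointness of $\{B_{(2\cqn)^{-1}}(x_l)\}_{l \in \NN}$.

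For (i), I would argue by contradiction using maximality: if some $y \in \N$ lay outside every $B_1(x_l)$, then for every $l$ we would have $|x_l^{-1}y| \geq 1$, and the quasi-triangle inequality \eqref{quas} then forces $B_{(2\cqn)^{-1}}(y) \cap B_{(2\cqn)^{-1}}(x_l) = \emptyset$ for every $l$, contradicting the maximality of $\{B_{(2\cqn)^{-1}}(x_l)\}$. Applying $\delta_{2^k}$ yields (i). For (ii), I would first note that disjointness of $\{B_{(2\cqn)^{-1}}(x_l)\}$ gives a quantitative separation: $|x_l^{-1}x_{l'}| \geq (2\cqn)^{-1}$ for $l \neq l'$. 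If $x \in \bigcap_{l \in I} B_1(x_l)$, the quasi-triangle inequality places every $x_l$, $l \in I$, inside $B_\cqn(x)$, hence the disjoint balls $B_{(2\cqn)^{-1}}(x_l)$, $l \in I$, all lie inside a fixed ball $B_R(x)$ of radius $R = R(\cqn)$. The homogeneity of the Haar measure then caps $|I|$ by $\mu_\N(B_R(x))/\mu_\N(B_{(2\cqn)^{-1}})$, a constant $m$ depending only on $\cqn$ and $\hdim$; this bound transfers to any $k$ since $\delta_{2^k}$ preserves intersection multiplicities.

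For (iii) and (iv), I would use the same volume-packing principle at the correct scale. Suppose $B_l^k \cap B_{l'}^{k'} \neq \emptyset$. The quasi-triangle inequality then forces the centers $c_l^k := \delta_{2^k}(x_l)$ and $c_{l'}^{k'} := \delta_{2^{k'}}(x_{l'})$ to satisfy
\[
|(c_l^k)^{-1} c_{l'}^{k'}| \leq \cqn(2^k + 2^{k'}) \leq 2\cqn \cdot 2^{k \vee k'}.
\]
Meanwhile, the scaled disjointness gives that the centers $\{c_{l'}^{k'}\}_{l' \in \NN}$ are pairwise separated by at least $2^{k'}/(2\cqn)$, so the balls $B_{2^{k'}/(4\cqn^2)}(c_{l'}^{k'})$ are pairwise disjoint. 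Hence all the relevant centers $c_{l'}^{k'}$ lie in a single ball of radius $\lesssim_\cqn 2^{k \vee k'}$ around $c_l^k$, and a volume comparison bounds their number by
\[
\frac{\mu_\N\bigl(B_{C 2^{k \vee k'}}(c_l^k)\bigr)}{\mu_\N\bigl(B_{2^{k'}/(4\cqn^2)}\bigr)} \asymp_\cqn \bigl(2^{(k \vee k')-k'}\bigr)^{\hdim}.
\]
When $k' \geq k$ this ratio is bounded by a constant $m'$, giving (iii); when $k' < k$ it is bounded by $m'' \cdot 2^{(k-k')\hdim}$, giving (iv). The only delicate point is keeping track of the constants in the quasi-triangle inequality \eqref{quas} throughout, but since $\cqn$ is fixed once and for all, every constant ultimately depends only on $\cqn$ and $\hdim$, so the bounds $m, m', m''$ are uniform in $k, k'$.
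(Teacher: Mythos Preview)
Your proof sketch is correct. The paper does not actually prove this lemma; it is stated with a citation to \cite[Lemma~7.14]{FS}, and the paper only includes a brief remark explaining why property~(i) continues to hold for the slightly modified dyadic balls $B_l^k = B_{2^k}(\delta_{2^k}(x_l))$ used here (as opposed to the balls $B_{(2\gamma)^k}(\delta_{(2\gamma)^k}(x_l))$ in \cite{FS}). Your argument---reducing via the dilation $\delta_{2^k}$ to the unit-scale family $\{B_1(x_l)\}$, exploiting maximality for the covering property, and using the separation $|x_l^{-1}x_{l'}| \geq (2\gamma)^{-1}$ together with a volume-packing count for the overlap bounds---is precisely the standard proof one would find in \cite{FS}, so in substance you have reconstructed the cited argument rather than taken a different route.
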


\begin{remark}
Strictly speaking, the dyadic balls as defined in \cite{FS} are of the form $B^k_l= B_{(2\gamma)^k}(\delta_{(2\gamma)^k}(x_l))$ rather than \eqref{Bkl}, as used in this paper. However, the dyadic balls defined by \eqref{Bkl} still satisfy all the properties of \cite[Lemma 7.14]{FS} stated as Lemma \ref{dyba} above. For example, given any $x \in \N$, the maximality of the balls $\big\{ B_{(2 \cqn)^{-1}}(x_{l}) \big\}_{l \in \mathbb{N}}$ implies that there exists $l \in \mathbb{N}$ such that the distance from $\delta_{2^{-k}}(x)$ to $B_{(2\gamma)^{-1}}(x_l)$
is less than $(2\gamma)^{-1}$; that is, there exists $y \in B_{(2\gamma)^{-1}}(x_l)$ such that $|y^{-1}\delta_{2^{-k}}(x)| < (2\gamma)^{-1}$. Therefore,
\begin{align*}
\big| \big(\delta_{2^k}(x_l)\big)^{-1} x\big|&=2^k \big| x_l^{-1} \delta_{2^{-k}}(x)\big| \leq 2^k\gamma\left(| (x_l^{-1} y|+ \big|y^{-1}\delta_{2^{-k}}(x)\big|\right) \\
&<2^k \gamma\big( (2\gamma)^{-1} + (2\gamma)^{-1}\big) =2^k,
\end{align*}
which yields that $x \in B^k_l$.  The properties (ii)--(iv) in Lemma \ref{dyba} can also be verified  by a similar argument as in \cite{FS}.
\end{remark}

We will denote the collection of all dyadic balls introduced above  by  
    \[ \mathcal{B} = \big\{ B_l^k: k \in \mathbb{Z}, l \in \mathbb{N} \big\}, \] 
    and set, for each $k \in \mathbb{Z}$, 
\begin{equation*}
\mathcal{B}_k = \{B^{k}_l: l \in \mathbb{N}\}.
\end{equation*}
Note that
$
\mathcal{B}=\cup_{k \in\mathbb{Z}} \; \mathcal{B}_k.$
For $B \in \mathcal{B}$, we denote by $x_B$ and $r_B$ the center and radius of $B$, respectively, and we use $k_B$ to denote the (unique) integer $k$ for which $B \in 
\mathcal{B}_k$.

Following \cite{Rychkov}, for any given sequence $(g_{j})_{j=-\infty}^{\infty}$ of measurable functions $g_j : N \to \mathbb{C}$, we set
\begin{equation*}
\big\|(g_{j})_{j=-\infty}^{\infty}\big\|_{\mathcal{C}_{q}}: = \sup_{k \in \mathbb{Z}, l \in \mathbb{N}} \left( \dashint_{B^k_l}
\sum_{j \geq -k } \big|g_j (x) \big|^q \; d\mu_\N(x)\right)^{1/q}
\end{equation*}
if $q < \infty$, and 
\begin{equation*}
\big\|(g_{j})_{j=-\infty}^{\infty}\big\|_{\mathcal{C}_{\infty}}: =\sup_{k \in \mathbb{Z}, l \in \mathbb{N}} \ \sup_{j \geq -k }  \; \dashint_{{B}^k_l}
 \big|g_j (x)\big| \; d\mu_\N(x),
\end{equation*}
if $q = \infty$. 
The following lemma provides an analogue of Lemma \ref{Ul}  for the spaces $\mathcal{C}_q$.

\begin{lemma} \label{infi}
Let $q \in (0, \infty]$ and $\delta >0$. For a sequence $(g_{j})_{j \in \mathbb{Z}}$ of
measurable functions $g_j : \N \to [0,\infty)$, define $(h^{(\delta)}_{\ell})_{\ell = - \infty}^{\infty}$ as in Equation \eqref{eq:hdl}.

If there exists some constant $C_1 > 0$ such that, for all $j \in \mathbb{Z}$ and all $B \in \mathcal{B}_{-j}$,
\begin{equation} \label{supinf}
\esssup_{x \in B } |g_{j} (x)| \leq C_1 \, \mathop{\mathrm{ess\,inf}}_{x \in B} |g_{j}(x)|,
\end{equation}
then there is a constant $C_2 = C_2 (\delta, q,C_1) > 0$ such that
\begin{align} \label{eq:bounded on Cq}
\big\| (h^{(\delta)}_{\ell})_{\ell = - \infty}^{\infty} \big\|_{\mathcal{C}_{q}} &\leq C_2
\big\|(g_{j})_{j=-\infty}^{\infty}\big\|_{\mathcal{C}_{q}}.
\end{align}
\end{lemma}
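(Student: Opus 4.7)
The plan is to reduce everything to fixing a single dyadic ball $B = B_l^k \in \mathcal{B}_k$ and estimating
\[
I_B^q := \dashint_B \sum_{\ell \geq -k} |h_\ell^{(\delta)}(x)|^q \; d\mu_\N(x),
\]
(with the obvious modification if $q = \infty$), splitting the sum defining $h_\ell^{(\delta)}$ according to whether $j \geq -k$ or $j < -k$. So I write $h_\ell^{(\delta)} = h_\ell^{(1)} + h_\ell^{(2)}$, where $h_\ell^{(1)} = \sum_{j \geq -k} 2^{-|j-\ell|\delta} g_j$ and $h_\ell^{(2)} = \sum_{j < -k} 2^{-|j-\ell|\delta} g_j$, and treat each piece separately.

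For $h^{(1)}$, the condition \eqref{supinf} is not needed: applying the pointwise inequality \eqref{Rylq} from Lemma \ref{Ul} to the truncated sequence $\{g_j \chi_{\{j \geq -k\}}\}_{j \in \Z}$ yields, for a.e.\ $x \in B$, the bound $\sum_{\ell \in \Z} |h_\ell^{(1)}(x)|^q \leq (C')^q \sum_{j \geq -k} |g_j(x)|^q$ (or the analogous $\ell^q$-norm inequality if $q \geq 1$). Restricting to $\ell \geq -k$ on the left, averaging over $B$, and invoking the definition of the $\mathcal{C}_q$-norm directly gives a bound by $C \| (g_j) \|_{\mathcal{C}_q}$.

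The main obstacle is the piece $h^{(2)}$, since the indices $j < -k$ are precisely the ones not controlled by averages on $B$; this is where hypothesis \eqref{supinf} enters. For fixed $j < -k$ we have $-j > k$, so by Lemma \ref{dyba}(i) almost every $x \in B$ lies in some $\tilde{B}_{x,j} \in \mathcal{B}_{-j}$, and \eqref{supinf} gives
\[
|g_j(x)| \leq \esssup_{\tilde{B}_{x,j}} |g_j| \leq C_1 \essinf_{\tilde{B}_{x,j}} |g_j| \leq C_1 \left( \dashint_{\tilde{B}_{x,j}} |g_j|^q \; d\mu_\N \right)^{1/q} \leq C_1 \|(g_j)\|_{\mathcal{C}_q},
\]
where the last step uses that $\tilde{B}_{x,j} \in \mathcal{B}_{-j}$ allows the single term $|g_j|^q$ to be dominated by $\sum_{j' \geq -(-j)} |g_{j'}|^q$ inside the definition of $\|\cdot\|_{\mathcal{C}_q}$. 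Thus $|g_j(x)| \leq C_1 \|(g_j)\|_{\mathcal{C}_q}$ uniformly in $x \in B$ and $j < -k$.

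With this uniform bound in hand, the remaining estimate for $h^{(2)}$ becomes a purely geometric-series computation: for $\ell \geq -k$ and $j < -k$ we have $|j - \ell| = \ell - j$, so
\[
|h_\ell^{(2)}(x)| \leq C_1 \|(g_j)\|_{\mathcal{C}_q} \sum_{j < -k} 2^{-(\ell - j)\delta} \leq C_1 (1-2^{-\delta})^{-1} \|(g_j)\|_{\mathcal{C}_q} \, 2^{-(\ell + k)\delta},
\]
and summing $2^{-q(\ell+k)\delta}$ over $\ell \geq -k$ yields a finite constant depending only on $q$ and $\delta$. Putting the two pieces together with the subadditivity inequality $|a + b|^q \leq 2^{(q-1)_+}(|a|^q + |b|^q)$ produces the required bound for $I_B$, uniformly in $B \in \mathcal{B}$, which is precisely \eqref{eq:bounded on Cq}. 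The case $q = \infty$ is handled by the same splitting: for $h^{(1)}$ one uses Minkowski directly on the definition of $\| \cdot \|_{\mathcal{C}_\infty}$, while the argument for $h^{(2)}$ is unchanged modulo replacing $q$-averages with $L^1$-averages.
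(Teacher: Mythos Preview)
Your proof is correct and follows essentially the same approach as the paper: both fix a dyadic ball $B \in \mathcal{B}_k$ and split the sum defining $h_\ell^{(\delta)}$ according to $j \geq -k$ versus $j < -k$, handling the latter via the hypothesis \eqref{supinf} to obtain the uniform pointwise bound $|g_j(x)| \lesssim C_1 \|(g_j)\|_{\mathcal{C}_q}$ (equivalently, the paper bounds $\dashint_B |g_j|^q$ the same way). Your organization is slightly more streamlined than the paper's: for the $q<\infty$ case the paper first establishes an intermediate pointwise ``Claim'' via H\"older/subadditivity (introducing an auxiliary $\varepsilon>0$), whereas you split $h_\ell$ itself into $h_\ell^{(1)}+h_\ell^{(2)}$ and dispatch $h_\ell^{(1)}$ by a direct appeal to \eqref{Rylq} of Lemma~\ref{Ul}, which avoids reproving that inequality.
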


\begin{proof}
Throughout, we simply write $h_{\ell} = h_{\ell}^{(\delta)}$.
We split the proof into the cases $q = \infty$ and $q < \infty$.
\\~\\
\textbf{Step 1}. Suppose first that $q =\infty$. Fixing $k \in \mathbb{Z}$ and an arbitrary  dyadic ball $B \in \mathcal{B}_k$, it suffices to show that, for every $\ell \geq -k$, 
\begin{align} \label{desir}
 \dashint_{{B}}|h_{\ell }(x)| \; d\mu_\N(x) \leq_{\delta, C_1}
\big\|(g_{j})_{j=-\infty}^{\infty}\big\|_{\mathcal{C}_{\infty}}.
\end{align}
To this end, letting $\ell \geq -k$, we write
\begin{align*}
\dashint_{{B}}|h_{\ell }(x)| \;d\mu_\N(x) & \leq  \sum_{j < -k }2^{-|j-\ell|\delta} \dashint_{{B}}|g_{j}(x)| \; d\mu_\N(x)
+ \sum_{j \geq -k} 2^{-|j-\ell| \delta} \dashint_{{B}}|g_{j}(x)| \;d\mu_\N(x)\\
&=:I_1 +I_2.
\end{align*}
Then, by the definition of $\mathcal{C}_\infty$, it follows that
\begin{align*}
I_2 \leq \sum_{j \geq -k} 2^{-|j-\ell|\delta} \big\| (g_{m})_{m=-\infty}^{\infty}\big\|_{\mathcal{C}_{\infty}}
\lesssim_\delta
 \big\|(g_{m})_{m=-\infty}^{\infty}\big\|_{\mathcal{C}_{\infty}}.
\end{align*}

Next, for estimating $I_1$, we use Lemma \ref{dyba} (iii) for $j < -k$ (i.e., $-j >k$), to obtain at most $m'$ many balls  $B_1^{(j)},\cdots, B_{\theta_j}^{(j)} \in \mathcal{B}_{-j}$ ($\theta_j \leq m'$)  such that 
$B_i^{(j)}   \cap {B} \neq \emptyset$, $i=1,\cdots, \theta_j$. Since $B\subseteq \bigcup_{i=1}^{\theta_j} B_i^{(j)}$ by Lemma \ref{dyba} (i), it follows by the assumption \eqref{supinf} that
\begin{align*}
\dashint_{{B}}|g_{j}(x)| \; d\mu_\N(x) &\leq  \esssup_{x \in B} |g_j (x)| \leq \max_{1\leq i \leq \theta_j} \esssup_{x \in B_i^{(j)}} |g_j (x)| 
\leq C_1 \max_{1 \leq i \leq \theta_j}  \mathop{\mathrm{ess\,inf}}_{x \in B_i^{(j)}} |g_j (x)|\\
&\leq C_1 \max_{1 \leq i \leq \theta_j} \dashint_{B_i^{(j)}}  |g_j(x)|\; d\mu_\N(x) 
 \leq C_1 \|(g_m)_{m=-\infty}^\infty\|_{\mathcal{C}_\infty}. \numberthis  \label{eq:estimate_sup_integral}
\end{align*}
Hence,
\begin{align*}
I_1 \leq C_1 \big\|(g_m)_{m=-\infty}^\infty\big\|_{\mathcal{C}_\infty}\sum_{j<-k}  2^{-|j-\ell|\delta} 
\lesssim_{\delta} \big\|(g_m)_{m=-\infty}^\infty\big\|_{\mathcal{C}_\infty}.
\end{align*}
Combining the estimates for $I_1$ and $I_2$ proves the result for $q = \infty$.
\\~\\
\textbf{Step 2}.
Suppose that $0< q <\infty$ and fix $k \in \mathbb{Z}$. We start by proving the following claim.
\\~\\
\textbf{Claim:} For any $\varepsilon >0$ such that $\delta - \varepsilon > 0$,  
\begin{align}
\sum_{j \geq -k} |h_j(x)|^q \lesssim_{q, \delta, \varepsilon} \sum_{j < -k}2^{(j+ k)
(\delta -\varepsilon)q} |g_j(x)|^q + \sum_{j \geq -k }|g_j(x)|^q.
\end{align}

To see the claim, we first let $q \geq 1$. In this case, we use the triangle inequality to estimate
\begin{align*}
\bigg(\sum_{j \geq -k} |h_j(x)|^q \bigg)^{1/q} &= \bigg(\sum_{j \geq -k} \bigg|\sum_{\ell =-\infty}^\infty 2^{-|\ell -j |\delta} g_\ell(x) \bigg|^q \bigg)^{1/q} \\
 &\leq  \bigg(\sum_{j \geq -k} \bigg|\sum_{\ell<-k} 2^{-|\ell -j |\delta} g_\ell(x) \bigg|^q \bigg)^{1/q} + \bigg(\sum_{j \geq -k} \bigg|\sum_{\ell \geq -k} 2^{-|\ell -j |\delta} g_\ell(x) \bigg|^q \bigg)^{1/q} \\
 &= : I_1 + I_2.
\end{align*}
By H\"{o}lder's inequality and the fact that $\sum_{\ell < -k}2^{-|\ell -j|\varepsilon q'} 
 <\infty$, we have
\begin{align*}
I_1 &\lesssim_{q, \varepsilon} \bigg(\sum_{j \geq -k} \ \sum_{\ell < -k} 2^{-|\ell -j |(\delta-\varepsilon ) q } |g_\ell(x) |^q \bigg)^{1/q} 
= \bigg( \sum_{\ell < -k} \sum_{j \geq -k} 2^{(\ell -j )(\delta-\varepsilon ) q } |g_\ell(x) |^q \bigg)^{1/q} \\
&= \bigg(\sum_{\ell < - k}  2^{\ell (\delta-\varepsilon ) q } |g_\ell(x) |^q  \sum_{j \geq -k} 2^{ -j (\delta-\varepsilon ) q } \bigg)^{1/q} \lesssim_{q,\delta,\varepsilon} \bigg( \sum_{\ell < -k}  2^{(\ell +k  )(\delta-\varepsilon ) q } |g_\ell(x) |^q \bigg)^{1/q}.
\end{align*}
Similarly, for $I_2$ we  also have
\begin{align*}
I_2 &\lesssim_{q, \varepsilon} \bigg(\sum_{j \geq -k} \ \sum_{\ell \geq -k} 2^{-|\ell -j |(\delta-\varepsilon ) q } |g_\ell(x) |^q \bigg)^{1/q} 
= \bigg( \sum_{\ell \geq -k} \sum_{j \geq -k} 2^{-|\ell -j |(\delta-\varepsilon ) q } |g_\ell(x) |^q \bigg)^{1/q} \\
&\lesssim_{q,\delta,\varepsilon} \bigg( \sum_{\ell \geq -k}   |g_\ell(x) |^q \bigg)^{1/q} .
\end{align*}
It follows that
\begin{align*}
\sum_{j \geq -k} |h_j(x)|^q \leq (I_1 + I_2)^q  
\lesssim_{q,\delta,\varepsilon} \sum_{j < -k}2^{(j+ k)
(\delta -\varepsilon)q} |g_j(x)|^q + \sum_{j \geq -k }|g_j(x)|^q,
\end{align*}
as claimed.

If $0< q <1$, we use the inequality $(\sum_{\ell \in \mathbb{Z}}   |a_\ell|)^q \leq \sum_{\ell \in \mathbb{Z}} |a_\ell|^q$, which directly yields
\begin{align*}
\sum_{j \geq -k} |h_j(x)|^q 
&\leq \sum_{j \geq -k } \ \sum_{\ell =-\infty}^\infty 2^{-|\ell -j |\delta q} |g_\ell(x) |^q\\
&=  \sum_{\ell < -k} \ \sum_{j \geq -k }2^{-|\ell -j |\delta q} |g_\ell(x) |^q + \sum_{\ell \geq -k} \ \sum_{j \geq -k }2^{-|\ell -j |\delta q} |g_\ell(x) |^q\\
&\lesssim_{q,\delta}  \sum_{\ell < -k} 2^{(\ell+k)\delta q} |g_\ell(x) |^q + \sum_{\ell \geq -k} \  |g_\ell(x) |^q .
\end{align*}
Hence, the claim is also true in this case.

We now fix a small $\varepsilon>0$ such that $\delta -\varepsilon >0$. 
From the claim and the obvious fact that $\dashint_{B} \sum_{j \geq -k} |g_j |^q \; d\mu_N \leq \| (g_j)_{j = - \infty}^{\infty} \|_{\mathcal{C}_q}^q$ we see that, in order to prove the assertion of the lemma, it remains to show that\begin{align} \label{eq:remains}
\dashint_B \sum_{j < -k}2^{(j+ k)
\delta'} |g_j(x)|^q \; d\mu_\N (x) \lesssim \big\|(g_{j})_{j=-\infty}^{\infty}\big\|_{\mathcal{C}_{q}}^q 
\end{align}
for all $B \in \mathcal{B}_k$, where $\delta':=(\delta -\varepsilon)q >0$.
As in Step 1, for each integer $j < -k$ (i.e., $-j>k$),  we let $B_1^{(j)},\cdots, B_{\theta_j}^{(j)}$  ($\theta_j \leq m'$) be the balls in  $\mathcal{B}_{-j}$  such that $B_i^{(j)}  \cap {B} \neq \emptyset$, $i=1,\cdots, \theta_j$. Then, by \eqref{supinf}, 
\begin{align*}
\dashint_{{B}}|g_{j}(x)|^q \; d\mu_\N(x) &\leq   \max_{1\leq i \leq \theta_j} \esssup_{x \in B_i^{(j)}} |g_j(x)|^q 
\leq C_1^q \max_{1 \leq i \leq \theta_j}  \mathop{\mathrm{ess\,inf}}_{x \in B_i^{(j)}} |g_j(x)|^q\\
&\lesssim_{q, C_1} \max_{1 \leq i \leq \theta_j} \dashint_{B_i^{(j)}}  |g_j(x)|^q\; d\mu_\N(x) 
 \leq  \big\|(g_m)_{m=-\infty}^\infty\big\|_{\mathcal{C}_q}^q,
\end{align*}
which yields that
\begin{align*}
\dashint_B \sum_{j < -k}2^{(j+ k)
\delta'} |g_j(x)|^q \; d\mu_\N (x) \lesssim \sum_{j < -k}2^{(j+ k)
\delta'} \big\|(g_m)_{m=-\infty}^\infty\big\|_{\mathcal{C}_q}^q \lesssim \big\|(g_m)_{m=-\infty}^\infty\big\|_{\mathcal{C}_q}^q,
\end{align*}
as desired.
\end{proof}

Lastly, we prove the following lemma on the spaces $\mathcal{C}_q$. Parts of the proof are inspired by arguments for Euclidean spaces that can be found in \cite{Bui}.

\begin{lemma} \label{intt}
Let $q \in (0, \infty]$ and $a >0$. Suppose  $\frac{Q}{a} <r < q$ (resp. $\frac{Q}{a} < r <1$) if $q <\infty$ (resp. $q =\infty$).
\begin{enumerate}
    [\rm (i)]\item 
 For a sequence $(g_{j})_{j=-\infty}^{\infty}$
of measurable functions $g_j : \N \to \mathbb{C}$, define
\begin{equation*}
g_{j}^{\ast}(x) = \left( \int_\N \frac{2^{j\hdim}|g_{j}(z)|^r}{(1+2^{j}|z^{-1}x|)^{ar}} \; d\mu_\N(z)\right)^{1/r}, \quad x\in \N.
\end{equation*}
Then there exists $C > 0$ such that
\begin{equation} \label{225}
\big\|(g_{j}^\ast)_{j=-\infty}^{\infty}\big\|_{\mathcal{C}_{q}} \leq C\big\|(g_{j})_{j=-\infty}^{\infty}\big\|_{\mathcal{C}_{q}}.
\end{equation}

\item For a measurable function $H:\N \times (0, \infty) \to \mathbb{C}$, define
\begin{align*}
  h_j(x) =& \bigg( \int_{2^{-j}}^{2^{-j+1}} \hspace{-10pt} |H (x, t)|^r \, \frac{d t}{t} \bigg)^{1/r}, \\
    h'_j(x) =& \bigg ( \int_{2^{-j}}^{2^{-j+1}} \hspace{-7pt} \int_{\N} \frac{\tau^{-\hdim} |H( z,t)|^r}{(1+t^{-1}|z^{-1}x|)^{ar}} d\mu_\N (z) \, \frac{d t}{t} \bigg)^{1/r}
\end{align*}
for $j \in \Z, x\in \N$. Then there exists $C > 0$ such that
\begin{align*}
    \big\|(h'_j)_{j=-\infty}^{\infty}\big\|_{\mathcal{C}_{q}} \leq C \big\|(h_{j})_{j=-\infty}^{\infty}\big\|_{\mathcal{C}_{q}}.
\end{align*}
\end{enumerate}
\end{lemma}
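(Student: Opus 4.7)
The plan is to prove both (i) and (ii) by a common annular decomposition around a fixed dyadic ball; I will focus on (i) and indicate at the end the changes for (ii). Fix $B = B_l^k \in \mathcal{B}_k$, an index $j \geq -k$, and set $\epsilon := ar - \hdim > 0$ (positive by the hypothesis $a > \hdim/r$). The goal is to first establish the pointwise estimate
\begin{equation*}
    |g_j^*(x)|^r \lesssim [\mathcal{M}_r(g_j \chi_{C_0 B})(x)]^r + 2^{-(j+k)\epsilon}\,\|(g_i)\|_{\mathcal{C}_q}^r, \qquad x \in B,
\end{equation*}
where $C_0 B$ is a fixed dilate of $B$ (with $C_0$ large enough to absorb the quasi-norm constants). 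Summation in $j$ will then yield \eqref{225}.

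To obtain this pointwise estimate, I will split the integration domain into $A_0 = C_0 B$ and $A_m = C_0 2^m B \setminus C_0 2^{m-1} B$ for $m \geq 1$. The local contribution is controlled by Lemma~\ref{lem:majorant} applied to $|g_j|^r \chi_{C_0 B}$ with exponent $ar > \hdim$, yielding $[\mathcal{M}_r(g_j \chi_{C_0 B})(x)]^r$. For each annulus with $m \geq 1$, the quasi-triangle inequality gives $|z^{-1}x| \asymp 2^{m+k}$ for $z \in A_m$ and $x \in B$, and since $j+k \geq 0$ one has $1 + 2^j|z^{-1}x| \gtrsim 2^{j+k+m}$. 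Combined with $\mu_\N(C_0 2^m B) \asymp 2^{(m+k)\hdim}$ and the arithmetic identity $j\hdim + (m+k)\hdim - (j+k+m)ar = -(j+k+m)\epsilon$, the contribution of $A_m$ is bounded by
\begin{equation*}
    2^{-(j+k+m)\epsilon} \dashint_{C_0 2^m B} |g_j|^r\,d\mu_\N.
\end{equation*}
Jensen's inequality (with $r < q$ when $q < \infty$, and $r < 1$ when $q = \infty$) will convert this to an $L^q$- (respectively $L^1$-) average, and covering $C_0 2^m B$ by boundedly many dyadic balls in $\mathcal{B}_{k+m}$ — for which the $\mathcal{C}_q$ bound applies, since $j \geq -k \geq -(k+m)$ — bounds the result by $\|(g_i)\|_{\mathcal{C}_q}^r$. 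Summing the geometric series in $m$ will produce the displayed pointwise estimate.

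For $q < \infty$, I will raise the pointwise bound to the $q$-th power (using subadditivity in either regime $q/r \lessgtr 1$), average over $B$, and sum over $j \geq -k$. The geometric factor contributes a convergent series bounded by $\|(g_i)\|_{\mathcal{C}_q}^q$, while the maximal-function contribution is handled by the Fefferman--Stein inequality (Lemma~\ref{vector-valued HL}, applicable since $r < q$):
\begin{equation*}
    \sum_{j \geq -k} \int_B [\mathcal{M}_r(g_j \chi_{C_0 B})]^q\, d\mu_\N \leq \int_\N \sum_{j \geq -k} [\mathcal{M}_r(g_j \chi_{C_0 B})]^q\, d\mu_\N \lesssim \int_{C_0 B} \sum_{j \geq -k} |g_j|^q\, d\mu_\N,
\end{equation*}
and the right-hand side is bounded by $\mu_\N(B)\,\|(g_i)\|_{\mathcal{C}_q}^q$ after covering $C_0 B$ by finitely many balls from $\mathcal{B}_k$. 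Division by $\mu_\N(B)$ and taking $q$-th roots then gives \eqref{225}.

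\textbf{The main obstacle} will be the endpoint $q = \infty$, where Fefferman--Stein is unavailable. Here I plan to take $L^1$-averages directly over $B$ and exploit that $\mathcal{M}_r$ is bounded on $L^1$ precisely when $r < 1$: indeed, the identity $\|\mathcal{M}_r f\|_{L^1} = \|\mathcal{M}(|f|^r)\|_{L^{1/r}}^{1/r}$ combined with boundedness of $\mathcal{M}$ on $L^{1/r}$ (as $1/r > 1$) yields $\|\mathcal{M}_r f\|_{L^1} \lesssim \|f\|_{L^1}$. Applied to $f = g_j \chi_{C_0 B}$, this gives $\dashint_B \mathcal{M}_r(g_j \chi_{C_0 B})\, d\mu_\N \lesssim \dashint_{C_0 B} |g_j|\, d\mu_\N \lesssim \|(g_i)\|_{\mathcal{C}_\infty}$ after covering, which combined with the far-part bound completes the case. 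Part (ii) will follow from the same annular decomposition applied to $H(\cdot, t)$ for each $t \in [2^{-j}, 2^{-j+1}]$ (with $t^{-1} \asymp 2^j$ throughout), integrating in $t$ before summing in $j$; all of the above estimates carry over verbatim after this replacement.
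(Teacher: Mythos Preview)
Your proposal is correct and follows essentially the same strategy as the paper: split into a local part near the fixed dyadic ball (controlled via the majorant property and Fefferman--Stein, or $L^1$-boundedness of $\mathcal{M}_r$ when $q=\infty$ and $r<1$) and a far part handled by an annular decomposition with geometric decay $2^{-(j+k+m)(ar-\hdim)}$, followed by a covering by dyadic balls in $\mathcal{B}_{k+m}$. The only cosmetic difference is that the paper decomposes the function as $g_j = u_j + v_j$ (with $u_j = g_j\chi_{3\gamma B}$) and applies H\"older directly to $[v_j^*]^q$, whereas you decompose the integration domain and first obtain a pointwise bound for $|g_j^*|^r$; the resulting estimates and the treatment of part (ii) are the same.
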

\begin{proof}
(i) 
We first consider the case $0 < q <\infty$.
For this, fix $k \in \mathbb{Z}$ and let $B \in \mathcal{B}_k$ be arbitrary.  
We decompose each function $g_j$ as the sum $g_j= u_j + v_j$ of the functions $u_{j}: =g_{j} \cdot \chi_{B(x_B, 3\gamma r_B)}$ 
and $v_{j}:=g_{j}-u_{j}$, where $x_B$ and $r_B$ denote the center and radius of the ball $B$, respectively.
Since $g_j^* \lesssim u_j^* + v_j^*$ for $j \in \mathbb{Z}$, it suffices to prove the required estimates for $u_j$ and $v_j$.

For obtaining the estimate for $u^*_{j}$, we use the majorant property (Lemma \ref{lem:majorant}) and the Fefferman-Stein inequalities (Lemma \ref{vector-valued HL}) for the Hardy-Littlewood maximal operator $\mathcal{M}_r$ to estimate 
\begin{align*}  
\int_{B} \sum_{j \geq -k} [u_{j}^{\ast}(x)]^{q} \; d\mu_\N(x) &\lesssim \int_{B} \sum_{j \geq -k}
[(\mathcal{M}_{r}u_{j})(x) ]^{q} \;d\mu_\N(x)  \leq \big\|(\mathcal{M}_{r}u_{j})_{j= -k}^\infty \big\|_{L^{q}(\ell^{q})}^{q} \\
& \lesssim \big\|(u_{j})_{j= -k}^\infty \big\|_{L^{q}(\ell^{q})}^{q}
=\int_{B(x_B, 3\gamma r_B)}
\sum_{j \geq -k} |u_{j}(x)|^{q} \; d\mu_\N(x). \numberthis \label{eq:ujstar}
\end{align*}
By Lemma \ref{dyba},  there exists a positive integer $m'''$ such that $B(x_B, 3\gamma r_B)$ is covered by at most $m'''$ many balls  $B_1, \cdots, B_\theta \in \mathcal{B}_k$ $(\theta\leq m''')$.  Since $m'''$ is independent of $k$ and the choice of $B \in \mathcal{B}_k$, we can further estimate \eqref{eq:ujstar} by 
\begin{equation} \label{226}
\dashint_{B} \sum_{j \geq -k} [u_{j}^{\ast}(x)]^{q} \; d\mu_\N(x) 
\lesssim   \sum_{i=1}^\theta \dashint_{B_i}
\sum_{j\geq -k} |u_{j}(x)|^{q} \; d\mu_\N(x) \lesssim  \big\|(g_{j})_{j=-\infty}^{\infty}\big\|_{\mathcal{C}_{q}}^q,
\end{equation}
which is the required estimate for $u^*_j$.

For obtaining the estimate for $v_{j}^\ast$, we first use H\"{o}lder's inequality, together with the fact that $(1 + |\cdot|)^{-ar} \in L^1(\N)$, to obtain that, for $x \in \N \setminus B(x_B, 3\gamma r_B)$,                              
\begin{align*}
[v_{j}^{\ast}(x)]^{q} &\leq \int_{\N \backslash B(x_B, 3\gamma r_B)} \frac{2^{j\hdim}|v_{j}(z)|^{q} }{(1+2^{j}|z^{-1}x|)^{ar}} \; d\mu_\N(z) \left(  \int_{\N } 
\frac{2^{j \hdim}}{ (1+2^{j}|z^{-1}x|)^{ar}}\; d\mu_\N(z)\right)^{\frac{q/r}{(q/r)'}}\\
&\lesssim \sum_{\ell =1}^\infty 
\int_{B(x_B, 3\gamma 2^{\ell} r_B) \backslash B(x_B, 3\gamma 2^{\ell - 1} r_B)} \frac{2^{j\hdim}|v_{j}(z)|^{q} }{(1+2^{j}|z^{-1}x|)^{ar}} \; d\mu_\N(z)\\
&\lesssim 
\sum_{\ell =1}^\infty  2^{-(j+k+\ell)ar}2^{j\hdim}
\int_{B(x_B, 3\gamma 2^{\ell} r_B)} |v_{j}(z)|^{q} \; d\mu_\N(z). \numberthis \label{itgr}
\end{align*}
For each $\ell \in \mathbb{N}$, let $B_1^{(\ell)}, \cdots, B_{\omega_\ell}^{(\ell)}$ be the balls in $\mathcal{B}_{k+\ell}$ such that
\begin{align} \label{eq:ballomega}
B_i^{(\ell)} \cap B(x_B, 3\gamma 2^{\ell} r_B)  \neq \varnothing, \quad  i =1,\cdots, \omega_\ell.
\end{align}
By Lemma \ref{dyba}, there exists an integer 
$\widetilde{m}$ such that $\omega_\ell \leq \widetilde{m}$ for all $\ell \in \mathbb{N}$. Using this fact, we deduce from \eqref{itgr}  that
\begin{align*} 
\dashint_{B} \sum_{j\geq -k} [v_{j}^{\ast}(x)]^{q} \; d\mu_\N(x) &\lesssim   \sum_{\ell =1}^\infty \sum_{i=1}^{\omega_\ell }\sum_{j \geq -k} 2^{-(j+k+\ell)ar}2^{j\hdim} \int_{B^{(\ell)}_i} |v_{j}(z)|^{q} \; d\mu_\N(z)\\
&\asymp \sum_{\ell =1}^\infty \sum_{i=1}^{\omega_\ell} \sum_{j \geq -k} 2^{-(j+k+\ell)(ar-\hdim)} \dashint_{B^{(\ell)}_i} |v_{j}(z)|^{q} \; d\mu_\N(z)\\
&\lesssim \sum_{\ell =1}^\infty \sum_{i=1}^{\omega_\ell} 2^{-\ell (ar -\hdim)}\sum_{j \geq - (k + \ell)} \dashint_{B^{(\ell)}_i} |v_{j}(z)|^{q} \; d\mu_\N(z)\\
& \lesssim \sum_{\ell =1}^\infty \sum_{i=1}^{\omega_\ell} 2^{-\ell (ar -\hdim)} \big\| (v_j)_{j=-\infty}^\infty\big\|_{\mathcal{C}_q}^q \\
& \lesssim  \big\| (g_j)_{j=-\infty}^\infty\big\|_{\mathcal{C}_q}^q, \numberthis \label{228}
\end{align*}
which finishes the proof for the case $q<\infty$.

We next prove the assertion in the case $q =\infty$, which is based on similar ideas as the case just proven. Fix an arbitrary $k \in \mathbb{Z}$ and take an arbitrary $B \in \mathcal{B}_k$. 
Let ${u}_j$ and $v_j$ be as before.  Since $0< r <1$, the operator $\mathcal{M}_r$ is bounded on $L^1(\N)$. Hence, for every $j \geq -k$,
\begin{align} \label{eq:ujstarq=infty}
\dashint_{B} {u}_{j}^{\ast}(x) \; d\mu_\N(x) \lesssim \dashint_{B} 
(\mathcal{M}_{r}{u}_{j})(x)  \;d\mu_\N(x)  \lesssim\frac{1}{\mu_\N (B)} \int_{B(x_B, 3\gamma 2^{\ell} r_B)} |g_j(x)|\; d\mu_\N (x).
\end{align}
Similarly to the argument showing 
\eqref{226}, the ball $B(x_B, 3\gamma  r_B)$ can be covered by finitely many dyadic balls from $\mathcal{B}_k$, which yields that
\[ \dashint_{B} {u}_{j}^{\ast}(x) \; d\mu_\N(x) \lesssim \big\| (g_j)_{j =-\infty}^\infty\big\|_{\mathcal{C}_\infty}. \]

Lastly, for each $v_j$ with $j \geq -k$,  we use the H\"{o}lder's inequality to write 
\begin{align*}
v_j^\ast (x) 
&\leq  \left( \int_\N \frac{2^{j\hdim}|g_{j}(z)|}{(1+2^{j}|z^{-1}x|)^{ar}} \; d\mu_\N(z)\right)
\left( \int_\N \frac{2^{j\hdim}}{(1+2^{j}|z^{-1}x|)^{ar}} \; d\mu_\N(z)\right)^{(1-r)/r} \\
& \lesssim  \int_{\N\backslash B(x_B, 3\gamma  r_B)} \frac{2^{j\hdim}|g_{j}(z)|}{(1+2^{j}|z^{-1}x|)^{ar}} \; d\mu_\N(z).
\end{align*}
This estimate, together with a similar argument as showing \eqref{228}, yields 
\begin{align*}
\dashint_B v_j^\ast (x)d\mu_\N (x)& \lesssim \sum_{\ell =1}^\infty \sum_{i =1}^{\omega_\ell} 2^{-(j+k+\ell)ar}2^{j\hdim} \int_{B^{(\ell)}_i} |v_{j}(z)| \; d\mu_\N(z)\\
& \lesssim  \big\| (g_j)_{j=-\infty}^\infty\big\|_{\mathcal{C}_\infty},
\end{align*}
where $B^{(\ell)}_i, \cdots B^{(\ell)}_{\omega_\ell}$ are the balls from $\mathcal{B}_{k+\ell}$ which satisfy \eqref{eq:ballomega}.
In combination, this proves \eqref{225} for the case $q=\infty.$
\\~\\
(ii) The proof of assertion (ii) follows from the same proof as showing (i), due to an application of Fubini's theorem and
\begin{align*}
    (1+2^{j}|z^{-1}x|)^{ar} \asymp (1+t^{-1}|z^{-1}x|)^{ar}
\end{align*}
for all $j \in \Z$, $t \in [2^{-j}, 2^{-j+1}]$ and $x, z \in \N$, with an implicit constant independent of these variables.
\end{proof}

\section{Besov and Triebel-Lizorkin spaces} \label{sec:BTL}
This section is devoted to the basic theory of Besov and Triebel-Lizorkin spaces on homogeneous groups. The definition of these spaces is as follows.

\begin{definition}
 Let $\phi \in \SV(N)$ satisfy the discrete Calder\'{o}n condition \eqref{eq:discrete_calderon}.

 \begin{enumerate}[(1)]
    \item For $p \in (0,\infty)$, $q \in (0, \infty]$ and $\ord \in \mathbb{R}$, the \emph{Triebel-Lizorkin space} $\TLS(\N)$ associated to $\crk$ is defined as the space of all $f \in \mathcal{S}_0'(N)$ satisfying
 \[
  \| f \|_{\TLS} := \bigg\| \bigg( \sum_{j \in \mathbb{Z}} 2^{j\sigma q} | f \ast \crk_{2^{-j}} |^q \bigg)^{1/q} \bigg\|_{L^p} < \infty
 \]
and equipped with the quasi-norm $\| \cdot \|_{\TLS}$, with the usual modification when $q =\infty$.
    \item For $q \in (0, \infty]$ and $\ord \in \mathbb{R}$, the
\emph{Triebel-Lizorkin space} $\F_{\infty,q}^{\ord}(\N)$ associated to $\crk$ is defined as the space of all $f \in \mathcal{S}_0'(N)$ satifying
\begin{equation*}
\|f\|_{\F^{\ord}_{\infty,q}} : =\left\|\big( 2^{j\ord}f \ast \crk_{2^{- j}}\big)_{j=-\infty}^\infty\right\|_{\mathcal{C}_q} < \infty
\end{equation*}
and equipped with the quasi-norm $\| \cdot \|_{\F^{\ord}_{\infty,q}}$.
\item For $p, q \in (0,\infty]$ and $\ord \in \mathbb{R}$, the \emph{Besov space} $\BS(\N)$ associated to $\crk$ is defined as the space of all $f \in \mathcal{S}_0'(N)$ satisfying
 \[
  \| f \|_{\BS} :=  \bigg( \sum_{j \in \mathbb{Z}} 2^{j\ord q} \| f \ast \crk_{2^{-j}}\|_{L^p}^q \bigg)^{1/q} < \infty
 \]
and equipped with the quasi-norm $\| \cdot \|_{\BS}$, with the usual modification when $q =\infty$.
\end{enumerate}
\end{definition} 

The spaces and quasi-norms just defined are clearly well-posed. We prove various basic properties and characterizations in the next sections.

\subsection{Independence results} \label{subsection:independence}
We start by showing that the Besov and Triebel-Lizorkin spaces are
independent of the choice of the function satisfying the Calder\'on condition, with equivalent (quasi-)norms for different choices. 

\begin{theorem} \label{thm:indep_crk}
Suppose that $\phi, \eta \in \mathcal{S}_0(\N)$ satisfy the discrete Calder\'{o}n condition \eqref{eq:discrete_calderon}.
Then the following statements are true.

\begin{enumerate}
    [\rm (i)]\item  If $p \in (0,\infty)$, $q \in (0, \infty]$ and $\ord \in \mathbb{R}$, then 
\begin{align*}
    \bigg\| \bigg( \sum_{j \in \mathbb{Z}} 2^{j\sigma q} | f \ast\crk_{2^{-j}} |^q \bigg)^{1/q} \bigg\|_{L^p}
    \asymp
    \bigg\| \bigg( \sum_{j \in \mathbb{Z}} 2^{j\sigma q} | f \ast\eta_{2^{-j}} |^q \bigg)^{1/q} \bigg\|_{L^p}
\end{align*}
for all $f \in \SV'(N)$, with the usual modification when $q =\infty$.
    \item  If  $q \in (0, \infty]$ and $\ord \in \mathbb{R}$, then 
\begin{align*}
    \left\|\big( 2^{j\ord}f \ast \crk_{2^{- j}}\big)_{j=-\infty}^\infty\right\|_{\mathcal{C}_q} 
    \asymp
    \left\|\big( 2^{j\ord}f \ast \eta_{2^{- j}}\big)_{j=-\infty}^\infty\right\|_{\mathcal{C}_q}
\end{align*}
for all $f \in \SV'(N)$.
    \item  If $p, q \in (0,\infty]$ and $\ord \in \mathbb{R}$, then 
\begin{align*}
    \bigg( \sum_{j \in \mathbb{Z}} 2^{j\ord q} \| f \ast \crk_{2^{-j}}\|_{L^p}^q \bigg)^{1/q} 
    \asymp
    \bigg( \sum_{j \in \mathbb{Z}} 2^{j\ord q} \| f \ast \eta_{2^{-j}}\|_{L^p}^q \bigg)^{1/q}
\end{align*}
for all $f \in \SV'(N)$, with the usual modification when $q =\infty$.
\end{enumerate}

\end{theorem}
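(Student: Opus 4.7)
The plan is to prove all three parts simultaneously by first establishing a mixed sub-mean-value estimate analogous to Corollary \ref{coro:central_estimate}(i), but relating the $\crk$-convolutions of $f$ to the $\crkk$-convolutions, and then feeding this into the appropriate vector-valued inequality from Section \ref{sec:maximal}. By symmetry in the roles of $\crk$ and $\crkk$, it suffices to prove the $\lesssim$ direction of each of the three equivalences.

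The key preparatory step would be to show that, for any $M, \PTpar > 0$ with $M > \PTpar$ and any $r > 0$,
\begin{align*}
(\crk^*_{2^{-j}}f)_\PTpar(x) \lesssim \biggl( \sum_{k \in \Z} 2^{-|j-k|(M-\PTpar)r} \int_\N \frac{2^{k\hdim}|f \ast \crkk_{2^{-k}}(z)|^r}{(1+2^k|z^{-1}x|)^{\PTpar r}} \; d\mu_\N(z) \biggr)^{1/r}
\end{align*}
holds for all $j \in \Z$, $x \in \N$ and $f \in \SV'(\N)$. To obtain this, I would apply the discrete Calder\'on formula for $\crkk$ to write $f \ast \crk_{2^{-j}} = \sum_k (f \ast \crkk_{2^{-k}}) \ast (\crkk_{2^{-k}} \ast \crk_{2^{-j}})$, and then invoke the almost orthogonality estimate (Lemma \ref{aoe}) for $\crkk_{2^{-k}} \ast \crk_{2^{-j}}$. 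From here the derivation proceeds exactly as in Lemma \ref{lem:central_estimate} and Corollary \ref{coro:central_estimate}(i): divide by $(1+2^j|y^{-1}x|)^\PTpar$, take the supremum over $y$, apply the Peetre-type inequality \eqref{eq:peetretype}, and then use H\"older's inequality (when $r > 1$) or $\ell^r$-subadditivity (when $r \leq 1$) to promote the estimate to $r$-th powers. The rescaling $(1+2^j|z^{-1}x|)^{-\PTpar} \leq 2^{(k-j)\PTpar}(1+2^k|z^{-1}x|)^{-\PTpar}$ (valid for $k \geq j$) will be used to convert the Peetre weight at scale $2^{-j}$ into the weight at scale $2^{-k}$, at a cost absorbed by the $2^{-|j-k|M}$ decay coming from almost orthogonality.

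With this estimate in hand, set $G_k(x) := \bigl(\int_\N 2^{k\hdim} |f \ast \crkk_{2^{-k}}(z)|^r (1+2^k|z^{-1}x|)^{-\PTpar r} \; d\mu_\N(z)\bigr)^{1/r}$. Using $|f \ast \crk_{2^{-j}}| \leq (\crk^*_{2^{-j}}f)_\PTpar$ and redistributing $2^{j\sigma} = 2^{(j-k)\sigma} \cdot 2^{k\sigma}$, one obtains the pointwise bound
\begin{align*}
2^{j\sigma r}|f \ast \crk_{2^{-j}}(x)|^r \lesssim \sum_{k \in \Z} 2^{-|j-k|(M-\PTpar-|\sigma|)r} \cdot 2^{k\sigma r} G_k(x)^r,
\end{align*}
whose geometric decay exponent is positive once $M > \PTpar + |\sigma|$. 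For (iii), I would choose $r \in (0, p)$ with $\PTpar r > \hdim$, use Lemma \ref{lem:majorant} to bound $G_k \lesssim \M_r(f \ast \crkk_{2^{-k}})$, apply the $\ell^{q/r}(L^{p/r})$ instance of Lemma \ref{Ul}, and conclude via the $L^p$-boundedness of $\M_r$ for $r < p$. For (i), choosing $r \in (0, p \wedge q)$ enables the same scheme with the $L^{p/r}(\ell^{q/r})$ case of Lemma \ref{Ul} and the Fefferman--Stein inequality (Lemma \ref{vector-valued HL}). For (ii), choosing $r \in (0, q)$ (or $r \in (0, 1)$ if $q = \infty$) with $\PTpar r > \hdim$, I would apply Lemma \ref{infi} with exponent $q/r$ to the sequence $(2^{k\sigma r} G_k^r)_k$; the sup/inf condition \eqref{supinf} is easily checked for $G_k^r$ since $(1 + 2^k|z^{-1}x|) \asymp (1+2^k|z^{-1}x'|)$ uniformly for $x, x' \in B \in \mathcal{B}_{-k}$. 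Finally, Lemma \ref{intt}(i) is invoked to bound $\|(2^{k\sigma} G_k)_k\|_{\mathcal{C}_q}$ by $\|f\|_{\F^\sigma_{\infty,q}(\crkk)}$.

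The main obstacle I anticipate is the careful derivation of the mixed sub-mean-value estimate in the first step. Although it adapts the proof of Lemma \ref{lem:central_estimate} almost line by line, one must bookkeep three separate sources of decay or growth simultaneously---the almost orthogonality factor $2^{-|j-k|M}$, the scale mismatch $(2^j \wedge 2^k)^{\hdim}$ in the almost orthogonality kernel (which is the source of the asymmetry between the cases $k \geq j$ and $k < j$), and the conversion between the weights $(1+2^j|\cdot|)^{-\PTpar}$ and $(1+2^k|\cdot|)^{-\PTpar}$---and verify that the compound decay in $|j-k|$ survives with positive margin after all three are combined.
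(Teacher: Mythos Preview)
Your overall strategy is sound and closely parallels the paper's proof, but there is a genuine gap in your derivation of the mixed sub-mean-value estimate when $r \leq 1$---precisely the range needed when $p \wedge q \leq 1$.

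You propose to obtain
\[
[(\crk^*_{2^{-j}}f)_\PTpar(x)]^r \lesssim \sum_{k \in \Z} 2^{-|j-k|(M-\PTpar)r} \int_\N \frac{2^{k\hdim}|f \ast \crkk_{2^{-k}}(z)|^r}{(1+2^k|z^{-1}x|)^{\PTpar r}} \, d\mu_\N(z)
\]
by following Lemma~\ref{lem:central_estimate} ``exactly''. But the bootstrap in Case~2 of that proof relies on the \emph{same} function appearing on both sides: one writes $|f \ast \crk_s(z)| = |f \ast \crk_s(z)|^r \cdot |f \ast \crk_s(z)|^{1-r}$ and absorbs the factor $|f\ast\crk_s(z)|^{1-r}$ into $\Lambda_{u,M}f(x)^{1-r}$, which is the very Peetre-type supremum sitting on the left. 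In your mixed setting the left side involves $\crk$ while the right side involves $\crkk$, so this loop does not close. Your alternative of $\ell^r$-subadditivity only handles the discrete sum in $k$: it produces $\bigl(\int |f \ast \crkk_{2^{-k}}| \cdots\bigr)^r$ rather than $\int |f \ast \crkk_{2^{-k}}|^r \cdots$. Downstream, that forces $\PTpar > \hdim$ (not merely $\PTpar r > \hdim$) in the majorant lemma and replaces $\mathcal{M}_r$ by $\mathcal{M}_1$, which is unbounded on $L^p$ for $p \leq 1$---so cases~(i) and~(iii) collapse in the quasi-Banach range.

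The paper's remedy, and the natural fix for your argument, is to split the estimate in two. Step~1 uses the Calder\'on formula for $\crkk$ (with dual $\drkk$, not $\crkk$ itself), almost orthogonality, and the trivial bound $|f \ast \crkk_{2^{-j}}(y)| \leq (\crkk^*_{2^{-j}}f)_\PTpar(x)\,(1+2^j|y^{-1}x|)^\PTpar$ to obtain the pointwise inequality
\[
2^{k\ord}|f \ast \crk_{2^{-k}}(x)| \lesssim \sum_{j \in \Z} 2^{-|j-k|(M-\PTpar-|\ord|)}\, 2^{j\ord}(\crkk^*_{2^{-j}}f)_\PTpar(x),
\]
which involves no $r$-th powers at all; taking the relevant mixed norm and applying Lemma~\ref{Ul} (or Lemma~\ref{infi} for $\mathcal{C}_q$) gives $\|\crk\text{-side}\| \lesssim \|(\crkk^*_{2^{-j}}f)_\PTpar\|$. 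Step~2 then invokes Lemma~\ref{lem:central_estimate}\,/\,Corollary~\ref{coro:central_estimate}(i) for $\crkk$ \emph{alone}---where the bootstrap does close, since $\crkk$ satisfies its own Calder\'on condition---to bound $[(\crkk^*_{2^{-j}}f)_\PTpar]^r$ by your $G_j(x)^r$. From that point your use of Lemmas~\ref{lem:majorant}, \ref{vector-valued HL}, \ref{Ul}, \ref{infi}, \ref{intt} goes through exactly as you outlined.
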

\begin{proof}
We prove the cases (i), (ii) and (iii) in parallel.  By symmetry, it suffices to show only one of the claimed inequalities. Our argument is divided into two steps. Throughout the proof, we fix $p, q \in (0, \infty]$, $\ord \in \mathbb{R}$ and $f \in \mathcal{S}'_0 (N)$. Recall that the Peetre-type maximal function $(\crk_t^* f)_\PTpar$ of $f$ is defined by
\[
(\crk_t^* f)_\PTpar(x) =
	\sup_{y \in \N} \frac{|f \ast \crk_t(y)| }{(1+t^{-1}|y^{-1}x|)^\PTpar},\quad x \in \N,
\]
for $a, t>0$.
\\~\\
\textbf{Step~1.} Throughout this step, we fix $a > 0$. 
Since $\crkk$ satisfies the discrete Calder\'{o}n condition, there exists some $\drkk \in \SV(\N)$ such that  
\begin{align*}
    f = \sum_{j \in \mathbb{Z}} f \ast \crkk_{2^{-j}} \ast \drkk_{2^{-j}},
\end{align*}
with convergence in $\SV'(\N)$. Hence, for $k \in \Z$,
\begin{align*}
    f \ast \crk_{2^{-k}} = \sum_{j \in \mathbb{Z}} f \ast \crkk_{2^{-j}} \ast \drkk_{2^{-j}} \ast \crk_{2^{-k}}
\end{align*}
with convergence in $\SV'(\N)$. Each of the above convolution products defines a smooth function on $\N$, so that the identity
\begin{align*}
    f \ast \crk_{2^{-k}}(x) &= \sum_{j \in \mathbb{Z}} \int_{\N} 
   \big( f \ast \crkk_{2^{-j}} \big) (y) \big( \drkk_{2^{-j}} \ast \crk_{2^{-k}} \big) (y^{-1}x) \; d\mu_\N (y)
\end{align*}
holds pointwise for all $x \in \N$. For arbitrary $M, L >0$, an application of Lemma~\ref{aoe} allows us to estimate
\begin{align*}
    2^{k \ord} |f \ast \crk_{2^{-k}}(x)| &\lesssim 2^{k \ord} \sum_{j \in \mathbb{Z}} \int_{\N} 
    |f \ast \crkk_{2^{-j}}(y)| |\drkk_{2^{-j}} \ast \crk_{2^{-k}}(y^{-1}x)| \; d\mu_\N (y) \\
    & \lesssim \sum_{j \in \mathbb{Z}} 2^{-|j -k |M} 2^{k \ord} \int_{\N} |f \ast \crkk_{2^{-j}}(y)| \frac{2^{(j \wedge k)\hdim}}{(1 + 2^{j\wedge k}|y^{-1}x|)^{L}} \; d\mu_\N (y)
\end{align*} 
for arbitrary $k \in \Z$ and $x \in \N$. For estimating the integral in the right-hand side further, we fix $\varepsilon > 0$ and $M > a + |\ord|$ and choose $L = a + \hdim + \varepsilon$. Then, using the simple $(1+2^j|y^{-1}x|)^a \leq 2^{|j-k|a} (1 +2^{j\wedge k}|y^{-1}x|)^a$ and the definition of the Peetre-type maximal function, it follows that
\begin{align*}
    |f \ast \crkk_{2^{-j}}(y)| \frac{2^{(j \wedge k)\hdim}}{(1 + 2^{j\wedge k}|y^{-1}x|)^{L}} 
    & \leq  2^{|j - k|a} \frac{|f \ast \crkk_{2^{-j}}(y)|}{(1 + 2^j|y^{-1}x|)^a} \frac{2^{(j \wedge k)\hdim}}{(1 + 2^{j\wedge k}|y^{-1}x|)^{\hdim + \varepsilon}} \\
    &\leq 2^{|j - k|a} (\crkk^*_{2^{-j}} f)_{a}(x) \frac{2^{(j \wedge k)\hdim}}{(1 + 2^{j\wedge k}|y^{-1}x|)^{\hdim + \varepsilon}},
\end{align*}
so that a change of variable gives 
\begin{align}
\begin{split} \label{eq:ptw_est_Step_1_indep_crk}
    2^{k \ord} |f \ast \crk_{2^{-k}}(x)| 
    &\lesssim \sum_{j \in \mathbb{Z}} 2^{-|j -k |(M-a-|\ord|)}2^{j\ord} (\crkk^*_{2^{-j}} f)_{a}(x) \int_{\N} \frac{2^{(j \wedge k)\hdim}}{(1 + 2^{j\wedge k}|y^{-1}x|)^{Q + \varepsilon}} \; d\mu_\N (y)\\ 
    &\lesssim \sum_{j \in \mathbb{Z}} 2^{-|j -k |(M-a-|\ord|)} 2^{j\ord}(\crkk^*_{2^{-j}} f)_{a}(x).
\end{split}
\end{align}
We next consider the cases (i), (ii) and (iii) separately.
\\~\\
\textit{Case (i).} If $p<\infty$, then taking the $L^p(\ell^q)$-norm in \eqref{eq:ptw_est_Step_1_indep_crk} and an application of \eqref{eq:Ry_lq_Lp} yields
\[
\Bigl \| \bigl ( 2^{j \ord} f \ast \crk_{2^{-j}} \big)_{j = -\infty}^\infty \Bigr \|_{L^p(\ell^q)} \lesssim \Bigl \| ( 2^{j \ord} (\crkk^*_{2^{-j}} f)_{a} )_{j = -\infty}^\infty \Bigr \|_{L^p(\ell^q)}, \]
as desired.
\\~\\
\textit{Case (ii).} For $q \leq \infty$, we aim to apply Lemma~\ref{infi} to the functions $(2^{j \ord} (\crkk^*_{2^{-j}} f)_{a})_{j = - \infty}^{\infty}$. For this, first note that the hypothesis \eqref{supinf} from Lemma~\ref{infi} is satisfied since
for all $j \in \mathbb{Z}$ and all $B \in \mathcal{B}_{-j}$, whenever $x, x' \in B$, we have
\begin{align}
\begin{split} \label{eq:justification_supinf}
(\crkk^*_{2^{-j}} f)_{a}(x) & = \sup_{y \in \N} \frac{|f \ast \crkk_{2^{-j}}(y)| }{(1+2^j|y^{-1}x'|)^a}\frac{(1+2^j|y^{-1}x'|)^a}{(1+2^j|y^{-1}x|)^a} \\
    &\leq (2\gamma)^a \sup_{y \in \N} \frac{|f \ast \crkk_{2^{-j}}(y)| }{(1+2^j|y^{-1}x'|)^a} =  (2\gamma)^a  (\crkk^*_{2^{-j}} f)_{a}(x'),
\end{split}
\end{align}
 where $\gamma > 0$ is the constant in the quasi-norm \eqref{quas}.
This implies that 
\eqref{supinf} holds.
Therefore, taking the $\mathcal{C}_q$-norm in \eqref{eq:ptw_est_Step_1_indep_crk} and an application of Lemma~\ref{infi} yields
\[
\Bigl \| \bigl ( 2^{j \ord} f \ast \crk_{2^{-j}} \big)_{j = -\infty}^\infty \Bigr \|_{\mathcal{C}_{q}} \lesssim \Bigl \| ( 2^{j \ord} (\crkk^*_{2^{-j}} f)_{a} )_{j = -\infty}^\infty \Bigr \|_{\mathcal{C}_{q}},
\]
which proves the desired estimate.
\\~\\
\textit{Case (iii).} The proof is the same as in case (i). Taking the $\ell^q(L^p)$-norm of \eqref{eq:ptw_est_Step_1_indep_crk}, together with the estimate \eqref{eq:Ry_Lp_lq}, gives
\[
\Bigl \| \bigl ( 2^{j \ord} f \ast \crk_{2^{-j}} \bigr )_{j = -\infty}^\infty \Bigr \|_{\ell^q(L^p)} \lesssim \Bigl \| ( 2^{j \ord} (\crkk^*_{2^{-j}} f)_{a} )_{j = -\infty}^\infty \Bigr \|_{\ell^q(L^p)}.
\]
This completes the proof of Step 1.
\\~\\
\textbf{Step~2.} 
Using the sub-mean-value property (Lemma \ref{lem:central_estimate}), it follows that, for arbitrary $a, r > 0$, there exists $C = C_{r, a} >0$ such that 
\begin{align*}
|f \ast \eta_{2^{-j}}(y)|^r 
\leq C \sum_{k \in \mathbb{Z}}
2^{-|j-k|ar} \int_{\N} \frac{2^{k\hdim}|f \ast \eta_{2^{-k}}(z)|^r}{(1+2^{k}|z^{-1}y|)^{ar}} \; d\mu_\N(z)
\end{align*}
for all $y \in \N$.
Multiplying both sides by $2^{j\ord r} \, (1+2^j|y^{-1}x|)^{-ar}$ for $x \in \N$ and using the basic inequality 
\begin{align} \label{eq:fund_inequality_weights}
 \frac{1}{(1+2^k|z^{-1}y|)^{ar}(1 +2^j|y^{-1}x|)^{ar}} \lesssim \frac{2^{|j-k|ar}}{(1+2^k|z^{-1}x|)^{ar}}   
\end{align}
gives
\begin{align*}
   2^{j\ord r} \frac{|f \ast \eta_{2^{-j}}(y)|^r}{(1+2^j|y^{-1}x|)^{ar}} 
    &\lesssim \sum_{k \in \mathbb{Z}}
2^{-|j-k|ar} 2^{j\ord r} \int_{\N} \frac{2^{k\hdim}|f \ast \eta_{2^{-k}}(z)|^r}{(1+2^{k}|z^{-1}z|)^{ar}} \; d\mu_\N(z) \\
   &\lesssim \sum_{k \in \mathbb{Z}}
2^{-|j-k|(ar-|\ord|r)} 2^{k\ord r} \int_{\N} \frac{2^{k\hdim}|f \ast \eta_{2^{-k}}(z)|^r}{(1+2^{k}|z^{-1}x|)^{ar}} \; d\mu_\N(z),
\end{align*}
which easily implies that
\begin{align}
    2^{j\ord} (\crkk^*_{2^{-j}} f)_{a}(x)
    &\lesssim \biggl ( \sum_{k \in \mathbb{Z}}
    2^{-|j-k|(ar-|\ord|r)} 2^{k\ord r} \int_{\N} \frac{2^{k\hdim}|f \ast \eta_{2^{-k}}(z)|^r}{(1+2^{k}|z^{-1}x|)^{ar}} \; d\mu_\N(z) \biggr )^{1/r} \label{eq:Mf_p=infty}
\end{align}
for any $x \in \N$.
\\~\\
\textit{Case (i) and (iii)}. Assume that $r > \hdim/a$ and $a > |\ord|$. Then an application of Lemma \ref{lem:majorant} yields
\begin{align*}
    2^{j\ord} (\crkk^*_{2^{-j}} f)_{a}(x) &\lesssim \biggl ( \sum_{k \in \mathbb{Z}} 2^{-|j-k|(ar-|\ord|r)} 2^{k \sigma r} \mathcal{M}(| f \ast \eta_{2^{-k}}|^r )(x) \biggr )^{1/r} \\
    &= \biggl ( \sum_{k \in \mathbb{Z}} 2^{-|j-k|(ar-|\ord|r)} \Bigl [ \mathcal{M}_r \bigl ( 2^{k\ord} |f \ast \eta_{2^{-k}}| \bigr )(x) \Bigr ]^r \biggr )^{1/r} \numberthis  \label{eq:Mf_p<infty}
\end{align*}
for all $x \in \N$ and $j \in \Z$. If $q < \infty$, we take the $L^p(\ell^q)$-quasi-norm of both sides of \eqref{eq:Mf_p<infty} to obtain
\begin{align*}
\bigg\| \bigg( \sum_{j \in \mathbb{Z}} 2^{j\sigma q} \bigl [& (\crkk^*_{2^{-j}} f)_{a} \bigr ]^q \bigg)^{1/q} \bigg\|_{L^p} \\
    &\lesssim
\bigg\| \bigg( \sum_{j \in \mathbb{Z}} \biggl ( \sum_{k \in \mathbb{Z}} 2^{-|j-k|(ar-|\ord|r)} \Bigl [ \mathcal{M}_r \bigl ( 2^{k\ord} |f \ast \eta_{2^{-k}}| \bigr ) \Bigr ]^r \biggr )^{q/r} \bigg)^{1/q} \bigg\|_{L^p} \\
& \lesssim
\bigg\| \bigg( \sum_{k \in \mathbb{Z}} \mathcal{M}_r \Bigl ( 2^{k\ord} |f \ast \eta_{2^{-k}}| \Bigr )^q \bigg)^{1/q} \bigg\|_{L^p} \\
& \lesssim
\bigg\| \bigg( \sum_{k \in \mathbb{Z}}   2^{k\ord q} |f \ast \eta_{2^{-k}}|^q \bigg)^{1/q} \bigg\|_{L^p},
\end{align*}
where the third and fourth inequality used the inequality \eqref{eq:Ry_Lp_lq} and Lemma~\ref{vector-valued HL}, respectively. Note that Lemma~\ref{vector-valued HL} is applicable provided that $r < p \wedge q$, which is satisfied precisely for $a > \frac{ar}{q \wedge q} > \frac{\hdim}{p \wedge q}$. The case $q = \infty$ follows similarly by choosing $r = 1 \wedge p$. This shows (i).

The case (iii) follows by similar arguments, taking the $\ell^q(L^p)$-norm in Equation \eqref{eq:Mf_p<infty} and applying Lemma~\ref{vector-valued HL}, which in this case only requires $r < p$.
\\~\\
\textit{Case (ii).}
Assume that $a>|\ord|$. Taking the $\mathcal{C}_q$-quasi-norms on both sides of \eqref{eq:Mf_p=infty} yields
\begin{align*}
    \Bigl \| \Bigl ( 2^{j\ord} (\crkk^*_{2^{-j}} f)_{a} \Bigr )_{j \in \Z} \Bigr \|_{\mathcal{C}_q}
    &\lesssim \biggl \| \biggl ( \biggl ( \sum_{k \in \mathbb{Z}}
    2^{-|j-k|(ar-|\ord|r)} 2^{k\ord r} \int_{\N} \frac{2^{k\hdim}|f \ast \eta_{2^{-k}}(z)|^r}{(1+2^{k}|z^{-1}x|)^{ar}} d\mu_\N(z) \biggr )^{1/r} \biggr )_{j \in \Z} \biggr \|_{\mathcal{C}_q} \\ 
    &\lesssim \biggl \| \biggl ( 2^{k\ord} \biggl ( \int_{\N} \frac{2^{k\hdim}|f \ast \eta_{2^{-k}}(z)|^r}{(1+2^{k}|z^{-1}x|)^{ar}} \; d\mu_\N(z) \biggr )^{1/r} \biggr )_{k \in \Z} \biggr \|_{\mathcal{C}_q},
\end{align*}
where the last step used \eqref{eq:bounded on Cq} of Lemma~\ref{infi}, whose hypothesis is satisfied 
due to an estimate that is almost identical to
\eqref{eq:justification_supinf} from Step~1. Therefore, the desired estimate \[
\Bigl \| ( 2^{j \ord} (\crkk^*_{2^{-j}} f)_{a} )_{j = -\infty}^\infty \Bigr \|_{\mathcal{C}_{q}} \lesssim \Bigl \| \bigl ( 2^{j \ord} f \ast \crkk_{2^{-j}} \big)_{j = -\infty}^\infty \Bigr \|_{\mathcal{C}_{q}}, \label{eq:estimate_PM_2_q}
\] follows from an application of Lemma~\ref{intt} if we additionally assume that $a > \frac{ar}{q} > \frac{\hdim}{q}$ when $q < \infty$ and that $a > \hdim$ when $q = \infty$.
\end{proof}

\medskip

We next state two consequences of (the proof of) Theorem \ref{thm:indep_crk} that will be used in the remainder. The first consequence is the following characterization of Besov and Triebel-Lizorkin spaces in terms of Peetre-type maximal functions.

\begin{corollary} \label{cor:PM_qn_equiv}
Suppose that $\crk \in \mathcal{S}_0(\N)$ satisfies the discrete Calder\'{o}n condition~\eqref{eq:discrete_calderon}.
Then the following statements hold. 

\begin{enumerate}
[\rm (i)]\item  If $p \in (0,\infty)$, $q \in (0, \infty]$, $\ord \in \mathbb{R}$ and  $a > \max \{\frac{\hdim}{p \land q}, |\ord| \}$, then 
\begin{align*}
\bigg\| \bigg( \sum_{j \in \mathbb{Z}} 2^{j\sigma q} | f \ast\crk_{2^{-j}} |^q \bigg)^{1/q} \bigg\|_{L^p}
\asymp
\bigg\| \bigg( \sum_{j \in \mathbb{Z}} 2^{j\sigma q} | (\crk^*_{2^{-j}} f)_{a} |^q \bigg)^{1/q} \bigg\|_{L^p}
    \end{align*}
for all $f \in \mathcal{S}_0'(\N)$, with the usual modification when $q =\infty$.
\item  If  $q \in (0, \infty]$, $\ord \in \mathbb{R}$ and $a > \max \{\frac{\hdim}{q}, |\ord| \}$, then 
\begin{align*}
\biggl \| \Bigl ( 2^{j\ord}f \ast \crk_{2^{- j}} \Bigr)_{j=-\infty}^\infty \biggr\|_{\mathcal{C}_q} 
\asymp
\biggl \| \Bigl ( 2^{j\ord} (\crk^*_{2^{-j}} f)_{a} \Bigr)_{j=-\infty}^\infty \biggr \|_{\mathcal{C}_q}
\end{align*}
for all $f \in \mathcal{S}_0'(\N)$.
\item  If $p, q \in (0,\infty]$, $\ord \in \mathbb{R}$ and $a > \max \{\frac{\hdim}{p}, |\ord| \}$, then
\begin{align*}
\bigg( \sum_{j \in \mathbb{Z}} 2^{j\ord q} \| f * \crk_{2^{-j}}\|_{L^p}^q \bigg)^{1/q} 
\asymp
\bigg( \sum_{j \in \mathbb{Z}} 2^{j\ord q} \| (\crk^*_{2^{-j}} f)_{a} \|_{L^p}^q \bigg)^{1/q}
\end{align*}
for all $f \in \mathcal{S}_0'(\N)$, with the usual modification when $q =\infty$.
\end{enumerate}
\end{corollary}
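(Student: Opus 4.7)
The plan is to exploit the fact that one of the two inequalities in each of (i)--(iii) is essentially trivial, while the nontrivial inequality is already contained in Step~2 of the proof of Theorem~\ref{thm:indep_crk} once we specialize $\crkk = \crk$ there.

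The easy direction is that $|f \ast \crk_{2^{-j}}(x)| \leq (\crk^*_{2^{-j}} f)_a(x)$ for all $x \in \N$, $j \in \Z$ and every $a > 0$; taking the respective $L^p(\ell^q)$, $\mathcal{C}_q$ or $\ell^q(L^p)$ quasi-norm weighted by $2^{j\sigma}$ on both sides yields the bound $\|f\|_{\cdot} \lesssim \|\text{Peetre version}\|_{\cdot}$ in each of (i)--(iii).

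For the reverse inequality, we apply Lemma~\ref{lem:central_estimate}(i) with $\crkk = \crk$ and multiply by $2^{j\sigma r}(1+2^j|y^{-1}x|)^{-ar}$, using the weight inequality \eqref{eq:fund_inequality_weights}. Taking the supremum over $y \in \N$ yields, for any $r > 0$ and $a > |\sigma|$,
\begin{align*}
 2^{j\sigma} (\crk^*_{2^{-j}} f)_a(x) \lesssim \biggl( \sum_{k \in \Z} 2^{-|j-k|(ar - |\sigma|r)} \, 2^{k\sigma r} \int_\N \frac{2^{kQ} |f \ast \crk_{2^{-k}}(z)|^r}{(1+2^k|z^{-1}x|)^{ar}} \, d\mu_\N(z) \biggr)^{1/r}.
\end{align*}
In cases (i) and (iii) we assume $a > Q/(p \wedge q)$ (resp. $a > Q/p$) and pick $r$ with $Q/a < r < p \wedge q$ (resp. $Q/a < r < p$); the majorant property of Lemma~\ref{lem:majorant} then bounds the inner integral by $[\mathcal{M}_r(2^{k\sigma}|f\ast\crk_{2^{-k}}|)(x)]^r$. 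Taking the $L^p(\ell^q)$ (resp. $\ell^q(L^p)$) quasi-norm and invoking Lemma~\ref{Ul} with $\delta = ar - |\sigma|r > 0$, followed by the Fefferman--Stein inequality (Lemma~\ref{vector-valued HL}), concludes the proof.

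In case (ii), with $a > Q/q$ we pick $Q/a < r < q$ (or $Q/a < r < 1$ if $q = \infty$). Taking the $\mathcal{C}_q$-norm of the pointwise estimate and applying Lemma~\ref{infi} (whose hypothesis \eqref{supinf} holds by the easy inequality $(\crk^*_{2^{-j}} f)_a(x) \leq (2\gamma)^a (\crk^*_{2^{-j}} f)_a(x')$ for $x, x' \in B$, $B \in \mathcal{B}_{-j}$) reduces matters to bounding $\|(g_k^{\ast})_{k \in \Z}\|_{\mathcal{C}_q}$ where $g_k = 2^{k\sigma} f \ast \crk_{2^{-k}}$; this is exactly the content of Lemma~\ref{intt}(i).

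The only subtle step is the selection of the auxiliary parameter $r$: the conditions $a > Q/(p \wedge q)$, $a > Q/q$, and $a > Q/p$ in (i)--(iii) are precisely what is needed to find $r$ satisfying $Q/a < r$ together with the additional upper bounds ($r < p \wedge q$, $r < q$, or $r < p$) required to apply the Fefferman--Stein inequality or Lemma~\ref{intt}. The condition $a > |\sigma|$ ensures the exponential decay factor $2^{-|j-k|(ar-|\sigma|r)}$, which is indispensable for applying Lemma~\ref{Ul} and Lemma~\ref{infi}.
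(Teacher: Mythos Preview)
Your proposal is correct and follows essentially the same approach as the paper. The paper does not give an explicit proof of this corollary but states it as a direct consequence of the proof of Theorem~\ref{thm:indep_crk}; your write-up is precisely the specialization $\crkk = \crk$ in Step~2 of that proof, combined with the trivial pointwise bound $|f\ast\crk_{2^{-j}}| \le (\crk^*_{2^{-j}}f)_a$ for the other direction, and your identification of the role of the parameter $r$ and the conditions on $a$ matches the paper's argument exactly.
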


The second consequence of Theorem~\ref{thm:indep_crk} is a characterization using spectral multipliers.

\begin{corollary} \label{cor:spectral_multiplier}
Let $\N$ be a homogeneous Lie group and let
$\P$ be the positive, self-adjoint convolution operator of homogeneous degree $1$ defined by \eqref{eq:def_P}.
Let $m \in \mathcal{S}(\mathbb{R}^+)$ be such that
\begin{align*} 
\supp m &\subseteq [ (1/2), 2],\\
|m(\lambda)| \geq c >0 \ \ &\mbox{for} \ \ (3/5) \leq \lambda \leq (5/3). 
\end{align*}
for a fixed constant $c > 0$.
\begin{enumerate}
[\rm (i)]\item If $p \in (0,\infty)$, $q \in (0,\infty]$ and $\ord \in \R$, then for all $f \in \mathcal{S}_0'(N)$,
\begin{align*}
    \| f \|_{\TLS} \asymp \biggl \|\Bigl ( \sum_{j=-\infty}^\infty  \big|2^{j \ord}m(2^{-j}\P)f\big|^{q} \Bigr )^{1/q} \biggr \|_{L^p},
\end{align*}
with the usual modification when $q =\infty$.

\item If $p = \infty$, $q \in (0,\infty]$ and $\ord \in \R$, then for all $f \in \mathcal{S}_0'(N)$,
\begin{equation*}
    \|f\|_{\F^{\ord}_{\infty,q}} \asymp \biggl \|\Bigl ( 2^{j\ord} m(2^{-j}\P)f \Bigr )_{j=-\infty}^\infty \biggr \|_{\mathcal{C}_q}
\end{equation*}

\item If $p, q \in (0,\infty]$ and $\ord \in \mathbb{R}$, then for all $f \in \mathcal{S}_0'(N)$,
\begin{align*}
    \| f \|_{\BS} \asymp \biggl (\sum_{j=-\infty}^\infty  \Bigl \| 2^{j \ord} m(2^{-j}\P)f \Bigr \|_{L^p}^q \biggr )^{1/q},
\end{align*}
with the usual modification when $q =\infty$.
\end{enumerate}

If $\N$ is graded, then instead of $\P$ one can equivalently employ $\RLO^{\frac{1}{\nu}}$ in $\rm (i)$ - $\rm (iii)$ for any positive Rockland operator of homogeneous degree $\hdeg \in \NN$. 

If $\N$ is stratified and equipped with the canonical homogeneous dilations, in particular if $\N = \H$, then, depending on the chosen quasi-norm in \eqref{eq:def_con_ker_P}, $\P$ may be chosen to coincide with $(\sL)^{\frac{1}{2}} = \RLO^{\frac{1}{\nu}}$ for a given (negative) sub-Laplacian $\RLO := -\sL$ or other fractional sub-Laplacians.

On the trivially stratified group $\N = \R^n$, equipped with the usual isotropic dilations, this holds true for the choice $\RLO := -\sL := -\Delta := - \sum_{j = 1}^n \partial_j^2$.
\end{corollary}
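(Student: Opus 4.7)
The plan is to reduce the corollary to a direct application of Theorem~\ref{thm:indep_crk}. To that end, I will take $\crk$ to be the convolution kernel of $m(\P)$, which belongs to $\SV(\N)$ by the combination of \cite[Theorem 4.1]{dziubanski1992schwartz} and \cite[Lemma 7.1]{glowacki2013lp} already used in the proof of Proposition~\ref{prop:construction_crk}. Because $\P$ is homogeneous of degree $1$, identity \eqref{eq:spectral_dilate} immediately gives $m(2^{-j}\P)f = f \ast \crk_{2^{-j}}$ for every $j \in \mathbb{Z}$. Thus once this particular $\crk$ is shown to satisfy the discrete Calder\'on condition \eqref{eq:discrete_calderon}, the three equivalences (i)--(iii) follow at once from the corresponding parts of Theorem~\ref{thm:indep_crk}.

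To exhibit a dual $\drk \in \SV(\N)$, I will construct an auxiliary multiplier $\widetilde{m}$ by the standard partition-of-unity trick. Setting $M(\lambda) := \sum_{j \in \mathbb{Z}} |m(2^{-j}\lambda)|^2$, the support condition $\supp m \subseteq [1/2,2]$ makes $M$ smooth and bounded; the intervals $[2^{j}\cdot 3/5,\, 2^{j}\cdot 5/3]$ cover $(0,\infty)$ (consecutive intervals overlap since $2 \cdot 3/5 < 5/3$), so the lower bound $|m| \geq c$ on $[3/5,5/3]$ yields $M(\lambda) \geq c^2$ uniformly; and one verifies directly that $M(2\lambda) = M(\lambda)$. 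Setting $\widetilde{m}(\lambda) := \overline{m(\lambda)}/M(\lambda)$ then produces a smooth function supported in $[1/2,2]$ satisfying
\begin{align*}
\sum_{j \in \mathbb{Z}} m(2^{-j}\lambda)\, \widetilde{m}(2^{-j}\lambda) = 1, \quad \lambda > 0.
\end{align*}
Letting $\drk$ denote the convolution kernel of $\widetilde{m}(\P)$, which lies in $\SV(\N)$ by the same reasoning as for $\crk$, the spectral theorem promotes the identity above to $\sum_{j \in \mathbb{Z}} m(2^{-j}\P)\widetilde{m}(2^{-j}\P) = I$ on $L^2(\N)$, equivalently to $f = \sum_{j \in \mathbb{Z}} f \ast \crk_{2^{-j}} \ast \drk_{2^{-j}}$ with $L^2$-convergence for $f \in \SV(\N)$.

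The technically nontrivial step will be to upgrade this $L^2$-convergence first to $\SV(\N)$ and then, by duality, to $\SV'(\N)$, as required by \eqref{eq:discrete_calderon}. For this I will follow verbatim the argument used in the proof of Proposition~\ref{prop:construction_crk}(i): apply the almost orthogonality estimate of Lemma~\ref{aoe} to $\crk \ast \drk \in \SV(\N)$ to obtain pointwise bounds analogous to \eqref{eq:X_f_kmj}, from which summability of the Schwartz semi-norms $\|\cdot\|_{(k)}$ yields convergence in $\SV(\N)$, and convergence in $\SV'(\N)$ then follows by pairing against $\SV(\N)$. With \eqref{eq:discrete_calderon} thereby established for $\crk$, Theorem~\ref{thm:indep_crk} completes the proof of (i)--(iii). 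The main obstacle is the very last convergence step, but it is routine given the machinery already developed.

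For the concluding assertions, on a graded group the operator $\P$ can be replaced by $\RLO^{1/\hdeg}$ for any positive Rockland operator $\RLO$ of homogeneous degree $\hdeg$: the Schwartz-kernel property for its spectral multipliers established in \cite{geller2006continuous, velthoven2022comptes} (as recalled after Proposition~\ref{prop:construction_crk}) makes the same argument applicable without modification. On a stratified group equipped with canonical dilations, the identification $\P = (-\sL)^{1/2}$ for a suitable quasi-norm in \eqref{eq:def_con_ker_P}, noted in the remark following \eqref{eq:int_rep_P}, reduces those cases---including the Heisenberg case $\N = \H$ and the Euclidean case $\N = \mathbb{R}^n$ with $\RLO = -\Delta$---to the general statement just proven.
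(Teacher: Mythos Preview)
Your proposal is correct and follows exactly the approach the paper intends: the corollary is stated in the paper without an explicit proof, as a direct consequence of Theorem~\ref{thm:indep_crk}, and your argument fills in precisely the details that are left implicit---namely, that the convolution kernel of $m(\P)$ lies in $\SV(\N)$ and satisfies the discrete Calder\'on condition~\eqref{eq:discrete_calderon}, after which Theorem~\ref{thm:indep_crk} applies. Your construction of the dual multiplier $\widetilde{m} = \overline{m}/M$ via the dyadic partition of unity is the standard device (the paper itself invokes the same idea in the proofs of Propositions~\ref{prop:Identification of Lebesgue and Hardy} and~\ref{Prop:identification of homogeneous Sobolev}), and the upgrade of convergence from $L^2$ to $\SV$ and $\SV'$ is, as you note, verbatim the argument of Proposition~\ref{prop:construction_crk}(i).
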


\subsection{Basic properties}
In this subsection, we show various additional basic properties of the Besov and Triebel-Lizorkin spaces, such as their completeness and elementary inclusions.

\begin{proposition} \label{prop:basic_emb}
 Let 
$0< p \leq \infty$, $0< q \leq \infty$ and 
$\ord \in \mathbb{R}$. Then
\begin{align} \label{eq:basic_embedding_BTL}
\B_{p,\min(p,q)}^\ord (\N) \subseteq \TLS (\N) \subseteq \B_{p, \max(p,q)}^\ord (\N)
\end{align}
with the inclusion maps being continuous; in particular, 
\begin{align*}
\B_{p,p}^\ord (\N) =\F_{p,p}^\ord (\N)
\end{align*}
with equivalent norms.
\end{proposition}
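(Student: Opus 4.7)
The plan is to prove the two inclusions $\B_{p,\min(p,q)}^{\ord} \subseteq \TLS$ and $\TLS \subseteq \B_{p,\max(p,q)}^{\ord}$ separately, treating the cases $p < \infty$ and $p = \infty$ in turn; the equality $\B_{p,p}^{\ord}(\N) = \F_{p,p}^{\ord}(\N)$ with equivalent quasi-norms then follows by specialising $q = p$ in the resulting chain.

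When $p < \infty$, both inclusions reduce to two standard ingredients applied to the functions $g_j(x) := 2^{j\ord} |f \ast \crk_{2^{-j}}(x)|$: the pointwise monotonicity of the discrete $\ell^r$-quasi-norms, $(\sum_j g_j^{r_2})^{1/r_2} \leq (\sum_j g_j^{r_1})^{1/r_1}$ for $r_1 \leq r_2$, and Minkowski's integral inequality, which exchanges the $\ell^r$-sum with the $L^p$-integral whenever the outer exponent is at least one. When $p \leq q$, the first ingredient applied pointwise (with $r_1 = p$, $r_2 = q$) gives the first inclusion, and Minkowski's inequality in exponent $q/p \geq 1$ applied to $(x,j) \mapsto g_j(x)^p$ gives the second. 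When $q \leq p$ the roles of $p$ and $q$ are interchanged, using $\ell^q \hookrightarrow \ell^p$ pointwise and Minkowski in $L^{p/q}$. The endpoint $q = \infty$ requires only the obvious modifications (replacing sums by suprema).

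The case $p = \infty$ is the main obstacle, since $\F^{\ord}_{\infty, q}$ is defined through the dyadic-average $\mathcal{C}_q$-quasi-norm. The easy inclusion $\B^{\ord}_{\infty, q} \subseteq \F^{\ord}_{\infty, q}$ follows by bounding $|f \ast \crk_{2^{-j}}(x)|$ above by $\|f \ast \crk_{2^{-j}}\|_{L^\infty}$ pointwise inside the $\mathcal{C}_q$-norm, which cancels the dashed integral and leaves precisely the $\ell^q$-sum defining $\B^{\ord}_{\infty, q}$. The hard part is the reverse embedding $\F^{\ord}_{\infty, q} \subseteq \B^{\ord}_{\infty, \infty}$, and the strategy is to pass to Peetre-type maximal functions via Corollary~\ref{cor:PM_qn_equiv}(ii) and (iii) for a parameter $a > \max\{\hdim/q, |\ord|\}$ (read as $a > \hdim$ when $q = \infty$). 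The key geometric ingredient is the oscillation estimate already observed in \eqref{eq:justification_supinf}, which guarantees that for every $j \in \mathbb{Z}$ and every $B \in \mathcal{B}_{-j}$ the essential supremum and essential infimum of $(\crk^*_{2^{-j}}f)_a$ over $B$ are comparable up to the constant $(2\cqn)^a$. Consequently, for any $j \in \mathbb{Z}$ and any $B \in \mathcal{B}_{-j}$,
\[
\esssup_{x \in B} (\crk^*_{2^{-j}}f)_a(x) \;\lesssim\; \mathop{\mathrm{ess\,inf}}_{x \in B} (\crk^*_{2^{-j}}f)_a(x) \;\leq\; \biggl ( \dashint_B \bigl [(\crk^*_{2^{-j}}f)_a(y)\bigr ]^q \, d\mu_\N(y) \biggr )^{1/q}
\]
(with the natural modification for $q = \infty$). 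Multiplying by $2^{j\ord}$, the right-hand side is bounded by $\big\|(2^{k\ord}(\crk^*_{2^{-k}}f)_a)_{k \in \mathbb{Z}}\big\|_{\mathcal{C}_q}$ upon choosing $k = -j$ in the supremum defining the $\mathcal{C}_q$-norm, and taking the essential supremum over $x$ by varying $B \in \mathcal{B}_{-j}$ and then the supremum over $j$ delivers $\|f\|_{\B^{\ord}_{\infty, \infty}} \lesssim \|f\|_{\F^{\ord}_{\infty, q}}$ via the two Peetre-type characterisations from Corollary~\ref{cor:PM_qn_equiv}.
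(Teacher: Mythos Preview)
Your proposal is correct and follows essentially the same approach as the paper. The only cosmetic difference in the $p=\infty$ step is that the paper bounds $|f\ast\crk_{2^{-j}}(x_0)|$ directly by $2^a\inf_{x\in B}(\crk^*_{2^{-j}}f)_a(x)$ from the definition of the Peetre maximal function (thus avoiding one invocation of Corollary~\ref{cor:PM_qn_equiv}(iii)), whereas you first pass to the Peetre-maximal form on the Besov side via Corollary~\ref{cor:PM_qn_equiv}(iii) and then use the oscillation estimate \eqref{eq:justification_supinf}; both routes lead to the same chain of inequalities ending with Corollary~\ref{cor:PM_qn_equiv}(ii).
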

\begin{proof}
We prove the embeddings \eqref{eq:basic_embedding_BTL} only for $p =\infty$ since for $0< p <\infty$ they follow from elementary inequalities concerning $L^p(\ell^q)$ and $\ell^q(L^p)$, see, e.g.,~\cite[Proposition 2.3.2]{triebel1983theory}.

From the definitions of the $\B_{\infty,q}^\ord$(\N) and $\F_{\infty,q}^\ord(\N)$-quasi-norms, it is easy to see that 
for any $\ord \in \mathbb{R}$ and $0< q \leq \infty$,
\begin{align*}
\|f\|_{\F_{\infty, q}^\ord} \leq \|f\|_{\B_{\infty, q}^\ord}.
\end{align*}
Hence, in order to prove \eqref{eq:basic_embedding_BTL} for $p =\infty$, it remains to show that for all $\ord \in \mathbb{R}$ and $0< q \leq \infty$,
\begin{align} \label{eq:infty_embedding}
  \|f\|_{\B_{\infty, \infty}^\ord} \lesssim \|f\|_{\F_{\infty,q}^\ord}.  
\end{align}
 
To this end, let $x_0 \in \N$ be arbitrary. Then, for any $j \in \mathbb{Z}$, there exists a ball $B \in \mathcal{B}_{-j}$ such that $x_0 \in B$.  Note that for every $x \in B$ we have 
\[
(\crk_{2^{-j}}^\ast f)_a(x) \geq \sup_{y \in B} \frac{\big|f\ast \crk_{2^{-j}}(y) \big|}{(1 + 2^j|y^{-1}x|)^a}\geq \frac{\big|f\ast \crk_{2^{-j}}(x_0) \big|}{(1 + 2^j|x_0^{-1}x|)^a} \geq 2^{-a}\big|f\ast \crk_{2^{-j}}(x_0) \big|.
\]
Hence, 
\begin{align*}
\big|f\ast \crk_{2^{-j}}(x_0) \big| \leq 2^a \inf_{x \in B} \big|(\crk_{2^{-j}}^\ast f)_a(x)  \big|.
\end{align*}
It follows that 
\begin{align*}
\big|2^{j\ord}f\ast\crk_{2^{-j}}(x_0)\big| &\lesssim \left(\inf_{x \in B} \big|2^{j\ord}(\crk_{2^{-j}}^\ast f)_a(x)  \big|^q\right)^{1/q} \leq \left(\dashint_B \big|2^{j\ord}(\crk_{2^{-j}}^\ast f)_a(x)  \big|^q \; d\mu_\N (x)\right)^{1/q} \\&\leq  \left(\dashint_{B} \sum_{\ell \geq j}\big|2^{\ell \ord}(\crk_{2^{-\ell}}^\ast f)_a(x)  \big|^q \; d\mu_\N (x)\right)^{1/q} \\
&\leq \sup_{B' \in \mathcal{B}_{-j}}\left(\dashint_{B'} \sum_{\ell \geq j}\big|2^{\ell\ord}(\crk_{2^{-\ell}}^\ast f)_a(x)  \big|^q \; d\mu_\N (x)\right)^{1/q} \\
& \leq \big\|\big(2^{\ell \ord} (\crk_{2^{-\ell}}^\ast f)_a\big) \big\|_{\mathcal{C}^q}  \asymp
\|f\|_{\F_{\infty,q}^\ord},
\end{align*}
where in the last step we have used Peetre-type maximal function characterization of $\F_{\infty,q}^\ord (\N)$ from Corollary \ref{cor:PM_qn_equiv}. Taking the supremum over $x_0 \in \N$ and $j \in \mathbb{Z}$ yields the desired estimate \eqref{eq:infty_embedding}. This completes the proof.
\end{proof}

For showing the completeness of Besov and Triebel-Lizorkin spaces, we will use a general result concerning the completeness of quasi-normed spaces embedded into tempered distributions. Given a quasi-normed space $E$ which is continuously embedded into $\SV'(\N)$, we say the $E$ has the \textit{Fatou property} if for all sequences $(f_n)_{n=1}^\infty$ in $E$ satisfying
\[
f_n \rightarrow f \ \text{ in } \ \SV'(\N) \quad  \text{ and } \quad \liminf_{n \rightarrow \infty}\|f_n\|_E <\infty
\]
it follows that $f \in E$ and $\|f\|_E \leq \liminf_{n\rightarrow \infty}\|f_n\|_E$.

The following lemma is a quasi-normed version of  \cite[Lemma 14.4.7]{HNVW}. We include its short proof for completeness.

\begin{lemma} \label{lem:completeness_of_quasinormed_spaces}
Let $E$ be a quasi-normed space which is continuously embedded into $\SV'(\N)$. If $E$ has the Fatou property, then $E$ is complete.  
\end{lemma}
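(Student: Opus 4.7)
The strategy is a direct Fatou-property-to-completeness transfer. Fixing a Cauchy sequence $(f_n)_{n \geq 1}$ in $E$, I would first pass to $\SV'(\N)$ via the continuous embedding to obtain a Cauchy sequence there, which converges to some $f \in \SV'(\N)$ by the sequential completeness of the dual of the Fréchet space $\SV(\N)$.

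The next step is to show $f \in E$ and $f_n \to f$ in $E$ using the Fatou property. Given $\varepsilon > 0$, choose $N$ such that $\|f_n - f_m\|_E < \varepsilon$ for all $n,m \geq N$, and fix one such $m$. Then the sequence $(f_n - f_m)_{n \geq 1}$ lies in $E$, converges in $\SV'(\N)$ to $f - f_m$, and satisfies $\liminf_n \|f_n - f_m\|_E \leq \varepsilon < \infty$. The Fatou property yields $f - f_m \in E$ with $\|f - f_m\|_E \leq \varepsilon$; since $f_m \in E$ and $E$ is a vector space, also $f \in E$. As $\varepsilon > 0$ was arbitrary, we conclude $\|f_m - f\|_E \to 0$ as $m \to \infty$, so $E$ is complete.

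The only mild obstacle is confirming the sequential completeness of $\SV'(\N)$ in whatever topology renders the embedding continuous; since $\SV(\N)$ is a closed subspace of the Fréchet space $\SC(\N)$, a standard Banach-Steinhaus argument delivers sequential completeness of $\SV'(\N)$ in both its weak-$\ast$ and strong topologies, so the limit $f$ exists in either case. Note that no recourse to the Aoki-Rolewicz theorem is needed here: the Cauchy-criterion definition of completeness applies verbatim in quasi-normed spaces, and the argument above uses only linear operations and the defining property of Fatou, both of which are insensitive to the quasi-triangle constant.
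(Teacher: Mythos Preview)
Your proof is correct and follows essentially the same approach as the paper: pass to a limit $f$ in $\SV'(\N)$ via the continuous embedding and sequential completeness, then apply the Fatou property to the shifted sequence $(f_n - f_m)_n$ to obtain both $f \in E$ and the norm convergence $\|f - f_m\|_E \leq \varepsilon$. The only cosmetic difference is that the paper first applies Fatou directly to $(f_n)$ to conclude $f \in E$ and then a second time to get convergence, whereas you accomplish both in a single application; the added remarks on sequential completeness of $\SV'(\N)$ and on Aoki--Rolewicz being unnecessary are accurate but not needed for the argument.
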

\begin{proof}
Let $(f_n)_{n=1}^\infty$ be a Cauchy sequence in $E$; in particular, $(f_n)_{n =1}^{\infty}$ is bounded, and thus $\liminf_{n \rightarrow \infty}\|f_n\|_E <\infty$.  Moreover, since $\SV'(\N)$ is complete and $E$ is continuously embedded into $\SV'(\N)$, there exists $f \in \SV'(\N)$ such that $f_n \rightarrow f$ in $\SV'(\N)$. Hence, by the Fatou property of $E$, it follows that $f \in E$. For showing that $f_n \rightarrow f$ in $E$, let $\varepsilon >0$ be arbitrary and choose $k \in \mathbb{N}$ such that $\|f_n -f_{m}\| < \varepsilon$ for all $n, m >k$. 
Then, by again the Fatou property, 
\[
\|f_n -f\|_E \leq \liminf_{m \rightarrow \infty} \|f_n -f_{m}\|_E \leq \varepsilon,
\]
which shows the completeness of $E$.
\end{proof}

\begin{proposition}
 Let $p,q\in (0,\infty]$ and $\ord \in \mathbb{R}$.  
 \begin{enumerate}
     [\rm (i)]\item The inclusions
     \begin{align} \label{eq:Schwartz_embedding}
     \SV(\N) \subseteq \BS(\N) \subseteq \SV'(\N) \quad \text{and} \quad \SV(\N) \subseteq \TLS(\N) \subseteq \SV'(\N)
     \end{align}
     hold, with the inclusions being continuous.
         \item  
      $\BS(\N)$ and $\TLS(\N)$ are quasi-Banach spaces (Banach spaces if $p,q \in [1,\infty]$).
 \end{enumerate}
\end{proposition}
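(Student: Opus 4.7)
For statement (i), I split into the two inclusions. For $\SV(\N) \hookrightarrow \BS(\N), \TLS(\N)$, given $f \in \SV(\N)$ I apply the almost orthogonality estimate of Lemma~\ref{aoe} with $t = 1$ and $s = 2^{-j}$, which yields, for any $M, L > 0$, a pointwise bound of the form
\[
|f \ast \crk_{2^{-j}}(x)| \lesssim_{M,L} 2^{-|j|M} \frac{(1 \wedge 2^j)^Q}{(1 + (1 \wedge 2^j)|x|)^L} \|f\|_{(k)}
\]
for some $k = k(M,L) \in \NN$. Choosing $L$ so large that the spatial factor lies in $L^p$ and $M$ large enough to absorb both $2^{j \ord}$ and the remaining $j$-dependence of the $L^p$-norms, one obtains directly that $\|f\|_{\BS} + \|f\|_{\TLS} \lesssim \|f\|_{(k)}$, which gives continuous inclusions into both spaces. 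The case $p = \infty$ for $\F^{\ord}_{\infty,q}$ is handled analogously by the same pointwise bound combined with the definition of the $\mathcal{C}_q$-quasi-norm.

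For the reverse inclusions $\BS(\N), \TLS(\N) \hookrightarrow \SV'(\N)$ (continuous in the weak-$*$ topology), it suffices to show that for any $\varphi \in \SV(\N)$ the pairing $( f, \varphi )$ is bounded by $C_\varphi$ times the relevant quasi-norm. Since $\crk$ satisfies the discrete Calder\'on formula \eqref{eq:discrete_calderon} with partner $\drk$, and convolution with the fixed Schwartz function $\drk_{2^{-j}}^\vee \ast \varphi$ is continuous on $\SV'(\N)$ (Section~\ref{sec:convolution}), I expand
\[
( f, \varphi ) = \sum_{j \in \Z} \big( f \ast \crk_{2^{-j}}, \drk_{2^{-j}}^\vee \ast \varphi \big).
\]
Lemma~\ref{aoe} applied to $\drk_{2^{-j}}^\vee \ast \varphi$ yields decay in both $j$ and the space variable, controlled by Schwartz seminorms of $\varphi$. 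For $p \geq 1$, H\"older's inequality on each term plus $\ell^q$-$\ell^{q'}$ duality with arbitrarily rapid geometric decay in $j$ gives $|( f, \varphi )| \leq C_\varphi \|f\|_{\BS}$; the Triebel-Lizorkin case follows via Proposition~\ref{prop:basic_emb}. For $p < 1$, I instead use the sub-mean value property (Lemma~\ref{lem:central_estimate}) with exponent $r = p$ to derive a pointwise bound on $|f \ast \crk_{2^{-j}}(x)|$ controlled by $\|f\|_{\BS}$ (using the elementary embedding $\ell^q \hookrightarrow \ell^\infty$), and then estimate the integral against the rapidly decaying function $\drk_{2^{-j}}^\vee \ast \varphi$ directly. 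The case $p = \infty$ for $\F^{\ord}_{\infty,q}$ follows from the embedding $\F^{\ord}_{\infty,q}(\N) \subseteq \B^{\ord}_{\infty,\infty}(\N)$ of Proposition~\ref{prop:basic_emb}. The main obstacle is thus handling the case $p < 1$, which forces one to invoke the sub-mean value property rather than a direct H\"older argument.

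For statement (ii), the quasi-norm axioms are routine: positive homogeneity and the quasi-triangle inequality follow from the corresponding properties of the $L^p$ and $\ell^q$ quasi-norms, while the non-degeneracy $\|f\|_{\BS} = 0 \Rightarrow f = 0$ is obtained from the Calder\'on formula \eqref{eq:discrete_calderon}, since $f \ast \crk_{2^{-j}} = 0$ for all $j$ forces $f = 0$ in $\SV'(\N)$. For $p, q \in [1, \infty]$, the ordinary triangle inequalities of $L^p$ and $\ell^q$ upgrade the quasi-norm to a norm, yielding the Banach property.

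Completeness is then established via Lemma~\ref{lem:completeness_of_quasinormed_spaces}, which, given the continuous embedding into $\SV'(\N)$ from part~(i), reduces the task to verifying the Fatou property. If $f_n \to f$ in $\SV'(\N)$ and $\liminf_{n} \|f_n\|_{\BS} < \infty$, then continuity of convolution with the fixed Schwartz function $\crk_{2^{-j}}$ (Section~\ref{sec:convolution}) yields pointwise convergence $f_n \ast \crk_{2^{-j}}(x) \to f \ast \crk_{2^{-j}}(x)$ for each $j \in \Z$ and $x \in \N$. Applying the scalar Fatou lemma in $x$ and the discrete Fatou lemma in $j$ to the defining quantities (and the analogous argument with $\mathcal{C}_q$ for $\F^{\ord}_{\infty,q}$) gives $\|f\|_{\BS} \leq \liminf_{n} \|f_n\|_{\BS}$ and $\|f\|_{\TLS} \leq \liminf_{n} \|f_n\|_{\TLS}$. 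This verifies the Fatou property and completes the proof.
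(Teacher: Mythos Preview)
Your proposal is correct and follows essentially the same approach as the paper: the almost orthogonality estimate (Lemma~\ref{aoe}) for $\SV \hookrightarrow \BS$, the Calder\'on expansion paired with decay of $\drk_{2^{-j}}^\vee \ast \varphi$ for the reverse embedding, and the Fatou property via Lemma~\ref{lem:completeness_of_quasinormed_spaces} for completeness. The one structural difference is that the paper does not split the reverse embedding $\BS \hookrightarrow \SV'$ at $p=1$: it treats all $0<p<\infty$ uniformly via the sub-mean-value estimate, passing through the Hardy--Littlewood maximal function (Lemma~\ref{lem:majorant}) and $L^{p/r}$-boundedness of $\mathcal{M}$ to control $\|f\ast\crk_{2^{-j}}\|_{L^\infty}$ by an $\ell^1$-type sum of $\|f\ast\crk_{2^{-k}}\|_{L^p}$. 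Your route---a direct H\"older pairing $\|f\ast\crk_{2^{-j}}\|_{L^p}\|\drk_{2^{-j}}^\vee\ast\varphi\|_{L^{p'}}$ for $p\geq 1$, and the sub-mean-value property with $r=p$ (bounding the weight by $1$ to get $\|f\ast\crk_{2^{-j}}\|_{L^\infty}\lesssim 2^{jQ/p-j\ord}\|f\|_{\BS}$) for $p<1$---is slightly more elementary in each regime and avoids the maximal function entirely.
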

\begin{proof}
(i) To prove \eqref{eq:Schwartz_embedding}, it suffices to show 
\begin{align} \label{eq:Schwartz_embedding_besov}
    \SV(\N) \subseteq \BS(\N) \subseteq \SV'(\N)
     \end{align}
due to the basic embeddings \eqref{eq:basic_embedding_BTL}.

We first prove the continuous embedding $ \SV(\N) \subseteq \BS(\N)$. Let $\varphi \in \SV(\N)$ and let 
$\crk \in \SV(\N)$ satisfy the discrete Calder\'{o}n condition \eqref{eq:discrete_calderon}.
By  Lemma \ref{aoe} we have, for any $M,L>0$,  
\begin{align} \label{eq:convoluition_omega_km}
\big|\varphi \ast \crk_{2^{-j}} (x)\big|
\lesssim \|\varphi\|_{(k)} 2^{-|j|M} \frac{(2^j \wedge 1)^\hdim}{ \big(1 + (2^{j} \wedge 1)^\hdim|x| \big)^L},
\end{align}
where $k$ is a positive integer  depending only on $M$ and $L$. 

If $0< p <\infty$, we fix $M,L$ such that $M -|\sigma|-\hdim|p-1|>0$ and $L >\hdim/p$. Then from \eqref{eq:convoluition_omega_km}
it follows that 
\begin{align*}
 \|2^{j\ord} \varphi \ast \crk_{2^{-j}}\|_{L^p}  \lesssim \|\varphi\|_{(k)}2^{-|j|(M -|\sigma|-\hdim|p-1|)},
\end{align*}
and hence
\begin{align*}
\|\varphi\|_{\BS} \lesssim \big\| \big(2^{-|j|(M -|\sigma|-\hdim|p-1|}\big)_{j =-\infty}^\infty \big\|_{\ell^q} \|\varphi\|_{(k)} \lesssim \|\varphi\|_{(k)}
\end{align*}
for all $0<q \leq \infty$.
If $p =\infty$, then using \eqref{eq:convoluition_omega_km} it is also easily follows that $\|\varphi\|_{\B_{\infty,q}^\ord} \lesssim \|\varphi\|_{(k)}$. In combination, this shows $\SV (\N) \hookrightarrow \BS(\N)$.

We next show the continuous embedding
$\BS(\N) \subseteq \SV'(\N)$.
Since $\crk$ satisfies the discrete Calder\'{o}n condition \eqref{eq:discrete_calderon}, there exists $\drk \in \SV(\N)$ such that 
\begin{align*}
f= \sum_{j =-\infty}^\infty f \ast \crk_{2^{-j}} \ast \drk_{2^{-j}}
\end{align*}
with convergence in $\SV'(\N)$.   It follows that, for every $\varphi \in \SV(\N)$,
\begin{align}  \label{eq:f_omega_0}
( f,\varphi ) = \sum_{j=-\infty}^\infty ( f\ast \crk_{2^{-j}}, \;  \varphi \ast \drk^{\vee}_{2^{-j}} ).
\end{align}
Consequently,
\begin{align} \label{eq:f_omega}
   |( f,\varphi)| & \lesssim   \sum_{j \in \mathbb{Z}} 
\|f\ast \crk_{2^{-j}}\|_{L^\infty} \|\varphi \ast \drk^{\vee}_{2^{-j}}\|_{L^1}
\end{align}
provided the infinite series converges.
To show this, note that by Lemma \ref{aoe}, 
for any $M,L>0$, there exists $k =k(M,L) \in \mathbb{N}$ such that  
\begin{align*}
 \big|\varphi \ast \drk^{\vee}_{2^{-j}}(x)\big|
\lesssim \|\varphi\|_{(k)} 2^{-|j|M} \frac{(2^j \wedge 1)^\hdim}{ \big(1 + (2^{j} \wedge 1)^\hdim|x| \big)^L}, 
\end{align*}
so that
\begin{align} \label{eq:omega_k_theta}
\| \varphi \ast \drk^{\vee}_{2^{-j}} \|_{L^1} \lesssim \|\varphi\|_{(k)} 2^{-|j|M} .
\end{align}
As such, it remains to prove adequate estimates for $\| f \ast \phi_{2^{-j}} \|_{L^{\infty}}$. We consider the cases $p < \infty$ and $p = \infty$ separately.
\\~\\
\textbf{Case 1.} In this case, we fix $0 < p < \infty$ and choose $0 < r < p$.
To estimate $\|f\ast \crk_{2^{-j}}\|_{L^\infty}$, we use Lemma \ref{lem:central_estimate} to obtain
\begin{align*}
 |f\ast \crk_{2^{-j}}(x) |^r
    \lesssim \sum_{k \in \mathbb{Z}}
    2^{-|j-k|Mr} \int_{\N} \frac{2^{k\hdim}|f \ast \crk_{2^{-k}}(z)|^r}{(1+2^{k}|z^{-1}x|)^{\hdim + \varepsilon_1}} \; d\mu_\N(z), \quad x \in \N
\end{align*}
provided that $Mr >Q +\varepsilon_1$ for some $\varepsilon_1 > 0$.
Fix $x \in \N$. For any $y \in B_{2^{-j}}(x)$, we have $(1 +2^{j}|y^{-1}x|)^{\hdim + \varepsilon_1} \asymp 1$, and hence
\begin{align*} 
 |f\ast \crk_{2^{-j}}(x) |^r
&\lesssim \sum_{k \in \mathbb{Z}}
2^{-|j-k|Mr} \int_{\N} \frac{2^{k\hdim}|f \ast \crk_{2^{-k}}(z)|^r}{(1+2^{k}|z^{-1}x_0|)^{\hdim +\varepsilon_1}(1+2^{j}|y^{-1}x|)^{\hdim +\varepsilon_1}} \; d\mu_\N(z)\\
&\lesssim \sum_{k \in \mathbb{Z}}
2^{-|j-k|(Mr-\hdim -\varepsilon_1)} \int_{\N} \frac{2^{k\hdim}|f \ast \crk_{2^{-k}}(z)|^r}{(1+2^{k}|z^{-1}y|)^{\hdim +\varepsilon_1}} \; d\mu_\N(z), \numberthis \label{eq:fkmj}
\end{align*}
where we used the elementary inequality 
\begin{align*}(1+2^{k}|z^{-1}y|)^{\hdim +\varepsilon_1} \lesssim 2^{|j-k|(\hdim +\varepsilon_1)}(1+2^{k}|z^{-1}x|)^{\hdim +\varepsilon_1}(1+2^{j}|y^{-1}x|)^{\hdim +\varepsilon_1} .
\end{align*}
Using Lemma~\ref{lem:majorant}, we can further estimate the right-hand side to obtain
\begin{align} \label{eq:x0estimate}
|f\ast \crk_{2^{-j}}(x) |^r
\lesssim \sum_{k \in \mathbb{Z}}
2^{-|j-k|(Mr-\hdim -\varepsilon_1)} \mathcal{M}\big(|f \ast \crk_{2^{-k}}|^r\big)(y)
\end{align}
whenever $y \in B_{2^{-j}}(x)$, with an implicit constant independent of $y$.
Raising the estimate \eqref{eq:x0estimate} to the power $p/r$ gives
\begin{align*}
|f \ast \crk_{2^{-j}}(x) |^p &\lesssim \dashint_{B_{2^{-j}}(x)} \left(\sum_{k \in \mathbb{Z}}
2^{-|j-k|(Mr-\hdim -\varepsilon_1)} \mathcal{M}\big(|f \ast \crk_{2^{-k}}|^r\big)(y) \right)^{p/r} d\mu_\N (y) \\
&\lesssim 2^{j\hdim} \left\| \sum_{k \in \mathbb{Z}}
2^{-|j-k|(Mr-\hdim -\varepsilon_1)} \mathcal{M}\big(|f \ast \crk_{2^{-k}}|^r\big)\right\|_{L^{p/r}}^{p/r} \\
&   \leq 2^{j\hdim} \left(\sum_{k \in \mathbb{Z}}
2^{-|j-k|(Mr-\hdim -\varepsilon_1)} \left\|\mathcal{M}\big(|f \ast \crk_{2^{-k}}|^r\big)\right\|_{L^{p/r}}\right)^{p/r} \\
& \lesssim 2^{j\hdim} \left(\sum_{k \in \mathbb{Z}}
2^{-|j-k|(Mr-\hdim -\varepsilon_1)} \left\| f \ast \crk_{2^{-k}}\right\|_{L^{p}}^r\right)^{p/r}, \numberthis \label{eq:pointwise_f_phi_j_x0}
\end{align*}
where we used Minkowski's inequality and  the $L^{p/r}$ boundedness of $\mathcal{M}$, which hold as $p/r > 1$.

If $r <1$, then using H\"{o}lder's inequality to the summation on the right-hand side of \eqref{eq:pointwise_f_phi_j_x0}, we get
\begin{align}
|f\ast \crk_{2^{-j}}(x) |^p & \lesssim 2^{j\hdim} \left[\left(\sum_{k \in \mathbb{Z}}
2^{-|j-k|(Mr-\hdim -\varepsilon_1-\varepsilon_2)/r} \left\| f \ast \crk_{2^{-k}}\right\|_{L^{p}}\right)^{r} \left(\sum_{k \in \mathbb{Z}}
2^{-|j-k|\varepsilon_2/r}\right)^r\right]^{p/r} \nonumber\\
& \lesssim 2^{j\hdim} \left(\sum_{k \in \mathbb{Z}}
2^{-|j-k|(Mr-\hdim -\varepsilon_1-\varepsilon_2)/r} \left\| f \ast \crk_{2^{-k}}\right\|_{L^{p}}\right)^{p}, \label{eq:pointwise_f_phi_j_x0_2}
\end{align}
where $\varepsilon_2 > 0$ is an arbitrarily small positive number. If, instead, $r >1$, then by using the elementary inequality $(\sum_k|a_k|)^{1/r} \leq \sum_k |a_k|^{1/r}$ to the right-hand side of \eqref{eq:pointwise_f_phi_j_x0}, we see that \eqref{eq:pointwise_f_phi_j_x0_2} still holds. 
In combination, this implies
\begin{align} \label{eq:fkmt_Linfty}
  \|f\ast \crk_{2^{-j}}\|_{L^\infty} \lesssim  2^{j\hdim/p}  \sum_{k \in \mathbb{Z}}
2^{-|j-k|(Mr-\hdim -\varepsilon_1-\varepsilon_2)/r} \left\| f \ast \crk_{2^{-k}}\right\|_{L^{p}}. 
\end{align}

Using the estimates \eqref{eq:omega_k_theta} and \eqref{eq:fkmt_Linfty}, it follows from \eqref{eq:f_omega} that
\begin{align} \label{eq:sumjsumk}
|( f,\varphi)| & \lesssim   \sum_{j \in \mathbb{Z}} 
\|f\ast \crk_{2^{-j}}\|_{L^\infty} \|\varphi \ast \drk_{2^{-j}}\|_{L^1} \nonumber\\
& \lesssim \|\varphi\|_{(k)}\sum_{j \in \mathbb{Z}} 2^{-|j|M} 2^{j\hdim/p} 2^{-j\ord}\sum_{k \in \mathbb{Z}}
2^{-|j-k|(Mr-\hdim -\varepsilon_1-\varepsilon_2)/r} 2^{(j-k)\ord}\big\| 2^{k\ord}f \ast \crk_{2^{-k}}\big\|_{L^{p}} \nonumber\\
& \leq \|\varphi\|_{(k)}\sum_{j \in \mathbb{Z}} 2^{-|j|\delta_1}\sum_{k \in \mathbb{Z}} 
2^{-|j-k|\delta_2} \big\|2^{k\ord} f \ast \crk_{2^{-k}}\big\|_{L^{p}} ,
\end{align}
where $\delta_1 := M - \hdim/p -|\ord|$ and $\delta_2:=
M-\hdim /r-\varepsilon_1 /r-\varepsilon_2/r -|\ord|$. 

We now fix the small positive numbers $\varepsilon_1,\varepsilon_2$, and choose a sufficiently large $M$ such that $\delta_1,\delta_2 >0$. Then if $q >1$, we use H\"{o}lder's inequality to get
\begin{align}\label{eq:qgeq1}
\begin{split}
\sum_{j \in \mathbb{Z}} 2^{-|j|\delta_1}&\sum_{k \in \mathbb{Z}}
2^{-|j-k|\delta_2} \big\|2^{k\ord} f \ast \crk_{2^{-k}}\big\|_{L^{p}} \\
&\lesssim  \sum_{j \in \mathbb{Z}} 2^{-|j|\delta_1}\left(\sum_{k \in \mathbb{Z}} 
2^{-|j-k|\delta_2 q'} \right)^{1/q'}
\left(\sum_{k \in \mathbb{Z}} 
 \big\|2^{k\ord} f \ast \crk_{2^{-k}}\big\|_{L^{p}}^q \right)^{1/q} \\
 &\lesssim \|f\|_{\BS}.
\end{split}
\end{align}
If, instead, $0< q \leq 1$, using the elementary inequality $\sum_k |a_k| \leq (\sum_k |a_k|^q)^{1/q}$, we have
\begin{align} \label{eq:qleq1}
\begin{split}
\sum_{j \in \mathbb{Z}} 2^{-|j|\delta_1}&\sum_{k \in \mathbb{Z}}
2^{-|j-k|\delta_2} \big\|2^{k\ord} f \ast \crk_{2^{-k}}\big\|_{L^{p}} \\
&\lesssim  \sum_{j \in \mathbb{Z}} 2^{-|j|\delta_1}
\left(\sum_{k \in \mathbb{Z}} 2^{-|j-k|\delta_2 q}
 \big\|2^{k\ord} f \ast \crk_{2^{-k}}\big\|_{L^{p}}^q \right)^{1/q} \\
 &\lesssim \|f\|_{\BS}.
\end{split}
\end{align}
In combination, this shows
\begin{align*}
|( f,\varphi )| \lesssim \|\varphi\|_{(k)}\|f\|_{\BS}
\end{align*}
for all $q \in (0, \infty]$, 
which implies that $\BS(\N)$ is continuously embedded into $\SV'(\N)$ whenever $p < \infty$.
\\~\\
\textbf{Case 2.} In this case, we treat $p = \infty$. 
It follows from \eqref{eq:f_omega_0} that
\begin{align*}  
   |( f,\varphi)| & = \Biggl | \sum_{j \in \mathbb{Z}}
 ( 2^{j\ord} f\ast\crk_{2^{-j}},\ 2^{-j\ord} \varphi \ast \drk^{\vee}_{2^{-j}}
 ) \Biggr | \\
 &  \lesssim  \left(\sup_{\ell \in \mathbb{Z}}\|2^{\ell \ord}f\ast \crk_{2^{-\ell}}\|_{L^\infty}    \right)\sum_{j \in \mathbb{Z}}
\|2^{-j\ord}\varphi \ast \drk^{\vee}_{2^{-j}}\|_{L^1}. 
\end{align*}
By the estimate \eqref{eq:omega_k_theta}, we have
\begin{align}
  \sum_{j \in \mathbb{Z}}
\|2^{-j\ord}\varphi \ast \drk^{\vee}_{2^{-j}}\|_{L^1} \lesssim \|\varphi\|_{(k)} \sum_{j \in \mathbb{Z}} 2^{-|j|(M -|\sigma|)} \lesssim \|\varphi\|_{(k)},
\end{align}
while
\begin{align} \label{eq:p_infinity_Besov}
\sup_{\ell \in \mathbb{Z}}\|2^{\ell \ord}f\ast \crk_{2^{-\ell}}\|_{L^\infty} \leq \|f\|_{\B_{\infty,q}^\ord}.
\end{align}
Therefore, 
\begin{align*}
    |( f,\varphi)| \lesssim  \|f\|_{\B_{\infty,q}^\ord}\|\varphi\|_{(k)},  
\end{align*}
which shows that $\B_{\infty,q}^\ord(\N)$ is continuously embedded into $\SV'(\N)$.
\\~\\ 
(ii) It is easy to verify that $\BS(\N)$ and $\TLS(\N)$ are quasi-normed spaces (normed spaces if $p,q \geq 1$).  To prove the completeness of $\BS(\N)$ and $\TLS(\N)$, we use Lemma \ref{lem:completeness_of_quasinormed_spaces}, and thus it suffices to verify that all these spaces have the Fatou property.

Let us verify that all Besov spaces $\BS(\N)$ have the Fatou property. 
Choose a function $\crk \in \SV(\N)$ satisfying the discrete Calder\'{o}n condition \eqref{eq:discrete_calderon} and let $(f_n)_{n=1}^{\infty}$
be a sequence in $\BS(\N)$ such that 
$\liminf_{n\rightarrow \infty}\|f_n\|_{\BS} <\infty$ and $f_n \rightarrow f$ in $\SV'(\N)$. Then, for each $j \in \mathbb{Z}$ and $x \in \N$, 
\begin{align} \label{eq:pointwise_fn_phi}
\lim_{n \rightarrow \infty}f_n \ast \crk_{2^{-j}} (x)  = f \ast \crk_{2^{-j}} (x),
\end{align}
cf. \ref{sec:convolution}. Hence, if $p, q <\infty$, it follows from Fatou's lemma that
\begin{align*}
\left\|f\right\|_{\BS}&  = \left(\sum_{j \in \mathbb{Z}} 2^{j\ord q}\left\|f \ast \crk_{2^{-j}} \right\|_{L^p}^q \right)^{1/q} 
= \left(\sum_{j \in \mathbb{Z}} 2^{j\ord q}\left\|\lim_{n\rightarrow \infty}\big|f_n \ast \crk_{2^{-j}} \big|\right\|_{L^p}^q \right)^{1/q} \\
& \leq\left(\sum_{j \in \mathbb{Z}} 2^{j\ord q}\liminf_{n\rightarrow \infty} \left\|f_n \ast \crk_{2^{-j}} \right\|_{L^p}^q \right)^{1/q}  \leq \liminf_{n\rightarrow \infty} \|f_n\|_{\BS},
\end{align*}
and similarly if $q = \infty$. If $p =\infty$, then using that
\begin{align*}
\left\|\lim_{n\rightarrow 
\infty}\big|f_n \ast \crk_{2^{-j}} \big|\right\|_{L^\infty} \leq  \sup_{k \geq 1} \inf_{n\geq k}\esssup_{x \in \N}\big|f_{n}(x)\big|
=\liminf_{n\rightarrow \infty}\|f_n \ast \crk_{2^{-j}}\|_{L^\infty},
\end{align*}
we also easily deduce that
$
\left\|f\right\|_{\B^\ord_{\infty,q}}\leq \liminf_{n\rightarrow \infty} \|f_n\|_{\B^\ord_{\infty,q}}$ for $q \in (0, \infty]$.

Similarly one can verify that all Triebel-Lizorkin space $\TLS(\N)$ have the Fatou property.
\end{proof}

\section{Continuous maximal characterizations} \label{sec:continuous}
In this section, we prove various characterizations of the Besov and Triebel-Lizorkin spaces in terms of continuous (quasi-)norms defined by functions satisfying the continuous Calder\'on condition \eqref{eq:continuous_calderon}. These characterizations will play a key role in obtaining wavelet and frame characterizations in the following sections.

In addition to the Peetre-type maximal function of a distribution $f \in \SV'(N)$, defined by
\[
(\crk_t^{*} f)_a(x) :=\sup_{y \in \N} \frac{|f \ast \crk_s(y)| }{(1+s^{-1}|y^{-1}x|)^\PTpar}, \quad x \in N,
\]
for some $\crk \in \SV(N)$ and $a, t> 0$ (cf. Section \ref{sec:maximal}), 
we also define
\begin{align}
    (\crk_t^{**} f)_a(x) := \sup_{t/2 \leq s \leq 2t} (\crk_s^* f)_\PTpar(x) =  \sup_{\substack{y \in \N \\ t/2 \leq s \leq 2t}} \frac{|f \ast \crk_s(y)| }{(1+s^{-1}|y^{-1}x|)^\PTpar}, \quad x \in N.
\end{align}
Observe that $
    (f * \crk_t)(x) \leq (\crk_t^{*} f)_a(x) \leq (\crk_t^{**} f)_a(x)
$ for arbitrary $x \in N$ and $a,t>0$.

The following theorem is the central results of this paper.

\begin{theorem} \label{thm:cont_char}
Let $\crk \in \mathcal{S}_0(\N)$ satisfy the continuous Calder\'{o}n
condition~\eqref{eq:continuous_calderon}.
\begin{enumerate}[\rm (i)]
    \item Let $p \in (0,\infty)$, $q\in (0,\infty]$ and $\ord \in \mathbb{R}$. If $a > \max \{\frac{\hdim}{p \land q}, |\ord| \}$, then the norm equivalences
\begin{align}
\begin{split} \label{eq:cont_char_1}
    \|f\|_{\F^{\ord}_{p,q}}
    &\asymp
    \biggl\| \biggl( \int_0^\infty t^{-\ord q} |f * \crk_t|^{q} \; \frac{dt}{t}\biggr)^{1/q} \biggr \|_{L^p} \\
    &\asymp
    \biggl\| \biggl( \int_0^\infty t^{-\ord q} \bigl[(\crk_t^* f)_a \bigr]^q \; \frac{dt}{t}\biggr)^{1/q}\biggr\|_{L^p} \\
    &\asymp
    \biggl\| \biggl( \int_0^\infty t^{-\ord q} \bigl[(\crk_t^{**} f)_a \bigr]^q \; \frac{dt}{t}\biggr)^{1/q}\biggr\|_{L^p}
\end{split}
\end{align}
hold for all $f \in \mathcal{S}_0'(\N)$, with the usual modification when $q =\infty$.
    \item  Let $q \in (0, \infty]$ and $\ord \in \mathbb{R}$. If $a > \max \{ \frac{\hdim}{q}, |\ord| \}$ for $q < \infty$, then the norm equivalences
\begin{align}
\begin{split} \label{eq:cont_char_2_q}
    \|f\|_{\F^{\ord}_{\infty,q}}
    &\asymp \sup_{x \in \N, t > 0} \biggl( \dashint_{B_t(x)} \int_0^t
    \tau^{-\ord q} \bigl |f * \crk_{\tau} (y) \bigr|^q \; \frac{d\tau}{\tau}  d\mu_\N(y) \biggr)^{1/q} \\
    &\asymp \sup_{x \in \N, t > 0} \biggl( \dashint_{B_t(x)} \int_0^t
    \tau^{-\ord q} \bigl [ (\crk_\tau^* f)_a(y) \bigr ]^q \; \frac{d\tau}{\tau}  d\mu_\N(y) \biggr)^{1/q} \\
    &\asymp \sup_{x \in \N, t > 0} \biggl( \dashint_{B_t(x)} \int_0^t
    \tau^{-\ord q} \bigl [ (\crk_\tau^{**} f)_a(y) \bigr ]^q \; \frac{d\tau}{\tau}  d\mu_\N(y) \biggr)^{1/q}
\end{split}
\end{align}
hold for all $f \in \mathcal{S}_0'(\N)$, while if $a > \max \{ \hdim, |\ord| \}$ for $q = \infty$, then the norm equivalences
\begin{align}
\begin{split} \label{eq:cont_char_2_infty}
    \|f\|_{\F^{\ord}_{\infty,\infty}}
    &\asymp \sup_{x \in \N, t > 0} \; \sup_{\tau \in (0, t]} \tau^{-\ord} \dashint_{B_t(x)}
    |f * \crk_{\tau} (y) \bigr| \; d\mu_\N(y) \\
    &\asymp \sup_{x \in \N, t > 0} \; \sup_{\tau \in (0, t]} \tau^{-\ord} \dashint_{B_t(x)}
    (\crk_\tau^* f)_a(y) \; d\mu_\N(y) \\
    &\asymp \sup_{x \in \N, t > 0} \; \sup_{\tau \in (0, t]} \tau^{-\ord} \dashint_{B_t(x)}
    (\crk_\tau^{**} f)_a(y) \; d\mu_\N(y)
\end{split}
\end{align}
hold for all $f \in \mathcal{S}_0'(\N)$.
\item Let $p, q \in (0,\infty]$ and $\ord \in \mathbb{R}$. If $a > \max \{ \frac{\hdim}{p}, |\ord| \}$, then the norm equivalences
\begin{align}
\begin{split} \label{eq:cont_char_3}
    \|f\|_{\B^{\ord}_{p,q}}
    &\asymp
    \biggl( \int_0^\infty t^{-\ord q} \| f * \crk_t \|_{L^p}^{q} \; \frac{dt}{t}\biggr)^{1/q} \\
    &\asymp
    \biggl( \int_0^\infty t^{-\ord q} \| (\crk_t^* f)_a \|_{L^p}^{q} \; \frac{dt}{t}\biggr)^{1/q} \\
    &\asymp
    \biggl( \int_0^\infty t^{-\ord q} \| (\crk_t^{**} f)_a \|_{L^p}^{q} \; \frac{dt}{t}\biggr)^{1/q}.
\end{split}
\end{align}
hold for all $f \in \mathcal{S}_0'(\N)$ when $q < \infty$, with the usual modification when $q =\infty$.
\end{enumerate}
\end{theorem}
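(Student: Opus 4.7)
The strategy is to extract, from each chain in (i)--(iii), the trivial pointwise inequalities $|f\ast\crk_t(x)| \leq (\crk_t^*f)_a(x) \leq (\crk_t^{**}f)_a(x)$, which immediately give the three ``middle'' inequalities among the continuous quantities in each of (i)--(iii). The substantive work reduces to two bounds in each case: (A) the discrete $\BS$/$\TLS$ quasi-norm is dominated by the smallest continuous quantity (involving $|f\ast\crk_t|$), and (B) the largest continuous quantity (involving $(\crk_t^{**}f)_a$) is dominated by the discrete quasi-norm. The central analytic tool is the sub-mean-value estimate for the continuous Calder\'on reproducing formula, namely Corollary \ref{coro:central_estimate} (ii).

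For direction (B), fix $t > 0$ and choose $j \in \Z$ with $t \in [2^{-j}, 2^{-j+1}]$. Applying Corollary \ref{coro:central_estimate} (ii) to $(\crk_s^*f)_a(x)$ for each $s \in [t/2,2t]$ and taking the supremum over $s$, together with the equivalence $(s'/s \wedge s/s')^{(M-a)r} \asymp (s'/t \wedge t/s')^{(M-a)r}$ on that range, yields
\begin{align*}
 [(\crk_t^{**}f)_a(x)]^r \lesssim \int_0^\infty \Bigl(\frac{s'}{t}\wedge\frac{t}{s'}\Bigr)^{(M-a)r} \int_\N \frac{(s')^{-Q}|f\ast\crk_{s'}(z)|^r}{(1+(s')^{-1}|z^{-1}x|)^{ar}}\, d\mu_\N(z)\, \frac{ds'}{s'}.
\end{align*}
I would choose $r > 0$ with $r < p \wedge q$ in case (i) (resp.\ $r < p$ in (iii), $r < q$ in (ii)) and $a > \max\{Q/r,|\sigma|\}$. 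Taking the appropriate outer quasi-norm of this estimate ($L^p(\ell^q)$-type in (i), the sup-mean $\mathcal{C}_q$-type in (ii), $\ell^q(L^p)$-type in (iii)), the majorant property (Lemma \ref{lem:majorant}) bounds the inner convolution integral by $[\mathcal{M}_r(f\ast\crk_{s'})(x)]^r$; the Fefferman--Stein inequality (Lemma \ref{vector-valued HL}) and Rychkov's shift-sum bound (Lemma \ref{Ul}) — or, in case (ii), their $\mathcal{C}_q$-counterparts Lemmas \ref{infi} and \ref{intt} (ii) — then return the continuous quantity involving $|f\ast\crk_s|$, and a final comparison with the discrete Peetre-maximal characterization (Corollary \ref{cor:PM_qn_equiv}) completes (B). Direction (A) follows the same template applied in the opposite order: Corollary \ref{coro:central_estimate} (ii) at the dyadic scale $t = 2^{-j}$ bounds $(\crk_{2^{-j}}^*f)_a(x)$ by a weighted continuous integral of $|f\ast\crk_s|$ concentrated near $s \asymp 2^{-j}$, and the same combination of the majorant, Fefferman--Stein, and shift-sum estimates returns the continuous quasi-norm of $|f\ast\crk_t|$.

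The main obstacle is case (ii) at $p = \infty$, where the relevant quasi-norms are suprema of averages over $B_t(x) \times (0,t]$ and no Fefferman--Stein inequality is available. My plan is to systematically replace Lemmas \ref{vector-valued HL} and \ref{Ul} by their $\mathcal{C}_q$-analogues Lemmas \ref{infi} and \ref{intt} (ii), and to discretize both the scale integral $(0,t]$ into dyadic subintervals and the spatial average over $B_t(x)$ via the dyadic balls of Lemma \ref{dyba}. The crucial auxiliary step is a continuous-parameter version of Lemma \ref{intt} (ii) that controls the resulting integral operator on the localized $\mathcal{C}_q$-space; this is proved by decomposing $B_t(x)$ into a central core and the geometric annuli $B_{2^\ell t}(x) \setminus B_{2^{\ell-1} t}(x)$, exploiting the kernel decay $(1+s^{-1}|z^{-1}x|)^{-ar}$ to gain factors $2^{-\ell(ar-Q)}$, and covering each annulus by a controlled number of dyadic balls of scale $k_B + \ell$ (Lemma \ref{dyba}). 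The subcase $q = \infty$ of (ii) is handled by the same mechanism, with $\sup_{\tau \in (0,t]}$ replacing $\int_0^t \cdots d\tau/\tau$ throughout.
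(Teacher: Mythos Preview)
Your proposal correctly identifies the trivial chain $|f\ast\crk_t|\le(\crk_t^*f)_a\le(\crk_t^{**}f)_a$ and correctly outlines how Corollary~\ref{coro:central_estimate}(ii), the majorant property, Fefferman--Stein, and Lemma~\ref{Ul} (resp.\ Lemmas~\ref{infi},~\ref{intt} in case (ii)) combine to show that the three \emph{continuous} quantities are mutually equivalent. This matches the paper's Step~3 (and the trivial Step~2).

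The gap is in the bridge to the \emph{discrete} quasi-norm. The space $\TLS(\N)$ is defined via a function $\crkk$ satisfying the \emph{discrete} Calder\'on condition, while your $\crk$ only satisfies the continuous one; these are genuinely different hypotheses, and $\crk$ need not satisfy \eqref{eq:discrete_calderon}. Consequently, your direction~(A) --- bounding $(\crk_{2^{-j}}^*f)_a(x)$ via Corollary~\ref{coro:central_estimate}(ii) --- controls the wrong object: the quantity $\|(2^{j\ord}(\crk_{2^{-j}}^*f)_a)_j\|$ is not a priori comparable to $\|f\|_{\TLS}$, and Corollary~\ref{cor:PM_qn_equiv} applies only to functions satisfying the discrete condition. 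Likewise, in direction~(B) your ``final comparison with Corollary~\ref{cor:PM_qn_equiv}'' does not close the loop: that corollary says nothing about how the continuous quantity $\|(\int t^{-\ord q}|f\ast\crk_t|^q\,dt/t)^{1/q}\|_{L^p}$ compares to the discrete $\crkk$-norm.

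The paper supplies exactly this missing bridge in its Steps~1 and~4, and the mechanism is different from what you propose: one fixes an auxiliary $\crkk\in\SV(\N)$ satisfying the discrete Calder\'on condition, writes $f\ast\crkk_{2^{-j}}=\int_0^\infty f\ast\crk_s\ast\drk_s\ast\crkk_{2^{-j}}\,ds/s$ (resp.\ $f\ast\crk_t=\sum_j f\ast\crkk_{2^{-j}}\ast\drkk_{2^{-j}}\ast\crk_t$), and then applies the almost-orthogonality estimate Lemma~\ref{aoe} to the mixed convolution $\drk_s\ast\crkk_{2^{-j}}$ (resp.\ $\drkk_{2^{-j}}\ast\crk_t$). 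This produces a pointwise bound of the form \eqref{eq:pointwise_f_eta}, after which your machinery (Lemmas~\ref{Ul},~\ref{infi},~\ref{intt}, Fefferman--Stein) applies. The sub-mean-value estimate alone cannot replace this step, because Corollary~\ref{coro:central_estimate} only relates $\crk$-quantities to other $\crk$-quantities and never introduces~$\crkk$.
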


\begin{proof}
For proving the four estimates in (i) - (iii), we split the proof into four steps, each of them covering the cases (i) - (iii) in parallel.
Throughout the proof, we fix $f \in \SV'(N)$ and let $\eta \in \SV(N)$ be a function satisfying the discrete Calder\' on condition \eqref{eq:discrete_calderon}. We will compute the discrete (quasi-)norms of $\TLS(\N)$ and $\BS(\N)$ always with respect to this function $\eta$.
\\~\\
\textbf{Step~1.} 
In this step, we will estimate the discrete (quasi-)norms of $f$ (defined with respect to $\eta$) by the continuous (quasi-)norms involving the Peetre-type maximal function $(\crk_t^* f)_a$ of $f$ for fixed $a>0$. For this, let $\drk \in \SV(\N)$ be such 
\begin{align*}
    f \ast \crkk_{2^{-j}} = \int_0^{\infty} f \ast \crk_t \ast \drk_t \ast \crkk_{2^{-j}} \, \frac{dt}{t}, \quad j \in \mathbb{Z}.
 \end{align*}
 Since each of the above convolution products defines a smooth function on $\N$, we get 
\begin{align}
	|f \ast \crkk_{2^{-j}}(x)| \leq \int_0^{\infty} \hspace{-6pt} \int_{\N} |f \ast \crk_t(y)| |\drk_t \ast \crkk_{2^{-j}}(y^{-1}x)| \, d\mu_\N (y)  \, \frac{dt}{t} \label{eq:cont_char_convolution}
\end{align}
for all $x \in \N$. We estimate the right-hand side of \eqref{eq:cont_char_convolution} further by first estimating the factors in the integrand individually. By Lemma~\ref{lem:central_estimate}, given  $b, r > 0$, it follows that
\begin{align*}
    |f \ast \crk_t(y)| &\lesssim_{b,r}
    \biggl ( \int_0^\infty
    \Bigl (\frac{\tau}{t} \wedge \frac{t}{\tau} \Bigr )^{br} \tau^{-\hdim} \int_{\N} \frac{|f \ast \crk_\tau(z)|^r}{(1+\tau^{-1}|z^{-1}y|)^{br}} \; d\mu_\N (z) \frac{d\tau}{\tau} \biggr )^{1/r} \\
    &\lesssim_{a,r} \bigg( \int_0^\infty \Bigl (\frac{\tau}{t} \wedge \frac{t}{\tau} \Bigr )^{br} [(\crk_\tau^* f)_a(x)]^r \int_{\N} \tau^{-\hdim} \frac{(1+\tau^{-1}|z^{-1}x|)^{ar}}{(1+\tau^{-1}|z^{-1}y|)^{br}} \; d\mu_\N (z) \frac{d\tau}{\tau} \biggr )^{1/r} \numberthis \label{eq:cont_char_aux_est_2_Step1}
\end{align*}
for all $x, y \in \N$ and $t>0$, where the second inequality used that
\begin{align*}
	|f \ast \crk_\tau(z)|^r \lesssim_{a,r} [(\crk_\tau^* f)_a(x)]^r (1+\tau^{-1}|z^{-1}x|)^{ar}, \quad x, z \in \N.
\end{align*}
 For estimating \eqref{eq:cont_char_aux_est_2_Step1} further, we choose $b = \hdim/r + \varepsilon/r + a$ for some $\varepsilon > 0$, so that 
 $br = \hdim + \varepsilon + ar$, and estimate the inner integral in \eqref{eq:cont_char_aux_est_2_Step1} by
\begin{align*}
	\int_{\N}  \tau^{-\hdim} \frac{(1+\tau^{-1}|z^{-1}x|)^{ar}}{(1+\tau^{-1}|z^{-1}y|)^{br}} \; d\mu_\N (z)  
 &\lesssim (1+\tau^{-1}|y^{-1}x|)^{ar} \int_{\N} \hspace{-3pt} \frac{\tau^{-\hdim} }{(1+\tau^{-1}|z^{-1}y|)^{\hdim + \varepsilon}} \; d\mu_\N (z) \\
 &\lesssim (1+\tau^{-1}|y^{-1}x|)^{ar}
\end{align*}
for $x, y \in \N$, $\tau \in (0, \infty)$. In combination, this gives 
\begin{align*}
    |f \ast \crk_t(y)| 
    &\lesssim \bigg( \int_0^\infty \Bigl (\frac{\tau}{t} \wedge \frac{t}{\tau} \Bigr )^{br} [(\crk_\tau^* f)_a(x)]^r (1+\tau^{-1}|y^{-1}x|)^{ar} \; \frac{d\tau}{\tau} \biggr )^{1/r} \numberthis \label{eq:cont_char_aux_est_2_Step1_2}
\end{align*}
for any $x, y \in N$ and $t>0$.

For estimating the other factor in \eqref{eq:cont_char_convolution}, we simply use Lemma~\ref{aoe} to obtain
\begin{align} \label{eq:estimate_second_factor}
    |\drk_t \ast \crkk_{2^{-j}}(y^{-1}x)| \lesssim 2^{-|j-k |M} \frac{2^{(j \wedge k)\hdim}}{(1+ 2^{j\wedge k}|y^{-1}x|)^{L}}, \quad x, y \in \N,
\end{align}
for any $t \in [2^{-k}, 2^{-k + 1}]$, $k \in \mathbb{Z}$, and where $M, L \in \mathbb{N}$ can be taken arbitrarily large. 

By combining \eqref{eq:cont_char_aux_est_2_Step1_2} and \eqref{eq:estimate_second_factor}, the integrand of \eqref{eq:cont_char_convolution} can thus be estimated as
\begin{align*}
	&|f \ast \crk_t(y)| |\drk_t \ast \crkk_{2^{-j}}(y^{-1}x)| \\
 &\quad \quad \leq 2^{-|j-k |M}  \bigg( \int_0^\infty  \frac{2^{(j \wedge k)\hdim r} (1+\tau^{-1}|y^{-1}x|)^{ar}}{(1+ 2^{j\wedge k}|y^{-1}x|)^{Lr}}    \Bigl (\frac{\tau}{t} \wedge \frac{t}{\tau} \Bigr )^{br} [(\crk_\tau^* f)_a(x)]^r  \; \frac{d\tau}{\tau} \biggr )^{1/r} \\
  &\quad \quad \leq 2^{-|j-k |M} \bigg( \sum_{\ell \in \mathbb{Z}} \int_{2^{-\ell}}^{2^{- \ell + 1}}  2^{-|\ell - k|br} \frac{2^{(j \wedge k)\hdim r} (1+\tau^{-1}|y^{-1}x|)^{ar}}{(1+ 2^{j\wedge k}|y^{-1}x|)^{Lr}}     [(\crk_\tau^* f)_a(x)]^r  \; \frac{d\tau}{\tau} \biggr )^{1/r} \numberthis \label{eq:estimate_integrand}
\end{align*}
for any $t \in [2^{-k}, 2^{-k + 1}]$, $k \in \mathbb{Z}$, where the last step used that 
\begin{align*}
	\Bigl (\frac{\tau}{t} \wedge \frac{t}{\tau} \Bigr )^{br} \lesssim 2^{-|\ell - k|br}.
\end{align*}
We next estimate the $y$-depend expression in \eqref{eq:estimate_integrand}. For this, note that 
since $\tau \in [2^{-\ell},2^{-\ell+1}]$, $\ell \in \Z$, and
$
	|\ell - k \wedge j| \leq |\ell - k| + |j -k|, 
$
we have
\begin{align*}
(1+\tau^{-1}|y^{-1}x|)^{ar} &\asymp (1+2^\ell |y^{-1}x|)^{ar} \leq 2^{|\ell -j \wedge k|ar} (1+2^{j\wedge k}|y^{-1}x|)^{ar} \\
&\leq 2^{|\ell -k|ar}2^{|j-k|ar} (1+2^{j\wedge k}|y^{-1}x|)^{ar},
\end{align*}
and hence
\begin{align*}
 \frac{2^{(j \wedge k)\hdim r} (1+\tau^{-1}|y^{-1}x|)^{ar}}{(1+ 2^{j\wedge k}|y^{-1}x|)^{L r}} \lesssim  2^{|\ell - k|ar} 2^{|j - k|ar} \frac{2^{(j \wedge k)\hdim r}}{(1+ 2^{j\wedge k}|y^{-1}x|)^{(L - a)r}}
\end{align*}
for all $x, y \in \N$. Using this estimate in \eqref{eq:estimate_integrand}, it follows that 
\begin{align*}
	&|f \ast \crk_t(y)| |\drk_t \ast \crkk_{2^{-j}}(y^{-1}x)| \\
  &\quad \quad \lesssim 2^{-|j-k |(M-a)} \frac{2^{(j \wedge k)\hdim}}{(1+ 2^{j\wedge k}|y^{-1}x|)^{(L-a)}}  \bigg( \sum_{\ell \in \mathbb{Z}} \int_{2^{-\ell}}^{2^{- \ell + 1}}  2^{-|\ell - k|(b-a)r}     [(\crk_\tau^* f)_a(x)]^r  \; \frac{d\tau}{\tau} \biggr )^{1/r} \numberthis \label{eq:estimate_integrand2}
\end{align*}
Choosing $L>a+\hdim$ yields
\[
\int_N \frac{2^{(j \wedge k)\hdim }}{(1+ 2^{j\wedge k}|y^{-1}x|)^{(L - a)}} \; d\mu_N (y) \lesssim 1,
\]
so that integrating the estimate \eqref{eq:estimate_integrand2} over $N$ gives that for all $t \in [2^{-k},2^{-k+1}]$,
\begin{align*}
\int_N |f \ast \crk_t(y)|& |\drk_t \ast \crkk_{2^{-j}}(y^{-1}x)| \; d\mu_N (y)\\ &\lesssim 2^{-|j-k |(M-a)} \bigg( \sum_{\ell \in \mathbb{Z}} 2^{-|\ell - k|(b-a)r}  \int_{2^{-\ell}}^{2^{- \ell + 1}}    [(\crk_\tau^* f)_a(x)]^r  \; \frac{d\tau}{\tau} \biggr )^{1/r}\\
&\lesssim 2^{-|j-k |(M-a)} \bigg( \sum_{\ell \in \mathbb{Z}} 2^{-|\ell - k|(b-a)r} 2^{- \ell \sigma r} \int_{2^{-\ell}}^{2^{- \ell + 1}} \tau^{-\ord r}    [(\crk_\tau^* f)_a(x)]^r  \; \frac{d\tau}{\tau} \biggr )^{1/r},
\end{align*}
where the last inequality used $\tau^{-1}\asymp 2^{\ell}$. 
We then   
integrate both sides on $[2^{-k},2^{-k+1}]$ with respect to the measure $dt/t$, and multiply both sides by $2^{j\ord}$, to obtain \begin{align*}
2^{j\ord}\int_{2^{-k}}^{2^{-k+1}}&\int_N |f \ast \crk_t(y)||\drk_t \ast \crkk_{2^{-j}}(y^{-1}x)| \; d\mu_N (y) \; \frac{dt}{t}\\ &\lesssim 2^{-|j-k |(M-a)} \bigg( \sum_{\ell \in \mathbb{Z}} 2^{-|\ell - k|(b-a)r}  2^{(j-\ell) \ord r}\int_{2^{-\ell}}^{2^{- \ell + 1}}   \tau^{-\ord r} [(\crk_\tau^* f)_a(x)]^r  \; \frac{d\tau}{\tau} \biggr )^{1/r}\\
& \leq 2^{-|j-k |(M-a-|\sigma|)} \bigg( \sum_{\ell \in \mathbb{Z}} 2^{-|\ell - k|(b-a-|\sigma|)r}  \int_{2^{-\ell}}^{2^{- \ell + 1}}   \tau^{-\ord r} [(\crk_\tau^* f)_a(x)]^r  \; \frac{d\tau}{\tau} \biggr )^{1/r},
\end{align*}
where the last inequality used $2^{(j -\ell)\ord r}  \leq 2^{|j-k||\ord| r}2^{|k-\ell| |\ord| r}$. 
Finally, summing both sides of the last displayed inequality over $k \in \mathbb{Z}$,  and recalling the estimate \eqref{eq:cont_char_convolution}, it follows that
\begin{align*}
&2^{j\ord} |f\ast \crkk_{2^{-j}} (x)| \\
&\quad \lesssim \sum_{k \in \Z} 2^{-|j-k|(M-a-|\ord|)}  \biggl ( \sum_{\ell \in \Z} 2^{-|\ell-k|(b-a - |\ord|)r}  \int_{2^{-\ell}}^{2^{-\ell+1}} \hspace{-6pt} \tau^{-\ord r} \bigl [ (\crk_\tau^* f)_a(x) \bigr ]^r \; \frac{d\tau}{\tau} \biggr )^{1/r} \numberthis \label{eq:pointwise_f_eta}
\end{align*}
for all $x \in \N$ and $j \in \Z$.

We next treat the cases (i)-(iii) separately.
\\~\\
\textit{Case (i).}
If $q \in (0, \infty)$, we choose $r = q$ in \eqref{eq:pointwise_f_eta} and choose $M > a + |\ord|$ and $br = \hdim + \delta + ar + |\ord|r$, for any $\delta > 0$.
Our earlier choice of $b$ is not violated by this if we pick $\varepsilon = \delta + |\ord|r > 0$, which we may without loss of generality. We now apply the inequality \eqref{eq:Ry_Lp_lq} twice, once for the summation in $k$ and once for the summation in $\ell$, thus obtaining
\begin{align*}
    \|f\|_{\F^{\ord}_{p,q}}
    &\asymp \big\| \big(2^{j\ord}   f \ast \crkk_{2^{-j}}\big)_{j=-\infty}^\infty\big\|_{L^p(\ell^q)} \\
    & \lesssim \left\| \left(\sum_{k \in \Z} 2^{-|j-k|(M-a-|\ord|)}  \biggl ( \sum_{\ell \in \Z} 2^{-|\ell-k|(b -a - |\ord|) q} \int_{2^{-\ell}}^{2^{-\ell+1}} \hspace{-6pt} \tau^{-\ord q} \bigl [ (\crk_\tau^* f)_a(x) \bigr ]^q \; \frac{d\tau}{\tau} \biggr )^{1/q}   \right)_{j =-\infty}^\infty  \right\|_{L^p(\ell^q)} \\
    & \lesssim 
    \left\| \left(   \biggl (\sum_{\ell \in \Z} 2^{-|\ell-k|(b -a - |\ord|) q} \int_{2^{-\ell}}^{2^{-\ell+1}} \hspace{-6pt} \tau^{-\ord q} \bigl [ (\crk_\tau^* f)_a(x) \bigr ]^q \; \frac{d\tau}{\tau} \biggr )^{1/q}   \right)_{k =-\infty}^\infty  \right\|_{L^p(\ell^q)} 
    \\
    &= \left\| \left(    \sum_{\ell \in \Z} 2^{-|\ell-k|(b -a - |\ord|) q} \int_{2^{-\ell}}^{2^{-\ell+1}} \hspace{-6pt} \tau^{-\ord q} \bigl [ (\crk_\tau^* f)_a(x) \bigr ]^q \; \frac{d\tau}{\tau}    \right)_{k =-\infty}^\infty  \right\|_{L^{p/q}(\ell^1)}^{1/q} 
    \\
    & \lesssim 
    \left\| \left(    \int_{2^{-\ell}}^{2^{-\ell+1}} \hspace{-6pt} \tau^{-\ord q} \bigl [ (\crk_\tau^* f)_a(x) \bigr ]^q\; \frac{d\tau}{\tau}    \right)_{\ell =-\infty}^\infty  \right\|_{L^{p/q}(\ell^1)}^{1/q} 
    \\
    &= \biggl \| \biggl( \int_0^\infty \tau^{-\ord q} \bigl[(\crk_\tau^* f)_a \bigr]^q \; \frac{d\tau}{\tau} \biggr )^{1/q} \biggr \|_{L^p},
\end{align*}
as claimed. 

If $q = \infty$, we choose $r = 1$ in \eqref{eq:pointwise_f_eta} and repeat all estimates up to the penultimate one to conclude that
\begin{align*}
     \|f\|_{\F^\ord_{p, \infty}} \lesssim
     \left\| \left(    \int_{2^{-\ell}}^{2^{-\ell+1}} \hspace{-6pt} \tau^{-\ord q} \bigl [ (\crk_\tau^* f)_a(x) \bigr ]^q\; \frac{d\tau}{\tau}    \right)_{\ell =-\infty}^\infty  \right\|_{L^p(\ell^\infty)}.
\end{align*}
In combination with the observation that
\begin{align} \label{eq:aux_est_L1_Linfty}
    \int_{2^{-\ell}}^{2^{-\ell+1}} |g(\tau)| \; \frac{d\tau}{\tau} \leq \ln(2) \Bigl \| \tau^{-\ord} |g(\tau)| \Bigr \|_{L^\infty([2^{-\ell}, 2^{-\ell+1}], \frac{d\tau}{\tau})}
\end{align}
for all $g \in L^1_{loc}([2^{-\ell}, 2^{-\ell+1}], \frac{d\tau}{\tau})$, this yields the desired estimate
\begin{align*}
    \|f\|_{\F^\ord_{p, \infty}} \lesssim \Bigl \| \tau^{-\ord} \|  (\crk_\tau^* f)_a \|_{L^\infty((0, \infty), \frac{d\tau}{\tau})} \Bigr \|_{L^{p}}.
\end{align*}
\\~\\
\textit{Case (iii).}
Clearly, we may assume that $\int_0^{\infty} \tau^{- \sigma q} \| (\phi_{\tau}^* f)_a \|_{L^p}^q \; \frac{d\tau}{\tau} < \infty$, so that $\tau \mapsto \tau^{- \sigma q}(\phi_{\tau}^* f)_a$ is Bochner integrable.
If $q \in (0, \infty)$, we choose $r < p \wedge q$ in \eqref{eq:pointwise_f_eta}, and apply \eqref{eq:Ry_lq_Lp} to obtain
\begin{align*}
\|f\|_{\BS} & \asymp \left\|\left(2^{j\sigma} f \ast \crkk_{2^{-j}}\right)_{j=-\infty}^\infty \right\|_{\ell^q (L^p)}\\
& \lesssim \left\|\left(\biggl ( \sum_{\ell \in \mathbb{Z}} 2^{-|\ell-k|(b-a - |\sigma|)r} \int_{2^{-\ell}}^{2^{-\ell+1}} \hspace{-6pt} \tau^{-\sigma r} \bigl [ (\crk_\tau^* f)_a \bigr ]^r \; \frac{d\tau}{\tau} \biggr )^{1/r} \right)_{k=-\infty}^\infty \right\|_{\ell^q (L^p)}\\
&= \left( \sum_{k\in \mathbb{Z}} \left\| \biggl ( \sum_{\ell \in \mathbb{Z}} 2^{-|\ell-k|(b-a - |\sigma|)r} \int_{2^{-\ell}}^{2^{-\ell+1}} \hspace{-6pt} \tau^{-\sigma r} \bigl [ (\crk_\tau^* f)_a \bigr ]^r \; \frac{d\tau}{\tau} \biggr )^{1/r}\right\|_{L^p}^q\right)^{1/q} \\
& = \left( \sum_{k\in \mathbb{Z}} \left\|  \sum_{\ell \in \mathbb{Z}} 2^{-|\ell-k|(b-a - |\sigma|)r} \int_{2^{-\ell}}^{2^{-\ell+1}} \hspace{-6pt} \tau^{-\sigma r} \bigl [ (\crk_\tau^* f)_a\bigr ]^r \; \frac{d\tau}{\tau} \right\|_{L^{p/r}}^{q/r}\right)^{1/q} .
\end{align*}
Since $p/r >1$, it follows by Minkowski's inequality that
\begin{align}
\|f\|_{\BS} & \lesssim \left( \sum_{k\in \mathbb{Z}} \left(  \sum_{\ell \in \mathbb{Z}} 2^{-|\ell-k|(b-a - |\sigma|)r} \left\|\int_{2^{-\ell}}^{2^{-\ell+1}} \hspace{-6pt} \tau^{-\sigma r} \bigl [ (\phi_\tau^* f)_a\bigr ]^r \; \frac{d\tau}{\tau} \right\|_{L^{p/r}}\right)^{q/r}\right)^{1/q} \nonumber \\
& \leq \left( \sum_{k\in \mathbb{Z}} \left(  \sum_{\ell \in \mathbb{Z}} 2^{-|\ell-k|(b-a - |\sigma|)r}\int_{2^{-\ell}}^{2^{-\ell+1}} \hspace{-6pt} \tau^{-\sigma r} \left\| \bigl [ (\phi_\tau^* f)_a\bigr ]^r \right\|_{L^{p/r}} \; \frac{d\tau}{\tau} \right)^{q/r}\right)^{1/q}, \label{eq:Minkowski}
\end{align}
where the last inequality follows from a standard fact of Bochner integration, see, e.g., \cite[Proposition 1.2.2]{hytonen2016analysis}. 

For estimating the integral of the right-hand side of \eqref{eq:Minkowski}, we use H\"{o}lder's inequality to obtain
\begin{align*}
&\int_{2^{-\ell}}^{2^{-\ell+1}} \hspace{-6pt} \tau^{-\sigma r} \left\| \bigl [ (\phi_\tau^* f)_a\bigr ]^r \right\|_{L^{p/r}} \; \frac{d\tau}{\tau}  \\
&\quad\quad  \leq \left(\int_{2^{-\ell}}^{2^{-\ell+1}} \hspace{-6pt} \Big(\tau^{-\sigma r} \left\| \bigl [ (\phi_\tau^* f)_a \bigr ]^r \right\|_{L^{p/r}}\Big)^{q/r} \; \frac{d\tau}{\tau}\right)^{r/q} \left(\int_{2^{-\ell}}^{2^{-\ell+1}}  \hspace{-6pt} 1 \; \frac{d\tau}{\tau} \right)^{1-(r/q)} \\
& \quad\quad \lesssim \left(\int_{2^{-\ell}}^{2^{-\ell+1}} \hspace{-6pt} \tau^{-\sigma q} \left\| \bigl [ (\phi_\tau^* f)_a \bigr ]^r \right\|_{L^{p/r}}^{q/r}\frac{d\tau}{\tau}\right)^{r/q} \\
& \quad\quad =  \left( \int_{2^{-\ell}}^{2^{-\ell+1}} \hspace{-6pt} \tau^{-\sigma q} \left\|  (\phi_\tau^* f)_a \right\|_{L^{p}}^{q}\frac{d\tau}{\tau}\right)^{r/q} .
\end{align*}
Inserting this back into \eqref{eq:Minkowski}, and using H\"{o}lder's inequality for the summation in $\ell$, we get
\begin{equation} \label{eq:Holder_for_sum}
\begin{split}
\|f\|_{\BS} & \lesssim \left( \sum_{k\in \mathbb{Z}} \left(  \sum_{\ell \in \mathbb{Z}} 2^{-|\ell-k|(b-a - |\sigma|)r} \left(\int_{2^{-\ell}}^{2^{-\ell+1}} \hspace{-6pt} \tau^{-\sigma q} \left\|  (\crk_\tau^* f)_a \right\|_{L^{p}}^{q}\frac{d\tau}{\tau}\right)^{r/q} \right)^{q/r}\right)^{1/q}\\
&\lesssim  \left( \sum_{k\in \mathbb{Z}}   \sum_{\ell \in \mathbb{Z}} 2^{-|\ell-k|[(b-a - |\sigma|)r-\varepsilon]q/r}  \int_{2^{-\ell}}^{2^{-\ell+1}} \hspace{-6pt} \tau^{-\sigma q} \left\|  (\crk_\tau^* f)_a \right\|_{L^{p}}^{q}\frac{d\tau}{\tau} \right)^{1/q} \\
& = \left\|\left(   \sum_{\ell \in \mathbb{Z}} 2^{-|\ell-k|[(b-a - |\sigma|)r-\varepsilon]q/r}  \int_{2^{-\ell}}^{2^{-\ell+1}} \hspace{-6pt} \tau^{-\sigma q} \left\|  (\crk_\tau^* f)_a \right\|_{L^{p}}^{q}\frac{d\tau}{\tau}\right)_{k=-\infty}^\infty  \right\|_{\ell^1}^{1/q},
\end{split}
\end{equation}
where $\varepsilon$ is an arbitrary positive number.
By \eqref{Rylq}, we can estimate \eqref{eq:Holder_for_sum} further as
\begin{align*}
\|f\|_{\BS} & \lesssim \left\|\left(    \int_{2^{-\ell}}^{2^{-\ell+1}} \hspace{-6pt} \tau^{-\sigma q} \left\|  (\crk_\tau^* f)_a \right\|_{L^{p}}^{q}\frac{d\tau}{\tau}\right)_{\ell=-\infty}^\infty  \right\|_{\ell^1}^{1/q} \\
&= \left( \sum_{\ell \in \mathbb{Z}}\int_{2^{-\ell}}^{2^{-\ell+1}} \hspace{-6pt} \tau^{-\sigma q} \left\|  (\crk_\tau^* f)_a \right\|_{L^{p}}^{q}\frac{d\tau}{\tau} \right)^{1/q}\\
 & = \left(\int_0^\infty \hspace{-6pt} \tau^{-\sigma q} \left\|  (\crk_\tau^* f)_a \right\|_{L^{p}}^{q}\frac{d\tau}{\tau} \right)^{1/q},
\end{align*}
which proves the claim.

If $q = \infty$, we choose $r < (p \land 1)$ in \eqref{eq:pointwise_f_eta}. We repeat the proof up to \eqref{eq:Holder_for_sum} and apply \eqref{Rylq} to obtain
\begin{align*}
\|f\|_{\B^\ord_{p, \infty}} \lesssim \left\|\left( \int_{2^{-\ell}}^{2^{-\ell+1}} \hspace{-6pt} \tau^{-\sigma} \left\|  (\crk_\tau^* f)_a \right\|_{L^{p}}\frac{d\tau}{\tau}\right)_{\ell=-\infty}^\infty \right\|_{\ell^\infty}.
\end{align*}
As in (i), we finally use \eqref{eq:aux_est_L1_Linfty} to derive the desired estimate
\begin{align*}
    \|f\|_{\B^\ord_{p, \infty}} & \lesssim \Bigl \| \tau^{-\ord} \|  (\crk_\tau^* f)_a \|_{L^{p}} \Bigr \|_{L^\infty((0, \infty), \frac{d\tau}{\tau})}.
\end{align*}
\\~\\
\textit{Case (ii).}
This case requires a slightly different strategy. If $q \in (0, \infty]$, we apply the $\mathcal{C}_q$-quasi-norms to both sides of \eqref{eq:pointwise_f_eta}, we simplify the estimate by a two-fold application of \eqref{eq:bounded on Cq} in $k$ and $j$, assuming $M, b > a + | \ord |$,
\begin{align*}
   \|f\|_{\F^{\ord}_{\infty,q}} &\asymp \Bigl \| \Bigl ( 2^{j\ord} |f\ast \crkk_{2^{-j}}| \Bigr )_{j \in \Z} \Bigr \|_{\mathcal{C}_q} \\
    &\lesssim \Biggl \| \Biggl ( \sum_{k \in \Z} 2^{-|j-k|(M-a-|\ord|)} \biggl ( \sum_{\ell \in \Z} 2^{-|\ell-k|(b-a - |\ord|)r} \int_{2^{-\ell}}^{2^{-\ell+1}} \hspace{-12pt} \tau^{-\ord r} \bigl [ (\crk_\tau^* f)_a \bigr ]^r \; \frac{d\tau}{\tau} \biggr )^{1/r} \Biggr )_{j \in \Z} \Biggr \|_{\mathcal{C}_q} \\
    &\lesssim \Biggl \| \Biggl ( \int_{2^{-\ell}}^{2^{-\ell+1}} \hspace{-6pt} \tau^{-\ord r} \bigl [ (\crk_\tau^* f)_a \bigr ]^r \; \frac{d\tau}{\tau} \biggr )^{1/r} \Biggr )_{\ell \in \Z} \Biggr \|_{\mathcal{C}_q}.
\end{align*}
The sufficient condition \eqref{supinf} from Lemma~\ref{infi}, which allows us to employ \eqref{eq:bounded on Cq}, is satisfied due to an estimate that is identical to
\eqref{eq:justification_supinf} from Step~1 of the proof of Theorem~\ref{thm:indep_crk}.

If $q \in (0, \infty)$, we choose $r = q$ and write out the quasi-norm as
\begin{align*} 
& \biggl \| \biggl ( \int_{2^{-\ell}}^{2^{-\ell+1}} \hspace{-6pt} \tau^{-\ord q} \bigl [ (\crk_\tau^* f)_a \bigr ]^q \; \frac{d\tau}{\tau} \biggr )^{1/q}_{\ell \in \Z} \biggr \|_{\mathcal{C}_q} \\
& \quad \quad \quad \quad = \sup_{\ell_0 \in \Z, m \in \NN} \biggl( \dashint_{B^{\ell_0}_m} \sum_{\ell = -\ell_0}^{\infty}
\int_{2^{-\ell}}^{2^{-\ell+1}} \hspace{-6pt} \tau^{-\ord q} \bigl [ (\crk_\tau^* f)_a(y) \bigr ]^q \; \frac{d\tau}{\tau}
d\mu_\N(y) \biggr)^{1/q} \\
& \quad \quad \quad \quad = \sup_{\ell_0 \in \Z, m \in \NN} \biggl( \dashint_{B^{\ell_0}_m}
\int_0^{2^{\ell_0+1}} 
\hspace{-6pt} \tau^{-\ord q} \bigl [ (\crk_\tau^* f)_a(y) \bigr ]^q \; \frac{d\tau}{\tau}
      d\mu_\N(y)\biggr)^{1/q}.
\end{align*}
Note that for each fixed $\ell_0 \in \mathbb{Z}$, if we set $t = 2^{\ell_0 +1}$, then 
\[
 \dashint_{B^{\ell_0}_m}\int_0^{2^{\ell_0+1}} 
\hspace{-6pt} \tau^{-\ord q} \bigl [ (\crk_\tau^* f)_a(y) \bigr ]^q \; \frac{d\tau}{\tau} \lesssim 
\dashint_{B_t(x_m)} \int_0^t\tau^{-\ord q} \bigl [ (\crk_\tau^* f)_a(y) \bigr ]^q \; \frac{d\tau}{\tau}.
\]
Hence it follows that
\begin{align*} 
\|f\|_{\F^{\ord}_{\infty,q}}  &\lesssim \biggl \| \biggl ( \int_{2^{-\ell}}^{2^{-\ell+1}} \hspace{-6pt} \tau^{-\ord q} \bigl [ (\crk_\tau^* f)_a \bigr ]^q \; \frac{d\tau}{\tau} \biggr )^{1/q}_{\ell \in \Z} \biggr \|_{\mathcal{C}_q} \\
&\lesssim \sup_{x \in \N, t > 0} \biggl( \dashint_{B_t(x)} \int_0^t
\tau^{-\ord q} \bigl [ (\crk_\tau^* f)_a(y) \bigr ]^q \; \frac{d\tau}{\tau}  d\mu_\N(y)\biggr)^{1/q},
\end{align*}
as desired.

If, on the other hand, $q = \infty$, we set $r = 1$ and write out the quasi-norm as
\begin{align*}  
\biggl \| \biggl ( \int_{2^{-\ell}}^{2^{-\ell+1}} \hspace{-6pt} \tau^{-\ord} (\crk_\tau^* f)_a \frac{d\tau}{\tau} \biggr )_{\ell \in \Z} \biggr \|_{\mathcal{C}_\infty}
&= \sup_{\ell_0 \in \Z, m \in \NN} \; \sup_{\ell \geq -\ell_0 } \; \dashint_{{B}^{\ell_0}_m}
\int_{2^{-\ell}}^{2^{-\ell+1}} \hspace{-6pt} \tau^{-\ord q} (\crk_\tau^* f)_a(y) \,
\frac{d\tau}{\tau}  d\mu_\N(y)  \\
& = \sup_{\ell_0 \in \Z, m \in \NN} \; \sup_{\ell \geq -\ell_0 } \; \int_{2^{-\ell}}^{2^{-\ell+1}} \hspace{-6pt} \tau^{-\ord q} \dashint_{{B}^{\ell_0}_m}
(\crk_\tau^* f)_a(y)  d\mu_\N(y) \,
\frac{d\tau}{\tau},   
\end{align*}
where we have used Fubini's theorem for the second identity. 
Note that for each fixed $\ell_0 \in \mathbb{Z}$, if we set $t = 2^{\ell_0+1}$, then for all $\ell \geq -\ell_0$,
\begin{align*}
\int_{2^{-\ell}}^{2^{-\ell+1}} \hspace{-6pt} \tau^{-\ord q} \dashint_{{B}^{\ell_0}_m}
(\crk_\tau^* f)_a(y)  d\mu_\N(y)&\leq \ln(2) \sup_{\tau \in [2^{-\ell}, 2^{-\ell - 1}]} \tau^{-\ord} \dashint_{{B}^{\ell_0}_m}
(\crk_\tau^* f)_a(y) \, d\mu_\N(y)\\
& \leq \ln(2) \sup_{x\in \N}\sup_{\tau \in (0,t]} \tau^{-\ord} \dashint_{B_t(x)}
(\crk_\tau^* f)_a(y) \, d\mu_\N(y).
\end{align*}
Hence, if we apply the $\mathcal{C}_q$-quasi-norms to both sides of \eqref{eq:pointwise_f_eta} and simplify the estimate by using \eqref{eq:bounded on Cq} as we did for $q < \infty$, then
\begin{align*}
	\|f\|_{\F^{\ord}_{\infty,\infty}}
    \lesssim \biggl \| \biggl ( \int_{2^{-\ell}}^{2^{-\ell+1}} \hspace{-6pt} \tau^{-\ord} (\crk_\tau^* f)_a \frac{d\tau}{\tau} \biggr )_{\ell \in \Z} \biggr \|_{\mathcal{C}_\infty}
    \lesssim \sup_{x \in \N, t > 0} \; \sup_{\tau \in (0, t]} \tau^{-\ord} \dashint_{B_t(x)}
    (\crk_\tau^* f)_a(y) \, d\mu_\N(y),
\end{align*}
which completes Step 1 of the proof.
\\~\\
\textbf{Step 2.} In this step, we simply observe that the quasi-norms in the cases (i) - (iii) involving the Peetre-type maximal function $(\crk_{\tau}^* f)_a$ can be trivially estimated by those involving $(\crk_{\tau}^{**} f)_a$ as $(\crk_{\tau}^* f)_a (x) \leq (\crk_{\tau}^{**} f)_a(x) $ for all $x \in N$ and $a, t > 0$. 
\\~\\
\textbf{Step 3.}
In this step, we estimate the quasi-norms involving the  maximal function 
$(\crk^{**}_t f)_{a}$ by the continuous quasi-norms defined by the continuous Calder\'on condition \eqref{eq:continuous_calderon}. 

We write $t = 2^{-j}u$ for some (unique) $j \in \Z$ and $u \in [1, 2)$ to get the immediate pointwise estimate
\begin{align*}
    (\crk^{**}_{t} f)_{a} (x) &= \sup_{\substack{y \in \N \\ 2^{-j-1}u \leq v \leq 2^{-j+1}u}} \frac{|f * \crk_v(y)|}{(1 + v^{-1} |y^{-1}x|)^a} = \sup_{\substack{y \in \N \\ \frac{u}{2} \leq s \leq 2u}} \frac{|f * \crk_{2^{-j}s}(y)|}{(1 + 2^j s^{-1} |y^{-1}x|)^a} \\
    &\leq 4^a \sup_{\substack{y \in \N \\ \frac{1}{2} \leq s \leq 4}} \frac{|f * \crk_{2^{-j}s}(y)|}{(1 + 2^j s^{-1} |y^{-1}x|)^a} = 4^a \sup_{\frac{1}{2} \leq s \leq 4} (\crk^{*}_{2^{-j}s} f)_{a} (x)
\end{align*}
for any $x \in \N$. 
With $a > 0$ fixed and $t$ given as above, we pick $M := 2a$ and let $r>0$ be arbitrary, 
so that an application 
of Corollary \ref{coro:central_estimate} gives
\begin{align*}
    |(\crk^{*}_{2^{-j}s} f)_{a} (x)| 
    &\leq_{M,r} \biggl ( \int_0^\infty
    \Bigl (\frac{2^{-j}s}{\tau} \wedge \frac{\tau}{2^{-j}s} \Bigr )^{ar} \int_{\N} \frac{\tau^{-\hdim} |f \ast \crk_{\tau}(z)|^r}{(1+\tau^{-1}|z^{-1}x|)^{ar}} \, d\mu_\N (z) \frac{d\tau}{\tau} \biggr )^{1/r} \\
    &\lesssim \biggl ( \sum_{\ell \in \mathbb{Z}} 2^{-|\ell - j|ar} \int_{2^{-\ell}}^{2^{-\ell + 1}}
      \int_{\N} \frac{\tau^{-\hdim} |f \ast \crk_{\tau}(z)|^r}{(1+\tau^{-1}|z^{-1}x|)^{ar}} \, d\mu_\N (z) \frac{d\tau}{\tau} \biggr )^{1/r}
\end{align*} for all $s \in [1/2, 4]$ and $x \in \N$, where the last inequality used that
\begin{align*}
    \Bigl (\frac{2^{-j}s}{\tau} \wedge \frac{\tau}{2^{-j}s} \Bigr )^{ar} \lesssim 2^{-|\ell - j|ar}, \quad s \in [1/2, 4].
\end{align*}
Thus,
\begin{align}
\begin{split} \label{eq:cont_char_crucial_est_Step3}
    |(\crk^{**}_t f)_{a} (x)| &\lesssim \biggl ( \sum_{\ell \in \Z} 2^{-|\ell - j|ar} \int_{2^{-\ell}}^{2^{-\ell+1}} \int_{\N} \frac{\tau^{-\hdim} |f \ast \crk_{\tau}(z)|^r}{(1+\tau^{-1}|z^{-1}x|)^{ar}} \, d\mu_\N (z) \, \frac{d\tau}{\tau} \biggr )^{1/r}
\end{split}
\end{align}
for all $x \in \N$ and $t \in [2^{-j},2^{-j+1})$ for $j \in \Z$.  From \eqref{eq:cont_char_crucial_est_Step3} we can now derive the desired estimates for the cases (i) - (iii).
\\~\\
\textit{Case (i).} If $q \in (0, \infty)$, then choose $r \in (0, p \land q)$. Since we assume that $a > \frac{\hdim}{p \land q}$, we can even pick an $r$ that satisfies $a > \frac{\hdim}{r} > \frac{\hdim}{p \land q}$. Now, Fubini's theorem and the majorant property of the Hardy-Littlewood maximal function (Lemma \ref{lem:majorant}) allows us to estimate 
\begin{align*}
    &\int_{2^{-\ell}}^{2^{-\ell+1}} \int_{\N} \frac{\tau^{-\hdim} |f \ast \crk_{\tau}(z)|^r}{(1+\tau^{-1}|z^{-1}x|)^{ar}} \, d\mu_\N (z) \, \frac{d\tau}{\tau}\\
    &\quad\quad =\int_{\N} \int_{2^{-\ell}}^{2^{-\ell+1}}  \frac{\tau^{-\hdim} |f \ast \crk_{\tau}(z)|^r}{(1+\tau^{-1}|z^{-1}x|)^{ar}} \, \frac{d\tau}{\tau}\, d\mu_\N (z)  \\
    &\quad\quad \asymp \int_{\N} \frac{2^{\ell \hdim}}{(1+2^\ell |z^{-1}x|)^{ar}} \Bigl (\int_{2^{-\ell}}^{2^{-\ell+1}}  |f \ast \crk_{\tau}(z)|^r\, \frac{d\tau}{\tau} \Bigr ) \, d\mu_\N (z)  \\
    &\quad\quad \lesssim \mathcal{M} \Bigl ( \int_{2^{-\ell}}^{2^{-\ell+1}}  |f \ast \crk_{\tau}|^r \, \frac{d\tau}{\tau} \Bigr )(x).
\end{align*}
Inserting this into \eqref{eq:cont_char_crucial_est_Step3}, we obtain the pointwise estimate
\begin{align*}
    |(\crk^{**}_t f)_{a} (x)| &\lesssim \biggl ( \sum_{\ell \in \Z} 2^{-|\ell - j|ar} 
    \mathcal{M} \Bigl ( \int_{2^{-\ell}}^{2^{-\ell+1}}  |f \ast \crk_{\tau}|^r \, \frac{d\tau}{\tau} \Bigr )(x) \biggr )^{1/r}
\end{align*}
 for all $x \in \N$. This easily yields
\begin{align*}
    \biggl\| \biggl( \int_0^\infty & t^{-\ord q} \bigl[(\crk_t^{**} f)_a \bigr]^q \; \frac{dt}{t}\biggr)^{1/q}\biggr\|_{L^p} \\
    &\lesssim \Biggl \| \Biggl ( \sum_{j \in \Z} \int_{2^{-j}}^{2^{-j +1}} t^{-\ord q} \biggl ( \sum_{\ell \in \Z} 2^{-|\ell - j|ar} \mathcal{M} \Bigl ( \int_{2^{-\ell}}^{2^{-\ell+1}}  |f \ast \crk_{\tau}|^r \, \frac{d\tau}{\tau} \Bigr ) \biggr )^{q/r} \, \frac{dt}{t} \Biggr )^{1/q} \Biggr \|_{L^p} \\
    &\lesssim \Biggl \| \Biggl ( \sum_{j \in \Z} \int_{2^{-j}}^{2^{-j +1}} \biggl ( \sum_{\ell \in \Z} 2^{-|\ell - j|(a - |\ord|)q} \mathcal{M} \Bigl (2^{\ell \ord r} \int_{2^{-\ell}}^{2^{-\ell+1}}  |f \ast \crk_{\tau}|^r \, \frac{d\tau}{\tau} \Bigr )\biggr )^{q/r} \, \frac{dt}{t} \Biggr )^{1/q} \Biggr \|_{L^p}\\
   & \lesssim 
    \Biggl \| \Biggl ( \sum_{j \in \Z}  \biggl ( \sum_{\ell \in \Z} 2^{-|\ell - j|(a - |\ord|)q} \mathcal{M} \Bigl (2^{\ell \ord r} \int_{2^{-\ell}}^{2^{-\ell+1}}  |f \ast \crk_{\tau}|^r \, \frac{d\tau}{\tau} \Bigr )\biggr )^{q/r}  \Biggr )^{1/q} \Biggr \|_{L^p}\\
    &=\Biggl \|\Biggl ( \sum_{\ell \in \Z} 2^{-|\ell - j|(a - |\ord|)q} \mathcal{M} \Bigl (2^{\ell \ord r} \int_{2^{-\ell}}^{2^{-\ell+1}}  |f \ast \crk_{\tau}|^r \, \frac{d\tau}{\tau} \Bigr ) \Biggr )_{j=-\infty}^\infty \Biggr \|_{L^{p/r}(\ell^{q/r})}^{1/r} \\
    &\lesssim \Biggl \|\Biggl ( \mathcal{M} \Bigl (2^{\ell \ord r} \int_{2^{-\ell}}^{2^{-\ell+1}}  |f \ast \crk_{\tau}|^r \, \frac{d\tau}{\tau} \Bigr ) \Biggr )_{\ell=-\infty}^\infty \Biggr \|_{L^{p/r}(\ell^{q/r})}^{1/r},
\end{align*}
where the last inequality used the estimate \eqref{eq:Ry_Lp_lq}, which is applicable whenever $a > |\ord|$, as we assumed in the statement.
Because of $0 < r < p \land q$, we may apply the Fefferman-Stein inequalities (Lemma \ref{vector-valued HL}) to conclude
\begin{align*}
    \biggl\| \biggl( \int_0^\infty t^{-\ord q} \bigl[(\crk_t^{**} f)_a \bigr]^q \;\frac{dt}{t}\biggr)^{1/q}\biggr\|_{L^p} 
    &\lesssim \Biggl \|\Biggl (  2^{\ell \ord r} \int_{2^{-\ell}}^{2^{-\ell+1}}  |f \ast \crk_{\tau}|^r \, \frac{d\tau}{\tau}  \Biggr )_{\ell=-\infty}^\infty \Biggr \|_{L^{p/r}(\ell^{q/r})}^{1/r}\\
     &=\Biggl \| \Biggl ( \sum_{\ell \in \Z}  2^{\ell \ord q} \biggl (\int_{2^{-\ell}}^{2^{-\ell+1}}  |f \ast \crk_{\tau}|^r \, \frac{d\tau}{\tau} \biggr )^{q/r}\Biggr )^{1/q} \Biggr \|_{L^p}\\
     & \lesssim \Biggl \| \Biggl ( \sum_{\ell \in \Z}  2^{\ell \ord q}  \int_{2^{-\ell}}^{2^{-\ell+1}}  |f \ast \crk_{\tau}|^q \, \frac{d\tau}{\tau}  \Biggr )^{1/q} \Biggr \|_{L^p} \\  
    &\asymp \biggl \| \biggl ( \int_{0}^{\infty}  t^{-\ord q} |f \ast \crk_{\tau}|^q \, \frac{d\tau}{\tau} \biggr )^{1/q} \biggr \|_{L^p},
\end{align*}
where we the third step used H\"{o}lder's inequality:
\begin{align*}
\int_{2^{-\ell}}^{2^{-\ell +1}} |f \ast \crk_\tau|^r \, \frac{d\tau}{\tau}& \leq \left(\int_{2^{-\ell}}^{2^{-\ell +1}} |f \ast \crk_\tau|^q \, \frac{d\tau}{\tau}\right)^{r/q} \left(\int_{2^{-\ell}}^{2^{-\ell +1}}  1 \, \frac{d\tau}{\tau} \right)^{(q-r)/q}  \\
&\lesssim \left(\int_{2^{-\ell}}^{2^{-\ell +1}} |f \ast \crk_\tau|^q \, \frac{d\tau}{\tau}\right)^{r/q} .
\end{align*}

For $q = \infty$ we can repeat the proof almost verbatim if we replace the integral in $dt/t$ by the essential supremum and use \eqref{eq:aux_est_L1_Linfty} instead of H\"{o}lder's inequality at the end.
\\~\\ 
\textit{Case (iii).}
The case (iii) follows from an argument almost identical to those proving case (i) for both $q < \infty$ and $q = \infty$ by instead using the quasi-norms
\begin{align}
\begin{split}
\label{eq:aux_quasi_norm_lq_Lp}
    \biggl ( \int_0^\infty t^{-\ord q} \|  \, \cdot \, \|_{L^p}^{q} \frac{dt}{t} \biggr )^{1/q} &= \biggl ( \sum_{j \in \Z} \int_{2^{-j}}^{2^{-j +1}} t^{-\ord q} \|  \, \cdot \, \|_{L^p}^{q} \frac{dt}{t} \biggr )^{1/q}, \\
    \Bigl \| t^{-\ord} \| \, \cdot \, \|_{L^p}  \bigr \|_{L^\infty((0, \infty), \frac{dt}{t})} &= \sup_{j \in \Z} \Bigl \| t^{-\ord} \| \, \cdot \, \|_{L^p} \bigr \|_{L^\infty([2^{-j}, 2^{-j+1}], \frac{dt}{t})},
\end{split}
\end{align}
respectively, and the inequality \eqref{eq:Ry_lq_Lp}. The use of Lemma~\ref{vector-valued HL} in this case only requires $r < p$, which, in view of $ar > \hdim$, is satisfied if and only if $a > \frac{\hdim}{p}$.
\\~\\
\textit{Case (ii).}
If $q \in (0, \infty)$, we first observe that by using the properties (i) and (ii) of
the set of dyadic balls $\mathcal{B} = \{B_m^k: j_0 \in \mathbb{Z}, m \in \mathbb{N}\}$ stated in
Lemma \ref{dyba}, it follows that
\begin{align}
\begin{split} \label{eq:cont_char_quasi_norm_Step3_q}
\sup_{y \in \N, t > 0} \biggl( \dashint_{B_t(y)} \int_0^t
\bigl|& \tau^{-\ord} [\, \cdot \,](x) \bigr|^q \; \frac{d\tau}{\tau}  d\mu_\N(x)\biggr)^{1/q}  \\
&\lesssim \sup_{m \in \NN, j_0 \in \mathbb{Z}} \biggl( \dashint_{B^{j_0}_m} \int_0^{2^{j_0 +1}}
\bigl| \tau^{-\ord} [\, \cdot \,](x) \bigr|^q \; \frac{d\tau}{\tau} d\mu_\N(x)\biggr)^{1/q}  \\
&= \sup_{m \in \NN, j_0 \in \mathbb{Z}} \biggl( \dashint_{B^{j_0}_m} \sum_{j=-j_0}^\infty \int_{2^{-j}}^{2^{-j+1}}
\bigl| \tau^{-\ord} [\, \cdot \,](x) \bigr|^q \; \frac{d\tau}{\tau} d\mu_\N(x)\biggr)^{1/q}  \\
&\asymp \sup_{m \in \NN, j_0 \in \Z} \biggl( \dashint_{B^{j_0}_m} \sum_{j = -j_0}^{\infty}
\bigl| 2^{j \ord} [\, \cdot \,](x) \bigr|^q \; d\mu_\N(x)\biggr)^{1/q} .  
\end{split}
\end{align} 
Using this estimate, together with \eqref{eq:cont_char_crucial_est_Step3}, we  find that
\begin{align*}
    \sup_{y \in \N, t > 0} \biggl(& \dashint_{B_t(y)} \int_0^t
    \bigl| \tau^{-\ord} (\crk^{**}_{\tau} f)_{a} (x) \bigr|^q \; \frac{d\tau}{\tau}  d\mu_\N(x)\biggr)^{1/q} \\
    &\lesssim \sup_{j_0 \in \Z, m \in \NN} \Biggl( \dashint_{B^{j_0}_m} \sum_{j = -j_0}^{\infty}
     2^{-j \ord q} \biggl ( \sum_{\ell \in \Z} 2^{-|\ell - j|ar} \\
     &\hspace{55pt} \times \int_{2^{-\ell}}^{2^{-\ell+1}} \int_{\N} \frac{\tau^{-\hdim} |f \ast \crk_{\tau}(z)|^r}{(1+\tau^{-1}|z^{-1}x|)^{ar}} \, d\mu_\N (z) \, \frac{d\tau}{\tau} \biggr )^{q/r} \, d\mu_\N(x)\Biggr)^{1/q} \\
     &\lesssim \sup_{j_0 \in \Z, m \in \NN} \Biggl( \dashint_{B^{j_0}_m} \sum_{j = -j_0}^{\infty}
      \biggl ( \sum_{\ell \in \Z} 2^{-|\ell - j|(a - |\ord|)q} 2^{\ell \ord q} \\
     &\hspace{55pt} \times \int_{2^{-\ell}}^{2^{-\ell+1}} \int_{\N} \frac{\tau^{-\hdim} |f \ast \crk_{\tau}(z)|^r}{(1+\tau^{-1}|z^{-1}x|)^{ar}} \, d\mu_\N (z) \, \frac{d\tau}{\tau} \biggr )^{q/r} \, d\mu_\N(x)\Biggr)^{1/q}.
\end{align*}
Assuming that $a > |\ord| \geq 0$, we now apply \eqref{eq:bounded on Cq} to majorize this expression by the simpler 
\begin{align*}
    \sup_{\substack{\ell_0 \in \Z \\ m \in \NN}} \Biggl( \dashint_{B^{\ell_0}_m} \sum_{\ell = -\ell_0}^{\infty} 2^{\ell \ord q}
    \biggl ( \int_{2^{-\ell}}^{2^{-\ell+1}} \hspace{-7pt} \int_{\N} \frac{\tau^{-\hdim} |f \ast \crk_{\tau}(z)|^r}{(1+\tau^{-1}|z^{-1}x|)^{ar}} \; d\mu_\N (z)
    \frac{d\tau}{\tau} \biggr )^{q/r} \, d\mu_\N(x)\Biggr)^{1/q},
\end{align*}
where the sufficient condition \eqref{supinf} from Lemma~\ref{infi}, which allows us to employ \eqref{eq:bounded on Cq}, is satisfied due to an estimate that is almost identical to
\eqref{eq:justification_supinf}.
The desired estimate follows now from an application of Lemma~\ref{intt}~(ii) if we additionally assume that $a > \frac{ar}{q} > \frac{\hdim}{q}$:
\begin{align*}
     \sup_{\substack{\ell_0 \in \Z \\ m \in \NN}} \Biggl( \dashint_{B^{\ell_0}_m} &\sum_{\ell = -\ell_0}^{\infty}  2^{\ell \ord q}
    \biggl ( \int_{2^{-\ell}}^{2^{-\ell+1}} \hspace{-7pt} \int_{\N} \frac{\tau^{-\hdim} |f \ast \crk_{\tau}(z)|^r}{(1+\tau^{-1}|z^{-1}x|)^{ar}} \; d\mu_\N (z)
    \frac{d\tau}{\tau} \biggr )^{q/r} \, d\mu_\N(x)\Biggr)^{1/q} \\
    &\lesssim \sup_{\substack{\ell_0 \in \Z \\ m \in \NN}} \biggl( \dashint_{B^{\ell_0}_m} \sum_{\ell = -\ell_0}^{\infty} 2^{\ell \ord q}
     \int_{2^{-\ell}}^{2^{-\ell+1}} \hspace{-10pt} |f \ast \crk_{\tau}(z)|^q \, \frac{d\tau}{\tau}  d\mu_\N(x)\biggr)^{1/q} \\
    &\asymp
    \sup_{y \in \N, t > 0} \biggl( \dashint_{B_t(y)} \int_0^t
    \bigl| \tau^{-\ord} f * \crk_{\tau} (x) \bigr|^q \; \frac{d\tau}{\tau}  d\mu_\N(x)\biggr)^{1/q},
\end{align*}
which is the desired claim. For the last asymptotic equality we have used that due to Lemma~\ref{dyba} there exists a $C > 0$ such that any $y \in \N$, $t > 0$ we find a dyadic ball ${B}^{\ell_0}_m$ in our fixed collection \eqref{Bkl} that is close enough to $B_t(y)$ and of comparable radius so that for any $g \in L^1_{loc}(\N)$
\begin{align*}
    C^{-1} \dashint_{B_t(y)} |g(x)| \; d\mu_\N(x) \leq \dashint_{B^{\ell_0}_m} |g(x)| \; d\mu_\N(x) \leq C \dashint_{B_t(y)} |g(x)| \; d\mu_\N(x).
\end{align*}

We only sketch the very similar proof for $q = \infty$. Starting again from \eqref{eq:cont_char_crucial_est_Step3}, we employ the quasi-norm equivalence
\begin{align}
&  \sup_{x \in \N, t > 0} \; \sup_{\tau \in (0, t]} \tau^{-\ord} \dashint_{B_t(x)}
     \int_{2^{-\ell}}^{2^{-\ell+1}} \int_{\N} \frac{\tau^{-\hdim} |f \ast \crk_{\tau}(z)|^r}{(1+\tau^{-1}|z^{-1}y|)^{ar}} \, d\mu_\N (z)  \frac{d\tau}{\tau} d\mu_\N(y) \nonumber \\
    \asymp & \sup_{\ell_0 \in \Z, m \in \NN} \; \sup_{\tau \in (0, 2^{-\ell_0+1}]} \tau^{-\ord} \dashint_{{B}^{\ell_0}_m}
     \int_{2^{-\ell}}^{2^{-\ell+1}} \int_{\N} \frac{\tau^{-\hdim} |f \ast \crk_{\tau}(z)|^r}{(1+\tau^{-1}|z^{-1}y|)^{ar}} \, d\mu_\N (z)  \frac{d\tau}{\tau}  d\mu_\N(y) \nonumber \\
    \asymp & \sup_{\ell_0 \in \Z, m \in \NN} \; \sup_{\ell \geq -\ell_0 } \; 2^{-\ell \ord} \dashint_{{B}^{\ell_0}_m}
      \int_{2^{-\ell}}^{2^{-\ell+1}} \int_{\N} \frac{\tau^{-\hdim} |f \ast \crk_{\tau}(z)|^r}{(1+\tau^{-1}|z^{-1}y|)^{ar}} \, d\mu_\N (z) 
    \frac{d\tau}{\tau}  d\mu_\N(y). \label{eq:cont_char_quasi_norm_Step3_infty}
\end{align}
Note that majorizing the supremum by the integral above is justified by an estimate of the type \eqref{eq:justification_supinf} and resembles the estimate \eqref{eq:estimate_sup_integral} in the proof of Lemma~\ref{infi}. The use of Lemma~\ref{intt}~(ii) in this case requires $a > ar > \hdim$, but again our choice of $r$ does not restrict the assumption $a > \max \{ \hdim, |\ord| \}$. This completes the proof of Step~3.
\\~\\
\textbf{Step 4.}
We finally show that the quasi-norms in (i) - (iii) involving the continuous Calder\'on condition (with respect to $\crk$) can be estimated by the quasi-norms involving discrete Calder\'on condition (with respect to $\crkk$).
As in in Step~1, 
\begin{align*}
        f \ast \crk_t = \sum_{j \in \mathbb{Z}} f \ast \crkk_{2^{-j}} \ast \drkk_{2^{-j}} \ast \crk_t, \quad t > 0,
\end{align*}
with convergence in  $\SV'(\N)$. Employing a dyadic partition for the domain of integration in $t$, for which we index the intervals by $k \in \Z$, we can now proceed precisely as in Step~1 of the proof of Theorem~\ref{thm:indep_crk}. Making use of Lemma~\ref{aoe}, we thus get for any $t \in [2^{-k}, 2^{-k+1}]$, with $k \in \Z$, and any $x \in \N$ that
\begin{align*}
    t^{-\ord} |f \ast \crk_t (x)| 
    &\leq 2^{k \ord} \sum_{j \in \mathbb{Z}} \int_\N |f \ast \crkk_{2^{-j}}(y)| |\drkk_{2^{-j}} \ast \crk_t (y^{-1}x)| \; d\mu_\N (y) \\
    & \lesssim \sum_{j \in \mathbb{Z}} 2^{k \ord} 2^{-|j-k|M}\int_\N |f \ast \crkk_{2^{-j}}(y)|\frac{2^{(j \wedge k)\hdim}}{(1 +2^{j\wedge k}|y^{-1}x|)^{L}}\; d\mu_\N (y) \\
    &\lesssim \sum_{j \in \mathbb{Z}} 2^{-|j -k |(M-a-|\ord|)}2^{j\ord} (\crkk^*_{2^{-j}} f)_{a}(x) \int_{\N} \frac{2^{(j \wedge k)\hdim}}{(1 + 2^{j\wedge k}|y^{-1}x|)^{Q + \varepsilon}} \; d\mu_\N (y)\\ 
    &\lesssim \sum_{j \in \mathbb{Z}} 2^{-|j -k |(M-a-|\ord|)} 2^{j\ord}(\crkk^*_{2^{-j}} f)_{a}(x),
\end{align*}
provided we choose $L = a + \hdim + \varepsilon$ for an arbitrary but fixed $\varepsilon > 0$. Taking the relevant quasi-norms on both sides of the estimate above, together with an application of \eqref{eq:Ry_Lp_lq}, \eqref{eq:bounded on Cq} or \eqref{eq:Ry_lq_Lp} yields the claim (see Corollary~\ref{cor:PM_qn_equiv}).
\end{proof}

\section{Wavelet transform characterizations} \label{sec:wavelet}
This section is devoted to characterizing the Besov and Triebel-Lizorkin spaces by continuous wavelet transforms.

\subsection{Wavelet transforms on solvable extensions}
Throughout this section, let $A = \mathbb{R}^+$. Then $A$ acts on $N$ via the dilations $t \mapsto \delta_t$. The associated semi-direct product group $\NA = \N \rtimes A$ is defined via the product law
\[
 (x,s) (y,t) = (x \, \delta_s (y) , st), \quad  (x,s),  (y,t) \in G,
\]
and inversion $ (x,s)^{-1} = \bigl ( \delta_{s^{-1}}( x^{-1}), s^{-1} \bigr )$. The identity element in $G$ is given by $e_G = (e_N, 1)$ and the left-invariant Haar measure (up to a multiplicative constant) by $d\mu_G (x, s) = d\mu_\N(x) \, \frac{ds}{s^{\hdim +1}}$. The modular function of $G$ is given by $\Delta_G (x,s) = s^{-Q}$ for $(x,s) \in G$. 
The group $G$ is an exponential solvable Lie group, i.e., a connected, simply connected solvable Lie group whose exponential map is a diffeomorphism, cf. \cite[Proposition 5.27]{fuehr2005abstract}. As $G$ is nonunimodular, it necessarily has exponential volume growth.  

The \emph{quasi-regular representation} of $G$ is the map $\pi : G \to \mathcal{U}(L^2 (N))$ defined by
\[
\pi(x,s) f (y) =  s^{-\hdim/2} f(s^{-1} (x^{-1} y)), \quad  (x,s) \in G, \; y \in \N,   
\]
forms a unitary group representation. For fixed $\psi \in L^2 (N)$, the associated \emph{wavelet transform} $V_{\psi} : L^2 (N) \to L^{\infty}(G)$ is defined by
\[
V_{\psi} f (g) = \langle f, \pi(g) \psi \rangle, \quad g \in G.
\]
Throughout, we will mostly consider the wavelet transform associated to a real even function $\psi \in \mathcal{S}_0 (N)$ satisfying 
\begin{align} \label{eq:l2calderon}
f = \int_0^{\infty} f \ast \psi_t \ast \psi_t \; \frac{dt}{t} \quad \quad \text{for all} \quad f \in \mathcal{S}(N). 
\end{align}
with convergence in $L^2 (\N)$.
 The existence of such vectors is guaranteed by (the proof of) Proposition \ref{prop:construction_crk} and \cite[Corollary 7.2]{glowacki2013lp}; see also \cite[Lemma 4.1]{velthoven2022comptes}. We will refer to such a function as being \emph{admissible}.

The significance of an admissible function is that its wavelet transform is an isometry from $L^2 (\N)$ into $L^2 (\NA)$, because
\begin{align}
\begin{split} \label{eq:repro}
\| f \|^2_{L^2(\N)} &= \int_{0}^{\infty} \langle f \ast \drk_t \ast \drk_t, f \rangle \; \frac{dt}{t} = \int_0^{\infty} \int_N | f \ast \psi_t (x) |^2 \; d\mu_N (x) \frac{dt}{t}   \\
&= \int_N \int_0^{\infty} |V_{\psi} f (x,t) |^2 \; d\mu_N (x) \frac{dt}{t^{\hdim+1}} = \| V_{\psi} f \|^2_{L^2 (G)}
\end{split}
\end{align}
for all $f \in \mathcal{S}(N)$, where the penultimate step used that $V_{\drk} f (x,t) = t^{\hdim/2} (f \ast \overline{\drk_t}^{\vee}) (x)$ and that $\crk$ is real and even. Moreover, as in the proof of Proposition \ref{prop:construction_crk}, the reproducing formula \eqref{eq:l2calderon} extends to all of $f \in \SV'(N)$, with convergence in $\SV'(N)$.
Thus, any admissible vector satisfies the continuous Calder\'on condition \eqref{eq:continuous_calderon}. 

We next extend the definition of the wavelet transform to distribution spaces. For this, note that the action of the quasi-regular representation $\pi$ defines a continuous representation on $\mathcal{S}_{0} (N)$, see, e.g., \cite[Lemma 3.1]{christensen2012coorbit}. In particular, 
$\SV(N)$ is $\pi$-invariant. 
If $f \in \SV' (\N)$ and $\psi \in \SV (\N)$, then we also write \[
V_{\psi} f (g) = \langle f, \pi (g) \psi \rangle,\] 
where the pairing $\langle \cdot, \cdot \rangle : \SV' (\N) \times \SV (\N) \to \mathbb{C}$ denotes the anti-linear dual pairing. 

For extending the reproducing formula \eqref{eq:repro} to distributions, we will use the following pointwise estimates of the wavelet transform. The precise statement used here is \cite[Proposition 3.1]{velthoven2022comptes}; see also Lemma \ref{aoe} for closely related estimates.

\begin{lemma} \label{lem:pointwise}
Let $m, m' \in \mathbb{N}$ be arbitrary. Suppose that $f, \psi \in \mathcal{S}_0 (\N)$. Then the following assertions hold:
\begin{enumerate}
[\rm (i)]\item For all $(x,s) \in G$ with $s \leq 1$,
\[
|V_{\psi} f (x,s)| \lesssim s^{Q/2 + m} (1+|x|)^{-m'}.
\]
\item For all $(x,s) \in G$ with $s \geq 1$,
\[
|V_{\psi} f (x,s) |\lesssim s^{-(Q/2 + m)} (1+|x|)^{-m'}.
\]
\end{enumerate}
The implicit constants depend on $G, m, m', f, \psi$.
\end{lemma}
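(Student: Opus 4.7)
The plan is to reduce the wavelet transform to a convolution expression and then apply the almost orthogonality estimate from Lemma~\ref{aoe}. First I would express the wavelet transform as a dilation-normalized convolution: a direct computation using the definition of $\pi(x,s)$ shows that
\[
V_{\psi} f(x,s) = s^{Q/2} (f \ast \overline{\psi_s}^{\vee})(x), \quad (x,s) \in G.
\]
Since $\overline{\psi}^{\vee} \in \mathcal{S}_0(N)$ whenever $\psi \in \mathcal{S}_0(N)$, Lemma~\ref{aoe} applies with $t = 1$, the second argument equal to $s$, and the pair of functions $(f, \overline{\psi}^{\vee})$. For arbitrary $M, L > 0$, this yields an index $k = k_{M,L} \in \mathbb{N}$ and a constant $C_{M,L} > 0$ such that
\[
|f \ast \overline{\psi_s}^{\vee}(x)| \leq C_{M,L} (s \wedge s^{-1})^{M}\, \frac{(s^{-1} \wedge 1)^{Q}}{(1 + (s^{-1}\wedge 1)|x|)^{L}}\, \|f\|_{(k)}\, \|\overline{\psi}^{\vee}\|_{(k)}.
\]

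For case (i), when $s \leq 1$, we have $s \wedge s^{-1} = s$ and $s^{-1} \wedge 1 = 1$, so the estimate above reduces directly to
\[
|V_{\psi}f(x,s)| = s^{Q/2} |f \ast \overline{\psi_s}^{\vee}(x)| \lesssim s^{Q/2 + M} (1+|x|)^{-L},
\]
and the claim follows by choosing $M = m$ and $L = m'$.

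Case (ii), when $s \geq 1$, is the slightly delicate step. Now $s \wedge s^{-1} = s^{-1}$ and $s^{-1}\wedge 1 = s^{-1}$, which gives the bound
\[
|f \ast \overline{\psi_s}^{\vee}(x)| \lesssim s^{-M} s^{-Q} (1 + s^{-1}|x|)^{-L},
\]
but the factor $(1+s^{-1}|x|)^{-L}$ on its own is weaker than the required $(1+|x|)^{-L}$. To convert it, one uses the elementary inequality
\[
1 + |x| \leq 2 s (1 + s^{-1}|x|) \quad \text{for all } s \geq 1,\ x \in N,
\]
which gives $(1+s^{-1}|x|)^{-L} \leq (2s)^{L}(1+|x|)^{-L}$. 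Substituting this in and multiplying by $s^{Q/2}$ produces
\[
|V_{\psi}f(x,s)| \lesssim s^{Q/2 - Q - M + L}(1+|x|)^{-L} = s^{-Q/2 - (M-L)}(1+|x|)^{-L}.
\]
Choosing $L = m'$ and $M = m + m'$ yields the desired decay $s^{-Q/2 - m}(1+|x|)^{-m'}$.

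The only nontrivial point is the bookkeeping in case (ii): one must spend some of the decay in $M$ to pay for converting $(1+s^{-1}|x|)^{-L}$ into $(1+|x|)^{-L}$, which is handled by the freedom in choosing $M, L$ arbitrarily large in Lemma~\ref{aoe}. Both implicit constants absorb the dependence on the Schwartz seminorms $\|f\|_{(k)}$ and $\|\overline{\psi}^{\vee}\|_{(k)}$, consistent with the stated dependence of the implicit constants on $G, m, m', f, \psi$.
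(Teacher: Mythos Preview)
Your proof is correct. The paper does not actually supply a proof of this lemma; it cites \cite[Proposition~3.1]{velthoven2022comptes} and remarks ``see also Lemma~\ref{aoe} for closely related estimates.'' You have filled in precisely the argument the paper alludes to: rewrite $V_\psi f(x,s) = s^{Q/2}(f\ast \overline{\psi_s}^{\vee})(x)$ and invoke Lemma~\ref{aoe} with $t=1$, then handle the bookkeeping in the $s\geq 1$ case by trading part of the $M$-decay against the $s^{L}$ loss incurred when converting $(1+s^{-1}|x|)^{-L}$ to $(1+|x|)^{-L}$. This is exactly the intended route.
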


The following lemma provides the desired reproducing formula.

\begin{lemma} \label{lem:repro_sinfty}
Let $\psi \in \SV (\N)$ be admissible. Then 
\begin{align} \label{eq:repro_extended}
 \int_G V_{\psi} f_1 (x,s) \overline{V_{\psi} f_2 (x,s)} \; d\mu_G (x,s) = \langle f_1, f_2 \rangle
\end{align}
for all $f_1 \in \SV' (\N)$ and $f_2 \in \SV (\N)$.
\end{lemma}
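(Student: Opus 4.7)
My plan is to reduce \eqref{eq:repro_extended} to the continuous Calder\'on reproducing formula \eqref{eq:weaksense1} applied to $f_1$ and tested against $\overline{f_2} \in \SV(\N)$. The initial step is to express $V_\psi$ in terms of convolution: since $\psi$ is real and even, a direct calculation from the definition of $\pi$ yields
\[
V_\psi f(x,s) = s^{\hdim/2} (f \ast \psi_s)(x), \quad (x,s) \in \NA,
\]
for every $f \in \SV'(\N)$, where the convolution is understood as in Section \ref{sec:convolution}. Inserting this into the integrand on the left-hand side of \eqref{eq:repro_extended} and using $d\mu_\NA(x,s) = d\mu_\N(x)\frac{ds}{s^{\hdim+1}}$, the $s^{\hdim/2}$ factors cancel the dilation weight, so it suffices to prove
\[
\int_0^\infty \!\! \int_\N (f_1 \ast \psi_s)(x) \overline{(f_2 \ast \psi_s)(x)} \; d\mu_\N(x) \; \frac{ds}{s} = \langle f_1, f_2\rangle.
\]

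For each fixed $s > 0$, the function $f_1 \ast \psi_s$ is smooth with polynomial growth by \eqref{bounds_of_convolution}, while $f_2 \ast \psi_s \in \SV(\N)$ is Schwartz, so the inner integral converges absolutely and equals the bilinear duality pairing $(f_1 \ast \psi_s, \overline{f_2 \ast \psi_s})$. Since $\psi$ is real, $\overline{f_2 \ast \psi_s} = \overline{f_2}\ast\psi_s$, and the standard convolution-duality identity from Section \ref{sec:convolution}, combined with the evenness $\psi_s^\vee = \psi_s$, yields the pointwise-in-$s$ identity
\[
(f_1 \ast \psi_s,\, \overline{f_2}\ast\psi_s) = (f_1 \ast \psi_s \ast \psi_s,\, \overline{f_2}).
\]

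The conclusion is then an immediate application of the continuous Calder\'on formula \eqref{eq:weaksense1}, which is available since $\psi$ is admissible (cf.\ Proposition \ref{prop:construction_crk}(ii) and the discussion following \eqref{eq:l2calderon}): testing $f_1 \in \SV'(\N)$ against $\overline{f_2} \in \SV(\N)$ gives
\[
\langle f_1, f_2\rangle = (f_1, \overline{f_2}) = \int_0^\infty (f_1 \ast \psi_s \ast \psi_s,\, \overline{f_2}) \; \frac{ds}{s},
\]
so that chaining the three displays produces the claim. The main technical point to be checked is justifying Fubini's theorem when passing from the $\NA$-integral to the iterated integral in the first display; I plan to obtain absolute integrability of $V_\psi f_1 \cdot \overline{V_\psi f_2}$ on $\NA$ from the rapid decay of $V_\psi f_2$ at $s \to 0$ and $s \to \infty$ provided by Lemma \ref{lem:pointwise}, combined with the at-most-polynomial growth of $V_\psi f_1(x,s) = s^{\hdim/2}(f_1 \ast \psi_s)(x)$ coming from \eqref{bounds_of_convolution}.
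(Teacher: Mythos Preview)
Your argument is correct and in fact a bit more economical than the paper's. The paper does not invoke the weak-sense Calder\'on identity \eqref{eq:weaksense1} directly; instead it essentially reruns the argument behind Proposition~\ref{prop:construction_crk}(ii): it shows that $g \mapsto V_\psi f_2(g)\,\pi(g)\psi$ is Bochner integrable in the Banach space $\mathcal{S}^k(\N)$ (via Lemma~\ref{lem:pointwise}), so that $f_2 = \int_G V_\psi f_2(g)\,\pi(g)\psi\, d\mu_G(g)$ holds in $\mathcal{S}^k(\N)$, then extends $f_1$ to a continuous functional on $\mathcal{S}^k(\N)$ by Hahn--Banach and interchanges the functional with the Bochner integral. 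Your route avoids the Bochner/Hahn--Banach machinery by recycling the already-proven weak Calder\'on formula and reducing the remaining work to a single Fubini step; the convolution--duality identity $(f_1\ast\psi_s,\overline{f_2}\ast\psi_s)=(f_1\ast\psi_s\ast\psi_s,\overline{f_2})$ is exactly where the evenness of $\psi$ is used, and the absolute integrability you need for Fubini indeed follows from Lemma~\ref{lem:pointwise} (arbitrary polynomial decay of $V_\psi f_2$ in $|x|$, $s$, and $s^{-1}$) against the merely polynomial growth of $V_\psi f_1$ coming from \eqref{bounds_of_convolution} and a routine bound on $\|\psi_s\|_{(k)}$. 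The paper's approach has the advantage of being self-contained at this point in the exposition; yours has the advantage of making the dependence on \eqref{eq:weaksense1} explicit and keeping the proof to a few lines.
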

\begin{proof}
The proof is similar to that of Proposition \ref{prop:construction_crk}, hence only sketched.

For $k  \in \mathbb{N}$ with $k \geq \hdim + 1$, let $\mathcal{S}^{k} (N)$ be the Banach space as defined in \eqref{eq:Sk}.
Then $\mathcal{S} (N) \hookrightarrow \mathcal{S}^{k}(N)$ and $C^k (N) \hookrightarrow L^2 (N)$.
In particular, this implies that the vector-valued map $g \mapsto \pi(g) \psi$ is continuous from $G$ into $\mathcal{S}^k(\N)$. This, together with Lemma \ref{lem:pointwise}, easily yields that the map 
$g \mapsto V_{\psi} f_2 (g) \pi (g) \psi$ is Bochner integrable. 
Therefore, the integral formula  
\begin{align} \label{eq:repro_proof}
f_2 = \int_G V_{\psi} f_2 (g) \pi(g) \psi \; d\mu_G (g),
\end{align} 
holds as a Bochner integral in $\mathcal{S}^k (N)$. 

For showing \eqref{eq:repro_extended}, recall that $\SV'(N)  \cong \SIP$, and hence an application of the Hahn-Banach theorem provides a continuous extension of $f_1$ to $\mathcal{S}(N)$. Therefore, by \cite[Equation (3.45)]{FR}, there exist $k \geq \hdim + 1$ such that the restriction $f_1|_{\SV}$ is continuous with respect to $\| \cdot \|_{(k)}$. 
Another application of the Hahn-Banach theorem therefore yields that $f_1$ can be extended to a continuous linear functional $f_1'$ on $\mathcal{S}^k(N)$. Thus, evaluating $f_1$ on $f_2$ using \eqref{eq:repro_proof} gives
\[
\langle f_1, f_2 \rangle = \langle f_1', \overline{f_2} \rangle
= \int_G \overline{ V_\psi f_2 (g) } V_\psi f_1 (g) \; d\mu_G (g),
\]
where we interchanged the order of functional evaluation and Bochner integration, see, e.g., \cite[Section 1.2]{hytonen2016analysis}.
\end{proof}

For technical reasons, we will repeatedly use the following extended pairing between elements of $\SV'(\N)$ and $L^2(N)$ in the remainder.

\begin{definition} \label{def:extendedpairing}
Let $\psi \in \SV(\N)$ be admissible. The \emph{extended pairing} between $f_1 \in \SV'(\N)$ and $f_2 \in L^2 (N)$ is defined by
\[
\langle f_1, f_2 \rangle_{\psi} :=  \int_G V_{\psi} f_1 (g) \overline{V_{\psi} f_2 (g)} \; d\mu_G (g)
\]
provided the integral exists.
\end{definition}

Observe that for $f_1 \in \SV'(\N)$ and $f_2 \in \SV(\N)$, the extended pairing $\langle f_1, f_2 \rangle_{\psi}$ coincides with the usual conjugate linear pairing $\langle f_1, f_2 \rangle$ by Lemma \ref{lem:repro_sinfty}. In addition, if $f_1, f_2 \in L^2 (N)$, then $\langle f_1, f_2 \rangle_{\psi}$ coincides with the inner product $\langle f_1, f_2 \rangle$.

\subsection{Function spaces with mixed norms} \label{sec:mixednorm}
The following function spaces form an analogue of the Peetre-type spaces on $\R^d$ considered in \cite{rauhut2011generalized, koppensteiner2023anisotropic1, koppensteiner2023anisotropic2}, and will form a key ingredient in the wavelet transform characterization of Triebel-Lizorkin spaces. 

\begin{definition} \label{DefPTS}
Let $\ord \in \mathbb{R}$ and let $\PTpar \in (0, \infty)$. For $p \in (0, \infty)$, $q \in (0, \infty]$, the space $\PT(G)$ is defined to be the set of all (equivalence classes of) measurable functions $F: \NA \to \mathbb{C}$ such that
\begin{align*}
\| F \|_{\PT} := \Biggl \|  \biggl \| t^{-\ord} \; \esssup_{z \in \N} \frac{|F(z, t)|}{(1 + t^{-1} |z^{-1}  (\cdot)|)^\PTpar} \biggr \|_{L^q((0, \infty), \frac{dt}{t^{\hdim + 1}})} \Biggr \|_{L^p(\N)} < \infty. 
\end{align*}
For $p = \infty$ and $q \in (0, \infty]$, the space $\PTi(G)$ is defined to consist of all measurable $F : G \to \mathbb{C}$ such that
\begin{align*}
\| F \|_{\PTi} := \esssup_{x \in \N, t > 0} \Biggl (  \dashint_{B_t(x)} \int_0^t \Bigl ( s^{-\ord} \; \esssup_{z \in \N} \frac{|F(z, s)|}{(1 + s^{-1} |z^{-1} y|)^{\PTpar}} \Bigr )^q \, \frac{ds}{s^{\hdim + 1}} \, d\mu_\N(y) \Biggr )^{1/q} < \infty
\end{align*}
if $q < \infty$, and
\begin{align*}
\| F \|_{\PTii} := \esssup_{x \in \N, t > 0} \; \Biggl \| \dashint_{B_t(x)} s^{-\ord} \; \esssup_{z \in \N} \frac{|F(z, s)|}{(1 + s^{-1} |z^{-1} y|)^{\PTpar}} \, d\mu_\N(y) \Biggr \|_{L^\infty((0, t), \frac{ds}{s^{\hdim+1}})} < \infty,
\end{align*}
otherwise.
\end{definition}

The spaces just introduced satisfy the following basic properties, which are standing assumptions of the abstract theory \cite{velthoven2022quasi, feichtinger1989banach} to be employed in the next subsection. Here, we recall that a quasi-norm $\| \cdot \|$ is said to be an $r$-\emph{norm} ($r \in (0,1]$) if $\| F_1 + F_2 \|^r \leq \| F_1\|^r + \| F_2\|^r$.
The proof is very similar to analogous results on Euclidean spaces proven in \cite{ullrich2012continuous,koppensteiner2023anisotropic1, koppensteiner2023anisotropic2}, and hence skipped.

\begin{lemma} \label{lem:OpNormsPTS}
Let $\ord \in \mathbb{R}$, $\PTpar \in (0, \infty)$. For $p, q \in (0, \infty]$, let $r := \min \{1, p, q\}$. Then the space $\PT(\NA)$ is a solid quasi-Banach function space with $r$-norm. Moreover, $\PT(\NA)$ is invariant under left translation $L_{g} F:= F(g^{-1} \cdot)$ and right translation $R_{g} F := F(\cdot g)$, with operator norm estimates
\begin{align*}
    \| L_{(y, t)} \|_{\op} &= t^{\frac{\hdim}{p} - \frac{\hdim}{q} - \ord},  \\
    \| R_{(y, t)} \|_{\op} &\lesssim t^{\ord + \frac{\hdim}{q}} \, \max\{ 1, t^{-a} \}  \, (1 + |y|)^\PTpar 
\end{align*}
 for $(y, t) \in G$ whenever $p < \infty$,
and  \begin{align*}
    \| L_{(y, t)} \|_{\op} &= t^{\frac{\hdim}{p} - \frac{\hdim}{q} - \ord}, \\
    \| R_{(y, t)} \|_{\op} &\lesssim t^{\ord + \frac{\hdim}{q}} \, \max\{ 1, t^{-a} \} \, \max\{ 1, t^{\hdim} \} \, (1 + |y|)^\PTpar 
\end{align*}
whenever $p = \infty$.
\end{lemma}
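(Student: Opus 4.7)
The proof naturally decomposes into three independent tasks: verifying that $\PT(G)$ is a solid quasi-Banach function space with $r$-norm for $r = \min\{1,p,q\}$, computing the exact operator norm of left translation $L_{(y,t)}$, and bounding the operator norm of right translation $R_{(y,t)}$. The solidity is immediate from Definition~\ref{DefPTS}, since all three building blocks (the weighted essential supremum over $\N$, the $L^q$ in $s$, and the outer $L^p$ in $x$, or their $p=\infty$ analogues) are monotone in $|F|$. The $r$-norm property follows by iterating quasi-triangle inequalities: essential suprema are $1$-norms, $L^p$ is $\min(1,p)$-normed, $L^q$ is $\min(1,q)$-normed, and composing them yields an $r$-norm with $r = \min\{1,p,q\}$. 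Completeness will follow by a standard Fatou-type argument: extract an a.e.\ pointwise convergent subsequence from any Cauchy sequence and use solidity together with lower semicontinuity of the defining quasi-norm.

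For left translation, the plan is to exploit the covariance of the weight under the affine action. Since $(y,t)^{-1}(z,s) = (\delta_{t^{-1}}(y^{-1}z), t^{-1}s)$, the substitution $u = \delta_{t^{-1}}(y^{-1}z)$, $\tau = t^{-1}s$, $v = \delta_{t^{-1}}(y^{-1}x)$ and the homogeneity $|\delta_t(w)| = t|w|$ will yield the identity $s^{-1}|z^{-1}x| = \tau^{-1}|u^{-1}v|$, so the inner weighted essential supremum becomes scale-invariant in the new variables. Keeping track of the three scaling factors --- one from $s^{-\ord}$, one from the Haar density $ds/s^{Q+1}$, one from the Jacobian $d\mu_\N(x) = t^Q d\mu_\N(v)$ --- yields exactly $\|L_{(y,t)}F\|_{\PT} = t^{Q/p - Q/q - \ord}\|F\|_{\PT}$, with equality (not just inequality). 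For $p=\infty$ the computation is analogous, with the additional observation that the substitution $v = \delta_{t^{-1}}(y^{-1}x)$ maps balls $B_r(x)$ to balls of the same form in the new variable, preserving the supremum structure.

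For right translation, $R_{(y,t)}F(z,s) = F(z\delta_s(y), st)$, the key step will be the substitution $w = z\delta_s(y)$, after which the denominator becomes $(1+s^{-1}|\delta_s(y)w^{-1}x|)^a$. Using that $(w^{-1}x)(\delta_s(y)w^{-1}x)^{-1} = \delta_s(y^{-1})$ and the Peetre-type inequality~\eqref{eq:peetretype} gives the transfer bound
\[
(1+s^{-1}|\delta_s(y)w^{-1}x|)^{-a} \leq \cqn^a(1+|y|)^a(1+s^{-1}|w^{-1}x|)^{-a},
\]
producing the $(1+|y|)^a$ factor. Then the change of variable $\tau = st$ introduces the scale mismatch $(1+t\tau^{-1}|w^{-1}x|)^{-a}$ inside the inner supremum; this is controlled against $(1+\tau^{-1}|w^{-1}x|)^{-a}$ by the elementary bound $1+t\tau^{-1}r \geq \min\{1,t\}(1+\tau^{-1}r)$ for $r \geq 0$, which costs a factor of $\max\{1, t^{-a}\}$. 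The remaining scalings from $s^{-\ord}$ and $ds/s^{Q+1}$ collect into $t^{\ord + Q/q}$, and the outer $L^p$-norm in $x$ passes through cleanly since the variable $x$ is unaffected by this substitution.

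The main obstacle will be the $p=\infty$ case of right translation, where both the dyadic ball $B_t(x)$ in the averaging and the time window $(0,t]$ remain fixed under the substitution, while the underlying time variable is rescaled to $\tau = st$. Relating the resulting average over $B_t(x)$ at parameter range $\tau \in (0, t^2]$ back to the sup-of-averages quasi-norm that defines $\PTi$ and $\PTii$ requires covering the new ball and time range by a bounded number of admissible ones, which is what produces the extra $\max\{1, t^Q\}$ factor appearing only in the $p=\infty$ estimate. Once this comparison is made precise, the argument closes by the same bookkeeping used in the $p<\infty$ case.
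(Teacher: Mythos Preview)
Your proposal is correct and follows the standard route. The paper itself does not prove this lemma at all---it simply remarks that the proof is analogous to results in \cite{ullrich2012continuous,koppensteiner2023anisotropic1,koppensteiner2023anisotropic2} for Euclidean spaces and skips it. Your outline is precisely what those references do when carried over to the homogeneous group setting: the covariance of the Peetre weight under left translation via the substitution $(u,\tau,v)=(\delta_{t^{-1}}(y^{-1}z),t^{-1}s,\delta_{t^{-1}}(y^{-1}x))$ gives the exact scaling for $\|L_{(y,t)}\|_{\op}$, and for right translation the Peetre-type inequality~\eqref{eq:peetretype} together with the elementary comparison $1+t\tau^{-1}r \geq \min\{1,t\}(1+\tau^{-1}r)$ yields the claimed bound. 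Your identification of the extra $\max\{1,t^{\hdim}\}$ factor in the $p=\infty$ case as arising from the mismatch between the fixed averaging ball $B_t(x)$ and the rescaled time window is also correct. In short, you have supplied exactly the argument the paper omits.
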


For the wavelet transform characterization of Besov spaces, we will make use of the mixed normed spaces defined next. 

\begin{definition} \label{def:MTS}
Let $\ord \in \mathbb{R}$ and let $p, q \in (0, \infty]$. Then the space $\MT(\NA)$ is defined to be the set of all (equivalence classes of) measurable functions $F: \NA \to \mathbb{C}$ such that 
\begin{align*}
\| F \|_{\MT} := \bigg \| t^{-\ord} \bigg \|  \esssup_{z \in \N}\frac{|F(z, t)|}{(1 + t^{-1}|z^{-1}(\cdot)|)^\PTpar} \bigg \|_{L^p(\N)} \bigg \|_{L^q((0, \infty), \frac{dt}{t^{\hdim + 1}})} < \infty.
\end{align*}
\end{definition}

As the proof of Lemma \ref{lem:OpNormsPTS}, we skip the proof of the following lemma.

\begin{lemma} \label{lem:OpNormsMTS}
Let $\ord \in \mathbb{R}$, $a \in (0, \infty)$. For $p, q \in (0, \infty]$, let $r := \min \{1,p,q\}$. Then the space $\MT(\NA)$ is a solid quasi-Banach function space with $r$-norm. Moreover, $\MT(\NA)$  is invariant under left translation $L_{g} F:= F(g^{-1} \cdot)$ and right translation $R_{g} F := F(\cdot g)$, with operator norm estimates
\begin{align}
    \| L_{(y, t)} \|_{\op} &= t^{\frac{\hdim}{p} - \frac{\hdim}{q} - \ord} \hspace{5pt}, \label{eq:translation_norms_MTS_1} \\
    \| R_{(y, t)} \|_{\op} &\lesssim t^{\ord + \frac{\hdim}{q}} \, \max\{ 1, t^{-\PTpar} \} \, (1 + |y|)^\PTpar, \label{eq:translation_norms_MTS_2}
\end{align}
for $(y, t) \in G$.
\end{lemma}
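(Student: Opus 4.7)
The plan is to proceed by three separate computations, treating the structural properties of $\MT(\NA)$ first and then the two operator norm estimates.

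First, for the structural claims, I would verify that $\| \cdot \|_{\MT}$ is a solid quasi-norm on measurable functions by observing that pointwise monotonicity $|G| \leq |F|$ transfers through the essential suprema and the $L^p$- and $L^q$-quasi-norms, and that the standard $r$-inequality $\| F_1 + F_2 \|_E^r \leq \| F_1 \|_E^r + \| F_2\|_E^r$ for $E \in \{ L^p(\N), L^q((0,\infty), ds/s^{\hdim+1})\}$ with $r = \min\{1, p, q\}$ lifts to the mixed-norm. Completeness would follow by a routine Fatou argument: given a Cauchy sequence, its $\MT$-quasi-norms are bounded, a subsequence converges pointwise almost everywhere, and solidity combined with Fatou's lemma applied twice (first in the $x$-integral, then in the $s$-integral) identifies the pointwise limit as the $\MT$-limit.

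For the left translation bound \eqref{eq:translation_norms_MTS_1}, I would unwind the group law to write $L_{(y,t)} F(x,s) = F(\delta_{t^{-1}}(y^{-1}x), t^{-1}s)$ and then make the substitutions $w = \delta_{t^{-1}}(y^{-1}z)$, $v = \delta_{t^{-1}}(y^{-1}x)$, and $\tau = t^{-1}s$. Under these changes, the identity $s^{-1}|z^{-1}x| = \tau^{-1}|w^{-1}v|$ is exact, and the change of variables produces the Jacobians $t^{\hdim/p}$ in the inner $L^p$-norm and $t^{-\hdim/q}$ in the outer $L^q$-norm, together with $t^{-\ord}$ from $s^{-\ord}$. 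Collecting these factors gives $\| L_{(y,t)} F \|_{\MT} = t^{\hdim/p - \hdim/q - \ord} \| F \|_{\MT}$, and the reverse inequality comes from applying the same computation to $L_{(y,t)^{-1}}$.

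For the right translation bound \eqref{eq:translation_norms_MTS_2}, which is the main obstacle, I would use $R_{(y,t)} F(x,s) = F(x \delta_s(y), st)$ and substitute $w = z \delta_s(y)$, $\tau = st$. The substitution in $z$ introduces the translation $z^{-1}x = \delta_s(y) w^{-1} x$, so the weight $(1 + s^{-1}|z^{-1}x|)^a$ is no longer the natural weight for $w$. Here I would invoke the Peetre-type inequality \eqref{eq:peetretype} on $\N$, giving the pointwise bound
\[
(1 + s^{-1}|w^{-1}x|)^a \leq \gamma^a (1 + |y|)^a (1 + s^{-1}|z^{-1}x|)^a,
\]
since $w^{-1}x \cdot (z^{-1}x)^{-1} = \delta_s(y^{-1})$ has quasi-norm $s |y|$. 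After the $\tau$-substitution, the weight at scale $\tau$ becomes $(1 + t\tau^{-1}|w^{-1}x|)^a$, which I would compare to $(1 + \tau^{-1}|w^{-1}x|)^a$ via the elementary estimate
\[
(1 + t\tau^{-1}|\xi|)^a \geq \min\{1, t^a\}(1 + \tau^{-1}|\xi|)^a,
\]
producing the factor $\max\{1, t^{-a}\}$. Finally, tracking the Jacobians $ds/s^{\hdim + 1} = t^{\hdim}\, d\tau/\tau^{\hdim + 1}$ and the factor $s^{-\ord} = t^{\ord} \tau^{-\ord}$ in the outer $L^q$-norm yields the combined factor $t^{\ord + \hdim/q}$, completing the bound $\| R_{(y,t)} \|_{\op} \lesssim t^{\ord + \hdim/q} \max\{1, t^{-a}\} (1+|y|)^a$. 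The proof is almost identical in structure to that of Lemma~\ref{lem:OpNormsPTS}; the only subtlety is the careful handling of the Peetre-type weight under the right action, and I would organize the estimate to keep track of the $\min\{1, t^a\}$-type factors that appear whenever $t$ is small.
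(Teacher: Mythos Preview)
Your proof sketch is correct and follows the standard route. The paper does not actually supply a proof of this lemma: it states that, as for Lemma~\ref{lem:OpNormsPTS}, the argument is very similar to the Euclidean analogues in \cite{ullrich2012continuous,koppensteiner2023anisotropic1,koppensteiner2023anisotropic2} and is therefore skipped. Your change-of-variables computation for $L_{(y,t)}$ and your use of the Peetre-type inequality \eqref{eq:peetretype} together with the elementary comparison $(1+t\tau^{-1}|\xi|)^\PTpar \geq \min\{1,t^\PTpar\}(1+\tau^{-1}|\xi|)^\PTpar$ for $R_{(y,t)}$ are exactly the ingredients one expects, and the Jacobian bookkeeping is accurate.
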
 

In the remainder, we will prove various results that hold simultaneously for the spaces $\PT$ and $\MT$ defined in Definition \ref{DefPTS} and Definition \ref{def:MTS}, respectively. For this reason, in order to ease notation, we shall also write $\ST$ for either $\PT$ or $\MT$ when no distinction is required.

\subsection{Norm characterizations} The following lemma characterizes the Besov and Triebel-Lizorkin norms in terms of a local mean property of the continuous wavelet transform. Its statement is close to that of (part of) Theorem \ref{thm:cont_char}, but the precise statement of the following characterization is essential for obtaining wavelet decompositions in the next section.

\begin{lemma} \label{lem:wavelet_norm}
Let $\psi \in \SV (N)$ be an admissible vector and let $U := B_{1/2} (e_N) \times (1/2, 2)$. If $\PTpar > \max \{ \frac{\hdim}{p \wedge q}, |\ord| \}$, then for $\ord' := \ord + Q/2 - Q/q$, the norm equivalences
\[
\| f \|_{\TLS} \asymp \big\| V_{\psi} f \big\|_{\PTv{p}{q}{\ord'}} \asymp \bigg\|g \mapsto \sup_{u \in U} \big|V_{\psi} f (g u)\big| \bigg\|_{\PTv{p}{q}{\ord'}}
\]
and 
\[
\| f \|_{\BS} \asymp \big\| V_{\psi} f \big\|_{L^{p,q}_{\PTpar, \ord'}} \asymp \bigg\|g \mapsto \sup_{u \in U} \big|V_{\psi} f (g u)\big| \bigg\|_{L^{p,q}_{\PTpar, \ord'}}
\]
hold for all $f \in \SV'(N)$.
\end{lemma}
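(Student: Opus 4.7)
The plan is to reduce the four equivalences to Theorem~\ref{thm:cont_char} by unfolding the wavelet transform into a convolution. Since $\psi$ is real and even, one has
$\pi(x,t)\psi(y) = t^{-Q/2}\psi(\delta_{t^{-1}}(x^{-1}y))$ and, using $\psi^{\vee}=\psi$, the identity
\[
V_{\psi} f(x, t) = t^{Q/2}(f \ast \psi_t)(x), \qquad (x,t) \in \NA,
\]
holds first for $f \in \SV(N)$ by direct computation and then for all $f \in \SV'(N)$ via the extended pairing of Definition~\ref{def:extendedpairing}. Consequently, for each $x \in N$ and $t > 0$,
\[
\esssup_{z \in \N} \frac{|V_\psi f(z,t)|}{(1 + t^{-1}|z^{-1} x|)^{\PTpar}} = t^{Q/2} (\psi_t^* f)_{\PTpar}(x).
\]

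First I would establish the outer equivalences $\|V_\psi f\|_{\PTv{p}{q}{\ord'}} \asymp \|f\|_{\TLS}$ and $\|V_\psi f\|_{L^{p,q}_{\PTpar,\ord'}} \asymp \|f\|_{\BS}$ by a direct computation. With the choice $\ord' = \ord + Q/2 - Q/q$ one has
\[
s^{-\ord' q} \cdot s^{Qq/2} \cdot \frac{ds}{s^{Q+1}} = s^{-\ord q} \cdot \frac{ds}{s},
\]
so substituting the display above into the definitions of $\PTv{p}{q}{\ord'}$ and $\MT[\ord']$ converts the $L^q((0,\infty), ds/s^{Q+1})$-integrand into the familiar $s^{-\ord q}\bigl[(\psi_s^* f)_{\PTpar}\bigr]^q \, ds/s$. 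One then invokes Theorem~\ref{thm:cont_char}(i) and (ii) for the Triebel-Lizorkin cases $p<\infty$ and $p=\infty$ respectively (noting that the $\dashint_{B_t(x)}$-structure of $\PTi$ matches the quasinorm in Theorem~\ref{thm:cont_char}(ii) after the same substitution), and Theorem~\ref{thm:cont_char}(iii) for the Besov equivalence. The hypothesis $\PTpar > \max\{Q/(p\wedge q),|\ord|\}$ (respectively $\PTpar > Q/p$) is precisely the one required there.

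For the equivalences with the $U$-averaged quasinorms, the lower bounds are trivial since $e_G=(e_N,1) \in U$, so $|V_\psi f(g)| \leq \sup_{u \in U}|V_\psi f(gu)|$ pointwise. For the upper bound, using $(x,t)(y,s) = (x\delta_t(y), ts)$ and the fact that $y \in B_{1/2}(e_N)$ if and only if $x\delta_t(y) \in B_{t/2}(x)$, a change of variables gives
\[
\sup_{u \in U}|V_\psi f(gu)| = \sup_{\substack{\tilde y \in B_{t/2}(x)\\ \tilde s \in (t/2,\, 2t)}} \tilde s^{Q/2} |f \ast \psi_{\tilde s}(\tilde y)| \leq 2^{\PTpar+Q/2}\, t^{Q/2}\, (\psi_t^{**} f)_{\PTpar}(x),
\]
where the last inequality uses $|\tilde y^{-1}x| < t/2 \leq \tilde s$, so that $(1+\tilde s^{-1}|\tilde y^{-1}x|)^{\PTpar} \leq 2^{\PTpar}$. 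When the spatial essential supremum $\esssup_z (1+t^{-1}|z^{-1}\cdot|)^{-\PTpar}$ in the definition of $\PTv{p}{q}{\ord'}$ or $\MT[\ord']$ is then applied to this pointwise bound, an elementary application of the quasi-triangle inequality \eqref{quas}, identical to \eqref{eq:justification_supinf}, shows that
\[
\esssup_{z \in \N}\frac{(\psi_t^{**} f)_{\PTpar}(z)}{(1+t^{-1}|z^{-1}x|)^{\PTpar}} \asymp (\psi_t^{**} f)_{\PTpar}(x).
\]
Inserting this into the relevant quasinorm and repeating the reduction of the first paragraph reduces matters to the $(\psi_t^{**} f)_{\PTpar}$-variants of Theorem~\ref{thm:cont_char}, which are comparable to $\|f\|_{\TLS}$ or $\|f\|_{\BS}$ in the indicated parameter range.

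The Besov case is entirely parallel once the computation with $\MT$ in place of $\PT$ is carried out. The main obstacle is purely notational bookkeeping: tracking the interplay between the group translation $gu$ on $G$, the dilation structure on $N$, the conversion of measure $dt/t^{Q+1}$ to $dt/t$ via the shift $\ord' = \ord + Q/2 - Q/q$, and the passage between the Peetre-type maximal functions $(\psi_t^* f)_{\PTpar}$ and $(\psi_t^{**} f)_{\PTpar}$, so that all constants and exponents align. No analytical input beyond Theorem~\ref{thm:cont_char} and the quasi-triangle inequality is needed.
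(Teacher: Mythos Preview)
Your proposal is correct and follows essentially the same route as the paper: both reduce to Theorem~\ref{thm:cont_char} via the identity $V_\psi f(x,t)=t^{Q/2}(f\ast\psi_t)(x)$ and handle the local maximal function $M^L V_\psi f$ by relating it to $(\psi_t^{**}f)_a$. The only organizational difference is that the paper merges the inner $\esssup_{y}$ from the $\PT$-norm with the $\sup_{(z,s)\in U}$ in one step (changing variables $y\mapsto y\delta_t(z)^{-1}$ to absorb $z$ directly), whereas you first bound $M^L V_\psi f(x,t)\lesssim t^{Q/2}(\psi_t^{**}f)_a(x)$ pointwise and then perform a second Peetre-inequality absorption $\esssup_z (\psi_t^{**}f)_a(z)(1+t^{-1}|z^{-1}x|)^{-a}\asymp(\psi_t^{**}f)_a(x)$; both routes are equivalent and require the same analytical input.
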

\begin{proof}
We only prove the claim for $\TLS(\N)$ with $p,q \in (0,\infty)$. The remaining cases are shown similarly. For simplicity, set $M^{L} V_{\psi} f (g) := \sup_{u \in U} |V_{\psi} f (g u)|$ for $g \in G$. 

We will repeatedly use the identity 
$
V_{\psi} f (g)  = t^{Q/2} (f \ast \psi_t )(y)
$ for $g = (y,t) \in G$.

By the first equivalence in Equation \eqref{eq:cont_char_1},
\begin{align*}
    \| f \|_{\TLS} 
    &\asymp \bigg\| \bigg( \int_0^{\infty} \bigg( t^{-\ord}  (\psi_t^* f)_{\PTpar} \bigg)^q \; \frac{dt}{t} \bigg)^{1/q} \bigg\|_{L^p} \\
    &= \bigg\| \bigg( \int_0^{\infty} \bigg( t^{-\ord} \sup_{y \in N} \frac{|f \ast \psi_t (y)|}{(1 + t^{-1} |y^{-1} \cdot |)^{\PTpar}} \bigg)^q \frac{dt}{t} \bigg)^{1/q} \bigg\|_{L^p} \\
    &= \bigg\| \bigg( \int_0^{\infty} \bigg( t^{-\ord - \hdim/2 + \hdim/q} \sup_{y \in N} \frac{|V_{\psi} f(y, t)|}{(1 + t^{-1} |y^{-1} \cdot |)^{\PTpar}} \bigg)^q \frac{dt}{t^{\hdim + 1}} \bigg)^{1/q} \bigg\|_{L^p} \\
    &= \| V_{\psi} f\|_{\PTv{p}{q}{\ord'}}, 
\end{align*}
and hence $\| f \|_{\TLS} \asymp \| V_{\psi} f \|_{\PTv{p}{q}{\ord'}} \leq \| M^{L} V_{\psi} f \|_{\PTv{p}{q}{\ord'}}$ since $V_{\psi} f \leq M^L V_{\psi} f$  on $G$. 

For the reverse inequality, note that 
\begin{align*}
    \| M^{L} V_{\psi} f \|_{\PTv{p}{q}{\ord'}} &= \bigg\| \bigg( \int_0^{\infty} \bigg( t^{-\ord - Q/2 + Q/q} \sup_{\substack {y \in N \\ (z,s) \in U}} \frac{|V_{\psi} f  (y \delta_t (z), st) |}{(1 + t^{-1} |y^{-1} \cdot |)^{\PTpar}} \bigg)^q \frac{dt}{t^{Q+1}} \bigg)^{1/q} \bigg\|_{L^p} \\
    &=  \bigg\| \bigg( \int_0^{\infty} \bigg( t^{-\ord - Q/2 + Q/q} \sup_{\substack {y \in N \\ (z,s) \in U}} \frac{|V_{\psi} f  (y , st) |}{(1 + t^{-1} | \delta_t (z) y^{-1} \cdot |)^{\PTpar}} \bigg)^q \frac{dt}{t^{Q+1}} \bigg)^{1/q} \bigg\|_{L^p} \\
    &\lesssim \bigg\| \bigg( \int_0^{\infty} \bigg( t^{-\ord - Q/2 + Q/q} \sup_{\substack {y \in N \\ s \in (1/2, 2)}} \frac{|V_{\psi} f  (y , st) |}{(1 + t^{-1} | y^{-1} \cdot |)^{\PTpar}} \bigg)^q \frac{dt}{t^{Q+1}} \bigg)^{1/q} \bigg\|_{L^p}, \numberthis \label{eq:TL_coorbit}
\end{align*}
where the last step used the estimate 
\[\bigl (1 + t^{-1} | \delta_t (z) y^{-1} \cdot |\bigr )^{- \PTpar} \lesssim (1 + t^{-1} | y^{-1} \cdot |)^{-\PTpar} \]
for $z \in B_{1/2} (e_N)$. 
Next, for fixed $t \in (0, \infty)$ and $s \in (1/2, 2)$, 
\[
t^{-Q/2} \sup_{y \in N} \frac{|V_{\psi} f  (y , st) |}{(1 + t^{-1} | y^{-1} \cdot |)^{\PTpar}} \lesssim t^{-Q/2} \sup_{y \in N} \frac{|V_{\psi} f  (y , st) |}{(1 + (st)^{-1} | y^{-1} \cdot |)^{\PTpar}} 
\lesssim \sup_{y \in N} \frac{|f \ast \psi_{st} (y) |}{(1 + (st)^{-1} | y^{-1} \cdot |)^{\PTpar}}.
\]
Therefore, the inequality \eqref{eq:TL_coorbit} can be further estimated as 
\begin{align*}
    \| M^{L} V_{\psi} f \|_{\PTv{p}{q}{\ord'}} &\lesssim \bigg\| \bigg( \int_0^{\infty} \bigg( t^{-\ord  + Q/q} \sup_{\substack {y \in N \\ s \in (t/2, 2t)}} \frac{|f \ast {\psi}_{s} (y) |}{(1 + s^{-1} | y^{-1} \cdot |)^{\PTpar}} \bigg)^q \frac{dt}{t^{Q+1}} \bigg)^{1/q} \bigg\|_{L^p} \\
    &= \bigg\| \bigg( \int_0^{\infty} \bigg( t^{-\ord} \sup_{\substack {y \in N \\ s \in (t/2, 2t)}} \frac{|f \ast \psi_s (y)|}{(1+s^{-1} |y^{-1} \cdot |)^{\PTpar}} \bigg)^q \frac{dt}{t} \bigg)^{1/q} \bigg\|_{L^p} \\
    &= \bigg\| \bigg( \int_{0}^{\infty} t^{-\ord q} [(\psi_t^{**} f)_a]^q \; \frac{dt}{t} \bigg)^{1/q} \bigg\|_{L^p}
    \asymp \| f \|_{\TLS},
    \end{align*}
    where the last step used  the third equivalence in Equation \eqref{eq:cont_char_1}. 
\end{proof}

\section{Wavelet and molecular decompositions} \label{sec:coorbit}
This section is devoted to obtaining wavelet decompositions of the Besov and Triebel-Lizorkin spaces via the abstract theory  \cite{velthoven2022quasi, romero2021dual, feichtinger1989banach}. 

\subsection{Analyzing vectors} \label{sec:analyzing}
Throughout this section, we fix the relatively compact unit neighborhood $U = B_{1/2} (e_N) \times (-1/2, 2)$ in $\NA = \N \rtimes A$. For a function $F \in L^{\infty}_{\loc} (G)$, define the left and right local maximal functions by
\[
M^L F(g) = \esssup_{u \in U} | F(gu)| \quad \text{and} \quad M^R F(g) = \esssup_{u \in U} |F(u g)|, \quad g \in G,
\]
respectively. The two-sided maximal function is then defined by $M F := M^L M^R F = M^R M^L F$.
For $r \in (0, 1]$ and measurable $w : G \to [1,\infty)$, define the space
\[
W(L^r_w) = \bigg\{ F \in L^{\infty}_{\loc} (G) : \int_{\NA} | (MF)(g) w(g) |^r \; d\mu_G (g) < \infty \bigg\}.
\]

For applying the theory of \cite{velthoven2022quasi, feichtinger1989banach} to the Besov and Triebel-Lizorkin spaces studied in this paper, the existence of nonzero functions $\psi \in L^2 (N)$ with wavelet transform $V_{\psi} \psi \in W(L^r_w)$ is required. Such functions are often referred to as \emph{analyzing vectors}. The precise condition on the weight $w$ is as follows.
Here, we recall that we write $\ST$ for either $\PT$ or $\MT$.

\begin{definition} \label{def:control_weight}
Let $\ord \in \mathbb{R}$, $\PTpar \in (0, \infty)$, $p, q \in (0, \infty]$, and set $r := \min \{1, p, q\}$.  A measurable function $w : G \to [1, \infty)$ is called a \emph{strong control weight} for $\ST$ if it satisfies the following conditions:
\begin{enumerate}
    \item[(c1)] $w$ is submultiplicative;
    \item[(c2)] $w(g) = w(g^{-1}) \Delta_G (g^{-1})^{1/r}$ for $g \in G$;
    \item[(c3)] $\| L_{g^{-1}} \|_{\op} \leq w(g)$ for $g \in G$;
    \item[(c4)] $\| R_{g} \|_{\op} \leq w(g)$ for $g \in G$; 
\end{enumerate}
with $\| \cdot \|_{\op}$ denoting the operator norm on $\ST$.
\end{definition}

The following lemma shows the existence of a suitable control weight for the spaces introduced in Section \ref{sec:mixednorm}.

\begin{lemma} \label{lem:standard_control}
Let $p, q \in (0, \infty]$, $\PTpar \in \mathbb{R}$ and $\ord > 0$.
There exists a control weight $w : G \to [1,\infty)$ for $\ST$ satisfying $w(x,s) \lesssim (s^m + s^{-m'})(1+|x|)^k$ for some $m, m', k \in \mathbb{N}$ and all $(x,s) \in \NA$. Any such control weight will be called a \emph{standard control weight} for $\ST$.
\end{lemma}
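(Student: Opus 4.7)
The plan is to build the weight in two stages: first construct a preliminary weight $w_0$ of the prescribed polynomial form that is submultiplicative and dominates both translation operator norms, and then symmetrize $w_0$ so as to additionally enforce the modular-type condition (c2), all while keeping conditions (c1), (c3), (c4) intact and preserving the envelope $(s^m+s^{-m'})(1+|x|)^k$. The construction applies uniformly to $\PT(G)$ and $\MT(G)$, since the estimates in Lemmas~\ref{lem:OpNormsPTS} and~\ref{lem:OpNormsMTS} have the same structural form in $t$, $t^{-1}$ and $(1+|y|)^\PTpar$.

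First, I would set
\[
  w_0(y,t) := (t^{m} + t^{-m'})(1+|y|)^{k}, \qquad (y,t)\in\NA,
\]
with positive integers $m,m',k$ chosen large enough so that, for every $g=(y,t)$, both $\|L_{g^{-1}}\|_{\op}$ and $\|R_g\|_{\op}$ from Lemmas~\ref{lem:OpNormsPTS}--\ref{lem:OpNormsMTS} are dominated by $w_0(y,t)$ (including the extra $\max\{1,t^{\hdim}\}$ factor occurring in the $\PTi$-case for $p=\infty$). Using the product law $(y,s)(z,t)=(y\,\delta_s(z),st)$, the quasi-triangle inequality $|y\delta_s(z)|\le \cqn(|y|+s|z|)$, and the elementary bound $(st)^{m}+(st)^{-m'}\le (s^m+s^{-m'})(t^m+t^{-m'})$, a direct calculation gives
\[
  w_0(gh) \;\lesssim\; (1+s)^{k} w_0(g) w_0(h).
\]
Enlarging $m$ so that $m\ge k$ makes $(1+s)^k\lesssim s^m+s^{-m'}$, so that $w_0$ is submultiplicative up to a constant, which is absorbed by a global rescaling; the same rescaling ensures $w_0\ge 1$. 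Thus $w_0$ satisfies (c1), (c3), (c4).

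To promote $w_0$ to a weight fulfilling (c2), I would symmetrize by
\[
   w(g) := w_0(g) + w_0(g^{-1}) \Delta_G(g^{-1})^{1/r}.
\]
A direct manipulation using $\Delta_G(g)\Delta_G(g^{-1})=1$ shows $w(g^{-1})\Delta_G(g^{-1})^{1/r}=w(g)$, giving (c2). Submultiplicativity of $w$ is verified by expanding $w(g)w(h)$ into four non-negative terms and matching them against the two summands defining $w(gh)$ (using submultiplicativity of $w_0$ and the identity $\Delta_G(gh)=\Delta_G(g)\Delta_G(h)$), and (c3)--(c4) are inherited from $w_0\le w$. Finally, using $g^{-1}=(\delta_{t^{-1}}(y^{-1}),t^{-1})$ and $\Delta_G(g^{-1})=t^{\hdim}$ gives
\[
  w_0(g^{-1})\Delta_G(g^{-1})^{1/r} = (t^{\hdim/r-m}+t^{\hdim/r+m'})(1+t^{-1}|y|)^{k},
\]
and the crude bound $(1+t^{-1}|y|)^k\le \max\{1,t^{-k}\}(1+|y|)^k$ then yields $w(y,t)\lesssim (t^{\tilde m}+t^{-\tilde m'})(1+|y|)^k$ for enlarged integers $\tilde m,\tilde m'$, which is the desired bound after renaming.

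The main obstacle is not any deep analytic estimate but the careful bookkeeping of exponents: one must choose $m,m',k$ large enough to simultaneously dominate the left and right operator norms (whose dependence on $t$ mixes positive and negative powers according to the sign of $\ord+\hdim/q-\hdim/p$ and the constants appearing in the $\PTpar$- and $\hdim$-factors), to absorb the $(1+s)^k$ factor arising in the submultiplicativity check, and to survive the symmetrization through the $\Delta_G(g^{-1})^{1/r}=t^{\hdim/r}$ shift without altering the polynomial form of the final envelope.
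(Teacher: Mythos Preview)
Your overall two-stage strategy (build a submultiplicative base weight dominating the translation norms, then symmetrize to enforce (c2)) is exactly right, and your symmetrization step is fine. The gap is in the first stage: the specific weight
\[
  w_0(y,t) = (t^{m}+t^{-m'})(1+|y|)^{k}
\]
is \emph{not} submultiplicative on $G=N\rtimes(0,\infty)$, and enlarging $m$ does not repair this. The trouble is the dilation in the product law. Take, for instance, $N=\mathbb{R}$ and $g=(1,s)$ with $s$ large; then $g^{2}=(1+s,\,s^{2})$, so
\[
  w_0(g^{2}) \asymp s^{2m}\cdot s^{k},
  \qquad
  w_0(g)^{2} \asymp s^{2m}\cdot 2^{2k},
\]
and $w_0(g^{2})/w_0(g)^{2}\asymp s^{k}\to\infty$. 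Your proposed fix (``$(1+s)^{k}\lesssim s^{m}+s^{-m'}$ once $m\ge k$'') only shows that the obstruction factor is itself dominated by the $t$-part of $w_0(g)$; it does not allow you to absorb it back into $w_0(g)$, since that would require the impossible bound $(s^{m}+s^{-m'})^{2}\lesssim s^{m}+s^{-m'}$.

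The paper resolves this by choosing the spatial part of the base weight to already contain the dilation twist: working with a homogeneous quasi-norm $|\cdot|'$ satisfying the triangle inequality, it sets
\[
  v_{2}(x,s) = \bigl(1+|x|'+s^{-1}|x|'+s+s^{-1}\bigr)^{a}.
\]
A direct expansion using $|y\delta_{s}(z)|'\le|y|'+s|z|'$ shows that every term appearing in $v_{2}\bigl((y,s)(z,t)\bigr)$ occurs as a cross term in $v_{2}(y,s)\,v_{2}(z,t)$ (in particular, $s|z|'$ is matched by $s\cdot|z|'$, and $(st)^{-1}|y|'$ by $s^{-1}|y|'\cdot t^{-1}$), so $v_{2}$ is genuinely submultiplicative. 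It is also invariant under inversion, since $(x,s)^{-1}=(\delta_{s^{-1}}(x^{-1}),s^{-1})$ has first component of norm $s^{-1}|x|'$. Multiplying by a purely $s$-dependent submultiplicative factor $v_{1}$ that absorbs the various powers of $s$ from Lemmas~\ref{lem:OpNormsPTS}--\ref{lem:OpNormsMTS} then gives the desired base weight, and the symmetrization $w(g)=C\max\{v(g),\,\Delta_G(g^{-1})^{1/r}v(g^{-1})\}$ finishes (c1)--(c4). The final envelope estimate follows from the crude bound $v_{2}(x,s)\le(1+|x|')^{a}(1+s+s^{-1})^{a}\lesssim(1+|x|)^{a}\max\{s,s^{-1}\}^{a}$. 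So the key missing ingredient in your construction is the cross term $s^{-1}|x|$ in the base weight; once you include it, your outline goes through.
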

\begin{proof}
 Let $(x, s) \in \NA$ and recall that $(x, s)^{-1} = (\delta_{s^{-1}}(x^{-1}), s^{-1})$. By Lemma \ref{lem:OpNormsPTS} and Lemma \ref{lem:OpNormsMTS}, there exists a constant $C > 1$ such that
 \[ \| L_{(x,s)^{-1}} \|_{\op} = s^{\ord + \frac{\hdim}{q} - \frac{\hdim}{p}} \]
 and 
 \[ \| R_{(x,s)} \|_{\op} \leq C s^{\ord + \frac{\hdim}{q}} \, \max\{ 1, s^{-\PTpar} \} \, \max\{ 1, s^{\hdim} \}^{\frac{p}{\infty}} \, (1 + |x|)^\PTpar \]
 for the operator norms $\| \cdot \|_{\op}$ of both $\PT(\NA)$ and $\MT(\NA)$. 
 Set 
 \[ v_1 (x,s) = C\max\{ 1, s^{\ord + \frac{\hdim}{q} - \frac{\hdim}{p}} \} \max \{1, s^{\ord + \frac{\hdim}{q}} \} \, \max\{ 1, s^{-\PTpar} \} \, \max\{ 1, s^{\hdim} \}^{\frac{p}{\infty}}  \]
 and 
 \[
 v_2 (x,s) = (1+|x|' + s^{-1} |x|' + s + s^{-1} )^a,
 \]
 where $| \cdot |'$ is a homogeneous quasi-norm satisfying \eqref{quas} for $\gamma = 1$, cf. \cite{hebisch1990smooth}.
 Then both $v_1$ and $v_2$ are measurable, submultiplicative weights, and hence so is their product $v  := v_1 \cdot v_2 $. In addition, the weight $v$ satisfies $\| L_{(x,s)^{-1}} \|_{\op}, \| R_{(x,s)} \|_{\op} \lesssim v(x,s)$. Therefore, defining
 \[
 w(x,s) = C \max \{ v(x,s), s^{\hdim / r} v((x,s)^{-1})) \}
 \]
yields a measurable weight satisfying (c1)-(c4).

 For the remaining claim, note that $v_2(x,s) = v_2((x,s)^{-1})$ and 
 \[
 v_2(x,s) \leq (1+|x|')^a (1+s + s^{-1})^a \lesssim (1+|x|')^a \max\{s, s^{-1} \}^a,
 \]
 so that
 \begin{align*}
 w(x,s) &= C \max \{ v_1(x,s), s^{\hdim / r} v_1((x,s)^{-1})) \} v_2 (x,s) \\
 &\lesssim (1+|x|')^a \max \{ v_1(x,s), s^{\hdim / r} v_1((x,s)^{-1})) \} \max\{s, s^{-1} \}^a,
 \end{align*}
 which easily completes the proof.
\end{proof}

\begin{proposition} \label{prop:better_vector}
Let $r \in (0,1]$. For $f, \psi \in \mathcal{S}_0 (N)$, it holds that $V_{\psi} f \in W(L^r_w)$, 
where $w : G \to [1, \infty)$ is any standard control weight as in Lemma \ref{lem:standard_control}. 
\end{proposition}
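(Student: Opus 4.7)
The plan is to combine the pointwise decay estimates of Lemma~\ref{lem:pointwise} with the explicit polynomial growth of a standard control weight from Lemma~\ref{lem:standard_control}, reducing the claim to a calculation of a weighted integral.

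First, I would compute the group product $u_2 g u_1$ for $g = (x,s) \in G$ and $u_1 = (y_1, t_1)$, $u_2 = (y_2, t_2) \in U$. The semidirect product law yields
\[
 u_2 g u_1 = \bigl( y_2 \delta_{t_2}(x) \delta_{t_2 s}(y_1),\; t_2 s t_1 \bigr),
\]
so the scale component lies in $(s/4,\,4s)$, while the space component $w := y_2 \delta_{t_2}(x) \delta_{t_2 s}(y_1)$ relates to $x$ through the quasi-triangle inequality \eqref{quas} and the Peetre-type inequality \eqref{eq:peetretype}. Concretely, solving for $\delta_{t_2}(x)$ and using $|y_2|, |y_1| \leq 1/2$, $t_1, t_2 \in (1/2,2)$, one obtains for any $m' > 0$ the uniform comparison
\[
 (1+|w|)^{-m'} \lesssim (1+s)^{m'} (1+|x|)^{-m'}.
\]

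Next, I would insert this estimate into Lemma~\ref{lem:pointwise}, splitting the $s$-variable into the three regimes $s \leq 1/4$ (where $t_2 s t_1 \leq 1$), $s \geq 4$ (where $t_2 s t_1 \geq 1$), and the bounded intermediate range $s \in [1/4,4]$ in which the factor $(1+s)^{m'}$ is harmless. For arbitrary $m, m', K \in \mathbb{N}$ this yields the pointwise bound
\[
 (M V_{\psi} f)(x,s) \;\lesssim\; \bigl[\, s^{Q/2 + m}\,\mathbf{1}_{\{s \leq 1\}} \,+\, s^{-K}\,\mathbf{1}_{\{s > 1\}} \bigr] \,(1+|x|)^{-m'},
\]
where $K$ may be made arbitrarily large by taking $m$ large relative to $m'$ (roughly $m \geq m' + K - Q/2$).

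Finally, I would verify that $(MV_{\psi}f) \cdot w \in L^r(G)$. By Lemma~\ref{lem:standard_control}, a standard control weight satisfies $w(x,s) \lesssim (s^{m_0} + s^{-m_0'})(1+|x|)^{k_0}$ for some $m_0, m_0', k_0$. Using the Haar measure $d\mu_G(x,s) = d\mu_N(x)\,\frac{ds}{s^{Q+1}}$, the integral $\int_G (MV_\psi f)^r w^r \, d\mu_G$ factorizes into a spatial integral convergent whenever $m' > k_0 + Q/r$ (by \eqref{eq:polar}) and two scale integrals (near $0$ and near $\infty$) convergent whenever $m$ and $K$ are sufficiently large relative to $m_0, m_0', Q, r$. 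All these conditions can be met simultaneously by first choosing $m'$, then $m$ and $K$.

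The main technical care lies in Step 1, in handling the Peetre-type estimate so that the decay of $|w|$ transfers to decay of $|x|$ up to a controllable polynomial factor in $s$; this $s$-factor forces the three-case split in Step 2, but since the resulting polynomial can be absorbed by increasing the free parameters $m$, $m'$, $K$, no genuine obstruction arises. All remaining computations are elementary.
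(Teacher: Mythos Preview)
Your proposal is correct and follows essentially the same approach as the paper: pointwise control of $MV_\psi f$ via Lemma~\ref{lem:pointwise} together with the group product structure, then a direct check of weighted $L^r$-integrability using the polynomial growth of a standard control weight. The only cosmetic difference is that the paper packages the estimate of $M^L$ and $M^R$ separately through auxiliary envelope functions $v_{n,n',k}(x,s)=(s^n+s^{-n'})(1+|x|)^{-k}$, whereas you compute the full product $u_2 g u_1$ in one step and apply the Peetre-type inequality directly; the resulting pointwise bounds and the final integrability argument are the same.
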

\begin{proof}
Using Lemma \ref{lem:pointwise}, we will estimate the (two-sided) local maximal function $M V_{\psi} f$ of $V_\psi f$ by local maximal functions of auxiliary functions $v = v_{n, n',k}$ defined by $v_{n, n',k} (x,s) = (s^{n} + s^{-n'}) ( 1+ |x|)^k$ for $n, n', k \geq 0$. Fix $(x,s) \in \NA$, and note that
\begin{align*}
(M^R v)(x,s) &= \esssup_{z \in B_{1/2}, \; u \in (1/2, 2)} \big( (us)^n + (us)^{-n'} \big) (1 + |z \delta_u (x)|)^{-k} \\
&\lesssim  \esssup_{z \in B_{1/2}, \; u \in (1/2, 2)} \big( s^n + s^{-n'} \big) (1 + | \delta_u (x^{-1}) z^{-1}|)^{-k} \\
&\leq \esssup_{z \in B_{1/2}, \; u \in (1/2, 2)} \big( s^n + s^{-n'} \big) (1 + u| x|)^{-k} (1+|z|)^{k} \\
&\lesssim  \big( s^n + s^{-n'} \big) (1 + |x|)^{-k} = v(x,s).
\end{align*}
Similarly, it follows that 
\begin{align*}
    (M^L v)(x,s) &= \esssup_{y \in B_{1/2}, \; t \in (1/2, 2)} \big( (st)^n + (st)^{-n'} \big) (1 + |x \delta_s (y)|)^{-k} \\
    &\lesssim \esssup_{y \in B_{1/2}} \big( s^n + s^{-n'} \big) (1 + |x |)^{-k} (1 + s|y|)^k \\
    &\lesssim  \big( s^{n} + s^{-n'} \big) (1 + |x |)^{-k} (1 + s)^k.
\end{align*}
On the other hand, note that an application of Lemma \ref{lem:pointwise} yields, for arbitrary $m, m', k \in \N$, that 
$
|V_\psi f (x,s)| \lesssim  v_{\hdim / 2 + m, 0, k} (x,s)$ if $s \leq 1$, and 
$|V_\psi f(x,s)| \lesssim v_{0, \hdim/2 + m', k} (x,s)$ if $s \geq 1$.
Combining the obtained estimates therefore yields
\[
(M V_\psi f)(x,s) \lesssim  s^{\hdim/2 + m}  (1 + |x |)^{-k}, \quad x \in N, \; s \leq 1,
\]
and 
\[
(M V_\psi f)(x,s) \lesssim  s^{-(\hdim/2 + m')+k}  (1 + |x |)^{-k}, \quad x \in N, \; s \geq 1.
\]

In order to estimate the $L_w^r$-norm of $MV_{\psi} f$, choose some exponents $l, l', k' \in \mathbb{N}$ satisfying $w(x,s) \lesssim (s^l + s^{-l'}) (1+|x|)^{k'}$ for all $(x,s) \in \NA$. Then
\begin{align*}
\int_G | (M V_\psi f) (g)  w(g) |^r \; d\mu_G(g) 
&= \int_{0}^{\infty} \int_N  | M V_\psi f (x,s)  w(x,s) |^r \; d\mu_N (x) \frac{ds}{s^{\hdim + 1}} \\
&\lesssim  \int_{0}^1 s^{(\hdim /2 + m - l')r} \int_N  (1+|x|)^{(k' - k)r}  \; d\mu_N (x) \frac{ds}{s^{\hdim + 1}} \\
& \quad \quad  + \int_1^{\infty} s^{(-\hdim /2 - m'+k+l)r} \int_N (1+|x|)^{(k' - k) r}  \; d\mu_N (x) \frac{ds}{s^{\hdim + 1}}.
\end{align*}
Choosing $k \in \mathbb{N}$ sufficiently large so that $\int_N  (1+|x|)^{(k' - k)r}  \; d\mu_N (x) < \infty$ gives 
\begin{align*}
\int_G | (M V_\psi f) (g)  w(g) |^r \; d\mu_G(g) 
\lesssim  \int_{0}^1 s^{(\hdim /2 + m - l')r} \frac{ds}{s^{\hdim + 1}}  + \int_1^{\infty} s^{(-\hdim /2 - m'+k+l)r} \frac{ds}{s^{\hdim + 1}}.
\end{align*}
Lastly, choosing $m, m' \in \mathbb{N}$ sufficiently large yields that
$
\int_G | (M V_\psi f) (g)  w(g) |^r \; d\mu_G(g) < \infty,
$
as required.
\end{proof}

\subsection{Abstract coorbit spaces} 
In this subsection, we identify the Besov and Triebel-Lizorkin spaces studied in this paper with abstract coorbit spaces introduced in \cite{feichtinger1989banach, velthoven2022quasi}. This identification allows us to obtain molecular decompositions by exploiting the theory \cite{velthoven2022quasi, romero2021dual}.

Throughout this section, let $\psi \in \SV (\N)$ be an admissible vector such that $V_{\psi} \psi \in W(L^r_w)$ for a standard control weight $w$ for $\ST$ as constructed in Lemma \ref{lem:standard_control}. Let
\[
\mathcal{H}_{w, \psi} := \big\{ f \in L^2 (N) : V_{\psi} f \in L^1_w (\NA) \big\},
\]
and equip $\mathcal{H}_{w, \psi}$ with the norm $\| f \|_{\mathcal{H}} = \|V_{\psi} f \|_{L^1_w}$. Note that $V_{\psi} \phi \in L^1_w$ for any $\phi \in \SV(\N)$ by Proposition \ref{prop:better_vector}, so that $\SV(\N) \subseteq \mathcal{H}_{w, \psi}$.
The space $\mathcal{H}_{w, \psi}$ is a $\pi$-invariant Banach space that is independent of the choice of admissible vector $\psi \in \SV(\N)$, cf. \cite[Lemma 4.2]{velthoven2022quasi}.  The anti-dual space of $\mathcal{H}_{w, \psi}$ will be denoted by $\mathcal{R}_{w,\psi} := (\mathcal{H}_{w, \psi})^*$  and equipped with the dual pairing $\langle f, h \rangle_{\mathcal{R}, \mathcal{H}} := f(h)$. The associated (extended) wavelet transform of $f \in \mathcal{R}_{w,\psi}$ is defined as $V_{\psi} f = \langle f, \pi(\cdot) \psi \rangle_{\mathcal{R}, \mathcal{H}}$.

The following lemma identifies Besov and Triebel-Lizorkin spaces as (abstract) coorbit spaces studied in \cite{feichtinger1989banach, velthoven2022quasi}. The result is a special case of \cite[Corollary 4.10]{velthoven2022quasi}.

\begin{lemma} \label{lem:coorbit}
Let $\psi \in \SV (N)$ be an admissible vector. Let $\ord \in \mathbb{R}$, $p, q \in (0,\infty]$ and set $\ord' := \ord + Q/2 - Q/q$.  If $\PTpar > \max \{ \frac{\hdim}{p \wedge q}, |\ord| \}$
and $w$ is a standard control weight for $Y^{p,q}_{a, \sigma'}$, then
  \[  \TLS(\N) = \big\{ f \in \mathcal{R}_{w, \psi} : M^L V_{\psi} f \in \PTv{p}{q}{\ord'} \big\} =: \Co_{\psi}(\PTv{p}{q}{\ord'}) \]
  and 
  \[ \BS(\N) = \big\{ f \in \mathcal{R}_{w, \psi} : M^L V_{\psi} f \in L^{p,q}_{\PTpar, \ord'} \big\} =: \Co_{\psi}(L^{p,q}_{\PTpar, \ord'}) \]
  in the sense that $f \mapsto f|_{\SV}$ is a well-defined bijection from $\Co_{\psi}(\PTv{p}{q}{\ord'})$ into $\TLS(\N)$ (resp. from $\Co_{\psi}(L^{p,q}_{\PTpar, \ord'})$ into $\BS(\N)$). Furthermore, given the unique extension $\widetilde{f} \in \Co_{\psi}(\PTv{p}{q}{\ord'})$ of $f \in \TLS(\N)$ (resp. $\widetilde{f} \in \Co_{\psi}(L^{p,q}_{\PTpar, \ord'})$ of $f \in \BS(\N)$), it holds that
  \[
  \langle \widetilde{f}, \phi \rangle_{\mathcal{R}, \mathcal{H}} = \langle f, \phi \rangle_{\psi} 
  \]
  for all $\phi \in \mathcal{H}_{w, \psi}$, where $\langle \cdot, \cdot \rangle_{\psi}$ denotes the extended pairing of Definition \ref{def:extendedpairing}.
\end{lemma}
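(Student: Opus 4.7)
The plan is to verify that the setting of the present paper satisfies the hypotheses of the abstract coorbit theory developed in \cite{velthoven2022quasi}, after which the identifications follow by combining the abstract results with the local-mean norm characterization established in Lemma \ref{lem:wavelet_norm}.

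First, I would assemble all ingredients required by \cite[Corollary 4.10]{velthoven2022quasi}. The quasi-regular representation $\pi$ acts unitarily on $L^2(\N)$; Proposition \ref{prop:construction_crk} together with the discussion preceding Lemma \ref{lem:repro_sinfty} produces an admissible $\psi \in \SV(\N)$; Proposition \ref{prop:better_vector} yields $V_\psi \psi \in W(L^r_w)$ for $r = \min\{1,p,q\}$ and the standard control weight $w$ from Lemma \ref{lem:standard_control}; finally, Lemmas \ref{lem:OpNormsPTS} and \ref{lem:OpNormsMTS} verify that $\PTv{p}{q}{\ord'}$ and $L^{p,q}_{\PTpar,\ord'}$ are solid quasi-Banach function spaces of $r$-norm type on $\NA$ for which $w$ controls both left and right translation, i.e.\ $w$ is a strong control weight in the sense of Definition \ref{def:control_weight}. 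With these data in place, the abstract theory produces, for each of our two function spaces $Y$, a coorbit space $\Co_\psi(Y) \subseteq \mathcal{R}_{w,\psi}$ satisfying $\|\widetilde f\|_{\Co_\psi(Y)} \asymp \|M^L V_\psi \widetilde f\|_Y$.

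Next, the identifications are obtained by combining the coorbit characterization with Lemma \ref{lem:wavelet_norm}, which for $\ord' = \ord + \hdim/2 - \hdim/q$ and $\PTpar > \max\{\hdim/(p\wedge q),|\ord|\}$ gives $\|f\|_{\TLS} \asymp \|M^L V_\psi f\|_{\PTv{p}{q}{\ord'}}$ and analogously for the Besov case, for every $f \in \SV'(\N)$. It then remains to set up the bijection between the two distributional realizations. Given $f \in \TLS(\N)$, I would extend it to a functional $\widetilde f \in \mathcal{R}_{w,\psi}$ via the extended pairing $\langle \widetilde f, \phi \rangle_{\mathcal{R},\mathcal{H}} := \langle f, \phi \rangle_\psi$ of Definition \ref{def:extendedpairing}; its boundedness on $\mathcal{H}_{w,\psi}$ follows from the pointwise bound $|V_\psi f| \leq M^L V_\psi f \in Y$ together with $V_\psi \phi \in L^1_w(\NA)$ for $\phi \in \mathcal{H}_{w,\psi}$ and a duality-type pairing between $Y$ and an appropriate weighted space on $\NA$ furnished by the abstract framework. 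Injectivity of the restriction map $\Co_\psi(Y) \to \SV'(\N)$, $\widetilde f \mapsto \widetilde f|_{\SV(\N)}$, comes from the reproducing formula \eqref{eq:repro_extended}: if $\widetilde f|_{\SV(\N)} = 0$ then $V_\psi \widetilde f \equiv 0$ on $\NA$, whence $\widetilde f = 0$. The final pairing identity $\langle \widetilde f, \phi \rangle_{\mathcal{R},\mathcal{H}} = \langle f, \phi\rangle_\psi$ is tautological from the construction and extends from $\SV(\N)$ to $\mathcal{H}_{w,\psi}$ by density and continuity.

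The principal obstacle I anticipate is making the well-definedness of the extended pairing rigorous: namely, establishing an inequality of the form $\int_\NA |V_\psi f(g)|\,|V_\psi \phi(g)|\,d\mu_\NA(g) \lesssim \|M^L V_\psi f\|_Y \,\|\phi\|_{\mathcal{H}_{w,\psi}}$, which rests on the fact that $w$ simultaneously controls both sides of the relevant convolution estimate together with a $W(L^r_w)$-envelope argument. Since this is precisely the content packaged into \cite[Corollary 4.10]{velthoven2022quasi}, once the hypotheses listed above have been verified the lemma follows directly from the abstract identification together with Lemma \ref{lem:wavelet_norm}.
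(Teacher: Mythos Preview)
Your proposal is correct and follows essentially the same route as the paper: verify the hypotheses of \cite[Corollary 4.10]{velthoven2022quasi} (solidity and $r$-norm property via Lemmas~\ref{lem:OpNormsPTS}/\ref{lem:OpNormsMTS}, strong control weight via Lemma~\ref{lem:standard_control}, $V_\psi\psi\in W(L^r_w)$ and $\SV(\N)\hookrightarrow\mathcal{H}_{w,\psi}$ via Proposition~\ref{prop:better_vector}, and the reproducing formula via Lemma~\ref{lem:repro_sinfty}), then combine the resulting abstract identification with the local-mean characterization of Lemma~\ref{lem:wavelet_norm}. The paper dispatches the final pairing identity by quoting \cite[Lemma~4.6(iii)]{velthoven2022quasi} rather than arguing density directly, but this is only a presentational difference.
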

\begin{proof}
To treat Besov and Triebel-Lizorkin spaces simultaneously, we write $Y^{p,q}_{a, \sigma}$ for either $\PT$ or $\MT$. 
By Lemma \ref{lem:wavelet_norm}, it follows $\| f \|_{\TLS} \asymp \| M^L V_{\psi} f \|_{\PTv{p}{q}{\ord'}}$ and $\| f \|_{\BS} \asymp \|M^L V_{\psi} f \|_{L^{p,q}_{a, \ord'}}$ for all $f \in \SV'(\N)$. As such, the claims $\TLS(\N) = \Co_{\psi}(\PTv{p}{q}{\ord'})$ and $\BS(\N) = \Co_{\psi} (L^{p,q}_{a, \ord'})$ follow once it is shown that
\begin{align} \label{eq:abstractcoorbit}
\Co_{\psi} (Y^{p,q}_{a, \sigma'}) := \big\{ f \in \mathcal{R}_{w, \psi} : M^L V_{\psi} f \in Y^{p,q}_{a, \sigma'} \big\} = \big\{ f \in \SV'(\N) : M^L V_{\psi} f \in Y^{p,q}_{a, \sigma'} \big\}.
\end{align}
For this, we only need to verify the hypotheses of \cite[Corollary 4.10]{velthoven2022quasi}. 
That is, we need to show that the pair $(Y^{p,q}_{a, \ord'}, w)$ is $L^r_w$-compatible ($r := \min\{1,p,q\}$) in the sense of \cite[Definition 3.5]{velthoven2022quasi}, that $V_{\psi} \psi \in W(L^r_w)$ and $\SV(\N) \hookrightarrow \mathcal{H}_{w, \psi}$, and that the reproducing formula
\begin{align} \label{eq:repro_extended2}
 \int_G V_{\psi} f_1 (g) \overline{V_{\psi} f_2 (g)} \; d\mu_G (g) = \langle f_1, f_2 \rangle
\end{align}
holds for all $f_1 \in \SV' (\N)$ and $f_2 \in \SV (\N)$.

Since $w : \NA \to (0,\infty)$ is a strong control weight for $Y^{p,q}_{a, \ord'}$ by Lemma \ref{lem:standard_control}, a combination of Lemma \ref{lem:OpNormsPTS} and \cite[Corollary 3.9]{velthoven2022quasi} imply that the pair $(Y^{p,q}_{a, \ord'}, w)$ is $L^r_w$-compatible in the sense of \cite[Definition 3.5]{velthoven2022quasi}. The fact that $V_{\psi} \psi \in W(L^r_w)$ and $\SV (\N) \hookrightarrow \mathcal{H}_{w, \psi}$ follow directly by (the proof of) Proposition \ref{prop:better_vector}. Lastly, the formula \eqref{eq:repro_extended2} is satisfied by Lemma \ref{lem:repro_sinfty}. In combination, this shows that \cite[Corollary 4.10]{velthoven2022quasi} is applicable, which yields that Equation \eqref{eq:abstractcoorbit} holds, in the sense that $f \mapsto f|_{\SV}$ is a bijection from $\Co_{\psi}(\PTv{p}{q}{\ord'})$ into $\TLS(\N)$ and from $\Co_{\psi} (L^{p,q}_{a, \ord'})$ into $\BS(\N)$. This proves the first assertion of the lemma.

For the remaining part, let $\widetilde{f} \in \Co_{\psi}(Y^{p,q}_{a, \ord'}) \subseteq \mathcal{R}_{w, \psi}$  be the unique extension of an element $f \in \TLS(\N)$ or $f \in \BS(\N)$. Then $\langle \widetilde{f}, \pi(\cdot) \psi \rangle_{\mathcal{R}, \mathcal{H}} = \langle f, \pi(\cdot) \psi \rangle_{\SV', \SV}$, and hence it follows by \cite[Lemma 4.6(iii)]{velthoven2022quasi} that $\langle \widetilde{f}, \phi \rangle_{\mathcal{R}, \mathcal{H}} = \langle V_{\psi} f, V_{\psi} \phi \rangle_{L^{\infty}_{1/w}, L^1_w}$, and thus $ \langle \widetilde{f}, \phi \rangle_{\mathcal{R}, \mathcal{H}} = \langle f, \phi \rangle_{\psi}$ by definition of $\langle \cdot, \cdot \rangle_{\psi}$.
\end{proof}

\subsection{Molecular decompositions}
This subsection is devoted to obtaining wavelet and molecular decompositions of Besov and Triebel-Lizorkin spaces. The central definition is as follows.

\begin{definition}
    Let $\Lambda \subseteq G$ be discrete and let $\psi \in \SV(\N)$ be an admissible vector. 

    For $r \in (0,1]$ and a standard control weight $w$, a system $(\phi_{\lambda})_{\lambda \in \Lambda}$ of vectors $\phi_{\lambda} \in L^2 (\N)$ is said to be an \emph{$L^r_w$-molecular system} if there exists a function $\Phi \in W(L^r_w)$ such that
    \[
  | \big( V_{\psi} \phi_{\lambda} \big)(g) | \leq \Phi(\lambda^{-1} g)
    \]
    for all $g \in G$ and $\lambda \in \Lambda$.
\end{definition}

Note that any system $(\pi(\lambda) \phi)_{\lambda \in \Lambda}$ for $\phi \in \SV(\N)$ is an $L^r_w$-molecular system since
\[
|( V_{\psi} \pi(\lambda) \phi )(g) | = |\langle \phi, \pi(\lambda^{-1} g) \psi \rangle| = |V_{\psi} \phi|(\lambda^{-1} g)
\]
and $V_{\psi} \phi \in W(L^r_w)$ by Lemma \ref{prop:better_vector}. In general, however, a molecular system $(\phi_{\lambda})_{\lambda \in \Lambda}$ need not be of the simple form $(\pi(\lambda) \phi)_{\lambda \in \Lambda}$, but the wavelet transforms of the vectors $h_{\lambda}$ do satisfy analogous uniform size estimates. The notion of a molecular system is independent of the choice of admissible vector, cf. \cite[Lemma 6.3]{velthoven2022quasi}.

The following result is the main theorem of this section, cf.  \cite[Theorem 6.14]{velthoven2022quasi}.

\begin{theorem} \label{thm:molecular_frame}
Let $\psi \in \SV(\N)$ be an admissible vector. 
Let $\ord \in \mathbb{R}$, $p, q \in (0,\infty]$ and $r := \min \{1,p,q\}$. If $\PTpar > \max \{ \frac{\hdim}{p \wedge q}, |\ord| \}$ and $w : G \to (0,\infty)$ is a standard control weight for $\ST$, then
 there exists a relatively compact unit neighborhood $V \subseteq G$ with the following property:

For any discrete set $\Lambda \subseteq G$ satisfying
\begin{align} \label{eq:relsep}
\sup_{g \in G} \# (\Lambda \cap g U) < \infty \quad \text{and} \quad G = \bigcup_{\lambda \in \Lambda} \lambda V,
\end{align}
there exists an $L^r_w$-molecular system $(\phi_{\lambda})_{\lambda \in \Lambda}$ such that any $f \in \TLS (N)$ (resp. $f \in \BS (N)$) admits the expansion
\begin{align} \label{eq:molecular}
f = \sum_{\lambda \in \Lambda} \langle f, \phi_{\lambda} \rangle_{\psi} \; \pi(\lambda) \psi = \sum_{\lambda \in \Lambda} \langle f, \pi(\lambda) \psi \rangle_{\psi} \; \phi_{\lambda}
\end{align}
with convergence of the series in the weak-$*$-topology of $\SV'(\N)$. 
\end{theorem}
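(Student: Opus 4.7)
The plan is to reduce the theorem to the abstract molecular decomposition result \cite[Theorem 6.14]{velthoven2022quasi} for coorbit spaces associated to an integrable group representation. The groundwork for this reduction is already provided by Lemma~\ref{lem:coorbit}, which, under the hypothesis $\PTpar > \max\{\hdim/(p\wedge q), |\ord|\}$ and with $\ord' := \ord + \hdim/2 - \hdim/q$, identifies
\[
\TLS(\N) = \Co_{\psi}\bigl(\PTv{p}{q}{\ord'}\bigr), \qquad \BS(\N) = \Co_{\psi}\bigl(\MT[\ord'][\PTpar][p,q](\NA)\bigr),
\]
where the identification is realized by the restriction map $\widetilde{f}\mapsto \widetilde{f}|_{\SV}$ and the extended pairing $\langle\cdot,\cdot\rangle_\psi$ computes the coorbit duality on $\SV(\N)$.

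First I would verify that the hypotheses of \cite[Theorem 6.14]{velthoven2022quasi} are satisfied by the data $(\pi, \psi, w, \ST)$, where $\ST$ stands for either $\PTv{p}{q}{\ord'}$ or $\MT[\ord'][\PTpar][p,q]$. This amounts to checking: (a) $w$ is a standard control weight for $\ST$, which is provided by Lemma~\ref{lem:standard_control}; (b) $(\ST, w)$ is $L^r_w$-compatible in the sense of \cite[Definition 3.5]{velthoven2022quasi}, which is inherited from the operator-norm bounds in Lemma~\ref{lem:OpNormsPTS} and Lemma~\ref{lem:OpNormsMTS} via \cite[Corollary 3.9]{velthoven2022quasi}; (c) $V_\psi \psi \in W(L^r_w)$, which is Proposition~\ref{prop:better_vector}; and (d) the reproducing formula $\langle f_1, f_2\rangle = \int_G V_\psi f_1\overline{V_\psi f_2}\,d\mu_G$ for $f_1 \in \mathcal{R}_{w,\psi}$, $f_2 \in \mathcal{H}_{w,\psi}$, which follows by density from Lemma~\ref{lem:repro_sinfty} together with $\SV(\N)\hookrightarrow \mathcal{H}_{w,\psi}$.

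Once these hypotheses are in place, \cite[Theorem 6.14]{velthoven2022quasi} provides a relatively compact unit neighborhood $V\subseteq \NA$ such that for every discrete set $\Lambda\subseteq \NA$ satisfying the relative separation and covering condition \eqref{eq:relsep}, there is an $L^r_w$-molecular system $(\varphi_\lambda)_{\lambda\in\Lambda} \subseteq \mathcal H_{w,\psi}$ for which every $\widetilde{f} \in \Co_\psi(\ST)$ admits the two expansions
\[
\widetilde f = \sum_{\lambda\in \Lambda} \langle \widetilde f, \varphi_\lambda\rangle_{\mathcal R,\mathcal H}\, \pi(\lambda)\psi = \sum_{\lambda\in \Lambda} \langle \widetilde f, \pi(\lambda)\psi\rangle_{\mathcal R,\mathcal H}\, \varphi_\lambda,
\]
converging in the weak-$*$-topology of $\mathcal R_{w,\psi}=(\mathcal H_{w,\psi})^*$. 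Given $f\in\TLS(\N)$ (resp.\ $f\in\BS(\N)$), I would apply this to its unique extension $\widetilde f\in\Co_\psi(\ST)$ from Lemma~\ref{lem:coorbit}, and then transfer the coefficients using the identity $\langle \widetilde f,\phi\rangle_{\mathcal R,\mathcal H}=\langle f,\phi\rangle_\psi$ for all $\phi\in\mathcal H_{w,\psi}$, which is the final statement of Lemma~\ref{lem:coorbit}. In particular, the coefficients in \eqref{eq:molecular} come out as the extended pairings $\langle f,\varphi_\lambda\rangle_\psi$ and $\langle f,\pi(\lambda)\psi\rangle_\psi$, as required.

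The last step is to upgrade weak-$*$-convergence in $\mathcal R_{w,\psi}$ to weak-$*$-convergence in $\SV'(\N)$. The main (and essentially only) obstacle here is exactly this topological compatibility: since $\SV(\N)\hookrightarrow\mathcal H_{w,\psi}$ continuously (via Proposition~\ref{prop:better_vector} applied to $V_\psi\phi$ for $\phi\in\SV(\N)$), testing against any $\phi\in\SV(\N)$ is a continuous linear functional on $\mathcal R_{w,\psi}$, so weak-$*$-convergence in $\mathcal R_{w,\psi}$ implies testwise convergence against every Schwartz function with vanishing moments, i.e.\ weak-$*$-convergence in $\SV'(\N)$. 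This delivers \eqref{eq:molecular} in the required topology and completes the argument.
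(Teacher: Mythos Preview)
Your proposal is correct and follows essentially the same route as the paper's proof: identify the Besov and Triebel--Lizorkin spaces as coorbit spaces via Lemma~\ref{lem:coorbit}, invoke \cite[Theorem~6.14]{velthoven2022quasi} to obtain the molecular dual system and the expansions in $\mathcal{R}_{w,\psi}$, translate the coefficients to extended pairings via the last part of Lemma~\ref{lem:coorbit}, and then restrict to $\SV(\N)$ to obtain weak-$*$-convergence in $\SV'(\N)$. The only minor redundancy is your explicit verification of hypotheses (a)--(d): these are exactly what is checked in the proof of Lemma~\ref{lem:coorbit} (via \cite[Corollary~4.10]{velthoven2022quasi}), so once that lemma is in place you may simply quote it rather than re-verify.
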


\begin{proof}
By Lemma \ref{lem:coorbit}, it follows that $\TLS(\N) = \Co_{\psi}(\PTv{p}{q}{\ord'})$ and $\BS(\N) = \Co_{\psi} (L^{p,q}_{a, \ord'})$ for $a> \max\{\hdim / p \wedge q, |\sigma|\}$ and $\sigma' = \sigma +\hdim/2 - \hdim/q$. 
Denote by $Y^{p,q}_{a, \ord'}$ either $\PTv{p}{q}{\ord'}$ or $L^{p,q}_{a, \ord'}$.
Then an application of  \cite[Theorem 6.14]{velthoven2022quasi} yields a compact unit neighborhood $V \subseteq G$ such that, for any discrete set $\Lambda \subseteq G$ satisfying \eqref{eq:relsep}, there exists a $L^r_w$-molecular system $(\phi_{\lambda})_{\lambda \in \Lambda}$ such that any $f' \in \Co_{\psi}(Y^{p,q}_{a, \ord'})$ can be represented as
\begin{align} \label{eq:molecular_abstract}
f' = \sum_{\lambda \in \Lambda} \langle f', \phi_{\lambda} \rangle_{\mathcal{R}, \mathcal{H}} \; \pi(\lambda) \psi = \sum_{\lambda \in \Lambda} \langle f', \pi(\lambda) \psi \rangle_{\mathcal{R}, \mathcal{H}} \; \phi_{\lambda}
\end{align}
with unconditional convergence of the series in the weak-$*$-topology of $\mathcal{R}_{w, \sigma}$. By Lemma \ref{lem:coorbit}, any $f \in \TLS(\N)$ and $f \in \BS(\N)$ admit a unique extension to an element $\widetilde{f} \in \Co_{\psi}(Y^{p,q}_{a, \ord'})$ for which the dual pairing satisfies $\langle \widetilde{f}, \phi \rangle_{\mathcal{R}, \mathcal{H}} = \langle f, \phi \rangle_{\psi}$. Therefore, applying Equation \eqref{eq:molecular_abstract} to $\widetilde{f}$ and restricting the domains of both sides of Equation \eqref{eq:molecular_abstract} to $\SV(\N)$ yields that Equation \eqref{eq:molecular} holds for all $f \in \TLS(\N)$ and $f \in \BS(\N)$, with unconditional convergence in the weak-$*$-topology of $\SV'(\N)$.
\end{proof}

We record the following useful consequence of Theorem \ref{thm:molecular_frame}. 

\begin{corollary} \label{cor:dense}
   Let $\ord \in \mathbb{R}$. The space $\SV(N)$ is norm dense in $\TLS(\N)$ and $\BS(\N)$ if $p, q \in (0, \infty)$ and weak-$*$-dense, otherwise.
\end{corollary}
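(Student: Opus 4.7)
The plan is to deduce the density statements directly from Theorem~\ref{thm:molecular_frame} together with the coorbit identifications in Lemma~\ref{lem:coorbit}. Fix an admissible $\psi \in \SV(\N)$, a discrete set $\Lambda \subseteq \NA$ satisfying \eqref{eq:relsep}, and the associated $L^r_w$-molecular system $(\phi_\lambda)_{\lambda \in \Lambda}$ from Theorem~\ref{thm:molecular_frame}. For every $f \in \TLS(\N)$ (resp. $f \in \BS(\N)$) and every finite subset $F \subseteq \Lambda$, set
\[
f_F := \sum_{\lambda \in F} \langle f, \phi_\lambda \rangle_\psi \; \pi(\lambda)\psi.
\]
Since $\psi \in \SV(\N)$ and the representation $\pi$ preserves $\SV(\N)$, each $f_F$ lies in $\SV(\N)$.

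By Theorem~\ref{thm:molecular_frame}, the net $(f_F)_F$ converges to $f$ in the weak-$*$-topology of $\SV'(\N)$. This immediately settles the weak-$*$ density of $\SV(\N)$ in $\TLS(\N)$ and $\BS(\N)$ in all cases and, in particular, handles the case when $p = \infty$ or $q = \infty$.

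It remains to show norm density when $p, q \in (0, \infty)$. The plan is to lift the convergence $f_F \to f$ to the norm of the coorbit space via Lemma~\ref{lem:coorbit}: under the identifications $\TLS(\N) = \Co_\psi(\PTv{p}{q}{\sigma'})$ and $\BS(\N) = \Co_\psi(L^{p,q}_{a,\sigma'})$, it suffices to verify that the sequence spaces $\PTv{p}{q}{\sigma'}$ and $L^{p,q}_{a,\sigma'}$ have absolutely continuous (quasi-)norm, i.e., compactly supported measurable functions on $G$ are dense. This is a direct consequence of the dominated convergence theorem for the iterated $L^p$- and $L^q$-norms in Definitions~\ref{DefPTS} and \ref{def:MTS} as long as $p, q < \infty$ (note that the essential supremum in $z \in \N$ in the definitions does not obstruct this since it is an inner envelope, not an outer integral). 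Feeding this into the abstract coorbit/molecular framework of \cite{velthoven2022quasi} then upgrades the weak-$*$ convergence in Theorem~\ref{thm:molecular_frame} to (unconditional) norm convergence in the coorbit space, which by Lemma~\ref{lem:coorbit} is equivalent to norm convergence in $\TLS(\N)$ or $\BS(\N)$.

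The main obstacle I anticipate is the last step: Theorem~\ref{thm:molecular_frame} as stated only records weak-$*$-convergence, so one must carefully track whether the molecular reconstruction in \cite{velthoven2022quasi, romero2021dual} actually gives norm convergence for the particular spaces $\PTv{p}{q}{\sigma'}$ and $L^{p,q}_{a,\sigma'}$ when $p, q < \infty$. The standard argument proceeds by approximating the coefficient sequence $(\langle f, \phi_\lambda\rangle_\psi)_\lambda$, viewed as an element of a solid sequence space modelled on $Y^{p,q}_{a, \sigma'}$, by finitely supported sequences and then invoking the boundedness of the synthesis map $(c_\lambda)_\lambda \mapsto \sum_\lambda c_\lambda \pi(\lambda)\psi$ on that sequence space; the absolute continuity of the (quasi-)norm for $p, q < \infty$ provides exactly this approximation.
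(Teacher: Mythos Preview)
Your proposal is correct and follows essentially the same approach as the paper. The paper likewise takes finite partial sums of the molecular expansion $f = \sum_{\lambda} \langle f, \phi_\lambda\rangle_\psi\, \pi(\lambda)\psi$, observes these lie in $\SV(\N)$, obtains weak-$*$ convergence from Theorem~\ref{thm:molecular_frame}, and for $p,q < \infty$ resolves exactly the obstacle you flag in your last paragraph by citing \cite[Proposition 6.11]{velthoven2022quasi} twice: first to place the coefficient sequence $(\langle f,\phi_\lambda\rangle_\psi)_\lambda$ in the associated discrete sequence space $Y_d(\Lambda)$, and second to conclude that the synthesis series then converges in the coorbit norm. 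Your detour through ``absolute continuity'' of the function-space norm is unnecessary (and somewhat delicate because of the inner $\esssup_z$); the clean route, which you describe at the end, is to work directly with the sequence space and the boundedness of the synthesis operator.
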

\begin{proof}
  Let $\psi \in \SV(N)$ be an admissible vector and let $f \in \TLS(N) = \Co_{\psi}(\PTv{p}{q}{\ord'})$ or $f \in \BS(\N) = \Co_{\psi} (L^{p,q}_{a, \ord'})$ (cf. Lemma \ref{lem:coorbit}), and write $Y^{p,q}_{a, \ord'}$ for either $\PTv{p}{q}{\ord'}$ or $L^{p,q}_{a, \ord'}$. By Theorem \ref{thm:molecular_frame}, it follows that $f = \sum_{\lambda \in \Lambda} c_{\lambda} (f) \pi(\lambda) \psi$ for coefficients $c_{\lambda} (f) := \langle f, \phi_{\lambda} \rangle_{\psi}$, where $(\phi_{\lambda})_{\lambda \in \Lambda}$ is a system of $L^p_w$-molecules. Choose an enumeration $(\lambda_n)_{n \in \mathbb{N}}$ of the index set $\Lambda$ and define $f_k = \sum_{n = 1}^k c_{\lambda_n} (f) \pi(\lambda_n) \psi \in \SV(N)$ for $k \in \mathbb{N}$. Then $V_{\psi} f_k = \sum_{n = 1}^k c_{\lambda_n} (f) L_{\lambda_n} V_{\psi} \psi$, and thus $V_{\psi} f_k \in \Co_{\psi} (Y^{p,q}_{a, \ord'})$ by Lemma \ref{lem:coorbit}. Moreover, Lemma \ref{lem:repro_sinfty} and Lebesgue's dominated convergence theorem yield 
  $
  \lim_{k \to \infty} \langle f_k , \varphi \rangle = \langle f, \varphi \rangle,
  $
  so that $f_k \to f$ in the weak-$*$-topology. 

  Lastly, the fact that $(\phi_{\lambda})_{\lambda \in \Lambda}$ is a molecular system implies by \cite[Proposition 6.11]{velthoven2022quasi} that $(c_{\lambda} (f))_{\lambda \in \Lambda} \in Y_d (\Lambda)$, where $Y_d (\Lambda)$ denotes the sequence space associated to $Y^{p,q}_{a, \ord'}$ (cf. \cite[Section 2.4]{velthoven2022quasi}). Therefore, if $p, q < \infty$, the series defining $f$ converges in the norm of $\Co_{\psi}(\PTv{p}{q}{\ord'})$ by \cite[Proposition 6.11]{velthoven2022quasi}, which implies that $f_k \to f$ converges in norm, and thus $\SV(N)$ is norm dense.
\end{proof}

\subsection{Application: boundedness of operators}
Using the molecular decomposition proven in Theorem \ref{thm:molecular_frame}, we easily obtain the following criterion for the boundedness of operators on Besov and Triebel-Lizorkin spaces. See \cite{gilbert2002smooth, grochenig2009molecules} for similar results on Euclidean spaces. 

\begin{theorem} \label{thm:continuity_BTL}
 Let $\ord \in \mathbb{R}$, $p, q \in (0,\infty]$ and $r := \min \{1,p,q\}$. Moreover, let $\PTpar > \max \{ \frac{\hdim}{p \wedge q}, |\ord| \}$ and $w : G \to (0,\infty)$ be a standard control weight for $\ST$. For an admissible vector $\psi \in \SV(\N)$, let $\Lambda \subseteq \NA$ be a discrete set such that $(\pi(\lambda) \psi)_{\lambda \in \Lambda}$ is a frame for $L^2 (\N)$ with a dual frame forming an $L^r_w$-molecular system (cf. Theorem~\ref{thm:molecular_frame}).

\begin{enumerate}
    [\rm (i)]\item If $T: L^2(\N) \to L^2(\N)$ is continuous and such that $(T \pi(\lambda) \psi)_{\lambda \in \Lambda}$ forms an $L^r_w$-molecule, then $T$ extends to continuous maps on $\TLS(\N)$ and $\BS(\N)$.
    \item In particular, this holds true for any operator $T:\SV(\N)\rightarrow \SV(\N)$ defined by
    \[
    T f = f \ast k_T, \quad f \in \SC(\N),
    \]
    for a distribution $k_T \in \mathcal{S}'(\N)$ of homogeneous of degree $- \hdim$ that is smooth on $\N \setminus \{e_N\}$.
\end{enumerate}
\end{theorem}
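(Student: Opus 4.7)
The strategy for part~(i) is to combine the molecular frame decomposition of Theorem~\ref{thm:molecular_frame} with the general analysis and synthesis bounds for molecular systems from the abstract coorbit theory of~\cite{velthoven2022quasi, romero2021dual}. Let $(\phi_\lambda)_{\lambda \in \Lambda}$ denote the $L^r_w$-molecular dual system provided by Theorem~\ref{thm:molecular_frame}, and write, as in the proof of Lemma~\ref{lem:coorbit}, $Y^{p,q}_{\PTpar,\ord'}$ for either $\PTv{p}{q}{\ord'}$ or $L^{p,q}_{\PTpar,\ord'}$ with $\ord' := \ord + \hdim/2 - \hdim/q$. The abstract theory furnishes a sequence space $Y_d(\Lambda)$ such that the analysis map $f \mapsto (\langle f, \phi_\lambda\rangle_\psi)_{\lambda}$ is bounded from $\Co_\psi(Y^{p,q}_{\PTpar,\ord'})$ into $Y_d(\Lambda)$, while the synthesis map $(c_\lambda) \mapsto \sum_\lambda c_\lambda h_\lambda$ is bounded from $Y_d(\Lambda)$ back into $\Co_\psi(Y^{p,q}_{\PTpar,\ord'})$ for \emph{any} $L^r_w$-molecular system $(h_\lambda)_\lambda$. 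Taking $h_\lambda := T\pi(\lambda)\psi$, which is $L^r_w$-molecular by hypothesis, and setting
\begin{align*}
  Tf := \sum_{\lambda \in \Lambda} \langle f, \phi_\lambda\rangle_\psi \, T\pi(\lambda)\psi, \qquad f \in \TLS(\N) \text{ or } f \in \BS(\N),
\end{align*}
the composition of these two bounded maps yields the operator norm estimate $\|Tf\|_{\TLS} \lesssim \|f\|_{\TLS}$ (and analogously for $\BS(\N)$). Consistency of this definition with the original $L^2$-action of $T$ on the dense subspace $L^2(\N) \cap \TLS(\N)$ (cf.~Corollary~\ref{cor:dense}) follows by hitting the $L^2$-convergent molecular expansion of such $f$ with the $L^2$-continuous operator $T$ term by term.

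For part~(ii), the task reduces to verifying the two hypotheses of~(i) for $Tf = f \ast k_T$. The $L^2$-boundedness is classical: since $k_T$ is homogeneous of degree $-\hdim$ and smooth on $\N \setminus \{e_\N\}$, its being a well-defined tempered distribution forces the cancellation condition on the unit sphere, whence $T$ is a Knapp-Stein type singular integral bounded on $L^2(\N)$, cf.~\cite[Chapter~6]{FS}. To verify that $(T\pi(\lambda)\psi)_{\lambda \in \Lambda}$ is $L^r_w$-molecular, we first note that $T$ commutes with the quasi-regular representation. Left-invariance of convolution gives $T \circ \pi(x,1) = \pi(x,1) \circ T$, while for the dilation action $D_s = \pi(e_\N, s)$, a direct change of variables combined with the homogeneity identity $k_T(\delta_s u) = s^{-\hdim} k_T(u)$ yields $T D_s = D_s T$. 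Consequently $T\pi(\lambda)\psi = \pi(\lambda)(T\psi)$ for all $\lambda \in \NA$, and since $T\psi \in \SV(\N)$ by hypothesis, Proposition~\ref{prop:better_vector} ensures that $V_\psi(T\psi) \in W(L^r_w)$. The identity
\begin{align*}
  |V_\psi(T\pi(\lambda)\psi)(g)| = |V_\psi(T\psi)(\lambda^{-1} g)|, \qquad g \in \NA, \; \lambda \in \Lambda,
\end{align*}
then exhibits $(T\pi(\lambda)\psi)_{\lambda \in \Lambda}$ as an $L^r_w$-molecular system, and an application of~(i) gives the conclusion.

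The principal obstacle I foresee lies in the well-definedness of the abstract extension of $T$ in part~(i): one must verify that the sum defining $Tf$ converges in $\SV'(\N)$, is independent of the choice of $\psi$ and $\Lambda$, and is consistent with the prescribed $T$ on the dense subspace $L^2(\N) \cap \TLS(\N)$. This should be handled by testing against $\varphi \in \SV(\N)$ via the extended pairing of Definition~\ref{def:extendedpairing} and exploiting that the abstract coorbit framework of~\cite{velthoven2022quasi} already guarantees weak-$*$ unconditional convergence of molecular expansions in the reservoir $\mathcal{R}_{w,\psi}$, which by Lemma~\ref{lem:coorbit} restricts correctly to $\SV(\N)$.
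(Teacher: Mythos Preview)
Your proof is correct and follows essentially the same route as the paper. For part~(i), the paper simply invokes \cite[Theorem~7.1 and Remark~7.2]{velthoven2022quasi} directly via the coorbit identification of Lemma~\ref{lem:coorbit}, whereas you unpack the mechanism (analysis into $Y_d(\Lambda)$ followed by molecular synthesis); these amount to the same argument. For part~(ii), your reasoning matches the paper's almost verbatim: establish $L^2$-boundedness by citing a standard result (the paper uses \cite[Theorem~3.2.30]{FR} rather than \cite{FS}), observe that $T$ commutes with $\pi$ so that $T\pi(\lambda)\psi = \pi(\lambda)T\psi$, and conclude molecularity from $T\psi \in \SV(\N)$ via Proposition~\ref{prop:better_vector}.
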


\begin{proof} 
(i) Due to the bijection between Besov and Triebel-Lizorkin spaces and their corresponding coorbit spaces shown in Lemma~\ref{lem:coorbit}, the claimed continuity follows directly from \cite[Theorem 7.1]{velthoven2022quasi} and \cite[Remark 7.2]{velthoven2022quasi}.
\\~\\
(ii)  
Under the assumptions, an application of \cite[Theorem 3.2.30]{FR} yields that $T$ extends to a continuous operator on $L^2(\N)$. In addition, since the kernel $k_T$ of $T$ is homogeneous of degree $-\hdim$, the operator $T$ is homogeneous of degree $0$ (cf.~\cite[Lemma 3.2.7]{FR}), and thus it commutes with dilations. Clearly, $T$ is also translation-invariant. 

Let $(\pi(\lambda) \psi)_{\lambda \in \Lambda}$ be a frame as in the statement.
 We observe that
\begin{align*}
	(V_{\psi} T\pi(\lambda) \psi )(g) &= \langle T \pi(\lambda) \psi, \pi(g) \psi \rangle = \langle \pi(\lambda) T \psi, \pi(g) \psi \rangle = \langle T \psi, \pi(\lambda^{-1}g) \psi \rangle \\
    &= (V_\psi T \psi)(\lambda^{-1}g)
\end{align*}
for all $g \in G$ and $\lambda \in \Lambda$. Since $T \psi \in \SV(\N)$ by assumption on $T$, the wavelet transform $V_\psi T \psi$ lies in $W(L^r_w)$ by Lemma \ref{prop:better_vector}. In combination, this shows that the family $(T \pi(\lambda) \psi)_{\lambda \in \Lambda}$ forms an $L^r_w$-molecular system, and thus (i) applies. This completes the proof of the theorem.
\end{proof}

\begin{example} \label{ex:Riesz_transform_graded}
Let $\N$ be a graded Lie group of homogeneous degree $\hdim$ and let $\RLO$ be a homogeneous positive Rockland operator on $\N$ of degree $\hdeg \in \NN$. Consider
the left-invariant Riesz transforms $T_\alpha := X^\alpha \RLO^{-\frac{[\alpha]}{\hdeg}}$ for arbitrary nonvanishing $\alpha \in \NN_0^\dimN$.
Using the identity
\[
\lambda^{-\frac{[\alpha]}{\nu}} = \frac{1}{\Gamma(\frac{[\alpha]}{\nu})}\int_0^\infty t^{\frac{[\alpha]}{\nu}}e^{-t\lambda}\; \frac{dt}{t}, \quad \lambda >0,
\]
we can write 
\begin{equation*} 
T_\alpha f= X^\alpha \RLO^{-\frac{[\alpha]}{\hdeg}} f= \frac{1}{\Gamma(\frac{[\alpha]}{\nu})}\int_0^\infty  t^{\frac{[\alpha]}{\nu}}X^\alpha e^{-t\RLO}f\; \frac{dt}{t} 
\end{equation*}
for all $f\in \SV(\N)$, where we use that $\SV(\N) \subseteq \Dom(\RLO^{-\frac{[\alpha]}{\hdeg}})$, cf.  Lemma~\ref{lem:domain_of_Rps}.
Hence, the distributional convolution kernel $k_{T_\alpha}$ of $T_\alpha$ has the expression
\begin{equation} \label{eq:expression_of_kernel_of_T_alpha}
k_{T_\alpha} = \frac{1}{\Gamma(\frac{[\alpha]}{\nu})}\int_0^\infty  t^{\frac{[\alpha]}{\nu}}X^\alpha \big( h(\cdot, t)\big)\; \frac{dt}{t},
\end{equation}
where $h : \N \times (0,\infty) \to \mathbb{C}$ denotes the heat kernel associated with $\RLO$, see, e.g., \cite[Theorem 4.2.7]{FR}. Using the fact that $h(x,t) = t^{-\frac{\hdim}{\nu}}h (t^{-\frac{1}{\nu}}x,1)$,
we may rewrite \eqref{eq:expression_of_kernel_of_T_alpha} as 
\begin{align} \label{eq:expression_of_kernel_of_T_alpha_2}
k_{T_\alpha} = \frac{1}{\Gamma(\frac{[\alpha]}{\nu})}\int_0^\infty t^{-\frac{\hdim}{\nu}} (X^\alpha \phi)(t^{-\frac{1}{\nu}}\cdot)\; \frac{dt}{t} &=
\frac{\nu}{\Gamma(\frac{[\alpha]}{\nu})}\int_0^\infty s^{-\hdim} (X^\alpha \phi)(s^{-1}\cdot)\; \frac{ds}{s} \nonumber \\
&= \frac{\nu}{\Gamma(\frac{[\alpha]}{\nu})}\int_0^\infty (X^\alpha \phi)_s\; \frac{ds}{s},
\end{align}
where $\phi := h(\cdot, 1)$ is 
a Schwartz function on $\N$ (cf. \cite[Corollary 4.2.17]{FR}). 
Since $X^\alpha \phi \in \SC(\N)$ and $\int_\N (X^\alpha \phi)(x)d\mu_\N (x)=0$ by integration by parts, it follows from 
\cite[Theorem 1.65]{FS} that the last integral in \eqref{eq:expression_of_kernel_of_T_alpha_2} converges in $\mathcal{S}'(\N)$ to a distribution which is smooth away from the group identity $e_\N$ and homogeneous of degree $-\hdim$.
Moreover, a straight-forward argument shows that $T_\alpha$ maps $\SV(\N)$ continuously into itself (cf., e.g., Lemma~\ref{lem:domain_of_Rps}~(i)). Hence, each $T_\alpha$ satisfies the conditions of Theorem~\ref{thm:continuity_BTL}~(ii) and therefore it is bounded on $\BS(\N)$ and $\TLS(\N)$ for the full range of the parameters $p,q \in (0,\infty]$ and $\sigma \in \mathbb{R}$.
\end{example}

\begin{example} \label{ex:Riesz_transform_hom}
For an arbitrary homogeneous group $\N$ and an operator $\P$ defined by \eqref{eq:def_P}, up to a choice of homogeneous quasi-norm on $\N$, we consider the operators $T_\alpha := X^\alpha \P^{-[\alpha]}$, $\alpha \in \mathbb{N}_0^\dimN$.
For an arbitrary but fixed $\alpha \in \mathbb{N}_0^\dimN$ we choose a real-valued function $m \in C_c^\infty (\mathbb{R}^+)$ such that
$C_m := \int_0^\infty t^{[\alpha]}m(t) \;\frac{dt}{t} \neq 0$. Then a simple change of variables gives
\begin{align*}
\lambda^{-[\alpha]}=C_m^{-1} \int_0^\infty t^{[\alpha]} m(t\lambda) \;\frac{dt}{t}, \quad \lambda \in \mathbb{R}^+.
\end{align*} 
By functional calculus,
\[
\P^{-[\alpha]} f=  C_m^{-1} \int_0^\infty t^{[\alpha]} m(t \P)f \;\frac{dt}{t}
\]
for $f \in \Dom(\P^{-[\alpha]})$.
If $k_m$ denotes the convolution kernel of $m(\P)$, then the fact that $k_m \in \SV(\N)$ (cf. the proof of Proposition~\ref{prop:construction_crk}~(ii)) can be used to show that $\SV(\N) \subseteq \Dom(\P^{-[\alpha]})$ and that $\P^{-[\alpha]}$ maps $\SV(\N)$ continuously into itself. (See Lemma~\ref{lem:domain_of_Rps}~(i) for similar arguments in the case of Rockland operators.)
Hence, the operator $X^\alpha \P^{-[\alpha]}$ is continuous on $\SV(\N)$ and can be written as
\[
X^\alpha \P^{-[\alpha]} f =  C_m^{-1} \int_0^\infty t^{[\alpha]}X^\alpha m(t \P)f \;\frac{dt}{t} = C_m^{-1} \int_0^\infty t^{[\alpha]}f * X^\alpha (k_m)_t \;\frac{dt}{t}
\]
for all $f \in \SV(\N)$. Since $X^\alpha k_m \in \SV(\N)$, it follows from \cite[Theorem 1.65]{FS} that the corresponding integral representation of its distributional convolution kernel $k_{T_\alpha}$,
\begin{align*} 
k_{T_\alpha}= C_m^{-1}\int_0^\infty 
 t^{[\alpha]} X^\alpha (k_m)_t \;\frac{dt}{t} 
 = C_m^{-1}\int_0^\infty
\big( X^\alpha k_m\big)_t \;\frac{dt}{t},
\end{align*}
converges in $\mathcal{S}'(\N)$ to a distribution which is smooth away from $e_\N$ and homogeneous of degree $-\hdim$. Thus, $T_\alpha$ satisfies the conditions of Theorem 7.8 (ii) and therefore it is bounded on $\BS(\N)$ and $\TLS(\N)$ for the full range of the parameters $p,q \in (0,\infty]$ and $\sigma \in \mathbb{R}$. 
\end{example}

\section{Identification  with some specific function spaces} \label{sec:identification}
In this section, we identify the Besov and Triebel-Lizorkin spaces defined in this paper with various function spaces studied in the literature before. Among others, we discuss the identification with Hardy spaces on homogeneous groups \cite{FS}, Sobolev spaces on stratified and graded groups \cite{FR2, Folland1975subelliptic} and Lipschitz spaces on stratified groups \cite{Folland1979Lipschitz}.

\subsection{Lebesgue and Hardy spaces on homogeneous groups}
The Hardy spaces considered in this section were introduced and systematically studied by Folland and Stein \cite{FS}. 
Among the various definitions and characterizations,  the definition of Hardy spaces in terms of commutative approximative identities is most convenient for our purposes. 

 A function $\eta \in \mathcal{S}(\N)$ is said to be a \emph{commutative approximate identity} if $\int_\N \eta(x)\; d\mu_N(x) =1$  and 
\[ \eta_s \ast \eta_t =\eta_t \ast \eta_s \] 
hold for all $s, t>0$. The existence of commutative approximate identities on general homogeneous groups was proven in \cite{Dziubanski1992, glowacki1986stable}. Given a commutative approximate identity, 
the associated (radial) maximal function of a tempered distribution $f \in \mathcal{S}'(N)$ is defined by
\begin{align*}
M_\eta f (x) =\sup_{t >0} |f \ast \eta_t (x)|, \quad x \in \N.
\end{align*}
The Hardy space $H^p(N)$, $p \in (0,\infty)$, is then defined as the space of all tempered distributions $f \in \mathcal{S}'(N)$ such that $M_{\eta} f \in L^p (\N)$ and equipped with the quasi-norm $\| f \|_{H^p} := \| M_{\eta} f \|_{L^p}$. This definition of the Hardy space is independent of the choice of commutative approximate identity, with equivalent (quasi-)norms for different choices. Moreover, $H^p (\N)$ is isometric to the Lebesgue space $L^p(\N)$ for $p \in (1,\infty)$.
For more details and results, cf. \cite{FS}.

Hardy spaces can be identified with Triebel-Lizorkin spaces as studied in this paper in the following manner.

\begin{proposition} \label{prop:Identification of Lebesgue and Hardy}
Let $f \in \mathcal{S}'(\N)$ and $p \in (0,\infty)$. 
\begin{enumerate}[\rm (i)]
    \item If $f \in H^p(\N)$, then $[f] =f + \mathcal{P} \in \TLptwo(\N)$, and     
    \[
    \big\|[f]\big\|_{\TLptwo} \lesssim \|f\|_{H^p}.
    \]
    \item If $f \in \mathcal{S}'(\N)$ such that  
    $[f]=f + \mathcal{P} \in \TLptwo(\N)$, then there exists a polynomial $P$ such that  $f-P \in H^p(\N)$, and  
    \[
    \|f-P\|_{H^p}
    \lesssim \big\|[f]\big\|_{\TLptwo}.
    \]
\end{enumerate}
\end{proposition}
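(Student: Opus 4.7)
The plan is to reduce both inequalities to the square-function characterization of $H^p(\N)$ due to Folland--Stein \cite{FS}. Fix $\crk \in \SV(\N)$ satisfying the continuous Calder\'on condition \eqref{eq:continuous_calderon}; then the Folland--Stein theory gives
\[
\|f\|_{H^p} \asymp \biggl \| \biggl ( \int_0^\infty |f \ast \crk_t|^2 \, \frac{dt}{t} \biggr )^{1/2} \biggr \|_{L^p}, \qquad f \in H^p(\N),
\]
while Theorem \ref{thm:cont_char}(i), specialised to $\ord = 0$ and $q = 2$, gives the very same right-hand side as an equivalent quasi-norm on $\F^0_{p,2}(\N)$. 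The proof therefore reduces to transferring finiteness of this common square function between $f \in \mathcal{S}'(\N)$ and $[f] \in \SV'(\N)$.

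For part (i), $f \in H^p(\N)$ lies in $\mathcal{S}'(\N)$, so $[f] = f + \mathcal{P} \in \SV'(\N)$. Since $\crk_t \in \SV(\N)$ annihilates polynomials, $f \ast \crk_t$ depends only on $[f]$ and coincides in both characterisations, and the bound $\|[f]\|_{\F^0_{p,2}} \lesssim \|f\|_{H^p}$ follows at once. For part (ii), given $[f] \in \F^0_{p,2}(\N)$, choose $\drk \in \SV(\N)$ such that the continuous Calder\'on formula $f = \int_0^\infty f \ast \crk_t \ast \drk_t \, dt/t$ holds in $\SV'(\N)$ and define the candidate representative
\[
g := \lim_{N \to \infty} \int_{1/N}^{N} f \ast \crk_t \ast \drk_t \, \frac{dt}{t},
\]
where the limit is taken in $\mathcal{S}'(\N)$. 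For a commutative approximate identity $\eta$, decompose the $t$-integral defining $g \ast \eta_s$ into the regions $t \leq s$ and $t > s$, apply the almost-orthogonality estimate of Lemma \ref{aoe} to $\drk_t \ast \eta_s$, and combine Cauchy--Schwarz with the majorant property of Lemma \ref{lem:majorant} to derive a pointwise bound of the form
\[
M_\eta g(x) \lesssim \mathcal{M}_r \biggl [ \biggl ( \int_0^\infty |f \ast \crk_t|^2 \, \frac{dt}{t} \biggr )^{1/2} \biggr ](x)
\]
for some $r < p$. Taking $L^p$-norms, invoking the boundedness of $\mathcal{M}_r$ on $L^p$ (Lemma \ref{vector-valued HL}) and Theorem \ref{thm:cont_char}(i) then yields $\|g\|_{H^p} \lesssim \|[f]\|_{\F^0_{p,2}}$. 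Since $g$ and $f$ agree modulo $\mathcal{P}$ in $\SV'(\N)$, the difference $P := f - g \in \mathcal{S}'(\N)$ is a polynomial, and $f - P = g \in H^p(\N)$ satisfies the claimed bound.

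The main obstacle lies in constructing the $\mathcal{S}'$-representative $g$ in part (ii). The Calder\'on reproducing identity is guaranteed a priori only in $\SV'(\N)$, so upgrading convergence of the truncated integrals to $\mathcal{S}'(\N)$ and verifying $[g] = [f]$ requires additional quantitative control that exploits not merely $f \in \SV'(\N)$ but the hypothesis that its square function belongs to $L^p$. A clean way to handle this is to first establish the existence and $H^p$-membership of $g$ on the dense subclass $\SV(\N) \subseteq \F^0_{p,2}(\N)$ provided by Corollary \ref{cor:dense}, and then extend to arbitrary $[f] \in \F^0_{p,2}(\N)$ using the Fatou property of $H^p(\N)$ together with the already established estimate $\|g_n\|_{H^p} \lesssim \|[f_n]\|_{\F^0_{p,2}}$ on approximating sequences.
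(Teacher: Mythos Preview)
Your part (i) is essentially the paper's approach: both invoke the $H^p$-to-$L^p$ boundedness of the Littlewood--Paley square function, which the paper re-derives via the vector-valued Calder\'on--Zygmund theory of \cite[Theorem~6.20]{FS} and you simply quote.

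Part (ii) has a genuine gap. You propose to apply Lemma~\ref{aoe} to $\drk_t \ast \eta_s$, but that lemma requires \emph{both} convolution factors to lie in $\SV(\N)$, whereas the commutative approximate identity satisfies $\int_\N \eta = 1$, so $\eta \notin \SV(\N)$. Only a one-sided estimate is available: the vanishing moments of $\drk$ give $|\drk_t \ast \eta_s(y)| \lesssim (t/s)^M s^{-\hdim}(1+s^{-1}|y|)^{-L}$ for $t \le s$, but for $t > s$ you obtain no factor $(s/t)^M$, only $|\drk_t \ast \eta_s(y)| \lesssim t^{-\hdim}(1+t^{-1}|y|)^{-L}$. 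Feeding this into your Cauchy--Schwarz/majorant scheme leaves you with a tail of the form $\int_s^\infty (\crk_t^* f)_a(x)\,\frac{dt}{t}$, which need not be finite, so the claimed pointwise bound $M_\eta g \lesssim \mathcal{M}_r\bigl[(\int_0^\infty |f\ast\crk_t|^2\,dt/t)^{1/2}\bigr]$ does not follow from the outlined steps. Your density-plus-Fatou closing argument is sound, but it does not rescue the core estimate: even on $\SV(\N)$ one still needs $\|f\|_{H^p}\lesssim\|[f]\|_{\F^0_{p,2}}$, which is exactly the missing direction.

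The paper avoids this obstacle by working in vector-valued Hardy spaces. Setting $g(x)(t) := f\ast\crk_t\ast\crk_t(x)$ with values in $\mathcal{H}=L^2((0,\infty),dt/t)$, the paper uses \cite[Lemma~5.4]{Sato} to bound $\sup_{s>0}|f\ast\crk_t\ast\crk_t\ast\eta_s(x)|$ by the Peetre maximal function $(\crk_t^* f)_a(x)$ \emph{for each fixed $t$}; this is where the interaction with $\eta$ is handled, and it requires no cancellation from $\eta$. Hence $\|g\|_{H^p_{\mathcal{H}}} \lesssim \|[f]\|_{\F^0_{p,2}}$ via Theorem~\ref{thm:cont_char}. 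The passage from $g$ back to a scalar representative is then accomplished by the $H^p_{\mathcal{H}}\to H^p$ boundedness of a second vector-valued singular integral $T_{\widetilde{K}}$ (again \cite[Theorem~6.20]{FS}), whose output $T_{\widetilde{K}}g = \int_0^\infty f\ast\crk_t\ast\crk_t\ast\drk_t\,dt/t$ converges in $H^p(\N)$ and agrees with $f$ modulo polynomials by the Calder\'on formula. The key difference is structural: the paper separates the $\sup_s$ (handled per $t$ via Sato's lemma) from the $dt/t$-integration (handled by the CZ operator), whereas your direct approach intertwines them and runs into the divergent tail.
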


\begin{proof} The proof is based on quite classical arguments using vector-valued singular integrals, see, e.g., \cite{Bui, FS, Sato} for similar results. 
Let $\mathcal{H}$ denote the Hilbert space $L^2((0,\infty),dt/t)$. For $p \in (0,\infty)$, let $L^p_\mathcal{H}(\N)$ be the space of $\mathcal{H}$-valued measurable functions $F$ on $\N$ for which
\begin{align*}
\|F\|_{L^p_\mathcal{H}} = \Big\|x \mapsto \big\|t \mapsto F(x)(t) \big\|_\mathcal{H}  \Big\|_{L^p} 
=\left( \int_{\N} \left(\int_0^\infty  |F(x)(t)|^{2} \; \frac{dt}{t}\right)^{p/2}d\mu_\N (x)\right)^{1/p}  
<\infty.
\end{align*}
Similarly, let $H^p_{\mathcal{H}}(\N)$ be the $\mathcal{H}$-valued Hardy spaces over $N$, that is, the space of $\mathcal{H}$-valued tempered distributions $F$
for which
\begin{align*}
\|F\|_{H^p_{\mathcal{H}}}& := 
\left\|x\mapsto \sup_{s >0} \big\| t \mapsto (F\ast \eta_s )(x)(t)\big\|_{\mathcal{H}} \right\|_{L^p}\\
&= \left\|x\mapsto \sup_{s >0} \left( \int_0^\infty\big|(F \ast \eta_s)(x)(t)\big|^2 \; \frac{dt}{t}\right)^{1/2}\right\|_{L^p} < \infty
\end{align*}
for an arbitrary but fixed commutative approximate identity $\eta \in \SC(\N)$.
\\~\\
(i)  Let us fix a function $\crk \in \SV(N)$ satisfying the continuous Calder\'{o}n condition~\eqref{eq:continuous_calderon}. We show that $H^p(N)$ is continuously embedded into $\TLptwo(\N)$. Define 
the continuous linear mapping $K:\mathcal{S}(\N) \rightarrow \mathcal{H}$ by
\begin{align*}
( K, f) (t) =
\int_\N f(x) \phi_t(x)d\mu_\N(x), \quad f \in \mathcal{S}(\N).
\end{align*}
Then $K$ is an $\mathcal{H}$-valued 
tempered distribution, cf. the proof of \cite[Theorem 7.7]{FS}. Moreover,  the proof of \cite[Theorem 7.7]{FS} shows that the associated convolution-type singular integral operator $T_K$, given by
\begin{align*}
T_K f (x)(t) = ( f \ast K )(x) (t) =  f \ast \phi_t (x), \quad  f \in \mathcal{S}(\N),
\end{align*}
extends to a bounded map from $L^2(\N)$ to $L^2_\mathcal{H}(\N)$ and that its kernel $K$ satisfies, for any $\alpha \in \mathbb{N}_0^n$,
\begin{align} \label{regular}
\left\|\widetilde{X}^\alpha  K(x)\right\|_{\mathcal{H}} \leq C_\alpha |x|^{-\hdim -[\alpha]}, \quad x \in \N \setminus \{e\}. 
\end{align}
Therefore, by \cite[Theorem 6.20]{FS}, it follows that $T_K$ extends to a bounded operator from $H^p(\N)$ to $H^p_\mathcal{H}(\N)$. This further implies that $T_K$ is bounded from $H^p(\N)$ to $L^p_\mathcal{H}(\N)$ by a limit argument as in the proof of \cite[Theorem 7.7]{FS}, thus
\begin{align*}
\left\| \left(\int_0^\infty |f \ast \phi_t |^2 \; \frac{dt}{t} \right)^{1/2} \right\|_{L^p} =\|T_K f\|_{L^p_\mathcal{H}}\lesssim \|f\|_{H^p}.
\end{align*}
In view of Theorem \ref{thm:cont_char}, this gives $[f] \in \TLptwo(\N)$ with $\| [f] \|_{\dot{F}^0_{p,2}} \lesssim \|f\|_{H^p}$.
\\~\\
(ii) For any $\psi \in \SV(\N)$ we define the continuous linear functional $\widetilde{K} : \mathcal{S}(\N; \mathcal{H}) \rightarrow \mathbb{C}$ by
\begin{align*}
\langle \widetilde{K}, \varphi\rangle =
 \int_\N \int_0^\infty  \psi_t (x) \big(\varphi(x)(t)\big) \; \frac{dt}{t}d\mu_\N(x), \quad \varphi \in \mathcal{S}(\N;\mathcal{H}).
\end{align*} 
From the proof of \cite[Theorem 7.7]{FS} and taking into account that $\mathscr{B}(\mathcal{H},\mathbb{C}) \cong \mathcal{H}$, we see that $\widetilde{K}$ is a $\mathscr{B}(\mathcal{H},\mathbb{C})$-valued tempered distribution on $\N$ and, similarly 
as \eqref{regular} we have, for any $\alpha \in \mathbb{N}_0^n$,
\begin{align*}
\left\|\widetilde{X}^\alpha  \widetilde{K}(x)\right\|_{\mathscr{B}(\mathcal{H},\mathbb{C})} \leq C_\alpha |x|^{-\hdim -[\alpha]}, \quad x \in \N \setminus \{e\}.
\end{align*}
Therefore, by \cite[Theorem 6.20]{FS}, the operator $T_{\widetilde{K}}$ is bounded from $H^p_\mathcal{H}(\N)$ into $H^p(\N)$ with
\begin{align} \label{vec val bdd}
\|T_{\widetilde{K}}g\|_{H^p} \lesssim \|g\|_{H^p_\mathcal{H}}.
\end{align}
We will show that \eqref{vec val bdd} yields the desired estimate for an appropriate choice of $g \in H^p_\mathcal{H}(\N)$.

Consider the operator $\P$ defined by \eqref{eq:def_P}. Choose some real-valued function $m_1 \in C^\infty_c(\R^+)$ that satisfies the Tauberian condition, i.e., for every $\lambda > 0$ there exists some $t > 0$ such that $m_1(t \lambda) \neq 0$. Then $m := m_1 \cdot m_1\in C^\infty_c(\R^+)$ also satisfies the Tauberian condition. Hence, by the Calder\'{o}n representation theorem (see, e.g., \cite[Thm.~1.1]{Hei}), there exists a real-valued function $m_2 \in C^\infty_c(\R^+)$ such that
\begin{align*}
    \int_0^\infty m(t \lambda) m_2(t \lambda) \; \frac{dt}{t} = 1 \quad \mbox{ for all } \lambda \in \R^+.
\end{align*}
So by Proposition~\ref{prop:construction_crk}, the convolution kernels $\phi$,  $\psi \in \SV(\N)$ of $m_1(\P), m_2(\P)$ satisfy the reproducing formula
\begin{align} \label{cal rep ind}
    f = \int_0^\infty f * \phi_t * \phi_t * \psi_t \; \frac{dt}{t}
\end{align}
for all $f \in \SV'(\N)$ with the right-hand side converging in $\SV'(\N)$.

Let now $f \in \mathcal{S}'(\N)$ such that $[f] = f + \mathcal{P} \in \SV'(\N)$ satisfy $\| [f] \|_{\TLptwo} < \infty$ and define the vector-valued function $g$ by $g(x)(t) = f \ast \phi_t \ast {\phi}_t(x)$. Then
\begin{align} \label{eq:vector_valued_g}
 T_{\widetilde{K}}g = \int_0^\infty f * \phi_t * \phi_t * \psi_t \; \frac{dt}{t}.
\end{align}
Using the simple estimate
\begin{align*}
\sup_{s >0} \left( \int_0^\infty \bigl |f \ast \phi_t \ast {\phi}_t \ast
\eta_s \bigr |^2 \; \frac{dt}{t}\right)^{1/2} \leq \left( \int_0^\infty \sup_{s >0} \bigl |f \ast \phi_t \ast {\phi}_t \ast 
\eta_s \bigr |^2 \; \frac{dt}{t}\right)^{1/2},
\end{align*}
we observe that
\begin{align*}
    \|g\|_{H^p_\mathcal{H}} &= \left\|\sup_{s >0} \left( \int_0^\infty \bigl |f \ast \phi_t \ast {\phi}_t \ast \eta_s \bigr |^2 \; \frac{dt}{t}\right)^{1/2}  \right\|_{L^p} \\
    &\lesssim \left\|\left( \int_0^\infty \sup_{s >0} \bigl |f \ast \phi_t \ast {\phi}_t \ast \eta_s \bigr |^2 \; \frac{dt}{t}\right)^{1/2}  \right\|_{L^p}.
\end{align*}
By \cite[Lemma 5.4]{Sato}, given $a >0$, it follows that
\begin{align*}
\sup_{s >0} \bigl |f \ast \phi_t \ast {\phi}_t \ast \eta_s(x) \bigr |
\lesssim_{\phi, a} (\crk_t^* f)_\PTpar(x), \quad t >0, \ x \in \N.
\end{align*}
Hence, using \eqref{eq:cont_char_1},
\begin{align*}
    \|g\|_{H^p_\mathcal{H}} \lesssim \left\|\left( \int_0^\infty \big[ (\crk_t^* f) _\PTpar \big]^2 \; \frac{dt}{t}\right)^{1/2}  \right\|_{L^p} \lesssim \big\|[f]\big\|_{\dot{F}^0_{p,2}}.
\end{align*}
Combining this with \eqref{eq:vector_valued_g} and \eqref{vec val bdd} gives
\begin{align} \label{eq:Hardy_norm_of_0_infty}
\biggl \| \int_0^{\infty} f \ast \phi_t \ast \phi_t \ast \psi_t\; \frac{dt}{t} \biggr \|_{H^p}
= \| T_{\widetilde{K}}g \|_{H^p} \lesssim \| g \|_{H^p_{\mathcal{H}}} 
   \lesssim \big\|[f]\big\|_{\dot{F}^0_{p,2}}.
\end{align}
In particular, this implies that 
\begin{align} \label{eq:varepsilon_L_Hardy}
\int_\varepsilon^L f \ast \phi_t \ast \phi_t \ast \psi_t\; \frac{dt}{t} 
\end{align}
converges in $H^p(\N)$ to some element $\widetilde{f}$ as $\varepsilon \rightarrow 0$ and $L\rightarrow \infty$. Since $H^p(\N)$ is embedded continuously into $\mathcal{S}'(\N)$ (cf. \cite[Propositon 2.15]{FS}), \eqref{eq:varepsilon_L_Hardy} also converges in $\mathcal{S}'(N)$ to $\widetilde{f}$ as $\varepsilon \rightarrow 0$ and $L\rightarrow \infty$. On the other hand, 
the Calder\'{o}n reproducing formula \eqref{cal rep ind} says that \eqref{eq:varepsilon_L_Hardy} converges in $\mathcal{S}'(\N)/\mathcal{P}$ to $f$. Hence $\widetilde{f} = f-P$ for some polynomial $P$. Therefore, in view of \eqref{eq:Hardy_norm_of_0_infty}, we have $f -P \in H^p(\N)$ and $\|f-P\|_{H^p} \lesssim \big\|[f]\big\|_{\dot{F}^0_{p,2}}$.
\end{proof}

\subsection{Functions of bounded mean oscillation on homogeneous groups}

Following \cite{FS},  the space $\BMO(\N)$  of functions of bounded mean oscillation (BMO) on $\N$ consists of those functions $f \in L^1_{\loc} (\N)$ such that
\begin{align*}
    \| f \|_{\BMO} := \sup_{x \in \N, t > 0} \hspace{5pt} \dashint_{B_t(x)} |f(y) - m_{B_t(x)}f| \, d\mu_\N(y) < \infty,
\end{align*}
where
$
    m_{B_t(x)} f= \dashint_{B_t(x)} f(y) \, d\mu_\N(y)
$
for $x \in \N$ and $t>0$.

We start by recalling some basic facts about $\BMO(\N)$. First, the space $\BMO(\N)$ can be identified with the dual space of the Hardy space $H^1(\N)$, cf. \cite{FS, BownikFolland2007}. 
Second, if $f \in \BMO(\N)$, then, for any $\varepsilon >0$,
\begin{align*}
\int_{\N} \frac{|f(x)|}{(1+|x|)^{\hdim + \varepsilon}} \; d\mu_N (x) < \infty;
\end{align*}
see \cite[Proposition 5.9]{FS}.
As such, any $f \in \BMO(\N)$ can be regarded as a tempered distribution. Actually, the proof of \cite[Proposition 5.9]{FS} gives the following estimate:
\begin{align} \label{eq:local p-integrable}
\int_{\N} \frac{|f(x) - m_{B_1(e_N)}f|}{(1 + |x|)^{\hdim +\varepsilon}} d\mu_N (x) \lesssim \|f\|_{\BMO}.
\end{align}
In addition,  \cite[Corollary 5.8]{FS} shows that the norm equivalence
\begin{align} \label{eq:equivalent bmo norm}
 \|f\|_{\BMO} \asymp \sup_{x \in \N, t > 0} \hspace{5pt} \left(\dashint_{B_t(x)} |f(y) - m_{B_t(x)}f|^p \, d\mu_\N(y)\right)^{1/p}
\end{align}
holds for all $1<p <\infty$.

In the following we will prove that $\BMO(\N)$ can be identified with $\F^0_{\infty, 2}(\N)$  modulo polynomials.

\begin{proposition} \label{prop:identification_BMO}
\begin{enumerate} 
   [\rm (i)] \item If $f \in \BMO(\N)$, then $[f] =f + \mathcal{P} \in \F^0_{\infty,2}(\N)$, and
    \begin{align*} 
    \bigl \| [f] \bigr \|_{\F^0_{\infty,2}} \lesssim \| f \|_{\BMO}.
    \end{align*}
    \item If $f \in \mathcal{S}'(\N)$ such that $[f] = f + \mathcal{P} \in \F^0_{\infty,2}(\N)$, then there exists a polynomial $P$ such that  $f-P \in \BMO(\N)$, and
     \begin{align*}
     \| f-P \|_{\BMO} \lesssim \bigl \|[f] \bigr \|_{\F^0_{\infty,2}}.
    \end{align*}
\end{enumerate}
\end{proposition}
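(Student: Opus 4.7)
The strategy is to transport the problem to the Carleson-measure characterization of $\dot{\mathbf{F}}^{0}_{\infty,2}(\N)$ furnished by Theorem~\ref{thm:cont_char}~(ii) with $q=2$. Fix an admissible real even $\phi \in \SV(\N)$ as provided by Proposition~\ref{prop:construction_crk}~(ii); then
\begin{align*}
\|[f]\|_{\dot{\mathbf{F}}^{0}_{\infty,2}} \asymp \sup_{x \in \N,\, t > 0} \biggl( \dashint_{B_t(x)} \int_0^t |f \ast \phi_\tau(y)|^2 \; \frac{d\tau}{\tau}\, d\mu_\N(y) \biggr)^{1/2},
\end{align*}
and, crucially, $f \ast \phi_\tau$ is unchanged when $f$ is modified by a polynomial, since $\phi$ has all moments vanishing. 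Part (i) then amounts to showing this Carleson seminorm is bounded by $\|f\|_{\BMO}$, while part (ii) amounts to reconstructing a BMO representative from the Carleson measure data.

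\emph{Proof plan for (i).} Let $f \in \BMO(\N)$. By \eqref{eq:local p-integrable}, $f$ defines an element of $\mathcal{S}'(\N)$ and hence $[f] \in \SV'(\N)/\mathcal{P}$. Fix a ball $B = B_t(x)$ and pick $K > 1$ large enough (depending on $\gamma$) so that $|\phi_\tau(z^{-1}y)|$ decays rapidly for $y \in B$, $z \notin B^*:=B_{Kt}(x)$. Using that $\phi_\tau$ integrates to zero, one may replace $f$ by $f - m_{B^*} f$ in the convolution, and split $f - m_{B^*}f = f_1 + f_2$ with $f_1 = (f-m_{B^*}f)\chi_{B^*}$ and $f_2 = (f - m_{B^*}f)\chi_{\N \setminus B^*}$. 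For the local part $f_1$, the $L^2$-isometry (arising exactly as in the admissibility calculation~\eqref{eq:repro} in Section~\ref{sec:wavelet}) gives $\int_\N \int_0^\infty |f_1 \ast \phi_\tau|^2 \frac{d\tau}{\tau} d\mu_\N \asymp \|f_1\|_{L^2}^2$, which by the John--Nirenberg-type estimate \eqref{eq:equivalent bmo norm} with $p=2$ is $\lesssim \mu_\N(B^*) \|f\|_{\BMO}^2$; division by $\mu_\N(B) \asymp \mu_\N(B^*)$ yields the desired bound. For the nonlocal part $f_2$, the Schwartz decay of $\phi$ gives $|\phi_\tau(z^{-1}y)| \lesssim_{M} \tau^{M - Q}|z^{-1}x|^{-M}$ uniformly for $y \in B$, $z \notin B^*$, and summing over dyadic annuli with the standard estimate $|m_{B^*}f - m_{2^j B^*}f| \lesssim j\|f\|_{\BMO}$ yields $|f_2 \ast \phi_\tau(y)| \lesssim (\tau/t)^{M-Q}\|f\|_{\BMO}$; squaring and integrating $d\tau/\tau$ on $(0,t)$ gives the estimate.

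\emph{Proof plan for (ii).} Given $[f]$ with $\|[f]\|_{\dot{\mathbf{F}}^0_{\infty,2}} < \infty$, the measure $d\nu := |f \ast \phi_s(z)|^2 \frac{d\mu_\N(z)\,ds}{s}$ is a Carleson measure of norm $\lesssim \|[f]\|^2_{\dot{\mathbf{F}}^0_{\infty,2}}$. Using the continuous Calder\'on reproducing formula $f = \int_0^\infty f * \phi_s * \phi_s \, \frac{ds}{s}$ in $\SV'(\N)/\mathcal{P}$, define truncations
\begin{align*}
g_{\varepsilon,R}(y) := \int_\varepsilon^R f \ast \phi_s \ast \phi_s(y) \; \frac{ds}{s} \;-\; P_{\varepsilon,R}(y),
\end{align*}
where $P_{\varepsilon,R}$ is the left Taylor polynomial at a fixed reference point $x_0 \in \N$ (say $x_0 = e_\N$) of the tail $\int_1^R f \ast \phi_s \ast \phi_s \, \frac{ds}{s}$, of homogeneous degree chosen large enough to make the tail integral absolutely convergent. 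To estimate the BMO seminorm of $g_{\varepsilon,R}$ uniformly, fix $B = B_t(x)$ with $\varepsilon \le t \le R$; subtracting a constant $c_B$, split into the low-scale piece $\int_\varepsilon^t f \ast \phi_s \ast \phi_s \, \frac{ds}{s}$ and the high-scale piece $\int_t^R [f \ast \phi_s \ast \phi_s - f \ast \phi_s \ast \phi_s(x)] \, \frac{ds}{s} + (\text{polynomial correction})$. For the low-scale piece, rewrite $f \ast \phi_s \ast \phi_s(y) = \int_\N f \ast \phi_s(z) \phi_s(z^{-1}y) \, d\mu_\N(z)$ and estimate the $L^2(B)$-norm via a tent-space duality style inequality: Cauchy--Schwarz in $(z,s)$ against the Carleson measure $d\nu$ yields a bound by $\|[f]\|_{\dot{\mathbf{F}}^0_{\infty,2}} \cdot (\mu_\N(B))^{1/2}$, which divides out correctly. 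For the high-scale piece, apply the left Taylor inequality of Lemma~\ref{lem:taylor inequality} to the smooth function $z \mapsto f \ast \phi_s \ast \phi_s(z)$ whose derivatives obey $|X^\alpha(f \ast \phi_s \ast \phi_s)(w)| \lesssim s^{-[\alpha]}(\phi_s^{**}f)_a(x)$ by Lemma~\ref{aoe}, together with the Peetre-maximal characterization of $\dot{\mathbf{F}}^0_{\infty,2}$ from Theorem~\ref{thm:cont_char}~(ii). Passing to the limit $\varepsilon \to 0$, $R \to \infty$ produces $g \in \BMO(\N)$ with $\|g\|_{\BMO} \lesssim \|[f]\|_{\dot{\mathbf{F}}^0_{\infty,2}}$, and the convergence of $g_{\varepsilon,R} \to f$ modulo polynomials in $\SV'(\N)/\mathcal{P}$ forces $f - g \in \mathcal{P}$.

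The main obstacle is part (ii), and in particular the uniform BMO estimates on $g_{\varepsilon, R}$. A naive Cauchy--Schwarz in $s$ on the low-scale piece would introduce a $\log(t/\varepsilon)$ loss; one must instead pair against the Carleson measure $d\nu$ in the manner of Coifman--Meyer--Stein tent-space duality. The high-scale piece has a similar delicate point: without the Taylor correction, the tail integral $\int_t^R f \ast \phi_s \ast \phi_s(y) \frac{ds}{s}$ is not absolutely convergent as $R \to \infty$, so one must choose the homogeneous degree of $P_{\varepsilon,R}$ large enough (relative to $Q$) that the Taylor remainder is integrable against $ds/s$ at infinity, while being small enough on $B$ that the quantitative BMO bound survives.
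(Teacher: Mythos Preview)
For part~(i) your plan coincides with the paper's: the same local/nonlocal decomposition $f - m_{B^*}f = f_1 + f_2$, the $L^2$-square-function bound on $f_1$ via \eqref{eq:equivalent bmo norm}, and the pointwise decay estimate for $f_2\ast\phi_\tau$.

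For part~(ii) the paper takes a different route. Rather than building a BMO representative by truncated reproducing formulae, it invokes the duality $(H^1(\N))^* \cong \BMO(\N)$ directly: for $g\in\SV(\N)$ (dense in $H^1$) it writes $([f],g)=\int_0^\infty\int_\N (f\ast\phi_t)(g\ast\phi_t)\,d\mu_\N\,\tfrac{dt}{t}$ and bounds this by $\|[f]\|_{\F^0_{\infty,2}}\|g\|_{H^1}$ via a stopping-time argument in the spirit of \cite[Ch.~IV, \S4.4]{S}, pairing the truncated Lusin function $S_\phi^{s(x)}f$ against $S_\phi g$ and using the $H^1\to L^1$ bound for $S_\phi$ from \cite[Theorem~7.8]{FS}. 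The functional $[f]$ then extends to $H^1$ and is represented by some $f'\in\BMO$, whence $f-f'\in\mathcal{P}$.

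Your constructive approach can be made to work, but two points deserve care. First, the low-scale $L^2(B)$-estimate does not follow from a bare Cauchy--Schwarz in $(z,s)$, since the $z$-integral runs over all of $\N$; one needs a near/far split in $z$, using the Carleson bound on $z\in B^*$ and the uniform bound $|f\ast\phi_s|\lesssim\|[f]\|_{\B^0_{\infty,\infty}}$ plus kernel decay for $z\notin B^*$. Second, the Taylor correction $P_{\varepsilon,R}$ should have degree~$0$ (a constant): a higher-degree polynomial would itself have unbounded BMO seminorm, and the constant already suffices to make the tail converge pointwise (with logarithmic growth in $|y|$, which is harmless for BMO). The paper's duality argument is shorter and sidesteps the convergence-of-truncations analysis; your approach has the virtue of making the BMO representative more explicit.
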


\begin{proof}
Our line of arguments will closely follow the proof of \cite[Ch.~IV~\S~4~Theorem.~3]{S} on Euclidean spaces. 
Throughout, let $\phi \in \SV(\N)$ be a function satisfying the continuous Calder\'on condition $f = \int_0^{\infty} f \ast \phi_t \ast \phi_t \; \frac{dt}{t}$ for $f \in \SV'(\N)$.
\\~\\
(i) Let $f \in \BMO(\N)$ and let $B := B_t(x)$ and $B^* := B_{2\gamma t}(x)$ for some fixed $x \in \N$ and $t \in (0, \infty)$. Then write $f = f_1 + f_2 + f_3$, where $f_1 := (f - m_{B^*}f)\chi_{B^*}$, $f_2 := (f - m_{B^*}f)\chi_{N \setminus B^*}$ and $f_3 := m_{B^*}f$. Since $\crk \in \SV(\N)$, we have 
\begin{align*}
f \ast \crk_\tau  = f_1 \ast \crk_\tau + f_2 \ast \crk_\tau.
\end{align*}
Using the $L^2(\N)$-continuity of the operator
\begin{equation*}
f_1 \mapsto \left(\int_0^\infty |f_1 \ast \phi_\tau|^2\; \frac{d\tau}{\tau}  \right)^{1/2}
\end{equation*}
cf. \cite[Theorem 7.7]{FS}, together with the norm equivalence \eqref{eq:equivalent bmo norm}, we get
\begin{align*}
    \int_{B_t(x)} \int_0^t |f_1 \ast \phi_\tau (y) |^2 \; \frac{d\tau}{\tau}d\mu_\N (y) &\leq \bigg\|\left(\int_0^\infty |f_1 \ast \phi_\tau|^2 \; \frac{d\tau}{\tau}  \right)^{1/2} \bigg\|_{L^2}^2 \lesssim \|f_1\|_{L^2(\N)}^2 \\
    &= \int_{B^*} |f(y) -m_{B^*} f|^2 \, d\mu_\N(y) \lesssim \mu_\N (B) \|f\|_{\BMO}^2
\end{align*}
for $x \in \N$ and $t > 0$.
Hence, by the norm equivalence \eqref{eq:cont_char_2_q}, this implies that
\begin{align}  \label{eq:BMO_estimate_f1}
    \bigl \| [f_1] \bigr \|_{\F^0_{\infty,2}} \asymp \sup_{x \in \N, t > 0} \biggl( \dashint_{B_t(x)} \int_0^t \bigl |f_1 * \crk_{\tau} (y) \bigr|^2 \; \frac{d\tau}{\tau}  d\mu_\N(y) \biggr)^{1/2} \lesssim \| f \|_{\BMO}.
\end{align}

To treat $f_2$, we first notice that because of $\crk \in \SC(\N)$,
\begin{align*}
    |\crk_{\tau}(z^{-1}y)| \lesssim \tau^{-\hdim} (1 + \tau^{-1}|z^{-1}y|)^{-(\hdim + 1)}
\end{align*}
for all $y \in \N$, $z \in \N$ and $\tau > 0$. Combining this with  the fact that 
\begin{align*}
    \tau + |z^{-1}y| \asymp 2 \gamma t + |z^{-1}x|
\end{align*}
for all $y \in B_t (x)$, $z \in \N \setminus B_{2\gamma t} (x)$ and $\tau \in (0,t)$, we get 
\begin{align*} 
    |f_2 \ast \crk_\tau (y)|
    &\lesssim \int_{\N \setminus B^*} \frac{|f(z) - m_{B^*}f| \tau^{-\hdim}}{(1 + \tau^{-1}|z^{-1}y|)^{\hdim + 1}} \; d\mu_\N(z) \\
    &
    \lesssim \frac{\tau}{t} \int_{\N \setminus B^*} \frac{|f(z) - m_{B^*}f| t^{-\hdim}}{(1 + (2 \gamma t)^{-1}|z^{-1}x|)^{\hdim + 1}} \, d\mu_\N(z) \\
    &\leq \frac{\tau}{t} \int_{\N} \frac{|f\bigl ( x \delta_{2 \gamma t}(z) \bigr ) - m_{B^*}f|}{(1 + |z|)^{\hdim + 1}} \; d\mu_\N(z),\numberthis \label{eq:f2 phi tau}
\end{align*}
where we have applied the change of variable $z \mapsto \delta_{(2 \gamma t)^{-1}}(x^{-1}z)$. So, taking into account that for $\widetilde{f}(z) := f\bigl ( x \delta_{2 \gamma t}(z) \bigr )$ the mean over the unit ball at the origin is given by
\begin{align*}
    m_{B_1(e_N)}\widetilde{f} &= \frac{1}{\mu_\N \bigl ( B_1(e_N) \bigr )} \int_{B_1(e_N)} \widetilde{f}(z) \, d\mu_\N(z) = \frac{(2 \gamma t)^{-\hdim}}{\mu_\N \bigl ( B_1(e_N) \bigr )} \int_{B_{2 \gamma t}(x)} f(z) \, d\mu_\N(z) = m_{B^*}f,
\end{align*}
we get 
\begin{align*}
    |f_2 \ast \crk_\tau (y)| \lesssim \frac{\tau}{t} \int_{\N } \frac{|f\bigl ( x \delta_{2 \gamma t}(z) \bigr ) - m_{B^*}f|}{(1 + |z|)^{\hdim + 1}} \; d\mu_\N(z) = \frac{\tau}{t} \int_{\N} \frac{|\widetilde{f}(z) - m_{B_1(e)}\widetilde{f}|}{(1 + |z|)^{\hdim + 1}} \, d\mu_\N(z).
\end{align*}
Since the $\BMO(\N)$-norm is invariant under translations and dilations, we deduce from \eqref{eq:local p-integrable} that
\begin{align*}
   |f_2 \ast \crk_\tau (y)| \lesssim \frac{\tau}{t} \int_{\N} \frac{|\widetilde{f}(z) - m_{B_1(e)}\widetilde{f}|}{(1 + |z|)^{\hdim + 1}} \, d\mu_\N(z) \lesssim \frac{\tau}{t} \| f \|_{\BMO}.
\end{align*}
Using the norm equivalence \eqref{eq:cont_char_2_q}, we thus obtain
\begin{align*} 
    \bigl \| [f_2] \bigr \|_{\F^0_{\infty,2}}^2 &\asymp \sup_{x \in \N, t > 0}  \dashint_{B_t(x)} \int_0^t \bigl |f_2 * \crk_{\tau} (y) \bigr|^2 \frac{d\tau}{\tau} \; d\mu_\N(y) \\
    &\lesssim \|f\|_{\BMO}^2 \sup_{t > 0} \int_0^t \frac{\tau^2}{t^2} \frac{d\tau}{\tau} \asymp \|f\|_{\BMO}^2. \numberthis \label{eq:BMO_estimate_f2}
\end{align*}
Combining \eqref{eq:BMO_estimate_f1} and \eqref{eq:BMO_estimate_f2} gives
\begin{align*}
    \bigl \| [f] \bigr \|_{\F^0_{\infty,2}} \leq \bigl \| [f_1] \bigr \|_{\F^0_{\infty,2}} + \bigl \| [f_2] \bigr \|_{\F^0_{\infty,2}} \lesssim \|f\|_{\BMO},
\end{align*}
which completes the proof of (i).
\\~\\
(ii) Let $f \in \mathcal{S}'(\N)$ such that $[f] = f + \mathcal{P} \in \dot{F}^0_{\infty,2}(\N)$. Let $g$ be an element of the space $\SV(\N)$, which is dense in $H^1(\N)$ by a combination of Corollary~\ref{cor:dense} and Proposition~\ref{prop:Identification of Lebesgue and Hardy}. Since $f * g^\vee = [f] * g^\vee \in C^\infty(\N)$, we can use the Calder\'{o}n reproducing formula \eqref{eq:continuous_calderon} to express
\begin{align*}
    ([f], g) &= f * g^\vee (e_N) = \int_0^\infty f * \crk_t * \crk_t * g^\vee (e_N)\frac{dt}{t} \\
 &= \int_0^\infty \int_\N f * \crk_t(x) \crk_t * g^\vee(x^{-1}) \, d\mu_\N(x) \, \frac{dt}{t} \\
    &= \int_0^\infty \int_\N f * \crk_t(x)  g * \crk^\vee_t(x) \, d\mu_\N(x) \, \frac{dt}{t} \\
    &= \int_0^\infty \int_\N f * \crk_t(x)  g * \crk_t(x) \, d\mu_\N(x) \, \frac{dt}{t}. \numberthis \label{eq:aux_repr_id_BMO_ii}
\end{align*}

In several steps we now derive the continuity estimates that prove (ii). Of crucial use will be the Lusin function
\begin{align*}
    S_\crk g(x) := \biggl ( \int_0^\infty \int_{|x^{-1}y| < t} |g * \crk_t(y)|^2 \, d\mu_\N(y) \, \frac{dt}{t^{\hdim + 1}} \biggr )^{1/2},
\end{align*}
and the continuity of the operator $g \mapsto S_{\phi} g$ from $H^1(\N)$ into $L^1(\N)$, cf. \cite[Theorem 7.8]{FS}.
Let us fix some further auxiliary notions that will prove useful in this context. For arbitrary $x \in \N$ and $s > 0$, let
\begin{align*}
    \Gamma^s(x) := \{ (y, t) \in \N \times (0, \infty) : |x^{-1}y| < t, t < s \}
\end{align*}
be the cone at $x$ truncated at height $s$, and set
\begin{align*}
    S_\crk^s g(x) := \biggl ( \iint_{\Gamma^s(x)} |g * \crk_t(y)|^2 \, d\mu_\N(y) \, \frac{dt}{t^{\hdim + 1}} \biggr )^{1/2}.
\end{align*}
We set
\begin{align*}
    T_\crk f(x) := \sup_{t > 0} \biggl ( \dashint_{B_t(x)} \int_0^t \bigl | f * \crk_\tau(y) \bigr |^2 \frac{d\tau}{\tau} \; d\mu_\N(y) \biggr )^{1/2},
\end{align*}
noting that $\| T_\crk f \|_{L^\infty} = \| [f] \|_{\F^0_{\infty, 2}} < \infty$, and define the ``stopping time'' $s(x)$ for $x \in \N$ by
\begin{align*}
    s(x) := \sup \bigl \{ s > 0 : S_\crk^s f(x) < A \, T_\crk f(x) \bigr \}
\end{align*}
for a constant $A = A(\hdim) > 0$ that shall be fixed soon.

Now, for arbitrary but fixed $y \in \N, t > 0$ we observe that
\begin{align*}
    \bigcup_{z \in B_t(y)} \Gamma^t(z) \subseteq B_{3 \gamma t}(y) \times (0, 3 \gamma t),
\end{align*}
so an application of Fubini's theorem gives
\begin{align*}
    \int_{B_t(y)} S_\crk^t f(x)^2 \, d\mu_\N(x) \leq \mu_\N(B_1(e_N)) \int_{B_{3 \gamma t}(y)} \int_0^{3 \gamma t} |f * \crk_t(y)|^2 \, \frac{dt}{t} \, d\mu_\N(y),
\end{align*}
and hence 
\begin{align*}
    \dashint_{B_t(y)} S_\crk^t f(x)^2 \, d\mu_\N(x)
    &\leq \frac{(3 \gamma)^\hdim \mu_\N(B_1(e_N))}{\mu_\N(B_{3 \gamma t}(y))} \int_{B_{3 \gamma t}(y)} \int_0^{3 \gamma t} |f * \crk_t(y)|^2 \, \frac{dt}{t} \, d\mu_\N(y) \\
    & \leq (3 \gamma)^\hdim \mu_\N(B_1(e_N)) \inf_{x \in B_t(y)} T_\crk f(x).
\end{align*}
If we now choose $A > \mu_\N(B_1(e))$, then the above implies that for
\begin{align*}
    c = c(A) =  \mu_\N(B_1(e_N)) \Bigl ( 1 - (3 \gamma)^\hdim \frac{\mu_\N(B_1(e_N))}{A^2} \Bigr )
\end{align*}
we have
\begin{align*}
    c t^\hdim \leq \mu_\N \bigl ( \{ x \in B_t(y) : s(x) \geq t \} \bigr ) \leq \mu_\N \bigl ( B_t(y) \bigr ) = \mu_\N(B_1(e_N)) t^\hdim.
\end{align*}

Using the above estimate and Fubini's theorem to rewrite the order of integration, this allows us to estimate
\begin{align} \label{eq:aux_est_BMO_ii}
    \int_0^\infty \int_\N H(y, t) t^\hdim \, d\mu_\N(y)  dt \leq c^{-1} \int_\N \bigg( \iint_{\Gamma^{s(x)}(x)} H(y, t) \, d\mu_\N(y)  dt \bigg) \; d\mu_\N(x)
\end{align}
for any nonnegative measurable function $H: \N \times (0, \infty) \to \mathbb{C}$. For the particular choice $H(y, t) := |f * \crk_t(x) g * \crk_t(x)| t^{- (\hdim + 1)}$, the identity \eqref{eq:aux_repr_id_BMO_ii} and estimate \eqref{eq:aux_est_BMO_ii}, together with an application of the Cauchy-Schwarz inequality, give
\begin{align*}
    |([f], g)| &\leq  \int_0^\infty \int_\N |f * \crk_t(x)  g * \crk_t(x)| \, d\mu_\N(x) \, \frac{dt}{t}  \\
    &\leq c^{-1} \int_\N S_\crk^{s(x)} f(x) S_\crk^{s(x)} g(x) \, d\mu_\N(x) \\
    &\leq c^{-1} A \int_\N T_\crk f(x) S_\crk g(x) \, d\mu_\N(x) \\
    &\leq c^{-1} A \| T_\crk f \|_{L^\infty} \| S_\crk g \|_{L^1} \\
    &\lesssim \| [f] \|_{\F^0_{\infty, 2}}  \| g \|_{H^1}.
\end{align*}
By the denseness of $\SV(\N)$ in $H^1(\N)$, we conclude that $[f]$ extends to a continuous linear functional on $H^1(\N)$ and because of $H^1(\N)^* \cong \BMO(\N)$ this functional can be identified with an element of $f' \in \BMO(\N)$. Since $[f] \in \TD/\mathcal{P}$, this implies the existence of a uniquely determined polynomial $P \in \mathcal{P}$ such that $f' = f - P \in \BMO(\N)$ and
\begin{align*}
    \| f - P \|_{\BMO} \lesssim \| [f] \|_{\F^{0}_{\infty, 2}}.
\end{align*}
This shows (ii) and thus completes the proof of the proposition.
\end{proof}

\subsection{Homogeneous Sobolev spaces on graded  groups}
In this subsection, we discuss the (homogeneous) Sobolev spaces on stratified and graded groups as introduced by Folland \cite{Folland1975subelliptic} and Fischer and Ruzhansky \cite{FR2}, respectively. 
Let $\N$ be a graded Lie group and let $\RLO$ be a positive Rockland operator on $\N$ of homogeneous degree $\nu$.  When $\N$ is stratified and $\RLO$ is a sub-Laplacian on $\N$, the inhomogeneous Sobolev spaces were introduced and studied by Folland \cite{Folland1975subelliptic}, in terms of the fractional powers of the sub-Laplacian. The results of \cite{Folland1975subelliptic} were extended to the setting of general graded Lie groups by Fischer and Ruzhansky \cite{FR2}.                  

We recall the definition of homogeneous Sobolev spaces on the graded Lie group $\N$. For $p \in [1,\infty)$, let $\RLO_p$ be the infinitesimal operator of the heat semigroup $\{e^{-t\RLO}\}_{t >0}$ on the Banach spaces $L^p(\N)$. For each $\ord \in \mathbb{R}$, let $\RLO_p^\ord$ be the fractional power of $\RLO_p$, which is defined as the power of a Komatsu-non-negative operator; cf. \cite[Section 4.3]{FR}. For $p =2$, the fractional power $\RLO_2^
\ord$ coincides with the fractional power $\RLO^\ord$ 
defined by the functional calculus
\begin{align}\label{eq:fractional_power_functional_calculus}
\RLO^\ord := \int_0^\infty \lambda^\ord dE_\RLO(\lambda), 
\end{align}
where $E_\RLO$ is the spectral measure associated with $\RLO$. In \cite{FR2,FR}, given $p \in (1,\infty)$ and $\ord \in \mathbb{R}$, the homogeneous Sobolev space $\dot{L}^p_{\ord, \RLO} (\N)$ is defined as the completion of $\mathcal{S}(\N) \cap \Dom (\RLO_p^{\frac{\ord}{\hdeg}})$ with respect to the norm
\begin{align*}
\|f\|_{\dot{L}^p_{\ord, \RLO}} :=\bigl \| \RLO_p^{\frac{\ord}{\hdeg}}f \bigr \|_{L^p} , \quad f \in \mathcal{S}(\N) \cap \Dom(\RLO_p^{\frac{\ord}{\hdeg}}).
\end{align*}
Then $\mathcal{S}(\N) \cap \Dom(\RLO_p^{\frac{\ord}{\hdeg}}) \subseteq \dot{L}^p_{\ord, \RLO}(\N) \subseteq \mathcal{S}'(\N)$ and $\dot{L}^p_{\ord, \RLO}(\N)$ is a Banach space with respect to the norm $\| \cdot \|_{\dot{L}^p_\ord}$, cf. \cite[Proposition 4.4.13]{FR}.

Our main purpose in this subsection is to show the following identification.

\begin{proposition} \label{Prop:identification of homogeneous Sobolev}
Let $\N$ be a graded Lie group and $\RLO$ a Rockland operator on $\N$. Then, for $p \in (1,\infty)$ and $\ord \in \mathbb{R}$,
the space $\dot{L}^p_{\ord, \RLO} (\N)$ is isomorphic to $\F^\ord_{p,2}(\N)$.   
Consequently, the space $\dot{L}^p_{\ord, \RLO} (\N)$ is independent of the choice of Rockland operator $\RLO$.
\end{proposition}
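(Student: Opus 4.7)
The plan is to combine the spectral-multiplier characterization of $\F^\sigma_{p,2}(\N)$ from Corollary~\ref{cor:spectral_multiplier} with a classical $L^p$-Littlewood--Paley square function equivalence for the positive Rockland operator $\RLO^{1/\nu}$. Both ingredients express a norm as an $L^p$-norm of an $\ell^2$-valued spectral-multiplier expression, and a single change of variable on the symbol converts one into the other.

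More concretely, I would fix $\eta \in C_c^\infty(\mathbb{R}^+)$ with $\supp \eta \subseteq [1/2,2]$ and $|\eta(\lambda)| \ge c > 0$ on $[3/5,5/3]$, and set $m(\lambda) := \lambda^{\sigma}\eta(\lambda)$. Then $m$ again meets the hypotheses of Corollary~\ref{cor:spectral_multiplier}, so its graded-group version (with $\RLO^{1/\nu}$ in place of $\P$) yields, for every $f \in \SV'(\N)$,
\[
\|f\|_{\F^{\sigma}_{p,2}} \asymp \Bigl\|\Bigl(\sum_{j\in\Z} |2^{j\sigma} m(2^{-j}\RLO^{1/\nu})f|^{2}\Bigr)^{1/2}\Bigr\|_{L^{p}}.
\]
By functional calculus, $2^{j\sigma} m(2^{-j}\RLO^{1/\nu}) = \RLO^{\sigma/\nu}\eta(2^{-j}\RLO^{1/\nu})$ on $\Dom(\RLO^{\sigma/\nu})$, so for every $f \in \SV(\N) \subseteq \Dom(\RLO_p^{\sigma/\nu})$ one has
\[
\|f\|_{\F^{\sigma}_{p,2}} \asymp \Bigl\|\Bigl(\sum_{j\in\Z} |\eta(2^{-j}\RLO^{1/\nu})\RLO^{\sigma/\nu}f|^{2}\Bigr)^{1/2}\Bigr\|_{L^{p}}.
\]
Invoking the standard $L^p$-Littlewood--Paley equivalence for $\RLO^{1/\nu}$ on $L^p(\N)$, $1<p<\infty$, provided by the Mih\-lin--H\"ormander multiplier theorem for positive Rockland operators in \cite{FR} together with Khintchine's inequality, namely
\[
\|g\|_{L^p} \asymp \Bigl\|\Bigl(\sum_{j\in\Z}|\eta(2^{-j}\RLO^{1/\nu})g|^{2}\Bigr)^{1/2}\Bigr\|_{L^{p}}, \quad g \in L^p(\N),
\]
with $g = \RLO^{\sigma/\nu}f$, combines with the previous display to give $\|\RLO^{\sigma/\nu}f\|_{L^p} \asymp \|f\|_{\F^{\sigma}_{p,2}}$ for every $f \in \SV(\N)$.

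To upgrade this equivalence on $\SV(\N)$ to the full isomorphism, I would appeal to density. On the Triebel--Lizorkin side, Corollary~\ref{cor:dense} ensures that $\SV(\N)$ is norm-dense in $\F^{\sigma}_{p,2}(\N)$ because $p,q \in (1,\infty)$. On the Sobolev side, the standard Schwartz-density results recorded in \cite{FR,FR2} imply that $\SV(\N)$ is dense in $\dot{L}^{p}_{\sigma,\RLO}(\N)$. Since both spaces continuously embed into $\SV'(\N)$, the identity map on $\SV(\N)$ is bi-Lipschitz and extends to a Banach space isomorphism between the two completions. The independence of $\RLO$ is then immediate, since $\F^{\sigma}_{p,2}(\N)$ has been defined without reference to $\RLO$.

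The principal obstacle is to pin down the $L^p$-Littlewood--Paley square function estimate in the Rockland setting: the present paper has built its general theory around the nondifferential operator $\P$, whose semigroup does not enjoy Gaussian bounds, so the square function equivalence cannot be borrowed from the $\P$-framework and must instead be deduced from the Rockland-adapted multiplier calculus in \cite{FR}. A subsidiary but nontrivial point is the careful handling of polynomials and domains: one has to verify that $\SV(\N)$, viewed as a subspace of $\SV'(\N) \cong \mathcal{S}'(\N)/\mathcal{P}$, genuinely sits densely inside the quotient realisation of $\dot L^{p}_{\sigma,\RLO}(\N)$, and that the commutation $\eta(2^{-j}\RLO^{1/\nu})\RLO^{\sigma/\nu} = 2^{j\sigma}m(2^{-j}\RLO^{1/\nu})$ is justified on Schwartz functions for every $\sigma \in \mathbb{R}$ (including $\sigma < 0$, where $\RLO^{\sigma/\nu}$ is unbounded but bounded on the range of the compactly-supported cutoff).
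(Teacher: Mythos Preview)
Your approach is correct and essentially identical to the paper's: both use Corollary~\ref{cor:spectral_multiplier} with the modified symbol (the paper writes $m_3(\lambda)=\lambda^{\sigma/\nu}m_1(\lambda)$, you write $m(\lambda)=\lambda^\sigma\eta(\lambda)$), pull out $\RLO^{\sigma/\nu}$ via functional calculus, and then extend by density of $\SV(\N)$.

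The one substantive difference concerns what you call the ``principal obstacle''. The paper does not go outside its own framework to obtain the $L^p$-square-function equivalence $\|g\|_{L^p}\asymp\bigl\|(\sum_j|\eta(2^{-j}\RLO^{1/\nu})g|^2)^{1/2}\bigr\|_{L^p}$; instead it recognises this as precisely the statement $L^p(\N)=\F^0_{p,2}(\N)$ from Proposition~\ref{prop:Identification of Lebesgue and Hardy} (proved there via vector-valued Calder\'on--Zygmund theory from \cite{FS}) combined with a second application of Corollary~\ref{cor:spectral_multiplier} at $\sigma=0$. So your obstacle is already resolved internally, and no separate Mihlin--H\"ormander/Khintchine argument is needed. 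On the density side, rather than arguing density of $\SV(\N)$ in $\dot L^p_{\sigma,\RLO}(\N)$, the paper shows directly in Lemma~\ref{lem:domain_of_Rps} that $\RLO^{\sigma/\nu}:\SV(\N)\to\SV(\N)$ is a continuous bijection; together with density of $\SV(\N)$ in $\F^\sigma_{p,2}(\N)$ (Corollary~\ref{cor:dense}) this yields the isomorphism without separately handling the Sobolev completion.
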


We mention that the fact that the Sobolev space is independent of the choice of Rockland operator was already shown earlier in \cite{FR, FR2}. The norm equivalence provided by Proposition \ref{Prop:identification of homogeneous Sobolev} yields an alternative proof of this fact.

To prove this proposition, we will use the following lemma that allows us to identify the relevant reservoirs.

\begin{lemma} \label{lem:domain_of_Rps}
Let $\N$ be a graded Lie group with homogeneous dimension $\hdim$, and let $\RLO$ be a positive Rockland operator on $\N$ of homegeneous degree $\hdeg$. Let $\ord \in \R$ and let $\RLO^\ord$ be defined by the functional calculus \eqref{eq:fractional_power_functional_calculus}.
\begin{enumerate}[\rm (i)]
    \item For any $\ord \in \mathbb{R}$, we have $\SV(\N) \subseteq \Dom(\RLO^\ord)$ and  $\RLO^\ord(\SV(\N)) \subseteq \SV(\N)$. Moreover, the mapping $\RLO^\ord :\SV(\N)\rightarrow \SV(\N)$ is continuous and bijective.
    \item For any $\ord \in \mathbb{R}$, we have $\SV(\N) \subseteq \bigcap_{1\leq p <\infty} \Dom(\RLO_p^\ord)$. Moreover, for each $p \in [1,\infty)$ and $f \in \SV(\N)$, we have $\RLO_p^\ord f =\RLO^\ord f$.
\end{enumerate}
\end{lemma}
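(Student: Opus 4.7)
\emph{Part (i).} The plan is to reduce to the construction in the proof of Proposition~\ref{prop:construction_crk}(i) applied to $\RLO^{1/\hdeg}$, a positive self-adjoint operator of homogeneous degree $1$, in place of $\P$. Fix a real-valued $m \in C_c^\infty((0,\infty))$ satisfying $\sum_{j \in \Z} |m(2^{-j}\lambda)|^2 = 1$ for all $\lambda > 0$. Then the convolution kernel $\crk$ of $m(\RLO^{1/\hdeg})$ lies in $\SV(\N)$, $\crk_t$ is the kernel of $m(t\RLO^{1/\hdeg})$, and every $f \in \SV(\N)$ admits the discrete Calder\'on representation $f = \sum_{j \in \Z} f \ast \crk_{2^{-j}} \ast \crk_{2^{-j}}$ with convergence in $\SV(\N)$. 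For fixed $\sigma \in \R$, set $m_\sigma(\lambda) := \lambda^{\hdeg\sigma}m(\lambda) \in C_c^\infty((0,\infty))$ and let $\drkk \in \SV(\N)$ denote the kernel of $m_\sigma(\RLO^{1/\hdeg})$; the spectral identity $\RLO^\sigma m(t\RLO^{1/\hdeg}) = t^{-\hdeg\sigma}m_\sigma(t\RLO^{1/\hdeg})$ shows that the kernel of $\RLO^\sigma m(t\RLO^{1/\hdeg})$ equals $t^{-\hdeg\sigma}\drkk_t$. Applying $\RLO^\sigma$ termwise to the Calder\'on formula and setting $\xi := \drkk \ast \crk \in \SV(\N)$ gives, formally,
\[
\RLO^\sigma f = \sum_{j \in \Z} 2^{j\hdeg\sigma}\, f \ast \xi_{2^{-j}}.
\]

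To justify this identity and deduce $\RLO^\sigma f \in \SV(\N)$, the right-hand side must be shown to converge in $\SV(\N)$; it then coincides with $\RLO^\sigma f$ in $L^2(\N)$ by the spectral theorem. For every multi-index $\alpha$ and arbitrarily large $M, L \in \NN$, Lemma~\ref{aoe} applied with $t = 1$, $s = 2^{-j}$ gives
\[
2^{j\hdeg\sigma}|X^\alpha(f \ast \xi_{2^{-j}})(x)| \lesssim 2^{j(\hdeg\sigma + [\alpha])}\, 2^{-|j|M}\, \frac{(2^j \wedge 1)^{\hdim}}{(1 + (2^j \wedge 1)|x|)^{L}},
\]
which, for $M, L$ chosen sufficiently large depending on $\alpha$ and $\sigma$, is summable against any Schwartz seminorm $\|\cdot\|_{(k)}$. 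This simultaneously establishes the continuity of $\RLO^\sigma : \SV(\N) \to \SV(\N)$. Bijectivity is then immediate from the spectral identities $\RLO^\sigma \RLO^{-\sigma} = \RLO^{-\sigma}\RLO^\sigma = \mathrm{Id}$ on $L^2(\N)$, restricted to $\SV(\N)$, once both $\RLO^{\pm\sigma}$ are known to preserve $\SV(\N)$.

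\emph{Part (ii).} The main obstacle is that $\RLO_p^\sigma$ is defined via Komatsu's theory of fractional powers of non-negative operators on the Banach space $L^p(\N)$, which is a priori distinct from the spectral definition of $\RLO^\sigma$ on $L^2(\N)$. The plan is to exploit that the heat semigroup $\{e^{-t\RLO}\}_{t>0}$ admits a Schwartz convolution kernel (cf.~\cite[Corollary 4.2.17]{FR}), and therefore extends to a consistent strongly continuous semigroup on every $L^p(\N)$, $1 \leq p < \infty$, whose infinitesimal generator is $\RLO_p$. This immediately gives $\SV(\N) \subseteq \bigcap_{k \in \NN_0} \Dom(\RLO_p^k)$ with $\RLO_p f = \RLO f$ on $\SV(\N)$. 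For $\sigma \in (0,1)$ and $f \in \SV(\N)$, Balakrishnan's representation
\[
\RLO_p^\sigma f = \frac{\sin(\sigma\pi)}{\pi} \int_0^\infty \lambda^{\sigma - 1}(\lambda I + \RLO_p)^{-1}\RLO_p f \, d\lambda
\]
converges absolutely in $L^p(\N)$, and since the resolvents of $\RLO_p$ and $\RLO$ agree on $\SV(\N) \subseteq L^2(\N) \cap L^p(\N)$ (both admit the same heat-semigroup representation), a Fubini/dominated-convergence argument identifies the right-hand side with the spectrally-defined $\RLO^\sigma f$. The agreement $\RLO_p^\sigma f = \RLO^\sigma f$ for general $\sigma \in \R$ then follows from the composition laws $\RLO_{(\cdot)}^{\sigma_1 + \sigma_2} = \RLO_{(\cdot)}^{\sigma_1}\RLO_{(\cdot)}^{\sigma_2}$ and $\RLO_{(\cdot)}^{-\sigma} = (\RLO_{(\cdot)}^\sigma)^{-1}$, which are valid for both calculi (cf.~\cite[Section 4.3]{FR}); the inclusion $\SV(\N) \subseteq \Dom(\RLO_p^\sigma)$ is then a byproduct, since $\RLO^\sigma f \in \SV(\N) \subseteq L^p(\N)$ by (i).
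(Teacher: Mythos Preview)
Your Part~(i) is essentially the paper's argument with a discrete Calder\'on sum in place of the paper's continuous integral $\RLO^\sigma f = C_m^{-1}\hdeg\int_0^\infty t^{-\hdeg\sigma} m(t^\hdeg\RLO)f\,\frac{dt}{t}$; both rely on Lemma~\ref{aoe} for the Schwartz seminorm bounds and on closedness of $\RLO^\sigma$ to identify the limit.

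Your Part~(ii) takes a genuinely different route. The paper does not touch Balakrishnan's formula at all: it simply invokes specific items of \cite[Theorem~4.3.6]{FR}. For $\sigma\geq 0$ it cites (1f) to get $\mathcal{S}(\N)\subseteq\Dom(\RLO_p^\sigma)$; for $\sigma<0$ it writes $\sigma$ as a nonnegative shift of a negative integer, uses the graph inclusion $\RLO_p^{\sigma+k}\RLO_p^{-k}\subseteq\RLO_p^\sigma$ from (1b), and reduces the integer case to (1a) together with Part~(i); the identification $\RLO_p^\sigma f=\RLO_2^\sigma f$ is then (1g). Your approach, by contrast, proves the consistency on $\SV$ for $\sigma\in(0,1)$ directly from the Balakrishnan integral and the fact that resolvents (equivalently, heat kernels) are the same convolution kernel on all $L^p$, then bootstraps to general~$\sigma$. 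Two points deserve tightening: the composition law in Komatsu's calculus is only the graph inclusion $\RLO_p^{\sigma_1}\RLO_p^{\sigma_2}\subseteq\RLO_p^{\sigma_1+\sigma_2}$, not an equality---though this inclusion is exactly what your inductive step needs; and the identity $\RLO_p^{-\sigma}=(\RLO_p^{\sigma})^{-1}$ requires injectivity of $\RLO_p$ on $L^p$, which holds but should be noted. Finally, your last sentence reads as if $\RLO^\sigma f\in L^p$ alone implies $f\in\Dom(\RLO_p^\sigma)$; it does not---the actual argument (which your outline supports) is that for $\sigma<0$ one takes $g:=\RLO^\sigma f\in\SV$, uses the already-established positive case to get $\RLO_p^{-\sigma}g=f$, and concludes $f\in\mathrm{Ran}(\RLO_p^{-\sigma})=\Dom(\RLO_p^\sigma)$. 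With these clarifications your argument is correct; the paper's version is shorter because it outsources precisely these domain subtleties to~\cite{FR}.
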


\begin{proof}
(i) Choose a real-valued function $m \in C_c^\infty (\mathbb{R}^+)$ such that
\begin{align*}
C_m := \int_0^\infty t^{-\ord}m(t) \;\frac{dt}{t} \neq 0.     
\end{align*}
Then by a simple change of variables we have
\begin{align*}
\lambda^\ord= C_m^{-1}\hdeg\int_0^\infty t^{-\hdeg \ord}m(t^\hdeg \lambda) \;\frac{dt}{t} 
\end{align*}
for all $\lambda \in \mathbb{R}^+$.
Hence, for any $f \in \Dom(\RLO^\ord)$, the identity
\begin{align} \label{eq:functional_calculus_omega}
\RLO^\ord f
= C_m^{-1}\hdeg\int_0^\infty t^{-\hdeg \ord} m(t^{\hdeg}\RLO)f \; \frac{dt}{t}
\end{align}
holds in $L^2(\N)$.
Since $\RLO^\ord$ is a closed operator, an $L^2$-function $f$ belongs to $\Dom(\RLO^\ord)$ if and only if 
\begin{align} \label{eq:spectral_convergence}
\int_0^\infty t^{-\hdeg \ord} m(t^{\hdeg}\RLO) f\; \frac{dt}{t}
\end{align} 
converges in $L^2(\N)$.

For proving the claims, we let $f \in \SV(\N)$ and will show that  \eqref{eq:spectral_convergence} converges in $\SV(\N)$. To see this, we denote by $\phi$ the convolution kernel of $m(\RLO)$. Then we have the following facts.
First, it follows from Hulanicki's theorem (see \cite{Hulanicki})  that $\phi \in 
\mathcal{S} (\N)$. Second, since $m$ vanishes near the origin, using integration by parts (cf. the proof of \cite[Corollary 1]{geller2006continuous}), it is easy to show that 
\begin{align*}
\int_{\N} \phi (x)P(x) \; d\mu_N(x) =0 
\end{align*}
for all polynomials $P \in \mathcal{P}$, so that $\phi \in \SV(\N)$. Third, the $\nu$-homogeneity of  $\RLO$ implies that the convolution kernel of $m(t^\nu \RLO)$ is $\phi_t$ for any $t \in (0, \infty)$.
Therefore, 
\begin{align} \label{eq:spectral_convolution}
\int_\varepsilon^R t^{-\hdeg \ord} m(t^{\nu}\RLO) f\; \frac{dt}{t} = \int_\varepsilon^R t^{-\hdeg \ord} f \ast \phi_t \; \frac{dt}{t}.
\end{align}
By Lemma \ref{aoe} we have, for any $M,L>0$,
\begin{align} \label{eq:X_f_omega_t} 
\big|X^\alpha \big(t^{-\hdeg \ord}f \ast \phi_t\big)(x)\big| &=t^{-\hdeg \ord} t^{-[\alpha]} \big|f \ast (X^\alpha \phi)_t(x)\big| \nonumber\\
&\lesssim \|f\|_{(k)} t^{-\hdeg \ord} t^{-[\alpha]} (t\wedge t^{-1})^M \frac{(t^{-1}\wedge 1)^\hdim}{(1 + (t^{-1}\wedge 1)|x|)^L} \\
& \leq \|f\|_{(k)}(t\wedge t^{-1})^{M -\hdeg |\ord|-[\alpha] -L}(1 +|x|)^{-L}, \nonumber 
\end{align}
where $k$ is a positive integer depending on $\alpha$, $M$ and $L$.
Given $\ell \in \mathbb{N}$, by choosing $L =\ell$ and $M > \hdeg|\ord| + 2 \ell$ in \eqref{eq:X_f_omega_t}, it follows that 
\begin{equation} \label{eq:Converges_in_Schwartz}
\begin{split}
\int_0^\infty\Bigl \| t^{-\hdeg \ord} m(t^{\nu}\RLO) f\Bigr \|_{(\ell)} \; \frac{dt}{t} 
   &= \int_0^\infty\sup_{[\alpha] \leq \ell, x\in \N}  (1+|x|)^\ell \big| X^\alpha \big(t^{-\hdeg \ord} f \ast \phi_t\big)(x)\big| \; \frac{dt}{t} \\
    &\lesssim \|f\|_{(k)}\int_0^\infty (t \wedge t^{-1})^{M -\hdeg |\ord| -2\ell} \; \frac{dt}{t} \\
    & \lesssim \|f\|_{(k)},
\end{split} 
\end{equation} 
where $k$ is a positive integer depending on $\ell$ and $M$. 
Since $\SV(\N)$ is complete, it follows that \eqref{eq:spectral_convergence} converges in $\SV(\N)$ as $\varepsilon \to 0$ and $R \to \infty$,
thus in particular in $L^2(\N)$, to some $g \in \SV(\N)$. Thus, $f \in \Dom(\RLO^\ord)$ and $\RLO^\ord f = C_m^{-1}\nu g \in \SV(\N)$, cf. the identity \eqref{eq:functional_calculus_omega}.
This shows that $\RLO^\ord : \SV(\N) \to \SV(\N)$ is well-defined. For the continuity, note that from \eqref{eq:Converges_in_Schwartz} it also follows that if $f_j \rightarrow 0$ in $\SV(\N)$ as $j \rightarrow \infty$, then 
\begin{align*}
\RLO^\ord f_j = C_m^{-1}\nu \int_0^\infty t^{-\hdeg \ord} f_j \ast \phi_t \; \frac{dt}{t} \rightarrow 0 \quad \text{in } \SV(\N)
\end{align*}
as $j \rightarrow \infty$.
Therefore, $\RLO^\ord$ is a continuous mapping from $\SV(\N)$ into $\SV(\N)$.  

It remains to show that $\RLO^\ord:\SV(\N) \rightarrow \SV(\N)$ is bijective. First observe that if $f \in \SV(\N)$ is such that $\RLO^\ord f =0$, then $f = \RLO^{-\ord} \RLO^\ord f =0$, which shows the injectivity. Second, given any $h \in \SV(\N)$, set $f: =\RLO^{-\ord}h \in \SV(\N)$. Then $\RLO^\ord f = h$, hence $\RLO^\ord:\SV(\N) \rightarrow \SV(\N)$ is also surjective.
\\~\\
(ii) Fix an arbitrary $p \in [1,\infty)$ and $\ord \in \mathbb{R}$. 
If $\ord \geq 0$, then it follows from \cite[Theorem 4.3.6~(1f)]{FR} that $\mathcal{S}(\N) \subseteq \Dom(\RLO_p^\ord)$, and thus $\SV(\N) \subseteq \Dom(\RLO_p^\ord)$. If $\ord <0$, then we let $k$ be the unique nonnegative integer such that $-k -1 < \ord \leq -k$. 
By  \cite[Theorem 4.3.6~(1b)]{FR}, $\RLO_p^{\ord+k}\RLO_p^{-k} \subseteq \RLO_p^\ord$ in the sense of operator graph. Hence, in order to prove $\SV(\N) \subset \Dom(\RLO_p^\ord)$, it suffices to show that 
\begin{align} \label{eq:domain_Rps_1}
f \in \Dom(\RLO_p^{-k}) \quad \text{for all } f \in \SV(\N)
\end{align}
and 
\begin{align} \label{eq:domain_Rps_2}
\RLO_p^{-k}f \in \Dom(\RLO_p^{\ord+k}) \quad \text{for all } f \in \SV(\N).
\end{align}
To this end, let $f \in \SV(\N)$. By assertion (i), we have $f \in \Dom(\RLO^{-k})$ and 
$g := \RLO^{-k}f \in \SV(\N)$. Consequently, $f = \RLO^k g$. Since $k$ is nonnegative integer,  it follows from 
\cite[Theorem 4.3.6~(1a)]{FR} 
that $\RLO_p^k g= \RLO^kg$. Hence $f =\RLO_p^kg$, which implies that $f \in \Dom(\RLO_p^{-k})$.
This shows \eqref{eq:domain_Rps_1}. 
Since $\RLO_p^{-k}f =g \in \SV(\N)$ and $\ord + k \geq 0$, it follows from 
 \cite[Theorem 3.6~(1f)]{FR2} that $\RLO_p^{-k}f \in \Dom(\RLO_p^{\ord+k})$, i.e., also \eqref{eq:domain_Rps_2} is true. In combination, this shows that $\SV(\N)\subseteq \Dom(\RLO_p^\ord)$ for arbitrary $p \in [1,\infty)$.

For the final claim, we note that \cite[Theorem 4.3.6~(1g)]{FR2} yields $\RLO_p^\ord f = \RLO_2^\ord f$ as $\RLO_2^\ord = \RLO^\ord_2$ and $f \in \Dom(\RLO_p^\ord) \cap \Dom(\RLO_2^\ord)$.
\end{proof}

We are now in a position to prove Proposition \ref{Prop:identification of homogeneous Sobolev}.
 
\begin{proof}[Proof of Proposition \ref{Prop:identification of homogeneous Sobolev}]
Throughout the proof, we will simply write $\dot{L}^p_{\ord}(\N)$ for $\dot{L}^p_{\ord, \RLO} (\N)$. 
Since $\| f \|_{\dot{L}_{\ord}^p} = \| \RLO_p^{\frac{\ord}{\nu} } \|_{L^p}$ for $f \in \SC(\N) \cap \Dom(\RLO_p^{\frac{\ord}{\nu}} )$ and $\SC(\N) \cap \Dom(\RLO_p^{\frac{\ord}{\nu}} )$ is dense in $\dot{L}^p_{\ord}(\N)$, it follows that $\RLO_p^{\frac{\ord}{\nu}}$  extends to a continuous map from $\dot{L}^p_\ord(\N)$  into $L^p(\N)$, and thus $\| \RLO_p^{\frac{\ord}{\nu}} f \|_{L^p} = \| f \|_{\dot{L}^p_\ord}$ for all $f \in \dot{L}^p_{\ord}(\N)$. On the other hand,  for any $f \in \SV(\N)$, letting $g =\RLO_p^{-\frac{\ord}{\nu}}f$ (i.e., $f =\RLO^{\frac{\ord}{\nu
}}g$) we have $\|\RLO_p^{-\frac{\ord}{\mu}}f \|_{\dot{L}^p_{\sigma}} = \|g\|_{\dot{L}^p_\ord} =\|\RLO_p^{\frac{\sigma}{\nu}}g\|_{L^p} = \|f\|_{L^p}$, and hence $\RLO_p^{-\frac{\ord}{\nu}}$ extends to a continuous map  from  $L^p(\N)$ into $\dot{L}^p_\ord(\N)$. 
In combination, this shows that  the mapping $\RLO_p^{\frac{\ord}{\nu}}$, which is  initially defined on $\SC(\N) \cap \Dom(\RLO_p^{\frac{\ord}{\nu}})$, extends to an isomorphism from $\dot{L}^p_\ord(\N)$ onto $L^p(\N)$. 

Furthermore, we have proved in Proposition~\ref{prop:Identification of Lebesgue and Hardy} that $\F^0_{p,2}(\N) = L^p(\N)$ with equivalent norms. Hence, it suffices to show that  $\RLO^{\frac{\ord}{\hdeg}}_p$  extends to an isomorphism from  $\F^\ord_{p,2}(\N)$ onto $\F^0_{p,2}(\N)$ and 
\begin{align} \label{eq:bijective_continuous}
\|\RLO_p^{\frac{\ord}{\hdeg}}f\|_{\F_{p,2}^0} \asymp \|f\|_{\F_{p,2}^{\ord}}
\end{align}
for all $f \in \F^{\ord}_{p,2}(\N)$.
Since $\SV(\N)$ is dense in $\F^\ord_{p,2}(\N)$ and $\F^0_{p,2}(\N)$ (cf. Corollary \ref{cor:dense}) and the mapping $\RLO_p^{\frac{\ord}{\hdeg}}:\SV(\N) \rightarrow \SV(\N)$ is continuous and bijective (cf. Lemma~\ref{lem:domain_of_Rps}), it suffices to show that \eqref{eq:bijective_continuous} holds
for all $f \in \SV(\N)$.

To show the latter, let $m_1 \in C_c^\infty(\mathbb{R}^+)$ be such that 
\begin{align} \label{eq:condition_on_Phi}
\supp m_1 \subseteq [2^{-\hdeg}, 2^\hdeg] \ \ \ \text{and} \ \ \ |m_1(\lambda)| \geq c >0 \ \text{for} \ 
\lambda \in [(3/5)^\hdeg, (5/3)^\hdeg].
\end{align}
These properties of $m_1$ guarantee the existence of $m_2 \in C_c^\infty(\mathbb{R}^+)$
such that 
\begin{align*}
\sum_{j \in \mathbb{Z}} m_1 (2^{-\hdeg j}\lambda) m_2 (2^{-\nu j }\lambda) =1 \quad \text{for all } \lambda \in \mathbb{R}^+.
\end{align*}
By (the proof of) Proposition 
\ref{prop:construction_crk}, it follows that for all $f \in \SV'(\N)$,
\begin{align*}
f =  \sum_{j \in \mathbb{Z}} f \ast \crk_{2^{-j}} \ast \drk_{2^{-j}} \quad \mbox{ in } \SV'(\N),
\end{align*}
where $\crk, \drk \in \SV(\N)$ denote the convolution kernels of $m_1(\RLO)$ and $m_2(\RLO)$, respectively. This shows that the function $\crk$ satisfies the discrete Calder\'{o}n condition \eqref{eq:discrete_calderon}. 
Set $m_3(\lambda):= \lambda^{\frac{\ord}{\hdeg}} m_1(\lambda)$, $\lambda \in \mathbb{R}_+$. Obviously, $m_3 \in C_c^\infty(\mathbb{R}^+)$ and $m_3$ also satisfies the properties \eqref{eq:condition_on_Phi}. Hence $\varphi$, the convolution kernel of $m_3(\RLO)$, also satisfies the discrete Calder\'{o}n condition \eqref{eq:discrete_calderon}. 
 By Lemma \ref{lem:domain_of_Rps}(ii) and Corollary~\ref{cor:spectral_multiplier}, we have, for any $f \in \SV(\N)$,
\begin{align*} 
\|\RLO_p^{\frac{\ord}{\hdeg}} f\|_{\F_{p,2}^0} &=\|\RLO^{\frac{\ord}{\hdeg}} f\|_{\F_{p,2}^0} \\
& \asymp \biggl \| \biggl ( \sum_{j \in \mathbb{Z}} \bigl |  (\RLO^{\frac{\ord}{\hdeg}} f) \ast \crk_{2^{-j}} \bigr |^2 \biggr )^{1/2}
\biggr \|_{L^p}  = \biggl \| \biggl ( \sum_{j \in \mathbb{Z}} \bigl |   m_1(2^{j \nu} \RLO)  (\RLO^{\frac{\ord}{\hdeg}} f)  \bigr |^2 \biggr )^{1/2}
\biggr \|_{L^p} \\
&=   \biggl \| \biggl ( \sum_{j \in \mathbb{Z}} \bigl | 2^{js} m_3(2^{-j \nu} \RLO)  f \bigr |^2 \biggr )^{1/2}
\biggr \|_{L^p}
=  \biggl \| \biggl ( \sum_{j \in \mathbb{Z}} \bigl | 2^{js} f \ast \varphi_{2^{-j}}\bigr |^2 \biggr )^{1/2}
\biggr \|_{L^p}\\
&\asymp\|f\|_{\F^\ord_{p,2}(\N)}.
\end{align*}
This completes the proof.
\end{proof}

\subsection{Lipschitz spaces on stratified groups}  Inhomogeneous Lipschitz spaces on stratified groups were first introduced by Folland \cite{Folland1975subelliptic}. Characterizations of these spaces by Poisson integrals and Littlewood-Paley functions were given in \cite{Folland1979Lipschitz} and \cite{hu2019littlewood}, respectively. 

Homogeneous Lipschitz spaces on any stratified Lie group $\N$ can be defined as follows. For $0 \leq \ord \leq 1$, the homogeneous Lipschitz space $\dot{\Lambda}^\ord(\N)$ is defined to be the space of all $f \in C(\N)$ such that 
\begin{align*}
 \displaystyle \|f\|_{\dot{\Lambda}^\ord} := \begin{cases}
  \displaystyle\sup_{x\in \N} \sup_{y \in \N \backslash \{e\}} \frac{|f(xy)-f(x)|}{|y|^\ord}, \quad & 0< \ord <1,\\
 \displaystyle \sup_{x\in \N} \sup_{y \in \N \backslash \{e\} }\frac{|f(xy) + f(xy^{-1})-2f(x)|}{|y|}, & \ord = 1,
 \end{cases}  
\end{align*}
is finite. 

 To define the space $\dot{
\Lambda
 }^\ord(\N)$ for higher $\ord$ we need to introduce some notation. Let $(\mathfrak{n}_i)_{i \in \mathbb{N}}$ be a stratification of the Lie algebra $\mathfrak{n}$, i.e., an \emph{$\mathbb{N}$-grading} such that $\mathfrak{n}_1$ generates $\mathfrak{n}$. Denoting $n_1 = \dim (\mathfrak{n}_1)$, we fix a basis $\{X_1,\cdots, X_{n_1}\}$ of $\mathfrak{n}_1$ and set
\begin{align*}
\mathcal{I}(n_1) = \bigcup_{k \in \NN_0} \{1,\cdots, n_1\}^k
\end{align*}
to be the set of multi-indices $I$ of arbitrary but finite length with values in $\{1, \ldots, n_1\}$.
For any $k \in \NN_0$ and $I = \{i_1,\ldots, i_k\} \in \{1,\ldots, n_1\}^k \subset \mathcal{I}(n_1)$, we set $|I| =k$ and 
\begin{align*}
    X_I =X_{i_1}\cdots X_{i_k},
\end{align*}
with the convention $X_I ={\rm id}$ when $|I|=0$. Now, for $\ord =k + \ord'$ with $k \in \NN$ and $0 \leq \ord' \leq 1$, the homogeneous Lipschitz space $\dot{\Lambda}^\ord(\N)$ is defined to be the space of all $f \in C^k(\N)$ such that
\begin{align*}
    \|f\|_{\dot{\Lambda}^\ord} := \sum_{I \in \mathcal{I}(n_1), |I| = k}
    \|X_If\|_{\dot{\Lambda}^{\ord'}} <\infty.
\end{align*}

A Littlewood-Paley characterization of inhomogeneous Lipschitz spaces on stratified Lie groups was proven in \cite{hu2019littlewood}. One may use the argument in \cite{hu2019littlewood}, with minor modifications, to also prove a Littlewood-Paley characterization of the homogeneous spaces $\dot{\Lambda}^s(\N)$. Thus one obtains the following identification:
\begin{proposition} \label{prop:LP_characterization_of_hom_Lip}
Let $\N$ be a stratified Lie group and let $s \in \mathbb{R}$. 
\begin{enumerate}[\rm (i)]
    \item If $f \in \dot{\Lambda}^\ord(\N)$, then $f \in \B^{\ord}_{\infty,\infty}(\N)$, and
    \[   \|f\|_{\B^{\ord}_{\infty,\infty}} \lesssim  \|f\|_{\dot{\Lambda}^\ord}.
   \]
   \item If  $[f]=f + \mathcal{P} \in \B^\ord_{\infty,\infty}(\N)$, then there exists a polynomial $P$ such that  $f-P \in \dot{\Lambda}^\ord(\N)$, and  
    \[
    \|f-P\|_{\dot{\Lambda}^\ord}
    \lesssim \big\|[f]\big\|_{\B^\ord_{\infty,\infty}}.
    \]
\end{enumerate}
\end{proposition}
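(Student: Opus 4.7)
I would adapt Hu's arguments for inhomogeneous Lipschitz spaces on stratified groups \cite{hu2019littlewood} to the homogeneous setting, the essential new ingredient being the ``modulo polynomials'' bookkeeping. By Corollary~\ref{cor:spectral_multiplier}, I may choose $\crk\in\SV(\N)$ as the convolution kernel of a compactly supported spectral multiplier of the sub-Laplacian $\sL$ (cf.\ Proposition~\ref{prop:construction_crk}), so that
\[
\bigl\|[f]\bigr\|_{\B^{\ord}_{\infty,\infty}}\asymp\sup_{j\in\Z}2^{j\ord}\|f*\crk_{2^{-j}}\|_{L^\infty}.
\]
Write $\ord=k+\ord'$ with $k\in\NN_0$ and $\ord'\in(0,1]$; the case $\ord'=1$ is the Zygmund endpoint.

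\textbf{Part (i).} For $f\in\dot\Lambda^\ord(\N)$, $f$ grows at most polynomially of order $\ord$, so $[f]\in\SV'(\N)$ is well defined. The vanishing moments of $\crk$ give
\[
(f*\crk_{2^{-j}})(x)=\int_\N\bigl[f(xy^{-1})-P^{(f)}_{x,k}(y^{-1})\bigr]\crk_{2^{-j}}(y)\,d\mu_\N(y).
\]
A Taylor-with-H\"older remainder argument (obtained from Lemma~\ref{lem:taylor inequality} together with the stratified-group observation that every $X^\alpha$ with $[\alpha]\le k$ reduces to polynomials in $X_1,\dots,X_{n_1}$, whose $k$th-order iterates $X_I$ with $|I|=k$ are $\ord'$-H\"older by hypothesis) bounds the bracket by $C|y|^\ord\|f\|_{\dot\Lambda^\ord}$; in the Zygmund case $\ord'=1$ the same bound is recovered by symmetrising $y\mapsto y^{-1}$ and playing the second-difference condition against the vanishing moments of $\crk$. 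Since the Schwartz decay of $\crk$ yields $\int_\N|y|^\ord|\crk_{2^{-j}}(y)|\,d\mu_\N(y)\lesssim 2^{-j\ord}$, one concludes $\|f*\crk_{2^{-j}}\|_{L^\infty}\lesssim 2^{-j\ord}\|f\|_{\dot\Lambda^\ord}$ uniformly in $j$.

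\textbf{Part (ii).} Pick a continuous Calder\'on pair $(\crk,\drk)$ from Proposition~\ref{prop:construction_crk}(ii); Theorem~\ref{thm:cont_char}(iii) gives $\|f*\crk_t\|_{L^\infty}\lesssim t^\ord\|[f]\|_{\B^{\ord}_{\infty,\infty}}$ for all $t>0$. Fix an integer $M'>\ord$ and define the candidate representative
\[
\widetilde f(x):=\int_0^1(f*\crk_t*\drk_t)(x)\,\tfrac{dt}{t}+\int_1^\infty\Bigl[(f*\crk_t*\drk_t)(x)-P^{(f*\crk_t*\drk_t)}_{e_\N,M'}(x)\Bigr]\,\tfrac{dt}{t}.
\]
Shifting derivatives onto $\drk_t$ produces $\|X^I(f*\crk_t*\drk_t)\|_{L^\infty}\lesssim t^{\ord-[I]}$, so the small-$t$ integral converges in $C^{k+1}(\N)$, while the large-$t$ integral converges thanks to the Taylor subtraction and a further application of Lemma~\ref{lem:taylor inequality} to $\drk_t$. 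By construction $P:=f-\widetilde f$ is a polynomial. To estimate $\|\widetilde f\|_{\dot\Lambda^\ord}$, fix $x,y\in\N$ and a multi-index $I$ with $|I|=k$, and split the $t$-integral representing $X_I\widetilde f(xy)-X_I\widetilde f(x)$ (or the Zygmund second difference when $\ord'=1$) at $t=|y|$: for $t\le|y|$ use the raw bound $\|X_I(f*\crk_t*\drk_t)\|_{L^\infty}\lesssim t^{\ord'}$, while for $t>|y|$ apply Lemma~\ref{lem:taylor inequality} to $\drk_t$ to extract a factor $(|y|/t)^{\ord'}$ from the difference. Summing yields the desired $|y|^{\ord'}$-bound and hence $\|X_I\widetilde f\|_{\dot\Lambda^{\ord'}}\lesssim\|[f]\|_{\B^{\ord}_{\infty,\infty}}$.

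\textbf{Main obstacle.} The delicate points live in Part~(ii): (a) confirming that $f-\widetilde f$ is \emph{actually} polynomial requires tracking convergence of the Calder\'on representation in $\SV'(\N)$ in tandem with the Taylor subtraction at infinity; and (b) the Zygmund endpoint $\ord'=1$ forces replacing H\"older differencing by symmetrised second-difference estimates throughout both parts. The stratified hypothesis enters at a single but critical spot, namely that any $X^\alpha$ with $[\alpha]\le k$ reduces to polynomials in the first-stratum derivatives $X_1,\dots,X_{n_1}$; beyond this, the proof is routine bookkeeping with the Taylor machinery of Section~\ref{sec:prelim} and the continuous maximal characterisations of Section~\ref{sec:continuous}.
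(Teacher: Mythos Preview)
The paper does not give its own proof of this proposition: immediately before the statement it simply says that ``one may use the argument in \cite{hu2019littlewood}, with minor modifications,'' and leaves it at that. Your proposal is precisely such an adaptation of Hu's inhomogeneous argument to the homogeneous setting, with the modulo-polynomials bookkeeping made explicit, so it is entirely in line with what the paper intends.
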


\section*{Acknowledgements}

G.H. is supported by NSF of China (Grant No. 12461018). D. R., M. R. and J. v. V. gratefully acknowledge support from the Research Foundation - Flanders (FWO): Odysseus 1 grant G.0H94.18N and Senior Research Grant G022821N. For J. v. V., this research was funded in whole or in part by the Austrian Science Fund (FWF): 10.55776/J4555 and 10.55776/PAT2545623. J. v. V. also thanks Tommaso Bruno for helpful comments on a draft of this paper.

\end{document}